\providecommand{\MR}{\relax\ifhmode\unskip\space\fi MR }
\providecommand{\MRhref}[2]{  \href{http://www.ams.org/mathscinet-getitem?mr=#1}{#2}}
\providecommand{\href}[2]{#2}
\newtheoremstyle{break}
{12pt}{12pt}{\normalfont}{}{\bfseries}{.}{\newline}{}
\theoremstyle{break}
\newtheorem{theorem}{Theorem}[section]
\newtheorem{proposition}[theorem]{Proposition}
\newtheorem{lemma}[theorem]{Lemma}
\newtheorem{corollary}[theorem]{Corollary}
\newtheorem{definition}[theorem]{Definition}
\newtheorem{example}[theorem]{Example}
\newtheorem{remark}[theorem]{Remark}
\newenvironment{proof}[1][Proof]{  
	\par\noindent\textbf{#1.}\par 
}{\hfill$\square$\par }
\begin{document}

\begin{titlepage}
	\centering
	
	\vspace*{2cm}
	
	{\LARGE \textbf{MOHAMED BOUDIAF UNIVERSITY OF M'SILA}} \\
	\vspace{0.5cm}
	{\Large \textbf{FACULTY OF MATHEMATICS AND COMPUTER SCIENCES}} \\
	\vspace{0.5cm}
	{\large \textbf{DEPARTMENT OF MATHEMATICS}} \\
	
	\vspace{2cm}
	
	{\Huge \textbf{Study of Composition Operators}} \\
	\vspace{0.5cm}
	{\Huge \textbf{in Certain Functional Spaces}} \\
	
	\vspace{2cm}
	
	{\Large \textbf{A Thesis Submitted in Partial Fulfillment}} \\
	\vspace{0.3cm}
	{\Large \textbf{of the Requirements for the Degree of}} \\
	\vspace{0.3cm}
	{\Large \textbf{MAGISTER}} \\
	\vspace{0.3cm}
	{\Large \textbf{in Functional Analysis}} \\
	
	\vspace{2cm}
	
	{\Large \textbf{By}} \\
	\vspace{0.5cm}
	{\Large \textbf{Mahdi Tahar BRAHIMI}} \\
	
	\vspace{2cm}
	
	{\Large \textbf{Supervised by}} \\
	\vspace{0.5cm}
	{\Large \textbf{Professor Madani MOUSSAI}} \\
	
	\vspace{1cm}
	
	{\Large \textbf{Academic Year: 2007/2008}}
	
\end{titlepage}

 {\thispagestyle{empty}}
\setstretch{1.5}
\begin{abstract}
	In this thesis we study three problems\,,\,the first is the superposition of the operators and their properties\,,\, such as boundedness,\,continuity,\,regularity and the inequalities of the norms of the composition of functions in some functional spaces\,,\, the second is to generalize some results of the composition of more than two functions\,,\, and the third is to give a generalization of Peetre's theorem.
\end{abstract}
\setstretch{1}
\tableofcontents
\chapter*{ \center{Notations}}

\addcontentsline{toc}{section}{\textbf{Notations}}
\setstretch{2}
\begin{itemize}
\item For all $\alpha = (\alpha_1, \alpha_2, ..., \alpha_n) \in \mathbb{N}^n$%
, $x = (x_1, x_2, ..., x_n) \in \mathbb{R}^n$, we have: 
\begin{equation*}
D^{\alpha}f = \frac{\partial^{|\alpha|} f}{\partial x_1^{\alpha_1} \cdots
\partial x_n^{\alpha_n}}, \quad \text{where} \quad |\alpha| = \alpha_1 +
\alpha_2 + ... + \alpha_n,
\end{equation*}

\item $C = C(\mathbb{R}^n)$: Denotes the space of continuous functions. 
\begin{equation*}
C(\mathbb{R}^n) = \{ u : \mathbb{R}^n \to \mathbb{R} ; \forall x_0 \in 
\mathbb{R}^n, \forall \varepsilon > 0, \exists \delta > 0, \forall x \in 
\mathbb{R}^n, \|x - x_0\| < \delta \Rightarrow |u(x) - u(x_0)| < \varepsilon
\}
\end{equation*}

\begin{itemize}
\item $C^r(\mathbb{R}^n) = \{ f \in C(\mathbb{R}^n) ; D^\alpha f \in C(%
\mathbb{R}^n), \text{ for all } |\alpha| \leq r \}$

\item $\mathcal{E}(\mathbb{R}^n) = C^\infty(\mathbb{R}^n) = \{ f \in C(%
\mathbb{R}^n) ; D^\alpha f \in C(\mathbb{R}^n), \text{ for all } \alpha \in 
\mathbb{N}^n \}$

\item $\text{supp}(f) = \overline{\{ x \in \mathbb{R}^n ; f(x) \neq 0 \}}$:
Support of the function $f$

\item $\mathcal{D}(\mathbb{R}^n) = C_0^\infty(\mathbb{R}^n) = \{ f \in
C^\infty(\mathbb{R}^n) ; \text{supp}(f) \text{ is compact} \}$

\item $\mathcal{S}(\mathbb{R}^n)$: The Schwartz space and $\mathcal{S}%
^{\prime }(\mathbb{R}^n)$ the space of tempered distributions
\end{itemize}

$\mathcal{S}(\mathbb{R}^{n})=\left\{ \varphi \in C^{\infty }(\mathbb{R}%
^{n});\quad \Vert \varphi \Vert _{\mathcal{S}(\mathbb{R}^{n})}\sim p_{k,m}(\varphi
)=\sup_{x\in \mathbb{R}^{n}}\left( \underset{\alpha \in \mathbb{N}%
	^{n},|\alpha |\leq m}{\sup }\left( \sup\limits_{\beta \in \mathbb{N}%
	^{n},|\beta |\leq k}|x^{\beta }\partial ^{\alpha }\varphi (x)|\right)
\right) <\infty \right\} $ \vspace{10pt}

$\mathcal{S}^{\prime }(\mathbb{R}^{n})=\left\{ T:\mathcal{S}\rightarrow 
\mathbb{R},\exists k\in \mathbb{N},\exists m\in \mathbb{N},\exists
C_{k,m}>0,\forall \varphi \in \mathcal{S}(\mathbb{R}^{n});|\langle T,\varphi
\rangle |<C_{k,m}p_{k,m}(\varphi )\right\} $

\item We define the norm of the Lebesgue space $L^{p}(\mathbb{R}^{n})$ by 
\begin{eqnarray*}
\Vert f\Vert _{L^{p}(\mathbb{R}^{n})} &=&\left( \int_{\mathbb{R}%
^{n}}|f(x)|^{p}\,\mathrm{d}x\right) ^{\frac{1}{p}},\quad \text{if }p\geq 1 \\
\Vert f\Vert _{L^{\infty }(\mathbb{R}^{n})} &=&\inf \{c>0;|f(x)|\leq c,\text{
(a.e.)}\}=\text{ess sup}_{x\in \mathbb{R}^{n}}|f(x)|
\end{eqnarray*}

\item For all $f,g\in \mathcal{S}(\mathbb{R}^{n})$, the convolution $f\ast g$
satisfies $f\ast g\in \mathcal{S}(\mathbb{R}^{n})$, such that 
\begin{equation*}
\forall \xi \in \mathbb{R}^{n};f\ast g(\xi )=\int_{\mathbb{R}^{n}}\tau
_{t}f(\xi )g(t)\,\mathrm{d}t=\int_{\mathbb{R}^{n}}f(u)\tau _{u}g(\xi )\,%
\mathrm{d}u,\quad \text{where }\tau _{u}g(\xi )=g(\xi -u)
\end{equation*}

\item If $f\in \mathcal{S}(\mathbb{R}^{n})$, we define the Fourier transform
and its inverse by 
\begin{eqnarray*}
\mathcal{F}f(\xi ) &=&\hat{f}(\xi )\sim \int_{\mathbb{R}^{n}}e^{-i\langle
x,\xi \rangle }f(x)\,\mathrm{d}x,\quad \text{for all }\xi \in \mathbb{R}^{n}
\\
\mathcal{F}^{-1}f(\xi ) &=&\check{f}(\xi )\sim (2\pi )^{-n}\int_{\mathbb{R}%
^{n}}e^{i\langle x,\xi \rangle }f(x)\,\mathrm{d}x,\quad \text{for all }\xi
\in \mathbb{R}^{n} \\
(\mathcal{F}T)(f) &=&\hat{T}(f)=T(\hat{f}),\quad (\mathcal{F}^{-1}T)(f)=%
\check{T}(f)=T(\check{f}),\quad \text{for all }T\in \mathcal{S}^{\prime }(%
\mathbb{R}^{n})
\end{eqnarray*}

\item $[s]$: Denotes the integer part of $s\in \mathbb{R}$.

\item $p^{\prime }$: Denotes the conjugate exponent of $p\geq 1$, defined by 
$\frac{1}{p}+\frac{1}{p^{\prime }}=1$.

\item $\mathcal{P}_{m}(\mathbb{R}^{n})$: The set of polynomials of degree at
most $m$, $m\in \mathbb{N}$, such that 
\begin{equation*}
\mathcal{P}_{m}(\mathbb{R}^{n})=\left\{ P\in \mathcal{S}^{\prime }(\mathbb{R}%
^{n});P(x)=\sum_{|\alpha |\leq m}a_{\alpha }x^{\alpha },x\in \mathbb{R}%
^{n}\right\} ,\quad \mathcal{P}_{-1}(\mathbb{R}^{n})=\{0\}
\end{equation*}

\begin{itemize}
\item $\mathcal{P}(\mathbb{R}^{n})=\mathcal{P}_{\infty }(\mathbb{R}^{n})$:
The subspace of $\mathcal{S}^{\prime }(\mathbb{R}^{n})$ of polynomials or
multinomials.

\item $[f]$: The equivalence class of a distribution $f\in \mathcal{S}%
^{\prime }(\mathbb{R}^{n})$ modulo $\mathcal{P}_{\infty }(\mathbb{R}^{n})$.

\item We agree to take $x^{\alpha }=x_{1}^{\alpha _{1}}x_{2}^{\alpha
_{2}}\cdots x_{n}^{\alpha _{n}}$, $\alpha !=\alpha _{1}!\alpha _{2}!\cdots
\alpha _{n}!$, $t^{\alpha }x=(t^{\alpha _{1}}x_{1},t^{\alpha
_{2}}x_{2},\ldots ,t^{\alpha _{n}}x_{n})$, $(t\geq 0)$, and $\langle
x,y\rangle =x_{1}y_{1}+x_{2}y_{2}+\cdots +x_{n}y_{n}$.
\end{itemize}

\item We introduce the space $\mathcal{S}_{0}(\mathbb{R}^{n})$ defined by
functions in $\mathcal{S}(\mathbb{R}^{n})$ with zero moments 
\begin{eqnarray*}
\mathcal{S}_{0}(\mathbb{R}^{n}) &=&\left\{ f\in \mathcal{S}(\mathbb{R}%
^{n});\partial ^{\alpha }\mathcal{F}f(0)=\int_{\mathbb{R}^{n}}x^{\alpha
}f(x)\,\mathrm{d}x=0,\text{ for all }\alpha \in \mathbb{N}^{n}\right\} \\
\widehat{\mathcal{S}}_{0}(\mathbb{R}^{n}) &=&\left\{ \hat{f};f\in \mathcal{S}%
_{0}(\mathbb{R}^{n})\right\} =\left\{ f\in \mathcal{S}(\mathbb{R}%
^{n});\partial ^{\alpha }f(0)=0,\text{ for all }\alpha \in \mathbb{N}%
^{n}\right\} \\
\mathcal{S}_{0}^{\prime }(\mathbb{R}^{n}) &=&\mathcal{S}^{\prime }(\mathbb{R}%
^{n})/\mathcal{S}_{0}^{\perp }(\mathbb{R}^{n})=\mathcal{S}^{\prime }(\mathbb{%
R}^{n})/\mathcal{P}(\mathbb{R}^{n})
\end{eqnarray*}%

\item For each $s\in \mathbb{R}^{+}\setminus \mathbb{N}$, we define the
Holder space by  
\begin{eqnarray*}
\mathcal{C}^{s}(\mathbb{R}^{n}) &=&\left\{ f\in L^{\infty };\Vert f\Vert _{%
\mathcal{C}^{s}(\mathbb{R}^{n})}=\Vert f\Vert _{\infty }+\sup_{x,y\in 
\mathbb{R}^{n},x\neq y}\frac{|f(x)-f(y)|}{|x-y|^{s}}<\infty \right\} ,\quad 
\text{if }0<s<1 \\
\mathcal{C}^{s}(\mathbb{R}^{n}) &=&\left\{ f\in \mathcal{C}^{[s]}(\mathbb{R}%
^{n});\Vert f\Vert _{\mathcal{C}^{s}(\mathbb{R}^{n})}=\sum_{|\beta |\leq
\lbrack s]}\Vert D^{\beta }f\Vert _{\mathcal{C}^{\alpha }(\mathbb{R}%
^{n})}<\infty \right\} ,\quad \text{if }s=[s]+\alpha ,0<\alpha <1
\end{eqnarray*}%
For each $s\in \mathbb{N}$, we define the Zygmund space by 
\begin{eqnarray*}
\mathcal{C}^{1}(\mathbb{R}^{n}) &=&\left\{ f\in C^{0}(\mathbb{R}^{n});\Vert
f\Vert _{\mathcal{C}^{1}(\mathbb{R}^{n})}=\sup_{h\neq 0,x\in \mathbb{R}^{n}}%
\frac{|u(x+h)+u(x-h)-2u(x)|}{|h|}<\infty \right\} \\
\mathcal{C}^{s}(\mathbb{R}^{n}) &=&\left\{ f\in \mathcal{C}^{1}(\mathbb{R}%
^{n});D^{\alpha }f\in \mathcal{C}^{1},\text{ for all }|\alpha |\leq
s-1\right\} ,\quad \text{if }s\neq 1
\end{eqnarray*}%

\item $A\lesssim B$: Means that for two parametric expressions $A$ and $B$,
there exists an independent constant $c>0$, such that $A\leq cB$.

\begin{itemize}
\item $A\sim B$: If $A\lesssim B$ and $B\lesssim A$, then there exist $%
c_{1},c_{2}>0$, such that $c_{1}B\leq A\leq c_{2}B$.
\end{itemize}
\end{itemize}
\setstretch{1.2}
\chapter*{ \center{Introduction}}

\addcontentsline{toc}{section}{\textbf{Introduction}}

\begin{quote}
The main results of this thesis are related to three problems.
\end{quote}

\begin{enumerate}
\item The first problem has been studied by several authors, G. Bourdaud, ~%
\cite{Bo-Cr}, ~\cite{Bo-Cr-Si-1}, ~\cite{Bo-Cr-Si-2}, W. Sickel ~\cite{SIC},
D. Kateb ~\cite{KAT}, S. Igari ~\cite{IGA}, and consists in solving the
superposition operator problem by finding a necessary and sufficient
condition for a composition operator $T_G: E \to E$, such that $T_G(f) = G
\circ f$, where $G$ is a real-valued function, to satisfy $T_G(E) \subset E$%
. Several researchers have tried to find non-trivial operators (associated
with non-affine functions) verifying the condition $T_G(E) \subset E$, for a
given functional space $E$.

\item The second problem consists in giving a general formula for the
derivative of the composition of several functions, using a basic inequality
introduced in ~\cite{Bo-Cr-Si-1} to generalize some results, studying the
properties of composition operators (bounded in particular), such as
differentiability, and verifying norm inequalities for the composition of
more than two functions.

\item The third problem consists in giving an extension to the famous
theorem of Peetre, based on the works introduced in ~\cite{Bo-Cr-Si-1} and ~%
\cite{Pee}.
\end{enumerate}

\begin{quote}
Our work is organized into four chapters:
\end{quote}

\begin{itemize}
\item In the first chapter, we present known functional spaces, such as
Besov spaces and their properties. We define the spaces $BV_p(\mathbb{R})$, $%
BV_p^1(\mathbb{R})$ from the spaces of functions of bounded p-variation for $%
p \geq 1$ and their norms.

\item In the second chapter, we study the composition operator problem in
the spaces $BV_p^1(\mathbb{R})$, $1 \leq p < \infty$, based on the works
introduced in ~\cite{Bo-Cr-Si-1}, ~\cite{Bo-Cr-Si-2}, as well as some
multiplication and continuity properties, and we will extend some results
concerning a basic inequality for the composition of several functions.
Thus, we obtain a basic algorithm giving the derivative of the composition
of several functions.

\item In the third chapter, we study functional calculus in homogeneous
Besov spaces $\dot{B}_p^{s,q}(\mathbb{R}^n)$, presenting the necessary and
sufficient conditions for composition operators.

\item In the fourth chapter, we present a new functional space $BV_p^\alpha(%
\mathbb{R})$ defined thanks to the space $\mathcal{V}_p^\alpha(\mathbb{R})$,
introduced in ~\cite{Pee}, and we generalize Peetre's Theorem (\ref{3.17})
to the spaces $BV_p^\alpha(\mathbb{R})$, $p \in ]1, +\infty[$, $0 \leq
\alpha < 1$, as well as application examples to affirm the stated results.
\end{itemize}
\setstretch{1.5}
\chapter{\textbf{Some preliminary results}}

\pagenumbering{arabic}\pagestyle{plain} \setcounter{page}{1}

\begin{quote}
In this chapter, we will recall essential notions, namely the
Littlewood-Paley series as well as finite differences, which will allow us
to construct equivalent norms for Besov spaces; finally, we will give an
overview of functions of bounded p-variation, as well as their properties
that will be very useful later.
\end{quote}

\section{\textbf{Littlewood-Paley series}}

\begin{quote}
In this paragraph, we give a definition of the partition of unity in $%
C_0^\infty(\mathbb{R}^n)$, followed by an example, and for more details see ~%
\cite{ULLR}.
\end{quote}

\subsection{\textbf{Partition of unity}}

\begin{quote}
Let the sequence of reals $A = \{A_j\}_{j \in \mathbb{N}} \subset \mathbb{R}%
^+$, such that there exist $0 < \lambda_0 \leq \lambda_1$ with 
\begin{equation*}
\lambda_0 A_j \leq A_{j+1} \leq \lambda_1 A_j, \quad j \in \mathbb{N}.
\end{equation*}
Then there exists $\ell_0 \in \mathbb{N}$ such that 
\begin{equation*}
2A_j \leq A_k, \quad \text{for all } j, k \text{ and } j + \ell_0 \leq k.
\end{equation*}
We define $\Omega_A = \{\Omega_{j,A}\}_{j \in \mathbb{N}}$, a covering of $%
\mathbb{R}^n$, associated with $A$, such that 
\begin{equation*}
\Omega_{j,A} = \left\{ 
\begin{array}{ccc}
\left\{ \xi \in \mathbb{R}^n ; |\xi| \leq A_{j+\ell_0} \right\}, & \text{if }
& j = 0,1,...,\ell_0-1 \\ 
\left\{ \xi \in \mathbb{R}^n ; A_{j-\ell_0} \leq |\xi| \leq A_{j+\ell_0}
\right\}, & \text{if } & j \geq \ell_0%
\end{array}
\right.
\end{equation*}
\end{quote}

\begin{definition}
\label{1.1} We say that the sequence of functions $\varphi_A =
\{\varphi_{j,A}\}_{j \in \mathbb{N}} \subset C_0^\infty(\mathbb{R}^n)$ is a
partition of unity with respect to $\Omega_A$ if the following conditions
are satisfied:

\begin{enumerate}
\item[(i)] $\varphi_{j,A}(\xi) \geq 0, \quad \xi \in \mathbb{R}^n, \quad j
\in \mathbb{N}$

\item[(ii)] $\text{supp}(\varphi_{j,A}) \subset \Omega_{j,A}, \quad j \in 
\mathbb{N}$

\item[(iii)] For all $\alpha \in \mathbb{N}^n$, there exists a constant $%
c_\alpha > 0$, such that 
\begin{equation*}
|D^\alpha \varphi_{j,A}(\xi)| \leq c_\alpha (1 + |\xi|^2)^{-|\alpha|/2},
\quad \xi \in \mathbb{R}^n, \quad j \in \mathbb{N}
\end{equation*}

\item[(iv)] There exists a constant $c_\varphi > 0$, such that 
\begin{equation*}
0 < \sum_{j=0}^\infty \varphi_{j,A}(\xi) = c_\varphi < \infty, \quad \xi \in 
\mathbb{R}^n.
\end{equation*}
\end{enumerate}
\end{definition}

For $A_j = 2^j$, $\ell_0 = 1$, $c_\varphi = 1$, we have a dyadic partition
of unity.\\ The partition $\varphi_A = \{\varphi_{j,A}\}_{j \in \mathbb{N}}$
is called inhomogeneous and can be replaced by the partition $%
\{\varphi_{j,A}\}_{j \in \mathbb{Z}}$, called homogeneous.

\begin{example}
\label{1.2} Let $K > 1$, consider the covering $\{C_p\}_{-1}^{+\infty}$,
defined by 
\begin{equation*}
\left\{ 
\begin{array}{ccc}
C_p & = & \left\{ \xi \in \mathbb{R}^n ; K^{-1} 2^p \leq |\xi| \leq K
2^{p+1} \right\} \\ 
C_{-1} & = & \overline{B}(0, K) = \left\{ \xi \in \mathbb{R}^n ; |\xi| \leq
K \right\}%
\end{array}
\right.
\end{equation*}
$\{C_p\}_{-1}^{+\infty}$ is a uniformly finite covering for $\mathbb{R}^n$,
i.e., the set $\{q \in \mathbb{N} ; C_q \cap C_p \neq \varnothing \}$ is
finite.

We can construct $\{\phi_0, \psi_\nu\}$, $\nu \in \mathbb{N}$, an
inhomogeneous Littlewood-Paley decomposition of unity such that 
\begin{equation*}
\psi_\nu(\xi) = \phi_\nu(\xi) - \phi_{\nu-1}(\xi), \quad \text{and} \quad
\phi_\nu(\xi) = \phi_0(2^{-\nu} \xi),
\end{equation*}
where $\phi_0 \in C_0^\infty(\mathbb{R}^n)$ is real, and $\phi_0 \equiv 1$
on the closed ball $\overline{B}(0,1)$, $\text{supp}(\phi_0) \subset 
\overline{B}(0,K)$, such that 
\begin{equation*}
\left( \phi_0 + \sum_{\nu \in \mathbb{N}} \psi_\nu \right) u = u, \quad u
\in \mathcal{S}^{\prime }(\mathbb{R}^n).
\end{equation*}
Indeed, such a decomposition exists because we have the following lemma.
\end{example}

\begin{lemma}
\label{1.3} There exist $\varphi, \psi \in C_0^\infty(\mathbb{R}^n)$, with $%
\text{supp}(\psi) \subset C_{-1}$, $\text{supp}(\varphi) \subset C_0$, such
that 
\begin{eqnarray*}
\psi(\xi) + \sum_{p=0}^\infty \varphi(2^{-p}\xi) &=& 1, \\
\psi(\xi) + \sum_{p=0}^N \varphi(2^{-p}\xi) &=& \psi(2^{-(N+1)}\xi).
\end{eqnarray*}
\end{lemma}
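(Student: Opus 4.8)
The plan is to reduce everything to a single smooth radial bump and then telescope. First I would construct $\phi_0 \in C_0^\infty(\mathbb{R}^n)$, radial, with $0 \le \phi_0 \le 1$, $\phi_0 \equiv 1$ on the closed unit ball $\overline{B}(0,1)$, and $\text{supp}(\phi_0) \subset \overline{B}(0,K)$. Fix $b$ with $1 < b < K$ and a nonnegative mollifier $\rho_\varepsilon \in C_0^\infty(\mathbb{R}^n)$ supported in $\overline{B}(0,\varepsilon)$ with $\int \rho_\varepsilon = 1$ and $0 < \varepsilon < \min(b-1, K-b)$; set $\phi_0 = \mathbf{1}_{\overline{B}(0,b)} * \rho_\varepsilon$. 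Then $\phi_0 \equiv 1$ on $\overline{B}(0, b-\varepsilon) \supseteq \overline{B}(0,1)$ and $\text{supp}(\phi_0) \subseteq \overline{B}(0, b+\varepsilon) \subseteq \overline{B}(0,K)$; this is the one place where the hypothesis $K > 1$ is genuinely used. Writing $\phi_p(\xi) := \phi_0(2^{-p}\xi)$ for $p \ge 0$, we get $\phi_p \equiv 1$ on $\overline{B}(0,2^p)$ and $\text{supp}(\phi_p) \subseteq \overline{B}(0, 2^p K)$.

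Next I would define
\[
\psi := \phi_0, \qquad \varphi(\xi) := \phi_0(2^{-1}\xi) - \phi_0(\xi) = \phi_1(\xi) - \phi_0(\xi),
\]
which belong to $C_0^\infty(\mathbb{R}^n)$. The inclusion $\text{supp}(\psi) \subset C_{-1} = \overline{B}(0,K)$ is immediate. For $\varphi$ I would check the two ends of the annulus $C_0 = \{\xi ; K^{-1} \le |\xi| \le 2K\}$ separately: if $|\xi| \le K^{-1}$ then $K^{-1} \le 1 \le 2$, so both $\phi_0(\xi)$ and $\phi_1(\xi) = \phi_0(2^{-1}\xi)$ equal $1$ and $\varphi(\xi) = 0$; if $|\xi| \ge 2K$ then $|2^{-1}\xi| \ge K$, hence $\phi_1(\xi) = 0$, and also $\phi_0(\xi) = 0$, so $\varphi(\xi) = 0$. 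Thus $\text{supp}(\varphi) \subset C_0$.

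Finally I would establish the two identities. For the finite one, note that for every $p \ge 0$,
\[
\varphi(2^{-p}\xi) = \phi_0(2^{-(p+1)}\xi) - \phi_0(2^{-p}\xi) = \phi_{p+1}(\xi) - \phi_p(\xi),
\]
so the sum telescopes:
\[
\psi(\xi) + \sum_{p=0}^N \varphi(2^{-p}\xi) = \phi_0(\xi) + \sum_{p=0}^N \bigl(\phi_{p+1}(\xi) - \phi_p(\xi)\bigr) = \phi_{N+1}(\xi) = \phi_0\bigl(2^{-(N+1)}\xi\bigr) = \psi\bigl(2^{-(N+1)}\xi\bigr),
\]
which is the second claimed relation. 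For the first, I would let $N \to \infty$: given $\xi$, as soon as $2^{-(N+1)}|\xi| \le 1$ we have $\phi_0(2^{-(N+1)}\xi) = 1$, so the partial sums are eventually constant and equal $1$; equivalently, $\varphi(2^{-p}\xi) \neq 0$ forces $K^{-1} \le 2^{-p}|\xi| \le 2K$, a bounded range of $p$, so $\sum_{p \ge 0} \varphi(2^{-p}\xi)$ is locally a finite sum and defines a $C^\infty$ function whose value together with $\psi(\xi)$ is $1$. No serious obstacle arises: the only point needing care is the simultaneous verification of the two support inclusions for $\phi_0$ (and hence for $\varphi$), everything else being the telescoping computation above.
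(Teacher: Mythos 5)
Your proof is correct, but it follows a genuinely different construction from the one in the paper. The paper starts from an annular bump $\theta$ with $\mathrm{supp}(\theta)\subset C_0$ and $\theta\equiv 1$ on $1\le|\xi|\le 2$, normalizes it by the dyadic sum $s(\xi)=\sum_{p\in\mathbb{Z}}\theta(2^{-p}\xi)$ to obtain $\varphi=\theta/s$ (so that the homogeneous sum $\sum_{p\in\mathbb{Z}}\varphi(2^{-p}\xi)=1$ holds for $\xi\neq 0$ by fiat), then defines $\psi=1-\sum_{p\ge 0}\varphi(2^{-p}\cdot)$ and establishes the finite identity $\psi(\xi)+\sum_{p=0}^{N}\varphi(2^{-p}\xi)=\psi(2^{-(N+1)}\xi)$ by a separate induction. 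You instead build everything from a single low-pass bump $\phi_0$ and take $\varphi$ to be the difference of two consecutive dilates, $\varphi=\phi_1-\phi_0$, with $\psi=\phi_0$; both identities then collapse to a single telescoping computation, and the infinite sum is handled by observing that the partial sums stabilize at $1$ once $2^{-(N+1)}|\xi|\le 1$. Your support check for $\varphi$ (vanishing on $|\xi|\le K^{-1}$ because both dilates equal $1$ there, and on $|\xi|\ge 2K$ because both vanish there) is exactly where $K>1$ enters, and it is done correctly. The trade-off: the paper's normalization trick works for an arbitrary annular bump and is the natural device when one wants the exact homogeneous resolution of unity as the primary object; your telescoping construction is shorter, makes the second identity of the lemma a one-line consequence rather than an induction, and is in fact the same mechanism the paper itself uses in Example \ref{1.2} (where $\psi_\nu=\phi_\nu-\phi_{\nu-1}$). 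Both yield valid partitions satisfying the lemma, so there is no gap to report.
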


\begin{proof}
Let $\theta \in C_0^\infty(\mathbb{R}^n)$, with $0 \leq \theta \leq 1$, $%
\text{supp}(\theta) \subset C_0$, $\theta(\xi) = 1$ for $1 \leq |\xi| \leq 2$%
.

Define $s(\xi) = \sum_{p=-\infty}^\infty \theta(2^{-p}\xi)$, $\xi \in 
\mathbb{R}^n \setminus \{0\}$.

Since $\{C_p\}_{-1}^{+\infty}$ is a uniformly finite covering for $\mathbb{R}%
^n$, then $s \in C^\infty(\mathbb{R}^n \setminus \{0\})$.\\ Define $\varphi
\in C_0^\infty(\mathbb{R}^n)$ by $$\varphi(\xi) = \frac{\theta(\xi)}{s(\xi)},$$
such that $s(2^{-p}\xi) = s(\xi)$.

For $|\xi| \geq K$, $p \leq -1$, we have $2^{-p}|\xi| = 2^{|p|}|\xi| \geq
2^{|p|}K \geq 2K$, and $2^{-p}\xi \notin C_0$, hence $\theta(2^{-p}\xi) = 0$%
. Therefore, if $|\xi| \geq K$, $\xi \in \mathbb{R}^n \setminus C_{-1}$,
then 
\begin{equation*}
\sum_{p=0}^\infty \varphi(2^{-p}\xi) = \sum_{p=-\infty}^\infty
\varphi(2^{-p}\xi) = \sum_{p=-\infty}^\infty \frac{\theta(2^{-p}\xi)}{%
s(2^{-p}\xi)} = \frac{\sum_{p=-\infty}^\infty \theta(2^{-p}\xi)}{%
\sum_{p=-\infty}^\infty \theta(2^{-p}\xi)} = 1.
\end{equation*}

Take $\psi(\xi) = 1 - \sum_{p=0}^\infty \varphi(2^{-p}\xi)$, then $\psi \in
C_0^\infty(\mathbb{R}^n)$, with $\text{supp}(\psi) \subset C_{-1}$.\\ For all $%
j \in \mathbb{Z}$, let $\psi_j(\xi) = \varphi(2^{-j}|\xi|)$ and $\psi_0(\xi)
= 1 - \sum_{k=1}^\infty \psi_k(\xi)$.\\ Then we have the following properties: 
\begin{eqnarray*}
\psi_j \text{ is even such that } \text{supp}(\psi_j) &\subset& \overline{C}%
_j, \quad \text{for all } j \in \mathbb{N} \\
\sum_{j=0}^\infty \psi_j(\xi) &=& 1, \quad \text{for all } \xi \in \mathbb{R}%
^n \\
C_m \cap \text{supp}(\psi_j) &=& \varnothing, \quad \text{if } |m - j| > 1 \\
\psi_{j-1}(\xi) + \psi_j(\xi) + \psi_{j+1}(\xi) &=& 1, \quad \text{for all }
\xi \in \text{supp}(\psi_j), \quad j \in \mathbb{N}.
\end{eqnarray*}

To construct a homogeneous partition on $\mathbb{Z}$, we introduce the
sequence $\Phi_j \in \mathcal{S}(\mathbb{R}^n)$, such that: 
\begin{eqnarray*}
\hat{\Phi}_j(\xi) &=& \frac{\psi_j(\xi)}{\sum_{k \in \mathbb{Z}} \psi_k(\xi)}
\\
\Phi_j(\xi) &=& 2^{jn} \Phi_0(2^j \xi), \quad \text{for all } \xi \in 
\mathbb{R}^n \\
\int_{\mathbb{R}^n} x^k \Phi_j(x) \,\mathrm{d}x &=& \hat{\Phi}_j^{(k)}(0) =
0, \quad (k \in \mathbb{N}^n).
\end{eqnarray*}
We have 
\begin{equation*}
\sum_{j \in \mathbb{Z}} \hat{\Phi}_j(\xi) = \sum_{j \in \mathbb{Z}} \left( 
\frac{\psi_j(\xi)}{\sum_{k \in \mathbb{Z}} \psi_k(\xi)} \right) = \frac{%
\sum_{j \in \mathbb{Z}} \psi_j(\xi)}{\sum_{k \in \mathbb{Z}} \psi_k(\xi)} =
1, \quad (\xi \in \mathbb{R}^n \setminus \{0\}).
\end{equation*}
Since $1 - \sum_{j=1}^\infty \hat{\Phi}_j$ is infinitely differentiable and
compactly supported,\\ we can choose a function $\Psi \in \mathcal{S}(\mathbb{R%
}^n)$, such that $\hat{\Psi} = 1 - \sum_{j=1}^\infty \hat{\Phi}_j$, where $%
\hat{\Psi} \neq 0$ on $|\xi| \leq 1$, and thus: $$\hat{\Psi} +
\sum_{j=1}^\infty \hat{\Phi}_j = 1.$$

Set $\psi(\xi) = \hat{\Psi}(\xi)$ and $\varphi(2^{-j}\xi) = \hat{\Phi}%
_j(\xi) $, then we have 
\begin{equation}
\psi(\xi) + \sum_{j=1}^\infty \varphi(2^{-j}\xi) = 1, \quad (\xi \in \mathbb{%
R}^n).  \label{[1.1]}
\end{equation}
We have $\psi(2^{-N}\xi) + \sum_{p=0}^\infty \varphi(2^{-p-N}\xi) =
\psi(\xi) + \sum_{p=0}^{N-1} \varphi(2^{-p}\xi) + \sum_{p=N}^\infty
\varphi(2^{-p}\xi) = 1$, hence 
\begin{eqnarray*}
\sum_{p=0}^\infty \varphi(2^{-p-N}\xi) &=& \sum_{p=N}^\infty
\varphi(2^{-p}\xi), \\
\psi(2^{-N}\xi) &=& \psi(\xi) + \sum_{p=0}^{N-1} \varphi(2^{-p}\xi).
\end{eqnarray*}
By induction, if $\varphi(2^{-N}\xi) = \psi(2^{-(N+1)}\xi) - \psi(2^{-N}\xi)$%
, and since $\psi(2^{-(N+1)}\xi) = \psi(\xi) + \sum_{p=0}^N
\varphi(2^{-p}\xi)$, then we have 
\begin{eqnarray*}
\psi(2^{-(N+2)}\xi) &=& \psi(\xi) + \sum_{p=0}^{N+1} \varphi(2^{-p}\xi) \\
&=& \psi(2^{-(N+1)}\xi) + \varphi(2^{-(N+1)}\xi),
\end{eqnarray*}
hence $\varphi(2^{-(N+1)}\xi) = \psi(2^{-(N+2)}\xi) - \psi(2^{-(N+1)}\xi)$.
\end{proof}

\subsection{\textbf{Decomposition of a function }$f \in \mathcal{S}^{\prime
}(\mathbb{R}^n)$}

Since $\Phi_j$ is real and even, $\hat{\Phi}_j$ is real and even and its
support is $C_j$, hence $\Phi_j \in \mathcal{S}_0(\mathbb{R}^n)$ and thus $%
\Phi_j \ast f$ is defined for all $f \in \mathcal{S}_0^{\prime }(\mathbb{R}%
^n)$.

Multiplying both sides of equality (\ref{1.1}) by $u \in \mathcal{S}(%
\mathbb{R}^n)$ and applying the inverse Fourier transform, we obtain 
\begin{equation*}
u = \Psi \ast u + \sum_{j \in \mathbb{N}}^\infty \Phi_j \ast u, \quad \text{%
for all } u \in \mathcal{S}(\mathbb{R}^n),
\end{equation*}
and we deduce its dual relation 
\begin{eqnarray*}
f &=& \Psi \ast f + \sum_{j \in \mathbb{N}} \Phi_j \ast f, \quad \text{for
all } f \in \mathcal{S}^{\prime }(\mathbb{R}^n), \\
u &=& \sum_{j \in \mathbb{Z}} \Phi_j \ast u, \quad \text{for all } u \in 
\mathcal{S}_0(\mathbb{R}^n), \\
f &=& \sum_{j \in \mathbb{Z}} \Phi_j \ast f, \quad \text{for all } f \in 
\mathcal{S}_0^{\prime }(\mathbb{R}^n).
\end{eqnarray*}

For all $j, k \in \mathbb{Z}$, we define the continuous operators $\Delta_k$%
, $Q_j$ by 
\begin{equation*}
\Delta_k: \mathcal{S}^{\prime }(\mathbb{R}^n) \to \mathcal{S}^{\prime }(%
\mathbb{R}^n) \quad \text{and} \quad Q_j: \mathcal{S}^{\prime }(\mathbb{R}%
^n) \to \mathcal{S}^{\prime }(\mathbb{R}^n),
\end{equation*}
such that 
\begin{eqnarray*}
Q_j f &=& (\psi(2^{-j} \cdot))^\vee \ast f = 2^{jn} \overset{\vee}{\psi}(2^j
\cdot) \ast f, \quad \text{for } j = 0,1,2,... \\
\Delta_k f &=& (\varphi(2^{-k} \cdot))^\vee \ast f = 2^{kn} \overset{\vee}{%
\varphi}(2^k \cdot) \ast f, \quad \text{for } k = 1,2,...
\end{eqnarray*}
We have $\widehat{\Delta_k f}(\xi) = \hat{\Phi}_k(\xi) \hat{f}(\xi)$, hence $%
\Delta_k f(\xi) = \Phi_k(\xi) \ast f(\xi)$, and we set $Q_0 f(\xi) = F(\xi)$.

For all $f \in \mathcal{S}^{\prime }(\mathbb{R}^n)$ and $k \in \mathbb{N}$,
(convergence in $\mathcal{S}^{\prime }(\mathbb{R}^n)$), we have 
\begin{eqnarray*}
Q_k f &=& \sum_{j \in \mathbb{Z}, j \leq k} \Delta_j f, \\
f &=& \sum_{j \in \mathbb{Z}} \Delta_j f = F + \sum_{j=1}^\infty \Delta_j f,
\quad \text{for all } f \in \mathcal{S}_0^{\prime }(\mathbb{R}^n),
\end{eqnarray*}
such that $\widehat{\Delta_j f} \neq 0$ on $A_j$ and $\hat{F}(\xi) \neq 0$
for $|\xi| \leq 1$.

If $\xi = 0$, then the expression $\sum_{j \in \mathbb{Z}} \hat{\Phi}_j(0) =
0$ implies (\ref{1.1}), because if $\phi^{(k)} = 0$ for all $k \in \mathbb{%
N}^n$, then (\ref{1.1}) is verified for all $\phi \in \widehat{\mathcal{S}}%
_0(\mathbb{R}^n)$.

\subsection{\textbf{Decomposition of }$\Delta_k(f \cdot g)$}

We compute 
\begin{eqnarray*}
\Delta_k(f \cdot g) &=& \Delta_k\left( \sum_{j \in \mathbb{N}} \sum_{\ell
\in \mathbb{N}} \Delta_j f \cdot \Delta_\ell g \right) \\
&=& \sum_{j \in \mathbb{N}} \sum_{\ell \in \mathbb{N}} \Delta_k(\Delta_j f
\cdot \Delta_\ell g) \\
&=& \left( \prod_{k,1} + \prod_{k,2} + \prod_{k,3} \right) (f,g),
\end{eqnarray*}
where 
\begin{eqnarray*}
\prod\nolimits_{k,1}(f,g) &=& \Delta_{k(1)}(f \cdot g) = \Delta_k(\widetilde{%
\Delta}_k f \cdot Q_{k+1} g) \\
\prod\nolimits_{k,2}(f,g) &=& \Delta_{k(2)}(f \cdot g) = (Q_{k+1} f \cdot 
\widetilde{\Delta}_k g) = \prod\nolimits_{k,2}(g,f) \\
\prod\nolimits_{k,3}(f,g) &=& \Delta_{k(3)}(f \cdot g) = \sum_{j=k}^\infty
\Delta_k(\Delta_j f \cdot \overline{\Delta}_j g),
\end{eqnarray*}
with $\widetilde{\Delta}_k = \sum_{j=k-2}^{k+4} \Delta_j$, and $\overline{%
\Delta}_k = \sum_{j=k-1}^{k+1} \Delta_j$ (support calculations).

\section{\textbf{Finite difference operators }$\Delta_h^m$}

\begin{definition}
\label{1.4} Let $x, h \in \mathbb{R}^n$, $m \in \mathbb{N}$, and $f$ an
arbitrary function, introduce the finite difference operator $\Delta_h$ such
that 
\begin{eqnarray*}
\Delta_h f &=& \tau_{-h} f - f, \quad \text{where } \tau_h f(x) = f(x - h),
\\
\Delta_h^1 f(x) &=& \Delta_h f(x) = f(x + h) - f(x).
\end{eqnarray*}
The operators $\Delta_h^m f$ are defined by the recurrence relation 
\begin{equation}
\Delta_h^m f(x) = \Delta_h(\Delta_h^{m-1} f(x)), \quad m \geq 2.
\label{[1.2]}
\end{equation}
We deduce that 
\begin{equation*}
\Delta_h^2 f(x) = \Delta_h(\Delta_h^1 f(x)) = f(x + 2h) - 2f(x + h) + f(x).
\end{equation*}
\end{definition}

\begin{lemma}
\label{1.5} Let $x, h \in \mathbb{R}^n$, $m \in \mathbb{N}$, and $f$ an
arbitrary function defined on a subset of $\mathbb{R}^n$, then the general
term of $\Delta_h^m f(x)$ can be written as 
\begin{equation*}
\Delta_h^m f(x) = \sum_{\ell=0}^m \binom{m}{\ell} (-1)^{\ell \pm m}
\tau_{-\ell h} f(x) = \sum_{\ell=0}^m \binom{m}{\ell} (-1)^\ell \tau_{(\ell
- m)h} f(x).
\end{equation*}
\end{lemma}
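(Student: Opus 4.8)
The plan is to prove the identity by induction on $m$, using the recurrence relation of Definition~\ref{1.4} together with Pascal's rule $\binom{m-1}{\ell-1}+\binom{m-1}{\ell}=\binom{m}{\ell}$ (with the convention $\binom{m-1}{-1}=\binom{m-1}{m}=0$). First I would rephrase the target in the equivalent pointwise form
\begin{equation*}
\Delta_h^m f(x)=\sum_{\ell=0}^m\binom{m}{\ell}(-1)^{m-\ell}f(x+\ell h),
\end{equation*}
which is legitimate because $\tau_{-\ell h}f(x)=f(x+\ell h)$ and because the ambiguous exponent $\ell\pm m$ is harmless: all of $(-1)^{\ell+m}$, $(-1)^{\ell-m}$ and $(-1)^{m-\ell}$ take the same value. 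The second displayed expression in the statement would then be recovered from this one by the substitution $\ell\mapsto m-\ell$, using $\binom{m}{\ell}=\binom{m}{m-\ell}$ and $f\bigl(x+(m-\ell)h\bigr)=\tau_{(\ell-m)h}f(x)$.

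The base case $m=1$ is immediate: the right-hand side is $-f(x)+f(x+h)$, which is $\Delta_h^1 f(x)$ by definition. For the inductive step I would assume the formula for $m-1$, write $\Delta_h^m f(x)=\Delta_h^{m-1}f(x+h)-\Delta_h^{m-1}f(x)$ using $\Delta_h g(x)=g(x+h)-g(x)$, substitute the inductive hypothesis into both terms, split the resulting expression into two sums, reindex the first by $\ell\mapsto\ell-1$, and collect the coefficient of each $f(x+\ell h)$ for $0\le\ell\le m$. That coefficient is $\binom{m-1}{\ell-1}(-1)^{m-\ell}+\binom{m-1}{\ell}(-1)^{m-\ell}$, which by Pascal's rule equals $\binom{m}{\ell}(-1)^{m-\ell}$, closing the induction.

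I do not expect a genuine obstacle here; the only delicate points are purely notational — keeping track of the boundary terms when reindexing (absorbed by the convention $\binom{m-1}{-1}=\binom{m-1}{m}=0$) and checking that the sign manipulations are consistent throughout. As an alternative that bypasses the induction entirely, I would observe that $\Delta_h=\tau_{-h}-\mathrm{Id}$ as operators, that $\tau_{-h}$ commutes with the identity, so the binomial theorem gives
\begin{equation*}
\Delta_h^m=(\tau_{-h}-\mathrm{Id})^m=\sum_{\ell=0}^m\binom{m}{\ell}(-1)^{m-\ell}\,\tau_{-h}^{\,\ell},
\end{equation*}
and finally that $\tau_{-h}^{\,\ell}=\tau_{-\ell h}$ since $\tau_a\tau_b=\tau_{a+b}$; this is exactly the claimed formula and makes the two sign conventions transparent.
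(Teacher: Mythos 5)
Your proof is correct and follows essentially the same route as the paper's: an induction on $m$ via the recurrence $\Delta_h^m=\Delta_h\circ\Delta_h^{m-1}$ and Pascal's rule, followed by the reindexing $\ell\mapsto m-\ell$ with $\binom{m}{\ell}=\binom{m}{m-\ell}$ to obtain the second form. The operator-theoretic shortcut $\Delta_h^m=(\tau_{-h}-\mathrm{Id})^m$ you mention at the end is a clean bonus, but the core argument matches the paper.
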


\begin{proof}\text{ \ }
\vspace{-20pt}	
\begin{itemize}

\item[(i)] Since we have $\binom{m}{\ell-1} + \binom{m}{\ell} = \binom{m+1}{%
\ell}$, and $(-1)^{m-\ell} = (-1)^{m+\ell} = -(-1)^{m+\ell-1} =
-(-1)^{m+\ell+1}$, we deduce by induction that if $\Delta_h^m f(x) =
\sum_{\ell=0}^m \binom{m}{\ell} (-1)^{m+\ell} \tau_{-\ell h} f(x)$, then
according to (\ref{1.2}), 
\begin{eqnarray*}
\Delta_h^{m+1} f(x) &=& \Delta_h(\Delta_h^m f(x)) = \Delta_h^m f(x + h) -
\Delta_h^m f(x) \\
&=& \sum_{k=0}^m \binom{m}{k} (-1)^{m+k} f(x + (k+1)h) - \sum_{k=0}^m \binom{%
m}{k} (-1)^{m+k} f(x + kh) \\
&=& \sum_{\ell=1}^{m+1} \binom{m}{\ell-1} (-1)^{m+\ell-1} f(x + \ell h) -
\sum_{k=0}^m \binom{m}{k} (-1)^{m+k} f(x + kh) \\
&=& f(x + (m+1)h) - (-1)^m f(x) + \sum_{\ell=1}^m \left( \binom{m}{\ell-1}
(-1)^{m+\ell-1} - \binom{m}{\ell} (-1)^{m+\ell} \right) f(x + \ell h) \\
&=& f(x + (m+1)h) - (-1)^m f(x) + \sum_{\ell=1}^m \binom{m+1}{\ell}
(-1)^{m+\ell+1} f(x + \ell h) \\
&=& \sum_{\ell=0}^{m+1} \binom{m+1}{\ell} (-1)^{m+1+\ell} f(x + \ell h).
\end{eqnarray*}

\item[(ii)] From (i) and since $\binom{m}{\ell} = \binom{m}{m-\ell}$, then
we have 
\begin{equation*}
\Delta_h^m f(x) = \sum_{\ell=0}^m \binom{m}{m-\ell} (-1)^{m+\ell} f(x + \ell
h).
\end{equation*}
Since $(-1)^{m-\ell} = (-1)^{m+\ell}$, and for $0 \leq \ell \leq m$, we have 
$0 \leq m-\ell \leq m$, so for $m-\ell = k$, we have 
\begin{equation*}
\Delta_h^m f(x) = \sum_{\ell=0}^m \binom{m}{m-\ell} (-1)^{m-\ell} f(x + \ell
h) = \sum_{k=0}^m \binom{m}{k} (-1)^k f(x + (m-k)h).
\end{equation*}
Thus, we agree to set 
\begin{equation*}
\Delta_h^m f(x) = \sum_{k=0}^m \binom{m}{k} (-1)^k f(x + (m-k)h) =
\sum_{\ell=0}^m \binom{m}{\ell} (-1)^{m \mp \ell} \tau_{-\ell h} f(x).
\end{equation*}
\end{itemize}
\end{proof}

\begin{definition}
\label{1.6} Let $h \in \mathbb{R}^n$, $m \in \mathbb{N}$, $t > 0$, and $f
\in L^p(\mathbb{R}^n)$, we define 
\begin{equation}
\omega_p^m(t, f) = \sup_{|h| \leq t, h \in \mathbb{R}^n} \| \Delta_h^m f
\|_{L^p(\mathbb{R}^n)},  \label{[1.3]}
\end{equation}
as the $m$-th order modulus of continuity of $f$ in $L^p(\mathbb{R}^n)$.
\end{definition}

This modulus of continuity is used to find equivalent norms.

\begin{remark}
\label{1.7}

\begin{description}
\item[a)] The continuity of a function $f$ at $x$ is defined by 
\begin{equation*}
\lim_{h \to 0} |\Delta_h^1 f(x)| = \lim_{h \to 0} |f(x+h) - f(x)| \to 0.
\end{equation*}

\item[b)] The differentiability of a function $f$ at $x$ is described by 
\begin{equation*}
\lim_{h \to 0} \frac{|\Delta_h^1 f(x)|}{|h|} \leq c < \infty, \quad (c > 0).
\end{equation*}

\item[c)] The moduli of continuity defined in (\ref{1.3}) converge in the $%
L^p$ norm, are monotone with respect to the $\sup$ operator, and are $m$%
-differentiable (regularity of order $m$).

\item[d)] $\omega_p^r(t, f) = 0$ if and only if $f$ is a polynomial of
degree $\leq r-1$.

\item[e)] For all $s>0$, we have an equivalent norm for the Holder-Zygmund
space $\mathcal{C}^{s}(\mathbb{R}^{n})$ 
\begin{equation*}
\Vert f\Vert _{\mathcal{C}^{s}(\mathbb{R}^{n})}\sim \sup_{x\in \mathbb{R}%
^{n}}|f(x)|+\sup_{x\in \mathbb{R}^{n},0<|h|\leq 1}|h|^{-s}|\Delta
_{h}^{k}f(x)|,\quad \text{for all }k\in \mathbb{N},k>s.
\end{equation*}
\end{description}
\end{remark}

\section{\textbf{Norms in Besov spaces}}

\begin{quote}
In this paragraph, we give the definitions of norms for Besov spaces, using
the \textbf{Littlewood-Paley} theory and \textbf{finite differences}.
\end{quote}

\subsection{\textbf{By Littlewood-Paley theory}}

\begin{definition}
\label{1.8} Let $s \in \mathbb{R}$, $p, q \in [1, +\infty]$, then we say
that a function $f$ belongs to $\dot{B}_p^{s,q}(\mathbb{R}^n)$, the
homogeneous Besov space, if and only if $f \in \mathcal{S}^{\prime }(\mathbb{%
R}^n) / \mathcal{P}(\mathbb{R}^n)$ and $f = \sum_{j \in \mathbb{Z}} \Delta_j
f$ such that 
\begin{equation*}
\| f \|_{\dot{B}_p^{s,q}(\mathbb{R}^n)} = \left\{ 
\begin{array}{cl}
\displaystyle \left( \sum_{j \in \mathbb{Z}} \left( 2^{sj} \| \Phi_j \ast f
\|_p \right)^q \right)^{1/q} < \infty, & \text{for } 1 \leq q < \infty \\ 
\displaystyle \sup_{j \in \mathbb{Z}} \left( 2^{sj} \| \Phi_j \ast f \|_p
\right) < \infty, & \text{for } q = \infty%
\end{array}
\right.
\end{equation*}
\end{definition}

\begin{proposition}[~\cite{ULLR}]
\label{1.9}  For all $f \in \dot{B}_p^{s,q}(\mathbb{R}^n)$ and
all $\lambda > 0$, there exist two constants $0 < c_1 \leq c_2$, such that 
\begin{equation}
c_1 \| f \|_{\dot{B}_p^{s,q}(\mathbb{R}^n)} \leq \lambda^{n/p - s} \|
f(\lambda \cdot) \|_{\dot{B}_p^{s,q}(\mathbb{R}^n)} \leq c_2 \| f \|_{\dot{B}%
_p^{s,q}(\mathbb{R}^n)}.  \label{[1.4]}
\end{equation}
\end{proposition}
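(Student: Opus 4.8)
The plan is to set $f_\lambda := f(\lambda\,\cdot\,)$ and to control each Littlewood--Paley block $\Phi_j\ast f_\lambda$ in terms of the blocks of $f$, treating first the dyadic dilations $\lambda=2^{k}$ (where I expect exact equality) and then reducing a general $\lambda>0$ to that case modulo a bounded error. Throughout I work with the homogeneous decomposition $f=\sum_{j\in\mathbb{Z}}\Phi_j\ast f$ of Definition \ref{1.8}, recalling that $\Phi_j(x)=2^{jn}\Phi_0(2^{j}x)$, hence $\widehat{\Phi_j}(\xi)=\widehat{\Phi_0}(2^{-j}\xi)$ and $\|\Phi_j\|_{1}=\|\Phi_0\|_{1}$ independently of $j$.

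\emph{Step 1 (dyadic $\lambda$).} For $\lambda=2^{k}$, a change of variables $z=\lambda w$ in the convolution integral, together with $2^{kn}\Phi_{j-k}(2^{k}u)=\Phi_{j}(u)$, gives the identity $(\Phi_{j}\ast f_\lambda)(x)=(\Phi_{j-k}\ast f)(\lambda x)$; on the Fourier side this is simply $\widehat{\Phi_j}(\lambda\xi)=\widehat{\Phi_0}(2^{-(j-k)}\xi)=\widehat{\Phi_{j-k}}(\xi)$. Taking $L^{p}$-norms and scaling yields $\|\Phi_{j}\ast f_\lambda\|_{p}=\lambda^{-n/p}\|\Phi_{j-k}\ast f\|_{p}$. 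Inserting the weight $2^{sj}=2^{sk}2^{s(j-k)}=\lambda^{s}2^{s(j-k)}$, taking the $\ell^{q}$-norm over $j$ and re-indexing $j\mapsto j-k$, I obtain $\|f_\lambda\|_{\dot B_p^{s,q}}=\lambda^{\,s-n/p}\|f\|_{\dot B_p^{s,q}}$, i.e.\ (\ref{[1.4]}) with $c_1=c_2=1$ in this case.

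\emph{Step 2 (general $\lambda>0$).} Choose $k\in\mathbb{Z}$ with $2^{k}\le\lambda<2^{k+1}$, so $\lambda=2^{k}\mu$ with $1\le\mu<2$ and $f_\lambda=(f_\mu)_{2^{k}}$; by Step 1 it is enough to compare $\|f_\mu\|_{\dot B_p^{s,q}}$ with $\|f\|_{\dot B_p^{s,q}}$. Writing $\widehat{f_\mu}=\sum_{i}\widehat{\Phi_i}(\cdot/\mu)\,\widehat{f_\mu}=\sum_{i}\widehat{(\Phi_i\ast f)_\mu}$ and observing that the support of $\widehat{\Phi_j}$ (an annulus of radius $\sim 2^{j}$) meets the support of $\widehat{(\Phi_i\ast f)_\mu}$ (contained in $\mu\cdot C_i$, an annulus of radius $\sim 2^{i}$ since $1\le\mu<2$) only when $|i-j|\le C_0$, with $C_0$ depending only on the constant $K$ of Example \ref{1.2}, I get $\Phi_j\ast f_\mu=\sum_{|i-j|\le C_0}\Phi_j\ast(\Phi_i\ast f)_\mu$. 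Young's inequality ($\|\Phi_j\ast g\|_p\le\|\Phi_0\|_1\|g\|_p$) and the scaling $\|(\Phi_i\ast f)_\mu\|_p=\mu^{-n/p}\|\Phi_i\ast f\|_p$ then give $\|\Phi_j\ast f_\mu\|_p\lesssim\sum_{|i-j|\le C_0}\|\Phi_i\ast f\|_p$ (using $\mu^{-n/p}\sim1$). Multiplying by $2^{sj}$, taking the $\ell^{q}$-norm in $j$ and using the triangle inequality on the finite sum over the shift $i-j$ (each contributing a factor $2^{-s(i-j)}$, which is $\lesssim 1$) yields $\|f_\mu\|_{\dot B_p^{s,q}}\lesssim\|f\|_{\dot B_p^{s,q}}$. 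Combined with Step 1 and $2^{sk}\sim\lambda^{s}$, this is the right-hand inequality of (\ref{[1.4]}).

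\emph{Step 3 (lower bound).} Apply the inequality just proved to $g=f_\lambda$ with dilation factor $1/\lambda$; since $(f_\lambda)_{1/\lambda}=f$, this gives $\lambda^{\,s-n/p}\|f\|_{\dot B_p^{s,q}}\lesssim\|f_\lambda\|_{\dot B_p^{s,q}}$, i.e.\ the left-hand inequality of (\ref{[1.4]}). The main obstacle is Step 2: one must verify carefully that only boundedly many indices $i$ contribute to $\Phi_j\ast f_\mu$ and that the resulting constant $C_0$ --- hence $c_1,c_2$ --- depends only on the fixed partition of unity, not on $\lambda$ or $f$; everything else is routine changes of variables and Young's inequality.
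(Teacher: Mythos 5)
Your proposal is correct, and its first and last steps coincide with the paper's: the exact identity $\|f(2^k\cdot)\|_{\dot B_p^{s,q}}=2^{k(s-n/p)}\|f\|_{\dot B_p^{s,q}}$ via the change of variables $\Phi_j\ast f(2^k\cdot)=(\Phi_{j-k}\ast f)(2^k\cdot)$, and the reduction of the lower bound to the upper bound by dilating back with $1/\lambda$. Where you genuinely diverge is the non-dyadic step $2^{k}\le\lambda<2^{k+1}$. The paper handles it by a pointwise comparison, asserting monotonicity of the blocks $\Delta_k f$ and deducing $f(2^N(x-t))\le f(\lambda(x-t))\le f(2^{N-1}(x-t))$, hence $\|\Phi_j\ast f(\lambda\cdot)\|_p\le\max\bigl(\|\Phi_j\ast f(2^N\cdot)\|_p,\|\Phi_j\ast f(2^{N-1}\cdot)\|_p\bigr)$; this only makes sense under monotonicity hypotheses that a general $f\in\dot B_p^{s,q}$ does not satisfy. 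You instead use the Fourier-support almost-orthogonality of the decomposition: writing $f_\mu=\sum_i(\Phi_i\ast f)_\mu$ with $1\le\mu<2$, only the indices $|i-j|\le C_0$ (with $C_0$ determined by the annuli of Example \ref{1.2}) survive in $\Phi_j\ast f_\mu$, and Young's inequality plus the $L^p$ scaling finishes the estimate. This is the standard rigorous route: it works for arbitrary tempered distributions modulo polynomials, and it makes transparent that $c_1,c_2$ depend only on the fixed partition of unity (through $\|\Phi_0\|_1$, $C_0$ and $2^{|s|C_0}$) and not on $f$ or $\lambda$, which is precisely the point the paper's pointwise argument leaves unsupported. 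One cosmetic remark: in your Step 2 the shift factor $2^{-s(i-j)}$ should be bounded by $2^{|s|C_0}$ rather than claimed $\lesssim 1$ uniformly in the sign of $s$, but since the range of $i-j$ is finite this changes nothing.
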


\begin{proof}\text{ \  }
	
\vspace{-20pt}	
\begin{itemize}

\item Let $g(x) = f(\lambda x)$, $\gamma = s - n/p$, $\lambda = 2^N$, $u =
2^N x$, then $\mathrm{d}u = 2^{nN} \mathrm{d}x$, and we have 
\begin{eqnarray*}
\Phi_j \ast g(x) &=& \int_{\mathbb{R}^n} \Phi_j(t) g(x-t) \,\mathrm{d}t =
\int_{\mathbb{R}^n} 2^j \Phi_0(2^j t) f(2^N (x-t)) \,\mathrm{d}t \\
&=& \int_{\mathbb{R}^n} 2^{j-N} \Phi_0(2^{j-N} u) f(2^N x - u) \,\mathrm{d}u
= \int_{\mathbb{R}^n} \Phi_{j-N}(u) f(2^N x - u) \,\mathrm{d}u \\
&=& \Phi_{j-N} \ast f(2^N x).
\end{eqnarray*}
Then 
\begin{eqnarray*}
\| g \|_{\dot{B}_p^{s,q}(\mathbb{R}^n)} &=& \left( \sum_{j \in \mathbb{Z}}
\left( 2^{js} \left( \int_{\mathbb{R}^n} |\Phi_{j-N} \ast f(2^N x)|^p \,%
\mathrm{d}x \right)^{1/p} \right)^q \right)^{1/q} \\
&=& \left( \sum_{j \in \mathbb{Z}} \left( 2^{js} 2^{-nN/p} \left( \int_{%
\mathbb{R}^n} |\Phi_{j-N} \ast f(u)|^p \,\mathrm{d}u \right)^{1/p} \right)^q
\right)^{1/q} \\
&=& (2^N)^{\gamma} \left( \sum_{k = j-N \in \mathbb{Z}} \left( 2^{ks} \|
\Phi_k \ast f \|_p \right)^q \right)^{1/q} = \lambda^\gamma \| f \|_{\dot{B}%
_p^{s,q}(\mathbb{R}^n)}.
\end{eqnarray*}

\item Consider $\lambda > 0$ such that $2^{N-1} < \lambda \leq 2^N$, $(N \in 
\mathbb{Z})$. To prove (\ref{1.4}), we prove one of its inequalities
because if $\| f(\lambda \cdot) \|_{\dot{B}_p^{s,q}(\mathbb{R}^n)} \leq c \|
f \|_{\dot{B}_p^{s,q}(\mathbb{R}^n)}$, then 
\begin{equation*}
\| f \|_{\dot{B}_p^{s,q}(\mathbb{R}^n)} = \| f(\lambda^{-1} (\lambda \cdot))
\|_{\dot{B}_p^{s,q}(\mathbb{R}^n)} \leq c^{\prime }\| f(\lambda \cdot) \|_{%
\dot{B}_p^{s,q}(\mathbb{R}^n)}.
\end{equation*}
We have $f = \sum_{j \in \mathbb{Z}} \Delta_j f$, so $\Delta_k f = \Phi_k
\ast f = \sum_{j \in \mathbb{Z}} \Phi_k \ast \Delta_j f$, but $\text{supp}%
(\Delta_k f) \subset [2^{k-1}, 2^{k+1}]$, hence 
\begin{equation*}
\Delta_k f = \sum_{j \in \mathbb{Z}} \Phi_k \ast \Delta_j f = \Phi_k \ast
\Delta_{k-1} f + \Phi_k \ast \Delta_{k+1} f + \Phi_k \ast \Delta_k f.
\end{equation*}
By induction, if $\Delta_k f$, $\Delta_{k-1} f$, $\Phi_0$ are decreasing,
then $\Phi_k$, $\Phi_k \ast \Delta_{k-1} f$, $\Phi_k \ast \Delta_k f$ are
decreasing, and thus $\Phi_k \ast \Delta_{k+1} f$ is decreasing as well as $%
\Delta_{k+1} f$, whence 
\begin{eqnarray*}
\Delta_k f(\lambda (x-t)) &=& \Phi_k \ast \Delta_{k-1} f + \Phi_k \ast
\Delta_{k+1} f + \Phi_k \ast \Delta_k f (\lambda (x-t)) \\
&\leq& \Phi_k \ast \Delta_{k-1} f + \Phi_k \ast \Delta_{k+1} f + \Phi_k \ast
\Delta_k f (2^{N-1} (x-t)) \\
&=& \Delta_k f (2^{N-1} (x-t)),
\end{eqnarray*}
and $f(\lambda (x-t)) = \sum_{j \in \mathbb{Z}} \Delta_j f (\lambda (x-t))
\leq \sum_{j \in \mathbb{Z}} \Delta_j f (2^{N-1} (x-t)) = f(2^{N-1} (x-t))$.

We deduce that 
\begin{eqnarray*}
f(2^N (x-t)) &\leq& f(\lambda (x-t)) \leq f(2^{N-1} (x-t)), \\
\Phi_j \ast f(2^{N-1} \cdot) &\leq& \Phi_j \ast f(\lambda \cdot) \leq \Phi_j
\ast f(2^N \cdot), \\
|\Phi_j \ast f(\lambda \cdot)| &\leq& \max \left( |\Phi_j \ast f(2^N
\cdot)|, |\Phi_j \ast f(2^{N-1} \cdot)| \right), \\
\| \Phi_j \ast f(\lambda \cdot) \|_p &\leq& \max \left( \| \Phi_j \ast f(2^N
\cdot) \|_p, \| \Phi_j \ast f(2^{N-1} \cdot) \|_p \right),
\end{eqnarray*}
hence 
\begin{eqnarray*}
\| g \|_{\dot{B}_p^{s,q}(\mathbb{R}^n)} &\leq& \max \left( \| f(2^N \cdot)
\|_{\dot{B}_p^{s,q}(\mathbb{R}^n)}, \| f(2^{N-1} \cdot) \|_{\dot{B}_p^{s,q}(%
\mathbb{R}^n)} \right) \\
&\leq& 2^\gamma (2^{N-1})^\gamma \| f \|_{\dot{B}_p^{s,q}(\mathbb{R}^n)}
\leq C \lambda^\gamma \| f \|_{\dot{B}_p^{s,q}(\mathbb{R}^n)}, \quad C =
2^\gamma.
\end{eqnarray*}
Thus, we can write $\| \cdot \|_{\dot{B}_p^{s,q}(\mathbb{R}^n)} \sim
\lambda^{n/p - s} \| (\lambda \cdot) \|_{\dot{B}_p^{s,q}(\mathbb{R}^n)}$.

If $\lambda = 2^m$, then $\| f(2^m \cdot) \|_{\dot{B}_p^{s,q}(\mathbb{R}^n)}
= (2^m)^{s - n/p} \| f \|_{\dot{B}_p^{s,q}(\mathbb{R}^n)}$.

If $q = 2$, then $\| f(\lambda \cdot) \|_{\dot{B}_p^{s,2}(\mathbb{R}^n)} =
\| f(\lambda \cdot) \|_{\dot{H}^{s,p}(\mathbb{R}^n)} = \lambda^{s - n/p} \|
f \|_{\dot{H}^{s,p}(\mathbb{R}^n)} = \lambda^{s - n/p} \| f \|_{\dot{B}%
_p^{s,2}(\mathbb{R}^n)}$.

We also have $\| f(\lambda \cdot) \|_{L^p(\mathbb{R}^n)} = \lambda^{-n/p} \|
f \|_{L^p(\mathbb{R}^n)}$.
\end{itemize}
\end{proof}

\begin{definition}
\label{1.10} Let $s \in \mathbb{R}$, $p, q \in [1, +\infty]$, then we say
that a function $f$ belongs to $B_p^{s,q}(\mathbb{R}^n)$, the
non-homogeneous Besov space, if and only if $f \in \mathcal{S}^{\prime }(%
\mathbb{R}^n)$ and $f = Q_0 f + \sum_{j \geq 1} \Delta_j f$, such that 
\begin{equation*}
\| f \|_{B_p^{s,q}(\mathbb{R}^n)} = \left\{ 
\begin{array}{cl}
\displaystyle \| \Psi \ast f \|_p + \left( \sum_{j \in \mathbb{N}} \left(
2^{sj} \| \Phi_j \ast f \|_p \right)^q \right)^{1/q} < \infty, & \text{for }
1 \leq q < \infty \\ 
\| \Psi \ast f \|_p + \sup_{j \in \mathbb{N}} \left( 2^{sj} \| \Phi_j \ast f
\|_p \right) < \infty, & \text{for } q = \infty%
\end{array}
\right.
\end{equation*}
\end{definition}

\subsection{\textbf{By finite differences}}

\begin{lemma}[~\cite{ULLR}]
\label{1.11}  For all $0 < s < 1$, we have 
\begin{equation*}
\sum_{j \in \mathbb{Z}} 2^{js} |\Phi_j(x)| \leq c x^{-(1+s)}, \quad \text{%
for all } x > 0, \quad (c > 0).
\end{equation*}
\end{lemma}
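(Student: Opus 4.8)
The plan is to reduce the bound to two elementary geometric series, using only that $\Phi_0\in\mathcal S(\mathbb R^n)$ together with the scaling $\Phi_j(x)=2^{j}\Phi_0(2^{j}x)$ of the homogeneous decomposition (here $x>0$ is scalar, so we work on $\mathbb R$; the vanishing moments of $\Phi_0$ play no role in this estimate). Since $\Phi_0$ is Schwartz, for every $N\in\mathbb N$ there is $c_N>0$ with $|\Phi_0(y)|\le c_N(1+|y|)^{-N}$ for all $y$, hence for $x>0$
\[
2^{js}|\Phi_j(x)|=2^{j(1+s)}\,|\Phi_0(2^{j}x)|\le c_N\,2^{j(1+s)}\,(1+2^{j}x)^{-N}.
\]
Fixing once and for all an integer $N\ge 2$ — which is admissible because $0<s<1$ forces $1+s<2\le N$ — it then suffices to show that $S(x):=\sum_{j\in\mathbb Z}2^{j(1+s)}(1+2^{j}x)^{-N}\le C_s\,x^{-(1+s)}$.

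First I would split $S(x)$ at the dyadic scale $2^{j}\sim x^{-1}$: let $j_0=j_0(x)$ be the largest integer with $2^{j_0}x\le 1$, so that $\tfrac1{2x}<2^{j_0}\le\tfrac1x$. For $j\le j_0$ I simply discard the decay factor, $(1+2^{j}x)^{-N}\le 1$, and sum the \emph{increasing} geometric series
\[
\sum_{j\le j_0}2^{j(1+s)}=\frac{2^{1+s}}{2^{1+s}-1}\,2^{j_0(1+s)}\le\frac{2^{1+s}}{2^{1+s}-1}\,x^{-(1+s)} .
\]
For $j>j_0$ I use instead $(1+2^{j}x)^{-N}\le(2^{j}x)^{-N}=x^{-N}2^{-jN}$, which turns the tail into the \emph{decreasing} geometric series $x^{-N}\sum_{j>j_0}2^{j(1+s-N)}$; its ratio $2^{1+s-N}$ is $<1$, so it is bounded by a constant times its leading term, giving $\sum_{j>j_0}2^{j(1+s)}(1+2^{j}x)^{-N}\le C\,x^{-N}\,2^{j_0(1+s-N)}$. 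Since $2^{j_0}>\tfrac1{2x}$ and $1+s-N<0$, we have $2^{j_0(1+s-N)}\le 2^{N-1-s}x^{N-1-s}$, so this part contributes at most $C\,x^{-N}x^{N-1-s}=C\,x^{-(1+s)}$.

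Adding the two contributions yields $S(x)\le C_s\,x^{-(1+s)}$, hence $\sum_{j\in\mathbb Z}2^{js}|\Phi_j(x)|\le c_N\,S(x)\le c\,x^{-(1+s)}$; the same two estimates show that the series converges (geometrically at both ends) for every fixed $x>0$, so it is well defined. I do not expect any genuine obstacle — the argument is a routine dyadic splitting — and the only points that need attention are: (a) choosing $N$ strictly larger than $1+s$ so that the high-frequency tail is a convergent geometric series; and (b) keeping straight which of the two series is increasing and which is decreasing, so that each is correctly summed down to a constant multiple of its dominant term near $j=j_0$. One should also note that the resulting constant depends on $s$, through the factor $\frac{2^{1+s}}{2^{1+s}-1}$ (and its analogue for the exponent $1+s-N$), consistent with the $\lesssim$ in the statement; on $\mathbb R^n$ the same computation, with $\Phi_j(x)=2^{jn}\Phi_0(2^{j}x)$, gives $\sum_j 2^{js}|\Phi_j(x)|\lesssim|x|^{-(n+s)}$.
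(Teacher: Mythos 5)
Your proposal is correct and follows essentially the same route as the paper: split the sum at the dyadic index where $2^{j}x\sim 1$, bound the low-frequency part by an increasing geometric series using only the boundedness of $\Phi_0$, and bound the high-frequency tail by a decreasing geometric series using the Schwartz decay $|\Phi_0(y)|\lesssim |y|^{-N}$ with $N>1+s$. Your direct treatment of general $x>0$ via the index $j_0$ is in fact a cleaner organization than the paper's two-step reduction (first $x=2^{-N}$, then a monotonicity argument for intermediate $x$), but the underlying estimate is identical.
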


\begin{proof}\text{  }
\vspace{-20pt}	
\begin{itemize}
\item If $x = 2^{-N}$, then $\Phi_0 \in \mathcal{S}$ implies $|\Phi_0(x)|
\leq c x^{-2}$, whence 
\begin{equation*}
\sum_{j=N+1}^\infty 2^{js} |\Phi_j(x)| = \sum_{j=N+1}^\infty 2^{j(s+1)}
|\Phi_0(2^j x)| \leq \sum_{n=N+1}^\infty 2^{n(\alpha-1)} x^{-2} = \frac{1}{%
1-2^{1-\alpha}} \cdot 2^{N(\alpha-1)} x^{-2} = c x^{-(1+s)}.
\end{equation*}
Since $\Phi_0$ is bounded, then 
\begin{equation*}
\sum_{j=-\infty}^N 2^{js} |\Phi_j(x)| \leq c \sum_{j=-\infty}^N 2^{j(1+s)} =
c 2^{N(1+s)} = c x^{-(1+s)}.
\end{equation*}

\item If $2^{-N} < x \leq 2^{-N+1}$, $N \in \mathbb{Z}$, and by the decrease
of $\Phi_j$, we have 
\begin{eqnarray*}
\sum_{j \in \mathbb{Z}} 2^{js} |\Phi_j(x)| &\leq& \sum_{j \in \mathbb{Z}}
2^{js} |\Phi_j(2^{-N})| \\
&\leq& c 2^{N(1+s)} = (2^{1+s} c) 2^{(N-1)(1+s)} \leq c^{\prime -(1+s)}.
\end{eqnarray*}
\end{itemize}
\end{proof}

\begin{lemma}[~\cite{ULLR}]
\label{1.12}  Let $1 \leq p \leq \infty$, $k \in \mathbb{N}$, $f
\in \mathcal{S}(\mathbb{R}^n)$, then

\begin{itemize}
\item[(i)] $\| \tau_{-h} f - f \|_p \leq |h| \| f^{\prime }\|_p$

\item[(ii)] $\| \Delta_h^k f \|_p \leq |h|^k \| f^{(k)} \|_p, \quad (k \geq
2)$
\end{itemize}
\end{lemma}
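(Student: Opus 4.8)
The plan is to reduce both estimates to the fundamental theorem of calculus, followed by Minkowski's integral inequality and the translation invariance of $\|\cdot\|_{L^p}$; since $f\in\mathcal{S}(\mathbb{R}^n)$, every differentiation and every interchange of integrals below is legitimate. For (i), fix $x,h\in\mathbb{R}^n$ and write
\[
\tau_{-h}f(x)-f(x)=f(x+h)-f(x)=\int_0^1\frac{\mathrm{d}}{\mathrm{d}t}f(x+th)\,\mathrm{d}t=\int_0^1\langle h,\nabla f(x+th)\rangle\,\mathrm{d}t .
\]
Taking the $L^p$ norm in $x$, pulling it inside the integral by Minkowski's integral inequality (valid for all $p\in[1,\infty]$), and using that $y\mapsto y+th$ preserves Lebesgue measure, hence $\|\nabla f(\cdot+th)\|_p=\|\nabla f\|_p$, gives
\[
\|\tau_{-h}f-f\|_p\le\int_0^1\|\langle h,\nabla f(\cdot+th)\rangle\|_p\,\mathrm{d}t\le|h|\int_0^1\|\,|\nabla f|\,\|_p\,\mathrm{d}t=|h|\,\|f'\|_p .
\]

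For (ii) I would iterate this identity. Using the recurrence $\Delta_h^k f=\Delta_h(\Delta_h^{k-1}f)$ of Definition \ref{1.4} together with the fact that $\Delta_h$ commutes with every partial derivative, an induction on $k$ — the base case $k=1$ being the display above, the inductive step one further application of the fundamental theorem of calculus to the innermost difference — yields
\[
\Delta_h^k f(x)=\int_{[0,1]^k}\Big(\langle h,\nabla\rangle^{k}f\Big)\big(x+(t_1+\cdots+t_k)h\big)\,\mathrm{d}t_1\cdots\mathrm{d}t_k ,
\]
where $\langle h,\nabla\rangle^{k}=\sum_{|\alpha|=k}\binom{k}{\alpha}h^\alpha D^\alpha$ by the multinomial formula, so that $|\langle h,\nabla\rangle^{k}f(y)|\le|h|^k\sum_{|\alpha|=k}\binom{k}{\alpha}|D^\alpha f(y)|$ pointwise. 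Taking $L^p$ norms in $x$, applying Minkowski's integral inequality over the cube $[0,1]^k$, and invoking translation invariance once more then gives
\[
\|\Delta_h^k f\|_p\le|h|^k\int_{[0,1]^k}\Big\|\sum_{|\alpha|=k}\binom{k}{\alpha}|D^\alpha f(\cdot+(t_1+\cdots+t_k)h)|\Big\|_p\,\mathrm{d}t=|h|^k\,\|f^{(k)}\|_p .
\]

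There is no serious obstacle here; the only point that needs to be pinned down is the reading of the symbol $f^{(k)}$ in several variables, which I would take to mean $\|f^{(k)}\|_p:=\big\|\sum_{|\alpha|=k}\binom{k}{\alpha}|D^\alpha f|\big\|_p$ (equivalently, up to a dimensional constant, the $L^p$ norm of the Euclidean length of the tensor of $k$-th order derivatives), so that the pointwise bound above is exactly what is required. In the one-dimensional case — the one actually used in the $BV_p^1(\mathbb{R})$ theory that follows — everything collapses to $\Delta_h^k f(x)=h^k\int_{[0,1]^k}f^{(k)}(x+(t_1+\cdots+t_k)h)\,\mathrm{d}t$, and $\|\Delta_h^k f\|_p\le|h|^k\|f^{(k)}\|_p$ is then immediate from Minkowski's inequality and translation invariance, with no loss in the constant.
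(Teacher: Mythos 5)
Your proof is correct, and for part (i) it takes a genuinely different route from the paper. The paper deduces (i) from the pointwise one-dimensional Morrey-type bound $|f(x)-f(y)|\leq |x-y|^{1-1/p}\|f'\|_p$ followed by the estimate $|h|^{1-1/p}\leq |h|$; note that this last step is only valid for $|h|\geq 1$ (for $|h|<1$ the inequality reverses, since $1-1/p\leq 1$), and the passage from that pointwise bound to the $L^p$ norm of $\tau_{-h}f-f$ is left implicit, so the paper's argument for (i) is at best incomplete. Your argument — writing $f(x+h)-f(x)=\int_0^1\langle h,\nabla f(x+th)\rangle\,\mathrm{d}t$ and applying Minkowski's integral inequality plus translation invariance — is the standard one, is uniform in $h$, and gives the sharp constant; it is the more robust choice. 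For part (ii) the two proofs are essentially the same induction: the paper applies (i) to $\Delta_h^k f$, commutes $\Delta_h^k$ with differentiation, and invokes the inductive hypothesis, which is exactly what your iterated-integral representation over $[0,1]^k$ encodes in closed form. Your remark pinning down the meaning of $f^{(k)}$ in several variables (as the appropriately weighted sum of $|D^\alpha f|$ with $|\alpha|=k$) is a useful clarification the paper does not make; in the one-dimensional setting where the lemma is actually used, both readings coincide.
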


\begin{proof}\text{ }
	
\vspace{-20pt}	
	
\begin{itemize}

\item[(i)] Using the inequality $|f(x) - f(y)| \leq |x-y|^{1-1/p} \|
f^{\prime }\|_p$, and since $p \geq 1$ gives $0 \leq 1-1/p \leq 1$, taking $%
h = x - y$, we obtain 
\begin{equation*}
\| \tau_{-h} f - f \|_p \leq |h|^{1-1/p} \| f^{\prime }\|_p \leq |h| \|
f^{\prime }\|_p.
\end{equation*}

\item[(ii)] We obtain by induction 
\begin{eqnarray*}
\| \Delta_h^{k+1} f \|_p &=& \| \Delta_h(\Delta_h^k f) \|_p \leq |h| \|
(\Delta_h^k f)^{\prime }\|_p = |h| \| \Delta_h^k f^{\prime }\|_p \\
&\leq& |h| |h|^k \| (f^{\prime (k)} \|_p \leq |h|^{k+1} \| f^{(k+1)} \|_p.
\end{eqnarray*}
\end{itemize}
\end{proof}

\begin{proposition}[~\cite{ULLR}]
\label{1.13}  Let $h \in \mathbb{R}^n$, $0 < s < 1$, $1 \leq p
\leq \infty$, $1 \leq q < \infty$, then $f \in B_p^{s,q}(\mathbb{R}^n)$ if
and only if $f \in L^p(\mathbb{R}^n)$, such that 
\begin{equation}
\int_{\mathbb{R}^n} \left( |h|^{-s} \| \tau_{-h} f - f \|_p \right)^q \frac{%
\mathrm{d}h}{|h|^n} < \infty,  \label{[1.5]}
\end{equation}
and $f \in \dot{B}_p^{s,q}(\mathbb{R}^n)$ if and only if $f \in \mathcal{S}%
^{\prime }(\mathbb{R}^n) / \mathcal{P}(\mathbb{R}^n)$, such that (\ref{1.5}%
) is verified.
\end{proposition}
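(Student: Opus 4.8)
The plan is to prove the two inclusions separately, running the homogeneous space $\dot B_p^{s,q}(\mathbb R^n)$ and the inhomogeneous space $B_p^{s,q}(\mathbb R^n)$ in parallel; the only genuine difference is the low-frequency block $Q_0f=\Psi\ast f$, which in the homogeneous setting disappears into the quotient by $\mathcal P(\mathbb R^n)$. I shall write $f=\Psi\ast f+\sum_{j\ge1}\Phi_j\ast f$ (resp.\ $f=\sum_{j\in\mathbb Z}\Phi_j\ast f$) and use repeatedly: the trivial estimate $\|\Delta_h^1g\|_p\le 2\|g\|_p$; Lemma \ref{1.12}(i), i.e.\ $\|\Delta_h^1g\|_p\le|h|\,\|g'\|_p$; the Bernstein-type bound $\|(\Phi_j\ast f)'\|_p\lesssim 2^j\|\Phi_j\ast f\|_p$ for the frequency-localised blocks (a consequence of the dilation relation $\Phi_j(x)=2^{jn}\Phi_0(2^jx)$ together with a reproducing identity $\Phi_j\ast f=\widetilde\Phi_j\ast\Phi_j\ast f$, a harmless fattening of the multiplier being suppressed); and, crucially for the converse, the vanishing moments $\int_{\mathbb R^n}\Phi_j(t)\,\mathrm{d}t=0$ coming from $\Phi_j\in\mathcal S_0(\mathbb R^n)$. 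Finally, passing to polar coordinates $\mathrm{d}h/|h|^n=(\mathrm{d}r/r)\,\mathrm{d}\sigma$ and to dyadic shells $2^{-N-1}<|h|\le 2^{-N}$, the integral in $(\ref{[1.5]})$ is comparable to $\sum_{N\in\mathbb Z}(2^{Ns}\eta_N)^q$, where $\eta_N^q:=\int_{2^{-N-1}<|h|\le 2^{-N}}\|\Delta_h^1f\|_p^q\,\mathrm{d}h/|h|^n$; note $\eta_N\le\omega_p^1(2^{-N},f)$, while the reverse comparison fails pointwise and so will not be used.

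\textbf{Step 1:} $f\in\dot B_p^{s,q}(\mathbb R^n)\Rightarrow(\ref{[1.5]})$. Since $\Delta_h^1$ commutes with convolution, $\Delta_h^1f=\sum_j\Delta_h^1(\Phi_j\ast f)$, and each summand is controlled in two ways: $\|\Delta_h^1(\Phi_j\ast f)\|_p\le 2\|\Phi_j\ast f\|_p$, and $\|\Delta_h^1(\Phi_j\ast f)\|_p\lesssim|h|\,2^j\|\Phi_j\ast f\|_p$ by Lemma \ref{1.12}(i) and Bernstein. For $|h|\le 1$ choose $N$ with $2^{-N-1}<|h|\le 2^{-N}$, and use the gradient bound for $j\le N$ (where $|h|\,2^j\le 2^{j-N}$) and the trivial bound for $j>N$:
\[ \|\Delta_h^1f\|_p\;\lesssim\;\sum_{j\le N}2^{j-N}\|\Phi_j\ast f\|_p\;+\;\sum_{j>N}\|\Phi_j\ast f\|_p . \]
After multiplication by $2^{Ns}$ the right-hand side becomes the sum of two discrete convolutions of the sequence $a_j:=2^{js}\|\Phi_j\ast f\|_p\in\ell^q$ with $(2^{-k(1-s)})_{k\ge 0}$ and $(2^{-ks})_{k\ge 1}$, which are summable precisely because $0<s<1$; Young's inequality for sequences then gives $(2^{Ns}\eta_N)_N\in\ell^q$ with norm $\lesssim\|f\|_{\dot B_p^{s,q}}$, i.e.\ $(\ref{[1.5]})$. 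In the inhomogeneous case the term $\Delta_h^1(\Psi\ast f)$ adds $\lesssim\min(1,|h|)\,\|\Psi\ast f\|_p$, which is $q$-integrable against $|h|^{-sq}\,\mathrm{d}h/|h|^n$ again because $0<s<1$; and $f\in L^p$ follows from $\|f\|_p\le\|\Psi\ast f\|_p+\sum_{j\ge1}\|\Phi_j\ast f\|_p\lesssim\|f\|_{B_p^{s,q}}$ by Hölder in $j$.

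\textbf{Step 2:} $(\ref{[1.5]})\Rightarrow f\in\dot B_p^{s,q}(\mathbb R^n)$. Here the vanishing mean of $\Phi_j$ gives
\[ \Phi_j\ast f(x)=\int_{\mathbb R^n}\Phi_j(t)\,\bigl(f(x-t)-f(x)\bigr)\,\mathrm{d}t,\qquad \|\Phi_j\ast f\|_p\le\int_{\mathbb R^n}|\Phi_j(t)|\,\|\Delta_{-t}^1f\|_p\,\mathrm{d}t . \]
Substituting $t=2^{-j}u$ (so $|\Phi_j(t)|\,\mathrm{d}t=|\Phi_0(u)|\,\mathrm{d}u$) and splitting the $u$-integral over dyadic shells $2^m\le|u|<2^{m+1}$, Hölder's inequality on each shell bounds that contribution by a factor $\rho_m$, equal to the $L^{q'}$-norm of $\Phi_0$ over the shell — which decays faster than any power as $m\to+\infty$ ($\Phi_0\in\mathcal S$) and like $2^{mn/q'}$ as $m\to-\infty$ (shell volume) — times $2^{jn/q}\bigl(\int_{2^{m-j}\le|h|<2^{m-j+1}}\|\Delta_h^1f\|_p^q\,\mathrm{d}h\bigr)^{1/q}$. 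Re-expressing the last integral through $\widetilde\delta_k^q:=\int_{2^k\le|h|<2^{k+1}}(|h|^{-s}\|\Delta_h^1f\|_p)^q\,\mathrm{d}h/|h|^n$ (note $\sum_k\widetilde\delta_k^q$ equals the integral in $(\ref{[1.5]})$), the bookkeeping collapses to $2^{js}\|\Phi_j\ast f\|_p\lesssim\sum_m\sigma_m\,\widetilde\delta_{m-j}$ with $\sigma_m:=\rho_m2^{m(s+n/q)}$, and $(\sigma_m)_m\in\ell^1$. Young's inequality for sequences then yields $\|f\|_{\dot B_p^{s,q}}=\bigl\|(2^{js}\|\Phi_j\ast f\|_p)_j\bigr\|_{\ell^q}\lesssim\|\sigma\|_{\ell^1}\bigl(\sum_m\widetilde\delta_m^q\bigr)^{1/q}<\infty$. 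For the inhomogeneous statement one also notes $\|\Psi\ast f\|_p\le\|\Psi\|_1\|f\|_p<\infty$ since $f\in L^p$; for the homogeneous one, that $\sum_{j\in\mathbb Z}\Phi_j\ast f$ represents $f$ in $\mathcal S'/\mathcal P$ is standard.

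I expect the crux to be Step 2, specifically the passage from the averaged information in $(\ref{[1.5]})$ to the individual block norms $\|\Phi_j\ast f\|_p$: a direct application of Hölder's inequality to the mean-zero representation yields only the endpoint bound $\|f\|_{\dot B_p^{s,\infty}}\lesssim\bigl(\int_{\mathbb R^n}(|h|^{-s}\|\Delta_h^1f\|_p)^q\,\mathrm{d}h/|h|^n\bigr)^{1/q}$, and recovering the sharp $\ell^q$-summability for finite $q$ forces one to exploit that the kernels $\Phi_j$ are essentially concentrated on $|t|\sim 2^{-j}$ — this is what the dyadic-shell decomposition and the resulting $\ell^1$-convolution structure accomplish. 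A secondary, purely technical obstacle is that the sup-modulus $\omega_p^1(2^{-N},f)$ is not controlled pointwise by the shell averages $\eta_N$, so the converse cannot be routed through the (otherwise convenient) equivalence $f\in B_p^{s,q}\iff\sum_N(2^{Ns}\omega_p^1(2^{-N},f))^q<\infty$; and throughout one must keep track of the role of $0<s<1$ — $1-s>0$ truncates high frequencies and $s>0$ sums low frequencies in Step 1, while $s>0$ is already what makes $(\ref{[1.5]})$ a meaningful condition (convergence at infinity) and $s<1$ is why first differences $\Delta_h^1$ suffice at all.
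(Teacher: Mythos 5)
Your argument is correct, and its two halves rest on the same two ideas as the paper's proof: the direct implication via the two-regime estimate on $\Delta_h^1(\Phi_j\ast f)$ (gradient/Bernstein bound for $j\leq N$, trivial bound for $j>N$, with $0<s<1$ making both geometric tails summable), and the converse via the vanishing-moment representation $\Phi_j\ast f(x)=\int\Phi_j(t)(f(x-t)-f(x))\,\mathrm{d}t$. Where you genuinely diverge is in scope and in the mechanism for the converse. The paper proves only the case $q=n=1$ and then asserts that the general case "can easily be deduced" from interpolation of Besov spaces and the Marcinkiewicz theorem; for $q=1$ its converse is a clean application of Minkowski's inequality combined with Lemma \ref{1.11}, i.e.\ the pointwise kernel bound $\sum_j 2^{js}|\Phi_j(y)|\lesssim|y|^{-(1+s)}$, which collapses the double sum in one stroke but does not extend directly to $q>1$. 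You instead treat general $q$ and $n$ head-on: in Step 2 the substitution $t=2^{-j}u$, the dyadic-shell H\"older estimate, and the resulting $\ell^1\ast\ell^q$ convolution structure replace Lemma \ref{1.11} entirely, and your bookkeeping $2^{js}\|\Phi_j\ast f\|_p\lesssim\sum_m\sigma_m\widetilde\delta_{m-j}$ with $(\sigma_m)\in\ell^1$ checks out (rapid decay of $\rho_m$ for $m\to+\infty$ from $\Phi_0\in\mathcal S$, shell volume $2^{mn/q'}$ for $m\to-\infty$, so $\sigma_m\lesssim 2^{m(n+s)}$ there). What your route buys is a self-contained proof of the full statement without invoking interpolation machinery that the paper never actually sets up for this purpose; what the paper's route buys is brevity in the special case. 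One cosmetic slip: in your closing paragraph the roles of $s<1$ and $s>0$ are swapped relative to your own Step 1 — it is $1-s>0$ that sums the low-frequency tail $j\leq N$ (where the gradient bound is used) and $s>0$ that sums the high-frequency tail $j>N$; the computation itself in Step 1 has it right.
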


\begin{proof}
First, take the simplest case $q = n = 1$, because the case $q = \infty$, as
well as the general case of $\mathbb{R}^n$, can easily be deduced using the
interpolation theorem of two Besov spaces which gives another Besov space,
as well as the Marcinkiewicz theorem.

\begin{itemize}
\item[(1)] Let $f = F + \sum_{j \in \mathbb{N}} \Delta_j f \in B_p^{s,1}(%
\mathbb{R})$, then $f \in L^p$, such that $\sum_{j \in \mathbb{N}} \|
\Delta_j f \|_p < \infty$, and thus by Lemma \ref{1.12}, we have 
\begin{eqnarray*}
\int_0^{2^{-j}} |h|^{-s} \| \Delta_h^1(\Delta_j f) \|_p \frac{\mathrm{d}h}{%
|h|} &\leq& \int_0^{2^{-j}} |h|^{1-s} \| (\Delta_j f)^{\prime }\|_p \frac{%
\mathrm{d}h}{|h|} \\
&\leq& c 2^j \| (\Delta_j f)^{\prime }\|_p \int_0^{2^{-j}} |h|^{-s} \mathrm{d%
}h = c 2^{js} \| (\Delta_j f)^{\prime }\|_p,
\end{eqnarray*}
and 
\begin{eqnarray*}
\int_{2^{-j}}^\infty |h|^{-s} \| \Delta_h^1(\Delta_j f) \|_p \frac{\mathrm{d}%
h}{|h|} &\leq& 2 \| (\Delta_j f)^{\prime }\|_p \int_{2^{-j}}^\infty |h|^{-s} 
\frac{\mathrm{d}h}{|h|} \leq c 2^{js} \| (\Delta_j f)^{\prime }\|_p,
\end{eqnarray*}
hence 
\begin{equation*}
\int_0^\infty |h|^{-s} \| \Delta_h^1(f) \|_p \frac{\mathrm{d}h}{|h|} =
\int_0^\infty |h|^{-s} \| \Delta_h^1\left( \sum_{j \in \mathbb{N}} \Delta_j
f \right) \|_p \frac{\mathrm{d}h}{|h|} \leq c \sum_{j \in \mathbb{N}} 2^{js}
\| (\Delta_j f)^{\prime }\|_p < \infty.
\end{equation*}

\item[(2)] Let $f \in L^p$, and $\int_0^\infty |h|^{-s} \| \Delta_h^1(f)
\|_p \frac{\mathrm{d}h}{|h|} < \infty$, then $f \ast \Psi \in L^p$.

We have $\Phi_j \ast f(x) = \int_{\mathbb{R}} \Phi_j(y) (f(x-y) - f(x)) \,%
\mathrm{d}y$, hence by Minkowski's inequality 
\begin{eqnarray*}
\| \Phi_j \ast f \|_p &=& \left\| \int_{-\infty}^0 \Phi_j(y) (f(x-y) - f(x))
\,\mathrm{d}y + \int_0^\infty \Phi_j(y) (f(x-y) - f(x)) \,\mathrm{d}y
\right\|_p \\
&\leq& 2 \int_0^\infty |\Phi_j(y)| \| \tau_{-y} f - f \|_p \,\mathrm{d}y.
\end{eqnarray*}
Using Lemma \ref{1.11}, we obtain $f \in B_p^{s,1}(\mathbb{R})$, because 
\begin{eqnarray*}
\sum_{j \in \mathbb{N}} 2^{js} \| \Phi_j \ast f \|_p &\leq& 2 \int_0^\infty
\sum_{j \in \mathbb{N}} 2^{js} |\Phi_j(y)| \| \tau_{-y} f - f \|_p \,\mathrm{%
d}y \\
&\leq& c \int_0^\infty |y|^{-(1+s)} \| \tau_{-y} f - f \|_p \,\mathrm{d}y =
c \int_0^\infty |h|^{-s} \| \Delta_h^1(f) \|_p \frac{\mathrm{d}h}{|h|} <
\infty.
\end{eqnarray*}
Note that (\ref{1.5}) is equivalent to $\int_{|h| \leq 1} \left( |h|^{-s}
\| \tau_{-h} f - f \|_p \right)^q \frac{\mathrm{d}h}{|h|^n}$, because 
\begin{equation*}
\int_{\mathbb{R}^n} \left( |h|^{-s} \| \tau_{-h} f - f \|_p \right)^q \frac{%
\mathrm{d}h}{|h|^n} = \int_{|h| < 1} (...) \,\mathrm{d}h + \int_{|h| \geq 1}
(...) \,\mathrm{d}h,
\end{equation*}
and the integral $\int_{|h| \geq 1} \left( |h|^{-s} \| \tau_{-h} f - f \|_p
\right)^q \frac{\mathrm{d}h}{|h|^n}$ is transformed by a change of variable
into $$\int_{|h| \leq 1} \left( |h|^{-s} \| \tau_{-h} f - f \|_p \right)^q 
\frac{\mathrm{d}h}{|h|^n}.$$
\end{itemize}
\end{proof}

\begin{theorem}[~\cite{SIC}]
\label{1.14}  Let $h \in \mathbb{R}^n$, $s > 0$, $1 \leq q \leq
\infty$, $1 \leq p < \infty$, $M \in \mathbb{N}$, $M \leq s < M+1$, then $f
\in \dot{B}_p^{s,q}(\mathbb{R}^n)$ if and only if $f \in \mathcal{S}^{\prime
}(\mathbb{R}^n) / \mathcal{P}(\mathbb{R}^n)$, and such that 
\begin{equation*}
\| f \|_{\dot{B}_p^{s,q}(\mathbb{R}^n)} \sim \left( \int_{\mathbb{R}^n}
|h|^{-sq} \| \Delta_h^{M+1} f \|_p^q \frac{\mathrm{d}h}{|h|^n} \right)^{1/q}
< \infty.
\end{equation*}
\end{theorem}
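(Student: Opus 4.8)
The plan is to run the classical Littlewood--Paley argument: use the decomposition $f=\sum_{j\in\mathbb Z}\Phi_j\ast f$ from the previous section and establish separately the two inequalities hidden in the claimed equivalence. Write $a_j=\|\Phi_j\ast f\|_p$, $b_j=2^{js}a_j$, and $I=\big(\int_{\mathbb R^n}|h|^{-sq}\|\Delta_h^{M+1}f\|_p^q\,|h|^{-n}\,\ud h\big)^{1/q}$; the polar identity $\int_{\mathbb R^n}F(h)|h|^{-n}\ud h=\int_0^\infty\!\int_{S^{n-1}}F(t\omega)\,\ud\sigma(\omega)\,\tfrac{\ud t}{t}$ will be used repeatedly to turn radial integrals into sums over dyadic shells. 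I would also record at the outset the two elementary facts about $\Delta_h^{M+1}$: the band‑limited estimate $\|\Delta_h^{M+1}g\|_p\lesssim\min\!\big(1,(R|h|)^{M+1}\big)\|g\|_p$ whenever $\widehat g$ is supported in $\{|\xi|\le R\}$ (combine the crude bound $\|\Delta_h^{M+1}g\|_p\le 2^{M+1}\|g\|_p$ coming from Lemma~\ref{1.5} with the Taylor‑type bound $\|\Delta_h^{M+1}g\|_p\le|h|^{M+1}\||\nabla|^{M+1}g\|_p$ of Lemma~\ref{1.12} and Bernstein's inequality), and the doubling property $\|\Delta_{2h}^{M+1}g\|_p\le 2^{M+1}\|\Delta_h^{M+1}g\|_p$, which follows from $\Delta_{2h}^{M+1}=(\operatorname{Id}+\tau_{-h})^{M+1}\Delta_h^{M+1}$ on the Fourier side.

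\textbf{Necessity} ($I\lesssim\|f\|_{\dot B_p^{s,q}}$). First I would write $\Delta_h^{M+1}f=\sum_j\Delta_h^{M+1}(\Phi_j\ast f)$ (each $\Phi_j\ast f$ is well defined on $\mathcal S'/\mathcal P$ since $\widehat\Phi_j$ vanishes near the origin, and the series converges in $L^p$ once the Besov norm is finite; this fixes the representative), and apply the band‑limited estimate with $R\sim 2^j$ to get $\|\Delta_h^{M+1}f\|_p\lesssim\sum_j\min(1,(2^j|h|)^{M+1})a_j$. Multiplying by $|h|^{-s}$, going to polar coordinates, and restricting $|h|\in[2^{-k-1},2^{-k}]$ converts this into $|h|^{-s}\|\Delta_h^{M+1}f\|_p\lesssim\sum_i d_i\,b_{k+i}$ with $d_i=\min(1,2^{i(M+1)})\,2^{-is}$, i.e. $d_i=2^{i(M+1-s)}$ for $i\le0$ and $d_i=2^{-is}$ for $i\ge0$. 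Thus $(d_i)\in\ell^1$ exactly because $s<M+1$ and $s>0$, and Young's inequality $\ell^1\ast\ell^q\to\ell^q$ gives $I\lesssim\|d\|_{\ell^1}\|b\|_{\ell^q}=\|d\|_{\ell^1}\|f\|_{\dot B_p^{s,q}}$ (with the obvious modification when $q=\infty$).

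\textbf{Sufficiency} ($\|f\|_{\dot B_p^{s,q}}\lesssim I$). The heart is a reproducing identity. Since $\widehat\Phi_0$ is supported in the annulus $A=\{K^{-1}\le|\zeta|\le 2K\}$, I would pick $w\in\mathcal S(\mathbb R^n)$ with $\int_{\mathbb R^n}w=1$ and $\widehat w$ supported in $\{|\mu|<K^{-1}\}$; the binomial expansion of $(e^{i\langle\eta,\zeta\rangle}-1)^{M+1}$ yields
\[
m(\zeta):=\int_{\mathbb R^n}w(\eta)(e^{i\langle\eta,\zeta\rangle}-1)^{M+1}\,\ud\eta=\sum_{k=0}^{M+1}\binom{M+1}{k}(-1)^{M+1-k}\widehat w(-k\zeta),
\]
and for $\zeta\in A$ every $k\ge1$ term vanishes (since $|k\zeta|\ge K^{-1}$), leaving $m\equiv(-1)^{M+1}$ on $A\supseteq\operatorname{supp}\widehat\Phi_0$. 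Hence $\widehat\Phi_j(\xi)=(-1)^{M+1}\widehat\Phi_j(\xi)\,m(2^{-j}\xi)$ identically, and as $m(2^{-j}\xi)\widehat f(\xi)$ is the Fourier transform of $2^{jn}\!\int w(2^jh)\Delta_h^{M+1}f\,\ud h$, rescaling $\eta=2^jh$ gives
\[
\Phi_j\ast f=(-1)^{M+1}\Phi_j\ast\Big(2^{jn}\!\!\int_{\mathbb R^n}\!w(2^jh)\,\Delta_h^{M+1}f\,\ud h\Big),\qquad
\|\Phi_j\ast f\|_p\le\|\Phi_0\|_1\!\int_{\mathbb R^n}\!|w(\eta)|\,\|\Delta_{2^{-j}\eta}^{M+1}f\|_p\,\ud\eta .
\]
Multiplying by $2^{js}$, inserting $|\eta|^s(2^{-j}|\eta|)^{-s}$, and changing variables $h=2^{-j}\eta$ turns the right side into $\|\Phi_0\|_1\int V_j(h)G(h)\,\ud h$, where $G(h)=|h|^{-s}\|\Delta_h^{M+1}f\|_p$ and $\int V_j=\int|\eta|^s|w(\eta)|\,\ud\eta<\infty$ is independent of $j$. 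In logarithmic polar coordinates $h=e^\tau\omega$ this becomes $\int_{S^{n-1}}(\Theta_\omega\star g_\omega)(j\log 2)\,\ud\sigma(\omega)$ with $g_\omega(\tau)=G(e^\tau\omega)$ and $\Theta_\omega(\rho)=e^{(n+s)\rho}|w(e^\rho\omega)|$, i.e. a convolution sampled along the lattice $(\log2)\mathbb Z$. Since $\Theta_\omega$ decays exponentially at both ends uniformly in $\omega$ (Schwartz decay of $w$ as $\rho\to+\infty$, and $n+s>0$ as $\rho\to-\infty$), the elementary sampled‑convolution bound $\|\{(\kappa\star g)(j\delta)\}_j\|_{\ell^q}\le\big(\sum_\ell\sup_{[\ell\delta,(\ell+1)\delta]}|\kappa|\big)\|g\|_{L^q(\mathbb R)}$, together with Minkowski and Hölder over $\omega\in S^{n-1}$ and the polar identity $\int_{\mathbb R^n}G^q|h|^{-n}\,\ud h=\int_{S^{n-1}}\|g_\omega\|_{L^q}^q\,\ud\sigma(\omega)=I^q$, yields $\|b\|_{\ell^q}\lesssim I$, that is $\|f\|_{\dot B_p^{s,q}}\lesssim I$.

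\textbf{Main obstacle.} Conceptually the only clever step is producing the reproducing identity: choosing $w$ with $\widehat w$ a bump at the origin is precisely what forces $m\equiv(-1)^{M+1}$ on the relevant annulus, and once that is in place everything else is bookkeeping. The care then goes into (i) the chain of summation inequalities (polar coordinates $\to$ dyadic shells $\to$ Young / sampled convolution), making all constants uniform in $j$ and in the direction $\omega$, and (ii) the standard nuisance that $\Delta_h^{M+1}f$ is only defined modulo $\mathcal P_M$: one fixes the representative $\sum_j\Delta_h^{M+1}(\Phi_j\ast f)$ throughout and checks, using continuity of $h\mapsto\|\Delta_h^{M+1}f\|_p$ on $L^p$ and the doubling property, that the $\eta$‑integral above converges absolutely in $L^p$ so that the Fourier manipulations are legitimate. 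For $M=0$ this specializes to Proposition~\ref{1.13}, and replacing $M+1$ by any integer order $>s$ would work verbatim.
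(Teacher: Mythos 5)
Your argument is correct, and it is genuinely different from (and considerably more complete than) the proof written in the paper. The paper's proof is a sketch: it treats only $n=1$, $q=1$, $0<s<1$, reducing to the computation already done for Proposition~\ref{1.13} (where the key reproducing step is the first--order identity $\Phi_j\ast f(x)=\int\Phi_j(y)(f(x-y)-f(x))\,\mathrm{d}y$, exploiting $\int\Phi_j=0$), and then disposes of $s\geq 1$ by asserting, without justification, the chain $\|f\|_{\dot B_p^{s,q}}\sim\int|h|^{M-s}\|\Delta_h^1 f\|_p\,\mathrm{d}h/|h|\sim\int|h|^{-s}\|\Delta_h^{M+1}f\|_p\,\mathrm{d}h/|h|$, deferring the general $q$ and $n$ to interpolation and Marcinkiewicz. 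You instead run the direct Littlewood--Paley argument uniformly in $n$, $q$ and $M\leq s<M+1$: the necessity half via the two--sided band--limited estimate $\|\Delta_h^{M+1}g\|_p\lesssim\min(1,(R|h|)^{M+1})\|g\|_p$ and discrete Young, and the sufficiency half via the reproducing identity built from an auxiliary $w$ with $\int w=1$ and $\widehat w$ supported near the origin, which forces $m\equiv(-1)^{M+1}$ on the Paley--Wiener annulus. That $w$--trick is exactly the correct higher--order replacement for the moment--cancellation identity the paper uses at first order, and it is what the paper's "lifting" step silently requires; your version also handles the $\mathcal P_M$--ambiguity of $\Delta_h^{M+1}f$ explicitly, which the paper ignores. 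The price is length and the bookkeeping of the sampled--convolution estimate (where a harmless factor $\delta^{1/q'}$ should appear in your $\ell^q$ bound); what it buys is a self--contained proof of the theorem as actually stated, rather than of its one--dimensional, first--order shadow.
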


\begin{quote}
The term $\int_{\mathbb{R}^n} ... \mathrm{d}h$ can be replaced by $\int_{|h|
< \varepsilon} ... \mathrm{d}h$, for any $\varepsilon > 0$, and in general
we take for $\varepsilon = 1$, the term $\int_{|h| < 1} ... \mathrm{d}h$ in
the sense of equivalent semi-norms.
\end{quote}

\begin{proof}
As in the proof of Proposition \ref{1.13}, suppose $f \in B_p^{s,1}(%
\mathbb{R})$, $0 < s < 1$, thus $f \in L^p$ and $$\int_0^\infty |h|^{-s} \|
\Delta_h^1 f \|_p \frac{\mathrm{d}h}{|h|} < \infty.$$

Since $\Psi \ast f \in L^p$, then for all $j \in \mathbb{N}$, we have $\|
\Phi_j \ast f \|_p \leq c \| \Delta_h^1 f \|_p$.

By Lemma \ref{1.12}, and for $2^{-(j+1)} \leq h \leq 2^{-j}$, we obtain 
\begin{eqnarray*}
\int_{2^{-(j+1)}}^{2^{-j}} |h|^{-s} \| \Delta_h^1 f \|_p \frac{\mathrm{d}h}{%
|h|} &\geq& c \| \Phi_j \ast f \|_p \int_{2^{-(j+1)}}^{2^{-j}} |h|^{-s} 
\frac{\mathrm{d}h}{|h|} = c 2^{js} \| \Phi_j \ast f \|_p,
\end{eqnarray*}
hence 
\begin{eqnarray*}
\sum_{j=0}^\infty 2^{js} \| \Phi_j \ast f \|_p &\leq& c \sum_{j=0}^\infty
\int_{2^{-(j+1)}}^{2^{-j}} |h|^{-s} \| \Delta_h^1 f \|_p \frac{\mathrm{d}h}{%
|h|} 
\leq c \int_0^1 |h|^{-s} \| \Delta_h^1 f \|_p \frac{\mathrm{d}h}{|h|} <
\infty.
\end{eqnarray*}
-- If $s \geq 1$, then $M \leq s < M+1$, whence $0 \leq s-M < 1$, and thus 
\begin{eqnarray*}
\| f \|_{\dot{B}_p^{s,q}(\mathbb{R})} &\sim& \int_0^\infty |h|^{-(s-M)} \|
\Delta_h^1 f \|_p \frac{\mathrm{d}h}{|h|} 
\sim \int_0^\infty |h|^{-s} \| \Delta_h^{M+1} f \|_p \frac{\mathrm{d}h}{|h|%
}.
\end{eqnarray*}
\end{proof}

\begin{theorem}[~\cite{SIC} ]
\label{1.15} Let $h \in \mathbb{R}^n$, $0 < q \leq \infty$, $0 <
p < \infty$, $M \in \mathbb{N}$, and denote $\sigma_p$ by 
\begin{equation*}
\sigma_p = \max \left( 0, \frac{n}{p} - n \right).
\end{equation*}

\begin{itemize}
\item[(i)] Suppose $\sigma_p < s < M$, then 
\begin{equation*}
\| f \|_{\dot{B}_p^{s,q}(\mathbb{R}^n)} \sim \left( \int_{\mathbb{R}^n}
|h|^{-sq} \| \Delta_h^M f \|_p^q \frac{\mathrm{d}h}{|h|^n} \right)^{1/q}.
\end{equation*}

\item[(ii)] If $0 < q \leq \infty$, $0 < p < \infty$, $s > \sigma_p$, $M
\leq s < M+1$, then 
\begin{equation*}
\| f \|_{\dot{B}_p^{s,q}(\mathbb{R}^n)} \sim \left( \int_{\mathbb{R}^n}
|h|^{-sq} \| \Delta_h^{M+1} f \|_p^q \frac{\mathrm{d}h}{|h|^n} \right)^{1/q}.
\end{equation*}
\end{itemize}
\end{theorem}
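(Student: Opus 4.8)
The plan is to reduce everything to the one-dimensional case $n=1$ with $q<\infty$ already established in Theorem~\ref{1.14} (the case $0<s<1$, $M+1=1$) and its proof, then bootstrap to general $M$ and finally to general $n$ by the same interpolation/Marcinkiewicz argument invoked in Proposition~\ref{1.13} and Theorem~\ref{1.14}. The key new ingredient over Theorem~\ref{1.14} is that here we allow $0<p<\infty$ (not just $p\ge 1$) and $0<q\le\infty$, which forces the correction term $\sigma_p=\max(0,n/p-n)$: when $p<1$ the space $L^p$ is no longer normed and Minkowski's inequality fails, so one must work with $p$-triangle inequalities $\|\,\cdot\,\|_p^p$ and the condition $s>\sigma_p$ is exactly what guarantees convergence of the relevant geometric series in the Littlewood--Paley estimates.

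First I would prove the estimate $\left(\int |h|^{-sq}\|\Delta_h^M f\|_p^q\,|h|^{-n}\,dh\right)^{1/q}\lesssim \|f\|_{\dot B_p^{s,q}}$. Write $f=\sum_{j\in\mathbb Z}\Delta_j f$; for each dyadic shell split the $h$-integral at $|h|\sim 2^{-j}$. On the region $|h|\le 2^{-j}$ use Lemma~\ref{1.12}(ii) in the form $\|\Delta_h^M(\Delta_j f)\|_p\lesssim |h|^M\|(\Delta_j f)^{(M)}\|_p\lesssim |h|^M 2^{jM}\|\Delta_j f\|_p$ (Bernstein's inequality for the derivative, valid for all $p>0$ on band-limited functions); on $|h|\ge 2^{-j}$ use the crude bound $\|\Delta_h^M(\Delta_j f)\|_p\lesssim \|\Delta_j f\|_p$ (via the $p$-triangle inequality when $p<1$, picking up a harmless constant $2^{M/p}$). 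Integrating gives $\int |h|^{-sq}\|\Delta_h^M(\Delta_j f)\|_p^q\,|h|^{-n}\,dh\lesssim 2^{jsq}\|\Delta_j f\|_p^q$ provided $s<M$ (for the low-$h$ part) — this is where the hypothesis $s<M$ in (i) is used. Summing in $j$ (using the $q$-triangle inequality / quasi-norm of $\ell^q$, and the fact that $\|\Delta_h^M\sum_j\Delta_j f\|_p\le$ an $\ell^p$-type sum) yields the bound. For part (ii), replace $M$ by $M+1$ throughout; then $0\le s-M<1< M+1$ so the same computation goes through with $s<M+1$ automatic.

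Next I would prove the reverse estimate $\|f\|_{\dot B_p^{s,q}}\lesssim \left(\int |h|^{-sq}\|\Delta_h^M f\|_p^q\,|h|^{-n}\,dh\right)^{1/q}$. Mimicking step (2) of the proof of Proposition~\ref{1.13}: write $\Phi_j\ast f(x)=\int \Phi_j(y)\,\tfrac1{c_M}\Delta_y^M f(x-\text{shift})\,dy$ using that $\widehat{\Phi_j}$ vanishes to high order at $0$ and is supported in an annulus, so one can factor $\widehat{\Phi_j}(\xi)=\widehat{\Phi_j}(\xi)\cdot m(2^{-j}\xi)$ where $m$ is chosen with $m(\xi)(e^{i\xi\cdot h}-1)^M$-type structure realizing the $M$-th difference; concretely, there is a kernel identity $\Phi_j\ast f=\int_{\mathbb R^n}K_j(y)\,\Delta_{2^{-j}y}^M f\,dy$ with $\|K_j\|_{L^1}$ bounded uniformly and $K_j$ rapidly decaying (this is the standard "$\Delta_h^M$ reconstructs Littlewood--Paley blocks" lemma, using $M< s$ so that enough vanishing moments are available when $p<1$ one also needs the $L^p$-maximal-function bound, i.e. $s>\sigma_p$). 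Then $\|\Phi_j\ast f\|_p\lesssim \int |K_j(y)|\,\|\Delta_{2^{-j}y}^M f\|_p\,dy$ (or its $p$-th power analogue when $p<1$), and multiplying by $2^{js}$, taking $\ell^q$ norms in $j$, and using Lemma~\ref{1.11}-type summation $\sum_j 2^{js}|K_j(\,\cdot\,)|$ together with a change of variables $h=2^{-j}y$ produces exactly $\left(\int |h|^{-sq}\|\Delta_h^M f\|_p^q\,|h|^{-n}\,dh\right)^{1/q}$, after the Hardy-type inequality for the resulting convolution in the $j$-variable. The main obstacle will be this reconstruction step in the quasi-Banach range $0<p<1$: the naive Minkowski integral inequality is unavailable, so one must either invoke the Fefferman--Stein vector-valued maximal inequality (which needs $s>\sigma_p$ to control the loss) or replace the integral estimate by its $L^p$-quasi-norm version with an extra summation, and carefully track that the constants stay finite precisely under $\sigma_p<s<M$. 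Once $n=1$, $q<\infty$ is done, the case $q=\infty$ follows by the obvious $\sup$-versions of all inequalities, and the passage to $\mathbb R^n$ is handled, as the author notes, by real interpolation between two such one-dimensional-type estimates together with the Marcinkiewicz theorem.
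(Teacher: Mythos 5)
The paper does not actually prove Theorem \ref{1.15}: its ``proof'' is a single sentence referring the reader to \cite{Trie}, paragraph 3.5.3. There is therefore no internal argument to compare yours against; what you have written is a sketch of the standard proof on which that reference is based, and in outline it is sound. Your direct estimate --- splitting $\int |h|^{-sq}\|\Delta_h^M(\Delta_j f)\|_p^q\,|h|^{-n}\,\mathrm{d}h$ at $|h|\sim 2^{-j}$, using a Bernstein-type bound $\|\Delta_h^M(\Delta_j f)\|_p\lesssim (2^j|h|)^M\|\Delta_j f\|_p$ on the band-limited pieces for small $|h|$ and the crude bound for large $|h|$ --- is exactly the right mechanism, and you correctly locate where $s<M$ (respectively $s<M+1$ in (ii)), where $s>0$, and where the $p$-triangle inequality for $p<1$ are each used. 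The converse estimate via a reconstruction identity $\Phi_j\ast f=\int K_j(y)\,\Delta_{2^{-j}y}^M f\,\mathrm{d}y$ (obtained by dividing $\widehat{\Phi}_j$ by a suitable average of $(e^{i\langle h,\xi\rangle}-1)^M$ over $|h|\sim 2^{-j}$, where one must verify non-vanishing on the annulus) together with a Peetre/Fefferman--Stein maximal-function argument in the range $p<1$ is likewise the standard route, and $s>\sigma_p$ is indeed the condition that keeps the maximal-function loss summable there.

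The one step you should not lean on is the closing reduction: ``once $n=1$ is done, pass to $\mathbb{R}^n$ by interpolation and the Marcinkiewicz theorem.'' You have inherited this from the paper's own remarks in the proofs of Proposition \ref{1.13} and Theorem \ref{1.14}, but it is not a genuine argument: real interpolation between Besov spaces moves the parameters $s$, $p$, $q$, not the dimension, and no interpolation device upgrades a one-dimensional equivalence of quasi-norms to an $n$-dimensional one. Every ingredient you list (Bernstein's inequality for band-limited functions, the reconstruction kernel, the vector-valued maximal inequality, Lemma \ref{1.11}-type summation) is available verbatim in $\mathbb{R}^n$, so the honest proof simply runs the whole argument in $\mathbb{R}^n$ from the start; with that correction your outline matches the treatment in \cite{Trie}.
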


\begin{proof}
For the proof, see for example ~\cite{Trie}, paragraph 3.5.3.
\end{proof}

\section{\textbf{Functions of bounded p-variation}}

\begin{quote}
Throughout this paragraph, $I$ denotes an interval of $\mathbb{R}$.
\end{quote}

\subsection{\textbf{General notions}}

\begin{quote}
In this subsection, we give the definitions of functions of bounded
p-variation $\mathcal{V}_p(I)$ and the spaces $\mathcal{BV}_p(I)$, $BV_p(I)$%
, of their equivalence classes with the equivalence relation of equality
almost everywhere, as well as the space of their primitives $BV_p^1(I)$, and
their properties that we will use later.
\end{quote}

\begin{definition}
\label{1.16} Let $p \in [1, +\infty[$, then a function $f: I \to \mathbb{R}$
is said to be of bounded p-variation, or briefly p-v b, if for all finite
strict real sequences $t_0 < t_1 < ... < t_N$ of $I$, there exists $c > 0$,
such that 
\begin{eqnarray*}
\sum_{k=1}^N | f(t_k) - f(t_{k-1}) |^p &\leq& c^p, \quad \text{or
equivalently} \\
\sup_{\{t_k\} \subset I} \left[ \sum_{k=1}^N | f(t_k) - f(t_{k-1}) |^p %
\right] &<& \infty.
\end{eqnarray*}
We denote the set of these functions by $\mathcal{V}_p(I)$, ($\mathcal{V}_p$
if $I = \mathbb{R}$), and the minimum of such constants $c$ with respect to $%
f$ by $\nu_p(f, I)$, ($\nu_p(f)$ if $I = \mathbb{R}$) 
\begin{equation*}
\nu_p(f, I) = \inf_c \left\{ c > 0 : \sum_{k=1}^N | f(t_k) - f(t_{k-1}) |^p
\leq c^p, \{t_i\}_{1 \leq i \leq n} \subset I \right\}.
\end{equation*}
\end{definition}

Definition \ref{1.16} is equivalent to the fact that for any family of
disjoint intervals $I_k = [a_k, b_k] \subset I$, we have 
\begin{equation*}
\left( \sum_{I_k} | f(a_k) - f(b_k) |^p \right)^{1/p} \leq c < \infty, \quad
(c > 0).
\end{equation*}
The space $\mathcal{V}_1(I)$, or simply $\mathcal{V}(I)$, is called the
space of functions of bounded variation on $I$, and $\mathcal{V}_\infty(I)$
is a Banach space for the norm 
\begin{equation*}
\| f \|_{\mathcal{V}_\infty(I)} = \nu_\infty(f, I) = \sup_{x \in I} |f(x)|.
\end{equation*}

\begin{definition}
\label{1.17} A function $f: A \to \mathbb{R}^n$, $A \subset \mathbb{R}^n$,
is said to be $\gamma$-Lipschitz, $\gamma \geq 0$, of order $\alpha$, $0 <
\alpha \leq 1$, if and only if 
\begin{equation*}
| f(x) - f(y) | \leq \gamma |x - y|^\alpha, \quad \text{for all } x, y \in A.
\end{equation*}
\end{definition}

The set of these functions is denoted by $\text{Lip}_\alpha(A)$, ($\text{Lip}%
_\alpha$ if $A = \mathbb{R}^n$). We also say that a function is Lipschitz if
it is $\gamma$-Lipschitz for some $\gamma \geq 0$, and we equip $\text{Lip}%
_\alpha(A)$ with the following norm 
\begin{equation*}
\| f \|_{\text{Lip}_\alpha(A)} = \sup_{x,y \in A, x \neq y} \frac{| f(x) -
f(y) |}{|x - y|^\alpha}.
\end{equation*}
A function $f: A \to \mathbb{R}^n$ is said to be locally $\gamma$-Lipschitz
if at every point in $A$ there exists a neighborhood where $f$ is $\gamma$%
-Lipschitz. Note that 
\begin{equation*}
B_\infty^{s,\infty}(\mathbb{R}) = \mathcal{C}^s(\mathbb{R}) = \text{Lip}_s(%
\mathbb{R}), \quad \text{if } 0 < s < 1.
\end{equation*}

\begin{proposition}[~\cite{Bo-Cr-Si-1}]
\label{1.18}  For all $x, y \in I$ and $p \in [1, +\infty[$%
, each element of $\mathcal{V}_p(I)$ is a bounded function; moreover, $%
\mathcal{V}_p(I)$ becomes a Banach space if equipped with the norm 
\begin{equation}
\| f \|_{\mathcal{V}_p(I)} = \sup_{x \in I} |f(x)| + \nu_p(f, I).
\label{[1.6]}
\end{equation}
\end{proposition}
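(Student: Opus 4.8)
The plan is to establish the two assertions in order: first that every $f \in \mathcal{V}_p(I)$ is bounded, and then that $\mathcal{V}_p(I)$ with the norm \eqref{[1.6]} is complete (the norm axioms being routine). For boundedness, fix $f \in \mathcal{V}_p(I)$ and a reference point $t_0 \in I$. For any $x \in I$, apply the defining inequality to the two-point partition $\{t_0, x\}$ (or $\{x, t_0\}$, depending on order), which gives $|f(x) - f(t_0)|^p \le \nu_p(f,I)^p$, hence $|f(x)| \le |f(t_0)| + \nu_p(f,I)$. This shows $\sup_{x \in I}|f(x)| < \infty$, so the first summand in \eqref{[1.6]} is finite and the norm is well-defined on $\mathcal{V}_p(I)$.

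Next I would verify that $\|\cdot\|_{\mathcal{V}_p(I)}$ is genuinely a norm. Positivity and homogeneity are immediate from the corresponding properties of $\sup$ and of $\nu_p$ (note $\nu_p(\lambda f, I) = |\lambda|\,\nu_p(f,I)$ directly from the definition). For the triangle inequality, the sup term is obvious; for $\nu_p$, given any partition $t_0 < \cdots < t_N$ of $I$, one writes $|(f+g)(t_k) - (f+g)(t_{k-1})| \le |f(t_k)-f(t_{k-1})| + |g(t_k)-g(t_{k-1})|$ and applies the triangle inequality in $\ell^p$ over $k = 1,\dots,N$, obtaining $\big(\sum_k |(f+g)(t_k)-(f+g)(t_{k-1})|^p\big)^{1/p} \le \nu_p(f,I) + \nu_p(g,I)$; taking the supremum over partitions yields $\nu_p(f+g,I) \le \nu_p(f,I) + \nu_p(g,I)$. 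Finally $\|f\|_{\mathcal{V}_p(I)} = 0$ forces $\sup|f| = 0$, so $f \equiv 0$.

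For completeness, let $(f_m)$ be Cauchy in $\mathcal{V}_p(I)$. Since $\|f_m - f_\ell\|_{\mathcal{V}_p(I)} \ge \sup_{x \in I}|f_m(x) - f_\ell(x)|$, the sequence is uniformly Cauchy, hence converges uniformly to some bounded function $f: I \to \mathbb{R}$. It remains to show $f \in \mathcal{V}_p(I)$ and $f_m \to f$ in the $\mathcal{V}_p$-norm. Fix $\varepsilon > 0$ and $M$ with $\nu_p(f_m - f_\ell, I) \le \varepsilon$ for $m, \ell \ge M$. For any fixed partition $t_0 < \cdots < t_N$ of $I$, we have $\big(\sum_{k=1}^N |(f_m - f_\ell)(t_k) - (f_m - f_\ell)(t_{k-1})|^p\big)^{1/p} \le \varepsilon$; letting $\ell \to \infty$ and using pointwise convergence, the finitely many terms pass to the limit and give $\big(\sum_{k=1}^N |(f_m - f)(t_k) - (f_m - f)(t_{k-1})|^p\big)^{1/p} \le \varepsilon$ for all $m \ge M$. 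Taking the supremum over all partitions yields $\nu_p(f_m - f, I) \le \varepsilon$; in particular $f = f_M - (f_M - f)$ is a difference of elements of $\mathcal{V}_p(I)$ (using the triangle inequality established above and $\nu_p(f_M - f, I) \le \varepsilon < \infty$), so $f \in \mathcal{V}_p(I)$, and combining with the uniform convergence of the sup term gives $\|f_m - f\|_{\mathcal{V}_p(I)} \to 0$.

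The only mildly delicate point—the main obstacle, such as it is—is the interchange of limits in the completeness argument: one must be careful to freeze the partition (hence a finite sum) \emph{before} letting $\ell \to \infty$, and only afterwards take the supremum over partitions, so that at no stage is an infinite process exchanged with another. Everything else is a direct unwinding of Definition \ref{1.16} together with the $\ell^p$ triangle inequality.
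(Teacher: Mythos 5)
Your proof is correct. The boundedness argument is exactly the paper's: a two-point partition gives $|f(x)-f(t_0)|\leq\nu_p(f,I)$, hence $\sup_I|f|\leq|f(t_0)|+\nu_p(f,I)<\infty$ (your use of an arbitrary reference point $t_0\in I$ is in fact slightly more careful than the paper's choice of $0$, which tacitly assumes $0\in I$). Where you go beyond the paper is that the paper stops there and simply asserts that the norm ``satisfies all conditions making $\mathcal{V}_p(I)$ a Banach space,'' whereas you actually verify the norm axioms (subadditivity of $\nu_p$ via the $\ell^p$ triangle inequality) and give the completeness argument. Your completeness proof is the standard and correct one, and you rightly identify the only delicate point: fixing a finite partition before passing to the limit in $\ell$, and only then taking the supremum over partitions, so that no two limiting processes are interchanged. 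In short, you prove what the paper proves the same way, and you supply the details the paper omits.
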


\begin{proof}
By a sequence with only two terms, we obtain $| f(t_k) - f(t_{k-1}) |^p \leq
(\nu_p(f, I))^p$, for all $t_k, t_{k-1} \in I$.

If we take $t_{k-1} = 0$ and $t_k = x$, then $| f(x) - f(0) | \leq \nu_p(f,
I)$, hence 
\begin{eqnarray*}
| f(x) | &=& | f(x) - f(0) + f(0) | \\
&\leq& | f(x) - f(0) | + | f(0) | \\
&\leq& \nu_p(f, I) + | f(0) | = C < \infty.
\end{eqnarray*}
Thus, each function in $\mathcal{V}_p(I)$ is bounded, and the norm (\ref%
{[1.6]}) satisfies all conditions making $\mathcal{V}_p(I)$ a Banach space.
\end{proof}

\subsection{\textbf{Functions of bounded p-variation as distributions}}

\begin{definition}[~\cite{Bo-Cr-Si-1} ]
\label{1.19} Let $p \in [1, +\infty]$, we denote by $%
\mathcal{BV}_p(I)$ the set of functions $f: \mathbb{R} \to \mathbb{R}$ such
that there exists a function $g \in \mathcal{V}_p(I)$ that coincides with $f$
almost everywhere, 
\begin{eqnarray*}
\mathcal{BV}_p(I) &=& \left\{ f: I \to \mathbb{R} ; \exists g \in \mathcal{V}%
_p(I), \text{ such that } f = g \ (a.e.) \right\}, \\
\varepsilon_p(f, I) &=& \inf \left\{ \nu_p(g, I) ; g \in \mathcal{V}_p(I), 
\text{ such that } g = f \ (a.e.) \right\}.
\end{eqnarray*}
\end{definition}

\begin{description}
\item[--] We denote by $BV_p(I)$ the quotient set with respect to the
equivalence relation "equality in $\mathcal{BV}_p(I)$ almost everywhere",
such that 
\begin{eqnarray*}
\dot{f} &=& \left\{ g \in \mathcal{BV}_p(I) ; g = f \ (a.e.) \right\}, \\
BV_p(I) &=& \left\{ \dot{f} ; f \in \mathcal{BV}_p(I) \right\} = \mathcal{BV}%
_p(I) / a.e.
\end{eqnarray*}

\item[--] If $h \in BV_p(I)$, we denote by $\varepsilon_p(h, I)$ the number $%
\varepsilon_p(f, I)$, for any representative $f$ of $h$.
\end{description}

\begin{definition}
\label{1.20} Let $f: I \to \mathbb{R}$ be a function having discontinuities
only of the first kind, then we say that $f$ is normalized if 
\begin{eqnarray*}
f(x) &=& \frac{1}{2} (f(x^+) + f(x^-)), \quad \text{for all } x \in 
\mathring{I}, \\
f(x) &=& \lim_{y \to x, y \in \mathring{I}} f(y), \quad \text{for all } y
\in I \cap \partial I,
\end{eqnarray*}
where $\partial I = \overline{I} \setminus \mathring{I}$ is the boundary of $%
I$, $\overline{I}$ is the closure, and $\mathring{I}$ is the interior, such
that 
\begin{equation*}
f(x^+) = \lim_{h > 0, h \to 0} f(x+h), \quad \text{and} \quad f(x^-) =
\lim_{h > 0, h \to 0} f(x-h).
\end{equation*}
\end{definition}

\begin{proposition}[~\cite{Bo-Cr-Si-1} ]
\label{1.21} If $f$ is a function in $\mathcal{V}_p(I)$,
then the function $\tilde{f}$ defined by 
\begin{eqnarray*}
\tilde{f}(x) &=& \frac{1}{2} (f(x^+) + f(x^-)), \quad \text{for all } x \in 
\mathring{I}, \\
\tilde{f}(x) &=& \lim_{y \to x, y \in \mathring{I}} f(y), \quad \text{for
all } x \in I \cap \partial I,
\end{eqnarray*}
is normalized, and belongs to $\mathcal{V}_p(I)$, and satisfies the
following inequalities 
\begin{equation*}
\nu_p(\tilde{f}, I) \leq \nu_p(f, I), \quad \text{and} \quad \sup_I |\tilde{f%
}| \leq \sup_I |f|.
\end{equation*}
\end{proposition}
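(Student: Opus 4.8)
The plan is to establish the three assertions in turn, the substance being the estimate $\nu_p(\tilde f,I)\le\nu_p(f,I)$. A preliminary point, implicit in the statement, is that every $f\in\mathcal V_p(I)$ has one‑sided limits $f(x^+),f(x^-)$ at each point, so that $\tilde f$ is well defined. I would prove this by contradiction: if, say, the left limit at $x_0$ did not exist, then $\liminf_{s\uparrow x_0}f(s)<\limsup_{s\uparrow x_0}f(s)$; picking $\alpha<\beta$ strictly between these values and setting $\delta=\beta-\alpha$, one can choose an increasing sequence $s_1<s_2<\cdots<s_{2N}<x_0$ on which $f$ alternates below $\alpha$ and above $\beta$, whence $\sum_{j=1}^{2N-1}|f(s_{j+1})-f(s_j)|^p\ge(2N-1)\delta^p$, exceeding $(\nu_p(f,I))^p$ for $N$ large — a contradiction. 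The same argument handles right limits and one‑sided limits at the endpoints of $I$, and Proposition \ref{1.18} guarantees these limits are finite.

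The key inequality is obtained via convexity. For each $t\in I$ set $a(t)=f(t^-)$, $b(t)=f(t^+)$ when $t$ is interior, and $a(t)=b(t)=f(t^+)$ (resp.\ $f(t^-)$) when $t$ is a left (resp.\ right) endpoint of $I$; in every case $\tilde f(t)=\tfrac12 a(t)+\tfrac12 b(t)$. Given a finite increasing sequence $t_0<\cdots<t_N$ in $I$, write $a_k=a(t_k)$, $b_k=b(t_k)$. Then $\tilde f(t_k)-\tilde f(t_{k-1})=\tfrac12(a_k-a_{k-1})+\tfrac12(b_k-b_{k-1})$, and since $p\ge1$ the convexity of $x\mapsto|x|^p$ gives $|\tilde f(t_k)-\tilde f(t_{k-1})|^p\le\tfrac12|a_k-a_{k-1}|^p+\tfrac12|b_k-b_{k-1}|^p$. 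Summing over $k$,
\[
\sum_{k=1}^N|\tilde f(t_k)-\tilde f(t_{k-1})|^p\le\tfrac12\sum_{k=1}^N|a_k-a_{k-1}|^p+\tfrac12\sum_{k=1}^N|b_k-b_{k-1}|^p .
\]
It thus suffices to bound each of the two sums on the right by $(\nu_p(f,I))^p$.

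I would do this for the $b$‑sum, the $a$‑sum being symmetric. Since each $b_k$ is a one‑sided limit of $f$ at $t_k$, for any $\varepsilon>0$ one can choose points $v_0<v_1<\cdots<v_N$ in $I$, with $v_k$ close enough to $t_k$ on the appropriate side (the right side, except possibly at a right endpoint where it must come from inside $I$), so that $|f(v_k)-b_k|<\varepsilon$ for all $k$. Then $|b_k-b_{k-1}|\le|f(v_k)-f(v_{k-1})|+2\varepsilon$, and the triangle inequality in $\ell^p(\{1,\dots,N\})$ yields $\big(\sum_k|b_k-b_{k-1}|^p\big)^{1/p}\le\big(\sum_k|f(v_k)-f(v_{k-1})|^p\big)^{1/p}+2\varepsilon N^{1/p}\le\nu_p(f,I)+2\varepsilon N^{1/p}$; letting $\varepsilon\to0$ with $N$ fixed gives $\sum_k|b_k-b_{k-1}|^p\le(\nu_p(f,I))^p$. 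Combining with the displayed inequality and taking the supremum over all finite increasing sequences yields $\nu_p(\tilde f,I)\le\nu_p(f,I)<\infty$, so $\tilde f\in\mathcal V_p(I)$. The bound $\sup_I|\tilde f|\le\sup_I|f|$ is then immediate: at interior $t$, $|\tilde f(t)|\le\tfrac12|f(t^+)|+\tfrac12|f(t^-)|\le\sup_I|f|$ since each one‑sided limit is a limit of values of $f$, and at a boundary point $\tilde f(t)$ is itself such a limit. Finally, $\tilde f$ is normalized in the sense of Definition \ref{1.20}: passing from $f$ to $\tilde f$ does not change the one‑sided limits, because as $s\downarrow x$ both $f(s^+)$ and $f(s^-)$ tend to $f(x^+)$ and $\tilde f(s)$ lies between them, so $\tilde f(x^+)=f(x^+)$ and likewise $\tilde f(x^-)=f(x^-)$; hence $\tfrac12(\tilde f(x^+)+\tilde f(x^-))=\tfrac12(f(x^+)+f(x^-))=\tilde f(x)$ at interior points, and the boundary condition follows the same way.

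I expect the main obstacle to be the bookkeeping in the approximation step — arranging the auxiliary points $v_0<\cdots<v_N$ (and $u_0<\cdots<u_N$ for the $a$‑sum) to be strictly increasing, to stay inside $I$, and to be simultaneously $\varepsilon$‑close to the prescribed one‑sided limits; the cases $t_0=\inf I\in I$ and $t_N=\sup I\in I$ require a little extra care, since there the relevant one‑sided limit is forced to be approached from inside $I$.
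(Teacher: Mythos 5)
Your proposal is correct, and it is more than the paper offers: for Proposition \ref{1.21} the paper gives no argument at all, only the citation to \cite{Bo-Cr-Si-1}, \cite{Bo-Cr-Si-2}, so there is no in-text proof to compare against. Your self-contained route is sound in all its steps. The preliminary oscillation argument (an alternating increasing sequence forcing the $p$-variation to exceed $(2N-1)^{1/p}(\beta-\alpha)$) correctly establishes that every element of $\mathcal{V}_p(I)$ has one-sided limits, which the statement of the proposition silently assumes; Proposition \ref{1.18} gives their finiteness. The decomposition $\tilde f=\tfrac12 a+\tfrac12 b$ into the left-limit and right-limit functions, combined with convexity of $t\mapsto|t|^p$, cleanly reduces the claim to bounding $\sum_k|b_k-b_{k-1}|^p$ and $\sum_k|a_k-a_{k-1}|^p$ separately by $\nu_p(f,I)^p$, and the $\varepsilon$-approximation by an honest increasing sequence $v_0<\cdots<v_N$ in $I$ followed by Minkowski in $\ell^p_N$ and $\varepsilon\to0$ at fixed $N$ does exactly that; the interleaving constraint $v_k\in\bigl(t_k,\min(t_{k+1},t_k+\eta_k)\bigr)$ that you flag as the main bookkeeping obstacle is indeed satisfiable, including at endpoints belonging to $I$. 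The normalization check via $\tilde f(x^\pm)=f(x^\pm)$ is also right. The only presentational caveat is that Definition \ref{1.20} presupposes discontinuities of the first kind, so your existence-of-limits lemma should be stated explicitly as the first step rather than left as a "preliminary point"; with that, the argument is complete.
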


\begin{proof}
For the proof, see ~\cite{Bo-Cr-Si-1}, ~\cite{Bo-Cr-Si-2}.
\end{proof}

\begin{proposition}[~\cite{Bo-Cr-Si-1}]
\label{1.22}  Let $p \in [1, +\infty]$, and $f \in BV_p(%
\mathbb{R})$, then $f$ has a unique normalized representative $\tilde{f} \in 
\mathcal{V}_p$, such that $\varepsilon_p(f) = \nu_p(\tilde{f})$.
\end{proposition}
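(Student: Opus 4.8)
The plan is to obtain $\tilde f$ by applying Proposition~\ref{1.21} to an arbitrary representative of $f$, and then to show that the normalized function so produced does not depend on the representative chosen. First I would fix some $g \in \mathcal{V}_p$ with $g = f$ almost everywhere (such a $g$ exists by the very definition of $BV_p(\mathbb{R})$). The key preliminary observation is that every function of bounded $p$-variation is regulated: at each point $x$ the one-sided limits $g(x^-)$ and $g(x^+)$ exist, since an oscillation of size $\delta>0$ on a one-sided neighbourhood of $x$ would produce, along a suitable monotone sequence $y_k \to x$, a sum $\sum_k |g(y_{k+1})-g(y_k)|^p = +\infty$, contradicting $\nu_p(g,\mathbb{R}) < \infty$. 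Consequently $g$ has at most countably many discontinuities (for each $m$, the points where the jump exceeds $1/m$ are isolated on bounded intervals, hence finite there), so the set on which $g$ differs from its normalization $\tilde g$ of Proposition~\ref{1.21} is a null set. Therefore $\tilde g = g = f$ almost everywhere, i.e.\ $\tilde g \in \mathcal{V}_p$ is a normalized representative of $f$; this gives existence.

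For uniqueness, I would take two normalized representatives $\tilde f_1, \tilde f_2 \in \mathcal{V}_p$ of $f$. They agree off a null set $Z$, and $\mathbb{R}\setminus Z$ is dense, so for each $x$ one may choose sequences approaching $x$ from the left and from the right inside $\mathbb{R}\setminus Z$; passing to the limit and using that both functions are regulated yields $\tilde f_1(x^{\pm}) = \tilde f_2(x^{\pm})$. The normalization identity $\tilde f_i(x) = \tfrac12(\tilde f_i(x^+) + \tilde f_i(x^-))$ then forces $\tilde f_1(x) = \tilde f_2(x)$ for every $x \in \mathbb{R}$ (the boundary clause in Definition~\ref{1.20} is vacuous since $\partial\mathbb{R} = \varnothing$). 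Thus the normalized representative is unique; denote it $\tilde f$.

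Finally, for the identity $\varepsilon_p(f) = \nu_p(\tilde f)$: the inequality $\varepsilon_p(f) \le \nu_p(\tilde f)$ is immediate because $\tilde f$ is one of the competitors in the infimum defining $\varepsilon_p$. Conversely, given any $g \in \mathcal{V}_p$ with $g = f$ a.e., its normalization $\tilde g$ is again a normalized representative of $f$, hence $\tilde g = \tilde f$ by the uniqueness just proved, and Proposition~\ref{1.21} gives $\nu_p(\tilde f,\mathbb{R}) = \nu_p(\tilde g,\mathbb{R}) \le \nu_p(g,\mathbb{R})$; taking the infimum over all such $g$ yields $\nu_p(\tilde f) \le \varepsilon_p(f)$, and equality follows.

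The step I expect to be the main obstacle is the regulated-function claim — specifically, making precise that bounded $p$-variation forbids genuine oscillation at a point, so that the discontinuity set is countable and therefore null; once this is in hand, everything else is formal bookkeeping with the normalization rule and the monotonicity estimate of Proposition~\ref{1.21}. (For $p = \infty$ one should read $\mathcal{V}_\infty$ through the same one-sided-limit property, or simply restrict to $p < \infty$ as elsewhere in this chapter.)
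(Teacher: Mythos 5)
The paper itself offers no proof of this proposition — it is stated with a citation to~\cite{Bo-Cr-Si-1} and then used as a black box — so there is nothing in-text to compare your argument against. On its own merits your proof is correct and complete: the oscillation/summation argument showing that finite $p$-variation forces one-sided limits to exist everywhere is right; countability of the discontinuity set then makes the normalization $\tilde g$ of Proposition~\ref{1.21} an a.e.\ representative of $f$; uniqueness follows from density of the complement of the exceptional null set together with the identity $\tilde f_i(x)=\tfrac12(\tilde f_i(x^+)+\tilde f_i(x^-))$; and $\varepsilon_p(f)=\nu_p(\tilde f)$ drops out of the monotonicity $\nu_p(\tilde g)\le\nu_p(g)$ in Proposition~\ref{1.21} combined with uniqueness. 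Two small repairs. First, ``isolated on bounded intervals, hence finite there'' is not a valid inference — a set of isolated points in a bounded interval can be infinite, e.g.\ $\{1/n\}_{n\ge 1}$; the correct reason the set of points with oscillation exceeding $1/m$ is finite is the same estimate you already used for the one-sided limits: $N$ such points produce a partition whose $p$-variation sum exceeds $N(2m)^{-p}$, contradicting $\nu_p(g)<\infty$ for large $N$. Second, your closing caveat about $p=\infty$ is well taken and worth keeping: with the paper's definition of $\mathcal{V}_\infty$ as merely bounded functions the regulated-function step genuinely fails (e.g.\ $\sin(1/x)$ is bounded but has no normalized a.e.\ modification), so the statement at $p=\infty$ requires reading $\mathcal{V}_\infty$ as bounded regulated functions, as in the cited source.
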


We thus consider the space $BV_p(\mathbb{R})$ as a Banach space of
distributions, equipped with the following norm 
\begin{eqnarray*}
\| f \|_{BV_p(\mathbb{R})} &=& \varepsilon_p(f) + \| f \|_\infty = \nu_p(%
\tilde{f}) + \sup_{x \in \mathbb{R}} |\tilde{f}(x)|, \quad \text{if } p <
\infty, \\
\| f \|_{BV_\infty(\mathbb{R})} &=& \| f \|_\infty = \sup_{x \in \mathbb{R}}
|\tilde{f}(x)|.
\end{eqnarray*}

\begin{definition}
\label{1.23} Let $p \in [1, +\infty]$, then a function $f: I \to \mathbb{R}$
belongs to $BV_p^1(I)$ if there exist $\alpha, x_0 \in \mathbb{R}$ and $g
\in BV_p(I)$, such that for all $x \in I$, we have 
\begin{equation}
f(x) = \alpha + \int_{x_0}^x g(t) \,\mathrm{d}t.  \label{[1.7]}
\end{equation}
\end{definition}

If (\ref{1.7}) is verified, then $f$ is a Lipschitz continuous function,
and we equip $BV_p^1(I)$ with the norm $\| f \|_{BV_p^1(I)} = |f(x_0)| + \|
f^{\prime }\|_{BV_p(I)}$, for which $BV_p^1(I)$ becomes a Banach space, and
for each point $x_0 \in I$, we obtain an equivalent norm.

\begin{proposition}[~\cite{Bo-Cr-Si-2}]
\label{1.24}  For any interval $I$ of $\mathbb{R}$, the
space $\mathcal{V}_p(I)$, $p \geq 1$, is a Banach algebra for pointwise
multiplication of functions, such that 
\begin{equation*}
\| f g \|_{\mathcal{V}_p(I)} \leq \| f \|_{\mathcal{V}_p(I)} \| g \|_{%
\mathcal{V}_p(I)}, \quad \text{for all } f, g \in \mathcal{V}_p(I).
\end{equation*}
\end{proposition}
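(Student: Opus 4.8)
The plan is to show that $\mathcal{V}_p(I)$ is closed under pointwise multiplication with the stated submultiplicative estimate on the norm $\|\cdot\|_{\mathcal{V}_p(I)} = \sup_I|\cdot| + \nu_p(\cdot,I)$ from \eqref{[1.6]}; completeness of $\mathcal{V}_p(I)$ is already granted by Proposition \ref{1.18}, so the only genuinely new content is the product inequality. The key device is the elementary telescoping identity for the increment of a product,
\begin{equation*}
f(t_k)g(t_k) - f(t_{k-1})g(t_{k-1}) = f(t_k)\bigl(g(t_k)-g(t_{k-1})\bigr) + \bigl(f(t_k)-f(t_{k-1})\bigr)g(t_{k-1}),
\end{equation*}
which converts an increment of $fg$ into a combination of an increment of $g$ (weighted by a value of $f$) and an increment of $f$ (weighted by a value of $g$).

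First I would fix an arbitrary finite strict sequence $t_0 < t_1 < \dots < t_N$ in $I$, apply the identity above termwise, and then use the triangle inequality in $\ell^p$ (Minkowski) to bound
\begin{equation*}
\left(\sum_{k=1}^N |f(t_k)g(t_k)-f(t_{k-1})g(t_{k-1})|^p\right)^{1/p} \le \left(\sum_{k=1}^N |f(t_k)|^p|g(t_k)-g(t_{k-1})|^p\right)^{1/p} + \left(\sum_{k=1}^N |f(t_k)-f(t_{k-1})|^p|g(t_{k-1})|^p\right)^{1/p}.
\end{equation*}
Next I would pull the sup-norms out: bound $|f(t_k)| \le \sup_I|f|$ and $|g(t_{k-1})| \le \sup_I|g|$ (both finite by Proposition \ref{1.18}), so the right-hand side is at most $(\sup_I|f|)\,\nu_p(g,I) + \nu_p(f,I)\,(\sup_I|g|)$ after taking the supremum over all partitions. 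This already shows $fg \in \mathcal{V}_p(I)$ and gives $\nu_p(fg,I) \le (\sup_I|f|)\,\nu_p(g,I) + (\sup_I|f|)... $ wait --- more precisely $\nu_p(fg,I) \le (\sup_I|f|)\,\nu_p(g,I) + (\sup_I|g|)\,\nu_p(f,I)$. For the sup-norm part one simply uses $\sup_I|fg| \le (\sup_I|f|)(\sup_I|g|)$.

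Finally I would assemble the two pieces:
\begin{align*}
\|fg\|_{\mathcal{V}_p(I)} &= \sup_I|fg| + \nu_p(fg,I) \\
&\le (\sup_I|f|)(\sup_I|g|) + (\sup_I|f|)\,\nu_p(g,I) + (\sup_I|g|)\,\nu_p(f,I) \\
&\le (\sup_I|f|)(\sup_I|g|) + (\sup_I|f|)\,\nu_p(g,I) + (\sup_I|g|)\,\nu_p(f,I) + \nu_p(f,I)\,\nu_p(g,I) \\
&= \bigl(\sup_I|f| + \nu_p(f,I)\bigr)\bigl(\sup_I|g| + \nu_p(g,I)\bigr) = \|f\|_{\mathcal{V}_p(I)}\|g\|_{\mathcal{V}_p(I)},
\end{align*}
where the middle inequality just adds the nonnegative term $\nu_p(f,I)\nu_p(g,I)$ to complete the factorization. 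There is no real obstacle here; the only point requiring a little care is the correct application of Minkowski's inequality with exponent $p \ge 1$ (which is exactly why the hypothesis $p \ge 1$ is needed — for $p<1$ the $\ell^p$ triangle inequality fails and one would only get a quasi-norm estimate), together with the legitimate interchange of the finite sum with the supremum over partitions. The case $p = \infty$ is handled identically, replacing the $\ell^p$ sums by suprema over $k$.
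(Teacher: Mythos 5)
Your proof is correct and follows essentially the same route as the paper: the telescoping identity for the increment of a product, Minkowski's inequality in $\ell^p$, extraction of the sup-norms to get $\nu_p(fg,I) \le (\sup_I|f|)\,\nu_p(g,I) + (\sup_I|g|)\,\nu_p(f,I)$, and then adding the nonnegative term $\nu_p(f,I)\,\nu_p(g,I)$ to complete the factorization. If anything, your final chain of inequalities is cleaner than the paper's, whose displayed assembly contains a typographical slip (spurious extra summands $\sup_I|f| + \sup_I|g|$) that your version avoids.
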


\begin{proof}
Let $x_0 < x_1 < ... < x_n$ be a finite sequence in $I$ and $f, g \in 
\mathcal{V}_p(I)$. Since $p \geq 1$, and using Minkowski's inequality, we
obtain 
\begin{eqnarray*}
\left( \sum_{j=1}^N | f g(x_j) - f g(x_{j-1}) |^p \right)^{1/p} &\leq&
\left( \sum_{j=1}^N | f(x_j) (g(x_j) - g(x_{j-1})) |^p \right)^{1/p} \\
&+& \left( \sum_{j=1}^N | g(x_{j-1}) (f(x_j) - f(x_{j-1})) |^p \right)^{1/p}
\\
&\leq& \sup_I |f| \left( \sum_{j=1}^N | g(x_j) - g(x_{j-1}) |^p
\right)^{1/p} + \sup_I |g| \left( \sum_{j=1}^N | f(x_j) - f(x_{j-1}) |^p
\right)^{1/p} \\
&\leq& \sup_I |f| \nu_p(g, I) + \sup_I |g| \nu_p(f, I),
\end{eqnarray*}
hence 
\begin{eqnarray*}
\nu_p(f g, I) &=& \sup_I \left( \sum_{j=1}^N | f g(x_j) - f g(x_{j-1}) |^p
\right)^{1/p} \leq \sup_I |f| \nu_p(g, I) + \sup_I |g| \nu_p(f, I),
\end{eqnarray*}
and thus 
\begin{eqnarray*}
\| f g \|_{\mathcal{V}_p(I)} &=& \nu_p(f g, I) + \sup_I |f| \sup_I |g| \\
&\leq& \sup_I |f| \nu_p(g, I) + \sup_I |g| \nu_p(f, I) + \sup_I |f| \sup_I
|g| \\
&\leq& \sup_I |f| \nu_p(g, I) + \sup_I |g| \nu_p(f, I) + \sup_I |f| + \sup_I
|g| + \nu_p(g, I) \nu_p(f, I) \\
&=& \left( \sup_I |f| + \nu_p(f, I) \right) \left( \sup_I |g| + \nu_p(g, I)
\right) = \| f \|_{\mathcal{V}_p(I)} \| g \|_{\mathcal{V}_p(I)}.
\end{eqnarray*}
Therefore, $\| f g \|_{\mathcal{V}_p(I)} \leq \| f \|_{\mathcal{V}_p(I)} \|
g \|_{\mathcal{V}_p(I)}$, for all $f, g \in \mathcal{V}_p(I)$.
\end{proof}

\chapter{\textbf{Composition of Operators in }$BV_p^1(I)$}

In this chapter, we give a reminder of some basic notions concerning
composition operators, then we present the properties of the spaces $%
BV_p^1(I)$, where $I$ is an interval of $\mathbb{R}$, then we present some
fundamental results of functional calculus on these spaces, then we present
two fundamental Theorems \ref{2.9} and \ref{2.13}, due to the works of 
\textbf{~\cite{Bo-Cr-Si-1}}.

Finally, we give our contribution, namely Theorem \ref{2.15} which is a
generalization of a fundamental inequality (Theorem \ref{2.9}), based on
Lemma \ref{2.14} which is an algorithm giving the derivative of the
composition of $n$ functions.

\section{\textbf{Reminder}}

\begin{definition}
\label{2.1} Let $E$ be a functional space and let $\phi$ be a real-valued
function, we define the composition operator $T_\phi$ associated with $\phi$
by 
\begin{equation*}
T_\phi(f) = \phi \circ f, \quad \text{for all } f \in E.
\end{equation*}
\end{definition}

In general, $T_\phi$ is nonlinear, and the \textbf{P}roblem of \textbf{S}%
uperposition \textbf{O}perators (\textbf{P.S.O}) for $E$ consists in finding
the set $S(E)$ of real-valued functions $\phi$ such that $T_\phi(E)
\subseteq E$

\begin{center}
$S(E) = \left\{ \phi: \mathbb{R} \to \mathbb{R} ; T_\phi(E) \subseteq E
\right\}$.
\end{center}

If $T_\phi(E) \subseteq E$, then we say that the composition operator $$
T_\phi: E \to E$$ operates on the functional space $E$.

\begin{remark}[~\cite{Bo-Cr-Si-1} ]
\label{2.2} We agree to say that a superposition operator 
$T_f: E \to E$ satisfies the norm composition inequality property for a
normed space $E$ if it verifies 
\begin{equation*}
\| T_f(g) \|_E \leq c_f (1 + \| g \|_E), \quad (c_f > 0), \quad \text{for
all } g \in E.
\end{equation*}
Note that if a composition operator satisfies the norm inequality property
in the normed space $E$, this implies that it operates on $E$.
\end{remark}

\section{\textbf{Properties of the spaces }$BV_p^1(I)$}

\begin{quote}
In all that follows, $I$ denotes an interval of $\mathbb{R}$.\\ All results in
this paragraph are due to ~\cite{Bo-Cr-Si-1} and ~\cite{Bo-Cr-Si-2}.
\end{quote}

\begin{proposition}[~\cite{Bo-Cr-Si-2}]
\label{2.3}  If $f \in BV_p^1(I)$, then $f$ can be
extended uniquely to a function in $BV_p^1(\overline{I})$ with the same norm
as $f$ in $BV_p^1(I)$.
\end{proposition}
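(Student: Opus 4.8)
The plan is to exploit that a function in $BV_p^1(I)$ is Lipschitz continuous (as observed just after Definition \ref{1.23}), hence uniformly continuous, so it possesses a unique continuous extension $\bar f$ to the closure $\overline I$; the content of the statement is then to check that this $\bar f$ again has the integral representation of Definition \ref{1.23} and that its norm is unchanged. Since $\overline I$ differs from $I$ by at most its two (finite) endpoints, it suffices to adjoin one of them, say a left endpoint $a\notin I$, the other being handled identically. We may also assume $p<\infty$, for when $p=\infty$ the space $BV_\infty(\cdot)$ is isometrically $L^\infty(\cdot)$, adjoining two points of Lebesgue measure zero alters neither the class of $g$ nor its norm, and the conclusion is then immediate.

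Write $f(x)=\alpha+\int_{x_0}^x g(t)\,\mathrm{d}t$ with $g\in BV_p(I)$, and let $\tilde g\in\mathcal{V}_p(I)$ be the normalized representative furnished by Proposition \ref{1.22}, so $\|g\|_{BV_p(I)}=\nu_p(\tilde g,I)+\sup_I|\tilde g|$. First I would record the elementary fact that every element of $\mathcal{V}_p(I)$ with $p<\infty$ has a one-sided limit at each endpoint of $I$ (from inside $I$): were the limit at $a$ to fail to exist, $\tilde g$ would oscillate there, producing arbitrarily many consecutive increments bounded below by a fixed positive constant, which contradicts the finiteness of the $p$-variation. In particular $L:=\lim_{y\to a^+,\,y\in I}\tilde g(y)$ exists; define $\bar g$ on $\overline I$ by $\bar g|_I=\tilde g$, $\bar g(a)=L$, and symmetrically at the other endpoint. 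By construction $\bar g$ satisfies the normalization conditions of Definition \ref{1.20} on $\overline I$, agrees with $g$ almost everywhere, and is right-continuous at $a$.

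Next I would verify $\bar g\in\mathcal{V}_p(\overline I)$ with $\nu_p(\bar g,\overline I)=\nu_p(\tilde g,I)$ and $\sup_{\overline I}|\bar g|=\sup_I|\tilde g|$. In each case one inequality is trivial, since $\bar g|_I=\tilde g$ and $|\bar g(a)|=|L|\le\sup_I|\tilde g|$. For the remaining $p$-variation bound, take any strictly increasing finite family $t_0<\cdots<t_N$ in $\overline I$; if $t_0=a$ (and likewise if $t_N$ equals a right endpoint that lies outside $I$) replace $t_0$ by $t_0+\varepsilon$ (and $t_N$ by $t_N-\varepsilon$) with $\varepsilon>0$ small enough that the points remain strictly increasing and inside $I$, so the corresponding variation sum is $\le\nu_p(\tilde g,I)^p$; letting $\varepsilon\to0^+$ and using the one-sided continuity of $\bar g$ at the adjoined endpoints shows the original sum is $\le\nu_p(\tilde g,I)^p$ too. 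Since $\bar g$ is normalized, it is its own normalized representative, whence $\bar g\in BV_p(\overline I)$ with $\|\bar g\|_{BV_p(\overline I)}=\nu_p(\bar g,\overline I)+\sup_{\overline I}|\bar g|=\|g\|_{BV_p(I)}$.

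Finally, because $\bar g=g$ almost everywhere, the function $x\mapsto\alpha+\int_{x_0}^x\bar g(t)\,\mathrm{d}t$ on $\overline I$ coincides with $f$ on $I$ and, being continuous, equals the continuous extension $\bar f$ on all of $\overline I$; thus $\bar f$ has the form of Definition \ref{1.23} with $\bar f'=\bar g\in BV_p(\overline I)$, so $\bar f\in BV_p^1(\overline I)$, and since $x_0\in I$,
\[
\|\bar f\|_{BV_p^1(\overline I)}=|\bar f(x_0)|+\|\bar g\|_{BV_p(\overline I)}=|f(x_0)|+\|g\|_{BV_p(I)}=\|f\|_{BV_p^1(I)}.
\]
Uniqueness is immediate: any element of $BV_p^1(\overline I)$ restricting to $f$ on $I$ is continuous, hence determined on $\overline I$ by its values on the dense subset $I$. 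The one step I expect to require genuine care is the equality $\nu_p(\bar g,\overline I)=\nu_p(\tilde g,I)$ — that is, the partition-approximation argument, which rests on having arranged $\bar g$ to be one-sidedly continuous at the adjoined endpoints by means of the one-sided limits of $p$-variation functions; everything else is routine.
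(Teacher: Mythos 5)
Your argument is correct. Note that the paper itself gives no proof of Proposition \ref{2.3} — it simply refers the reader to \cite{Bo-Cr-Si-2} — so there is no in-text argument to compare against; your proposal actually supplies the missing details, and it does so along the natural lines one would expect from the surrounding machinery. The two load-bearing points are exactly the ones you flag: (a) a function of finite $p$-variation ($p<\infty$) has one-sided limits at the endpoints, which you justify correctly by the oscillation argument (infinitely many consecutive increments of size $\geq\delta$ would force $\nu_p=\infty$); and (b) defining the boundary value as that one-sided limit makes the extension $\bar g$ one-sidedly continuous there, so the partition-perturbation argument gives $\nu_p(\bar g,\overline I)=\nu_p(\tilde g,I)$ rather than merely $\geq$. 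Combined with Proposition \ref{1.22} (the normalized representative attains $\varepsilon_p$, and $\bar g$ is normalized, hence is its own normalized representative), this yields equality of the $BV_p$ norms, and the rest — Lipschitz continuity of $f$, density of $I$ in $\overline I$ for uniqueness, and invariance of the integral representation under an a.e.\ modification of $g$ — is routine as you say. The only cosmetic caveat is the $p=\infty$ case, where the paper's notion of normalized representative is itself shaky (bounded functions need not have one-sided limits), but your reduction to the $L^\infty$ description is consistent with the norm the paper actually uses for $BV_\infty$.
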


\begin{proof}
For the proof, see ~\cite{Bo-Cr-Si-2}.
\end{proof}

By the above proposition, and exploiting affine transformations, the study
of the spaces $BV_p^1(I)$ can be reduced to the following three cases: 
\begin{equation*}
I = \mathbb{R}, \quad I = [0, +\infty[, \quad I = [0, 1].
\end{equation*}

\begin{proposition}
\label{2.4} The homogeneity property 
\begin{equation*}
\| f(\lambda \cdot)^{\prime }\|_{BV_p} = \lambda \| f^{\prime }\|_{BV_p},
\quad \text{for all } \lambda > 0
\end{equation*}
is verified for each function $f$ in $BV_p^1(\mathbb{R})$ or $BV_p^1([0,
+\infty[)$.
\end{proposition}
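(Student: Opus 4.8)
The statement to prove is the homogeneity identity
\begin{equation*}
\| f(\lambda \cdot)^{\prime}\|_{BV_p} = \lambda \| f^{\prime}\|_{BV_p}, \quad \lambda > 0,
\end{equation*}
for $f \in BV_p^1(\mathbb{R})$ or $f \in BV_p^1([0,+\infty[)$. The plan is to reduce everything to the behaviour of the $\mathcal{V}_p$-seminorm $\nu_p$ and the sup-norm under the dilation $x \mapsto \lambda x$, since by Definition \ref{1.23} and the norm on $BV_p(I)$ we have $\|g\|_{BV_p(I)} = \varepsilon_p(g,I) + \|g\|_\infty$, and $\varepsilon_p$ is computed through $\nu_p$ of the normalized representative.

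First I would set $g = f(\lambda \cdot)$ and compute $g^{\prime}(x) = \lambda f^{\prime}(\lambda x)$ by the chain rule; note $f$ is Lipschitz (hence differentiable a.e. with $f^{\prime} \in BV_p$) by the remark after Definition \ref{1.23}, so this is legitimate. Thus $g^{\prime} = \lambda \cdot (f^{\prime} \circ m_\lambda)$ where $m_\lambda(x) = \lambda x$. The key elementary fact is that precomposition with the bijection $m_\lambda$ preserves the $\mathcal{V}_p$-seminorm: for any finite increasing sequence $t_0 < \cdots < t_N$ in $I$, the points $\lambda t_0 < \cdots < \lambda t_N$ also lie in $I$ (here we use that $I = \mathbb{R}$ or $I = [0,+\infty[$ is invariant under $x \mapsto \lambda x$ for $\lambda > 0$ — this is exactly why these two cases are singled out and $[0,1]$ is excluded), and the correspondence is a bijection between admissible partitions. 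Hence $\sum_k |(f^{\prime}\circ m_\lambda)(t_k) - (f^{\prime}\circ m_\lambda)(t_{k-1})|^p = \sum_k |f^{\prime}(\lambda t_k) - f^{\prime}(\lambda t_{k-1})|^p$, and taking suprema gives $\nu_p(f^{\prime}\circ m_\lambda, I) = \nu_p(f^{\prime}, I)$; likewise $\sup_I |f^{\prime}\circ m_\lambda| = \sup_I |f^{\prime}|$. Then scaling by the constant $\lambda$ multiplies both $\nu_p$ and the sup-norm by $\lambda$ (positive homogeneity of a seminorm and of $\|\cdot\|_\infty$), so $\nu_p(g^{\prime}, I) = \lambda \nu_p(f^{\prime}, I)$ and $\|g^{\prime}\|_\infty = \lambda \|f^{\prime}\|_\infty$.

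The one genuine subtlety — and the step I expect to be the main obstacle — is that the $BV_p$ norm is defined on equivalence classes via the \emph{normalized} representative (Proposition \ref{1.22}), so I must check that normalization commutes with the two operations (dilation and scalar multiplication). For scalar multiplication by $\lambda > 0$ this is immediate from the formulas in Definition \ref{1.20}, since one-sided limits are linear. For the dilation $m_\lambda$: if $\tilde{u}$ is the normalized representative of $u = f^{\prime}$, then $\tilde{u} \circ m_\lambda$ has the same one-sided limits at $\lambda^{-1}x$ as $\tilde u$ has at $x$ (continuity of $m_\lambda$ and its inverse), so $\tilde{u}\circ m_\lambda$ is normalized and is the normalized representative of $u \circ m_\lambda = (f^{\prime}) \circ m_\lambda = (f(\lambda\cdot))^{\prime}/\lambda$. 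Combining, the normalized representative of $g^{\prime} = \lambda (f^{\prime}\circ m_\lambda)$ is $\lambda(\tilde{f^{\prime}} \circ m_\lambda)$, and by Proposition \ref{1.22} together with the seminorm computation above,
\begin{equation*}
\|g^{\prime}\|_{BV_p} = \nu_p\bigl(\lambda(\tilde{f^{\prime}}\circ m_\lambda)\bigr) + \|\lambda(\tilde{f^{\prime}}\circ m_\lambda)\|_\infty = \lambda\nu_p(\tilde{f^{\prime}}) + \lambda\|\tilde{f^{\prime}}\|_\infty = \lambda\|f^{\prime}\|_{BV_p},
\end{equation*}
which is the claim. (In the case $p = \infty$ the same argument works, dropping the $\nu_p$ term.) Finally I would remark explicitly that the restriction to $I = \mathbb{R}$ and $I = [0,+\infty[$ is essential: these are the intervals invariant under positive dilations, whereas $[0,1]$ is not, so the bijection-of-partitions argument would fail there.
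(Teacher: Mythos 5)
Your proof is correct and complete. Note that the paper states Proposition \ref{2.4} without any proof at all, so there is no argument of the author's to compare against; your route --- writing $(f(\lambda\cdot))' = \lambda\,(f'\circ m_\lambda)$, observing that $m_\lambda$ is an increasing bijection of $\mathbb{R}$ (or of $[0,+\infty[$) onto itself so that it induces a bijection of admissible partitions and hence preserves $\nu_p$ and the sup-norm, and then invoking positive homogeneity --- is the natural one and fills the gap. Your attention to the normalized representative (checking that normalization commutes with precomposition by an increasing homeomorphism and with multiplication by $\lambda>0$, so that Proposition \ref{1.22} applies) and your remark that the dilation-invariance of the underlying interval is exactly what excludes $I=[0,1]$ are both points the paper leaves implicit, and they are handled correctly.
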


Thus, the spaces $BV_p^1(\mathbb{R})$ and $BV_p^1([0, +\infty[)$ can be seen
as analogous to the homogeneous Sobolev space 
\begin{equation*}
\dot{W}^{1,p}(\mathbb{R}) = \{ f : f^{\prime p}(\mathbb{R}) \},
\end{equation*}
equipped with the semi-norm $\| f^{\prime }\|_p$, while the usual Sobolev
space $W^{1,p}(\mathbb{R})$ is non-homogeneous: $$W^{1,p}(\mathbb{R}) = \dot{W%
}^{1,p}(\mathbb{R}) \cap L^p(\mathbb{R}).$$

\begin{proposition}[~\cite{Bo-Cr-Si-2}]\text{ \ }
\label{2.5} 
\vspace{-20pt}
\begin{itemize}
\item[(i)] $BV_p^1(I) \cap L^p(I) = BV_p^1(I)$ if and only if $I$ is bounded.

\item[(ii)] $BV_p^1(I) \cap L^p(I)$ injects continuously into $BV_p(I)$.
\end{itemize}
\end{proposition}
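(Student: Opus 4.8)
\emph{Part (i).} If $I$ is bounded and $f\in BV_p^1(I)$, then $f'=g$ for some $g\in BV_p(I)$, which is bounded by Proposition~\ref{1.18}; hence $f$ is $\|g\|_\infty$-Lipschitz on $I$, so bounded, and $\int_I|f|^p\le|I|\,\|f\|_\infty^p<\infty$, i.e.\ $f\in L^p(I)$. Thus $BV_p^1(I)\subseteq L^p(I)$ and the stated equality holds. Conversely, if $I$ is unbounded, any nonzero constant $f\equiv c$ satisfies $f\in BV_p^1(I)$ (take $\alpha=c$ and $g\equiv0$ in Definition~\ref{1.23}) but $\int_I|c|^p\,\mathrm{d}x=+\infty$, so $f\notin L^p(I)$ and $BV_p^1(I)\cap L^p(I)\subsetneq BV_p^1(I)$. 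This proves the equivalence.

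\emph{Part (ii): reductions and the $L^\infty$-bound.} By Proposition~\ref{2.3} it suffices to treat $I=\mathbb{R}$, $I=[0,+\infty[$ and $I=[0,1]$. For $I=[0,1]$ one even has $BV_p^1(I)\hookrightarrow BV_p(I)$ directly, since on a bounded interval a $\|f'\|_\infty$-Lipschitz function satisfies $\|f\|_\infty\le|f(x_0)|+\|f'\|_\infty|I|$ and $\nu_p(f,I)\le\|f'\|_\infty|I|$, both $\lesssim\|f\|_{BV_p^1(I)}$. So fix $I=\mathbb{R}$ (the half-line is identical) and $f\in BV_p^1(\mathbb{R})\cap L^p(\mathbb{R})$; put $L=\|f'\|_\infty$ and note $L\le\|f'\|_{BV_p(\mathbb{R})}\le\|f\|_{BV_p^1(\mathbb{R})}$. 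Then $f$ is $L$-Lipschitz, in particular uniformly continuous, so $f(x)\to0$ as $|x|\to\infty$. Moreover, if $|f(x_0)|=M$ then, by the Lipschitz property, $|f|\ge M/2$ on an interval of length $M/(2L)$, whence $M^{p+1}\le 2^{p+1}L\,\|f\|_{L^p}^p$ and therefore $\|f\|_\infty=M\le 2\bigl(L\,\|f\|_{L^p}^p\bigr)^{1/(p+1)}\lesssim\|f\|_{BV_p^1(\mathbb{R})}+\|f\|_{L^p(\mathbb{R})}$ by the weighted arithmetic--geometric mean inequality.

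\emph{Part (ii): the $p$-variation bound.} It remains to estimate $\nu_p(f,\mathbb{R})$. Since inserting a node where $f$ overshoots its endpoint values increases $\sum_k|f(t_k)-f(t_{k-1})|^p$ for $p\ge1$, while refining within a monotone stretch decreases it, the supremum defining $\nu_p(f,\mathbb{R})$ is computed over partitions whose nodes are consecutive local extrema of $f$. Writing $\{I_j\}$ for the maximal intervals of monotonicity of $f$ (including the two unbounded end stretches, on which $f$ runs to $0$), $v_{j-1},v_j$ for the endpoint values of $f$ on $I_j$, $\delta_j=|v_j-v_{j-1}|$ for its oscillation there, $\rho_j=\int_{I_j}|f|^p$, and $G_j=\|f'\|_{L^\infty(I_j)}$, one gets $\nu_p(f,\mathbb{R})^p\le\sum_j\delta_j^p$ and $\sum_j\rho_j=\|f\|_{L^p(\mathbb{R})}^p$. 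On each $I_j$ the function $f$ is monotone, so the change of variable $u=f(t)$ together with $|f'|\le G_j$ gives $\rho_j\ge G_j^{-1}\int_{\min(v_{j-1},v_j)}^{\max(v_{j-1},v_j)}|u|^p\,\mathrm{d}u\ge c_p\,G_j^{-1}\delta_j^{p+1}$ for a constant $c_p>0$, i.e.\ $\delta_j^{p+1}\le c_p^{-1}G_j\rho_j$. Next, $f'$ has constant sign on each $I_j$ and this sign alternates across consecutive extrema; picking in each $I_j$ a point where $|f'|$ is close to $G_j$ yields a partition of $\mathbb{R}$ proving $\sum_j G_j^p\lesssim\|f'\|_{BV_p(\mathbb{R})}^p\lesssim\|f\|_{BV_p^1(\mathbb{R})}^p$. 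Finally, raising $\delta_j^{p+1}\le c_p^{-1}G_j\rho_j$ to the power $p/(p+1)$, summing in $j$ and applying Hölder's inequality with exponents $p+1$ and $(p+1)/p$,
\[
\nu_p(f,\mathbb{R})^p\le\sum_j\delta_j^p\lesssim\Bigl(\sum_j G_j^p\Bigr)^{1/(p+1)}\Bigl(\sum_j\rho_j\Bigr)^{p/(p+1)}\lesssim\|f\|_{BV_p^1(\mathbb{R})}^{\,p/(p+1)}\,\|f\|_{L^p(\mathbb{R})}^{\,p^2/(p+1)},
\]
so $\nu_p(f,\mathbb{R})\lesssim\|f\|_{BV_p^1(\mathbb{R})}+\|f\|_{L^p(\mathbb{R})}$, again by weighted AM--GM. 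Combined with the $L^\infty$-bound this gives $\|f\|_{BV_p(\mathbb{R})}=\|f\|_\infty+\nu_p(f,\mathbb{R})\lesssim\|f\|_{BV_p^1(\mathbb{R})}+\|f\|_{L^p(\mathbb{R})}$, i.e.\ the injection is continuous. The steps I expect to need the most care are the reduction $\nu_p(f,\mathbb{R})^p\le\sum_j\delta_j^p$ to the intervals of monotonicity (handling a possible accumulation of extrema and the unbounded end stretches) and the estimate $\sum_j G_j^p\lesssim\|f'\|_{BV_p(\mathbb{R})}^p$ with the normalized representative of $f'$; the complete argument is the one carried out in~\cite{Bo-Cr-Si-2}.
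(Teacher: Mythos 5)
The paper offers no proof of Proposition~\ref{2.5} beyond the citation of \cite{Bo-Cr-Si-2}, so there is nothing internal to compare with; your argument has to stand on its own. Part (i) is correct, and so are the reduction to the three model intervals and the $L^\infty$-estimate $\|f\|_\infty\lesssim\|f\|_{BV_p^1}+\|f\|_{L^p}$ in part (ii).

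The $p$-variation estimate, however, is built on a step that is false for $p>1$: the inequality $\nu_p(f,\mathbb{R})^p\le\sum_j\delta_j^p$, where the $\delta_j$ are the oscillations of $f$ over its maximal intervals of monotonicity. Your two elementary observations (inserting an overshooting node increases the sum, refining a monotone stretch decreases it) only show that the supremum may be restricted to partitions whose nodes are local extrema; they do not show that the partition made of \emph{all consecutive} extrema is optimal. It need not be: the optimal partition may skip extrema, and a single long increment can then beat the $\ell^p$-sum of the individual oscillations. Concretely, let $f$ be piecewise linear with $f(0)=0$, $f(1)=10$, $f(2)=1$, $f(3)=11$: the monotone oscillations are $10,9,10$, while the two-point partition $\{0,3\}$ alone contributes $11^p$, and $11^p>10^p+9^p+10^p$ already for $p=10$ (the example extends to $\mathbb{R}$ and to $f\in BV_p^1\cap L^p$ by letting $f$ decay to $0$ outside $[0,3]$). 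No constant repairs the inequality either: for $f(t)=t+\varepsilon\sin(Mt)$ on $[0,1]$ with $\varepsilon M$ of order $1$, one has $\sum_j\delta_j^p=O(M^{1-p})\to0$ as $M\to\infty$, while $\nu_p(f)\ge|f(1)-f(0)|$ stays of order $1$. Since the H\"older argument that follows feeds entirely on $\sum_j\delta_j^p$, the proof of the continuity of the injection does not go through as written. A correct argument must start from an arbitrary finite partition and split it according to the monotonicity of the finite sequence $(f(t_k))_k$, controlling separately the increments that straddle several monotone stretches --- this is exactly the kind of combinatorial decomposition used in the proof of Theorem~\ref{2.9}, and it is what is carried out in \cite{Bo-Cr-Si-2}.
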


\begin{proof}
For the proof, see ~\cite{Bo-Cr-Si-2}.
\end{proof}

\begin{theorem}[ ~\cite{Bo-Cr-Si-2}]
\label{2.6} $BV_p^1(I) \cap L^p(I)$ is a Banach algebra.
\end{theorem}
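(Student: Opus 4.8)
The plan is to equip $A:=BV_p^1(I)\cap L^p(I)$ with the norm $\|f\|_A=\|f\|_{L^p(I)}+\|f'\|_{BV_p(I)}$, which for each fixed base point $x_0\in I$ is equivalent to $\|f\|_{L^p(I)}+\|f\|_{BV_p^1(I)}$, and then to check the three defining properties of a Banach algebra in turn: completeness, stability under pointwise products, and continuity of the product.

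\emph{Completeness.} The space $L^p(I)$ is Banach and $BV_p^1(I)$ is Banach (recorded right after Definition \ref{1.23}); moreover both embed continuously into $\mathcal{S}'(I)$ (indeed into the locally integrable functions, since $BV_p^1(I)$ consists of Lipschitz functions). Hence a sequence that is Cauchy in $A$ is Cauchy in each factor, the two limits coincide as distributions, the common limit lies in $A$, and convergence holds in $\|\cdot\|_A$. So $(A,\|\cdot\|_A)$ is a Banach space.

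\emph{Stability and the product estimate.} Two inputs do the work. First, by Proposition \ref{2.5}(ii) the inclusion $A\hookrightarrow BV_p(I)$ is continuous, and by Proposition \ref{1.18} every element of $BV_p(I)$ is bounded; so there is $c>0$ with $\|f\|_\infty+\|f\|_{BV_p(I)}\le c\,\|f\|_A$ for all $f\in A$. Second, Proposition \ref{1.24} makes $\mathcal{V}_p(I)$ a Banach algebra, and passing to normalized representatives (Propositions \ref{1.21}--\ref{1.22}, which do not increase either $\nu_p$ or the sup) shows $BV_p(I)$ is a Banach algebra as well. Now fix $f,g\in A$. Both are Lipschitz, hence locally absolutely continuous, so $fg$ is locally absolutely continuous with $(fg)'=f'g+fg'$ almost everywhere. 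By the two inputs, $f'g$ and $fg'$ lie in $BV_p(I)$, with, for instance, $\|f'g\|_{BV_p(I)}\le\|f'\|_{BV_p(I)}\,\|g\|_{BV_p(I)}\le c\,\|f\|_A\|g\|_A$ and similarly for $fg'$; thus $(fg)'\in BV_p(I)$, and writing $fg(x)=fg(x_0)+\int_{x_0}^x (fg)'(t)\ud t$ exhibits $fg$ as an element of $BV_p^1(I)$. On the other hand $\|fg\|_{L^p(I)}\le\|g\|_\infty\,\|f\|_{L^p(I)}\le c\,\|f\|_A\|g\|_A$, so $fg\in L^p(I)$. Adding the estimates gives $\|fg\|_A\le C\,\|f\|_A\|g\|_A$ for an absolute constant $C$; replacing $\|\cdot\|_A$ by $C\|\cdot\|_A$ yields an equivalent norm for which $A$ is a Banach algebra in the strict sense.

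\emph{Main obstacle.} The only nonroutine step is $(fg)'\in BV_p(I)$: one needs a function in $BV_p^1(I)\cap L^p(I)$ to be simultaneously bounded \emph{and} of bounded $p$-variation, so that it can be multiplied inside $\mathcal{V}_p(I)$ against the derivative of another such function. This is exactly what Propositions \ref{2.5}(ii), \ref{1.18} and \ref{1.24} deliver; the remainder is the product rule for absolutely continuous functions together with the triangle inequality. (For $p=\infty$, $BV_\infty^1(I)\cap L^\infty(I)$ is the space of bounded Lipschitz functions, and the same three steps apply verbatim.)
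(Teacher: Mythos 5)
Your argument is correct. Note that the paper itself gives no proof of Theorem \ref{2.6} — it only refers the reader to the cited references — so there is no internal argument to compare against; what you have written is a legitimate self-contained derivation assembled from results the paper does state: the continuous injection $BV_p^1(I)\cap L^p(I)\hookrightarrow BV_p(I)$ of Proposition \ref{2.5}(ii), the boundedness of $\mathcal{V}_p$-functions from Proposition \ref{1.18}, the Banach-algebra property of $\mathcal{V}_p(I)$ from Proposition \ref{1.24}, and the Lipschitz fundamental theorem of calculus (Proposition \ref{2.12} together with Theorem \ref{2.10}) to justify $(fg)'=f'g+fg'$ and the representation $fg(x)=fg(x_0)+\int_{x_0}^x(fg)'(t)\,\mathrm{d}t$. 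The one assertion you leave unjustified is the equivalence of $\|f\|_{L^p(I)}+\|f'\|_{BV_p(I)}$ with $\|f\|_{L^p(I)}+\|f\|_{BV_p^1(I)}$, i.e.\ the bound $|f(x_0)|\lesssim \|f\|_{L^p(I)}+\|f'\|_{BV_p(I)}$ when $I$ is unbounded; it does hold (if $|f(x_0)|=a$ and $L=\|f'\|_\infty$, then $|f|\geq a/2$ on an interval of length $a/(2L)$, whence $a^{p+1}\leq 2^{p+1}L\|f\|_{L^p}^p$ and Young's inequality gives the degree-one bound), but a line to this effect would close the gap. Everything else — completeness of the intersection of two Banach spaces embedded in a common Hausdorff space, the passage from $\mathcal{V}_p$ to $BV_p$ via normalized representatives, and the renorming to absorb the constant $C$ — is sound.
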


\begin{proof}
For the proof, see ~\cite{Bo-Cr-Si-1} and ~\cite{Bo-Cr-Si-2}.
\end{proof}

\begin{proposition}[~\cite{Bo-Cr-Si-2}]
\label{2.7}  Let $I$ be a compact interval of $\mathbb{R}$%
, then we have the following multiplication properties 
\begin{equation*}
\text{If } f, g \in BV_p^1(\mathbb{R}), \text{ and } \text{supp}(g)
\subseteq I, \text{ then } f g \in BV_p^1(\mathbb{R}),
\end{equation*}
moreover, there exists $c > 0$, such that for all $f, g \in BV_p^1(\mathbb{R}%
)$, $\text{supp}(g) \subseteq I$, we have 
\begin{equation*}
\| f g \|_{BV_p^1(\mathbb{R})} \leq c \| f \|_{BV_p^1(\mathbb{R})} \| g
\|_{BV_p^1(\mathbb{R})}.
\end{equation*}
\end{proposition}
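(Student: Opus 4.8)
The plan is to localize the problem to the compact interval $I$, invoke the Banach-algebra structure of $BV_p^1$ on bounded intervals, and then glue the result back by extension by zero. Write $I=[a,b]$. Since $\text{supp}(g)\subseteq I$, the function $g$ vanishes on $\mathbb{R}\setminus I$; being Lipschitz, hence continuous, as noted after Definition \ref{1.23}, $g$ must vanish at the endpoints, $g(a)=g(b)=0$, and moreover $\text{supp}(g')\subseteq\text{supp}(g)\subseteq I$ because $g'=0$ a.e.\ on the open set where $g\equiv 0$. Because $f$ and $g$ are absolutely continuous, the Leibniz rule $(fg)'=f'g+fg'$ holds; hence $(fg)'$ also vanishes a.e.\ outside $I$, $fg$ is continuous and vanishes at $a$ and $b$, and therefore $fg$ equals on $\mathbb{R}$ the extension by zero of $(fg)|_I$.

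First I would record that restriction to $I$ is bounded, $\|h|_I\|_{BV_p^1(I)}\le C\,\|h\|_{BV_p^1(\mathbb{R})}$ for $h\in BV_p^1(\mathbb{R})$: indeed $\nu_p(h'|_I,I)\le\nu_p(h',\mathbb{R})$ and $\sup_I|h'|\le\sup_{\mathbb{R}}|h'|$, and choosing a base point in $I$ (any choice yielding an equivalent norm, as noted after Definition \ref{1.23}) controls the remaining term. Since $I$ is bounded, Proposition \ref{2.5}(i) gives $BV_p^1(I)\cap L^p(I)=BV_p^1(I)$, so Theorem \ref{2.6} provides a constant $c_1>0$ with
\[
\|(fg)|_I\|_{BV_p^1(I)}=\|(f|_I)(g|_I)\|_{BV_p^1(I)}\le c_1\,\|f|_I\|_{BV_p^1(I)}\,\|g|_I\|_{BV_p^1(I)}\le c_1C^2\,\|f\|_{BV_p^1(\mathbb{R})}\,\|g\|_{BV_p^1(\mathbb{R})}.
\]
In particular $(fg)|_I\in BV_p^1(I)$, and by the previous paragraph $fg$ is its zero extension.

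It remains to bound $\|fg\|_{BV_p^1(\mathbb{R})}$ by $\|(fg)|_I\|_{BV_p^1(I)}$, i.e.\ to show that the zero extension of a $BV_p^1(I)$ function supported in $I$ lies in $BV_p^1(\mathbb{R})$ with comparable norm. For any finite partition $t_0<\cdots<t_N$ of $\mathbb{R}$, the terms $|(fg)'(t_k)-(fg)'(t_{k-1})|^p$ vanish unless $[t_{k-1},t_k]$ meets $I$; the at most two terms whose subinterval straddles $a$ or $b$ are each bounded by $\|(fg)'\|_{L^\infty(I)}^p$, while the rest are controlled by $\nu_p((fg)',I)^p$. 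Hence $\nu_p((fg)',\mathbb{R})^p\le\nu_p((fg)',I)^p+2\|(fg)'\|_{L^\infty(I)}^p$, and since $\|(fg)'\|_{L^\infty(I)}\lesssim\|(fg)'\|_{BV_p(I)}$ this yields $\|fg\|_{BV_p^1(\mathbb{R})}\le C'\,\|(fg)|_I\|_{BV_p^1(I)}$. Chaining with the displayed estimate for $\|(fg)|_I\|_{BV_p^1(I)}$ gives $\|fg\|_{BV_p^1(\mathbb{R})}\le c\,\|f\|_{BV_p^1(\mathbb{R})}\,\|g\|_{BV_p^1(\mathbb{R})}$ with $c=c_1C^2C'$, which is the assertion.

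The main obstacle is the extension-by-zero step: one must check that the right objects vanish outside $I$ — not only $g$ but also $g'$, hence $(fg)'$ — and that passing from $BV_p^1(I)$ to $BV_p^1(\mathbb{R})$ does not create unbounded $p$-variation at the endpoints $a,b$; the estimate above shows that the only price is the two extra boundary jump terms. A secondary technical point is justifying the Leibniz rule, which is legitimate here precisely because membership in $BV_p^1$ forces $f$ and $g$ to be absolutely continuous.
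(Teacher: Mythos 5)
Your argument is correct, but it cannot be compared line-by-line with the paper, because the paper offers no proof of Proposition \ref{2.7} at all: it simply refers the reader to the cited works of Bourdaud, Lanza de Cristoforis and Sickel. What you supply is a genuine, self-contained derivation of Proposition \ref{2.7} from the two ingredients the paper does state (Proposition \ref{2.5}(i) and the Banach-algebra Theorem \ref{2.6}, themselves left unproved there), via localization to $I$ followed by extension by zero. The two delicate points are exactly the ones you isolate, and you handle both: (a) since $BV_p$ consists of a.e.-equivalence classes, you must work with the representative of $(fg)'=f'g+fg'$ that vanishes identically off $I$ (legitimate because $g$ and, a.e., $g'$ vanish on the open complement of $\operatorname{supp}(g)$, and the Leibniz rule holds since $f,g$ are Lipschitz, hence locally absolutely continuous); (b) the zero extension costs only the at most two boundary terms in the $p$-variation sum, each dominated by $\sup_I|(fg)'|^p$, which is part of the $BV_p(I)$ norm, so $\nu_p((fg)',\mathbb{R})^p\le \nu_p((fg)',I)^p+2\sup_I|(fg)'|^p$ as you claim. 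Two small points deserve one extra line each: Theorem \ref{2.6} is stated for the intersection norm of $BV_p^1(I)\cap L^p(I)$, so you should note that on a bounded $I$ the $L^p(I)$ norm is dominated by the $BV_p^1(I)$ norm (elements are bounded on $I$), making the two norms equivalent and transferring submultiplicativity with an $I$-dependent constant; and the constant in your restriction estimate depends on the distance from the chosen base point to $I$, which is harmless since $c$ is allowed to depend on $I$. Neither is a gap.
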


\begin{proof}
For the proof, see ~\cite{Bo-Cr-Si-1} and ~\cite{Bo-Cr-Si-2}.
\end{proof}

\section{\textbf{Solution of the S.O.P  }$BV_p^1(I)$}

\begin{quote}
Theorems \ref{2.9} and \ref{2.13} are due to ~\cite{Bo-Cr-Si-1}.
\end{quote}

\begin{lemma}[~\cite{GIR}]
\label{2.8}  Let $a, b, c \in \mathbb{R}$, $a < b < c$, and let $%
h $ be a measurable function defined on $[a, c]$ with real values such that 
\begin{eqnarray*}
\int_a^b h(x) \,\mathrm{d}x &\geq& 0, \\
\int_b^c h(x) \,\mathrm{d}x &<& 0,
\end{eqnarray*}
then there exist $u, v \in ]a, c[$, such that $h(u) h(v) \leq 0$.
\end{lemma}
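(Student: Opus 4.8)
The plan is to produce the two points separately, one drawn from $]a,b[$ and one from $]b,c[$, arguing by contraposition and relying only on the elementary fact that the Lebesgue integral of a pointwise nonnegative measurable function over an interval is nonnegative, and is strictly positive if the integrand is pointwise strictly positive there. No integrability assumption beyond measurability of $h$ is needed, since the two hypotheses already presuppose that $\int_a^b h$ and $\int_b^c h$ are defined as elements of $[-\infty,+\infty]$.

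First I would examine $]b,c[$: if $h(x)\geq 0$ for every $x\in ]b,c[$, then $\int_b^c h(x)\,\mathrm{d}x\geq 0$, which contradicts the hypothesis $\int_b^c h(x)\,\mathrm{d}x<0$; hence there is a point $v\in ]b,c[\subseteq ]a,c[$ with $h(v)<0$. Next I would examine $]a,b[$: if $h(x)<0$ for every $x\in ]a,b[$, then $-h$ is pointwise strictly positive on the positive-measure interval $]a,b[$, so $\int_a^b(-h)(x)\,\mathrm{d}x>0$, that is $\int_a^b h(x)\,\mathrm{d}x<0$, contradicting the hypothesis $\int_a^b h(x)\,\mathrm{d}x\geq 0$; hence there is a point $u\in ]a,b[\subseteq ]a,c[$ with $h(u)\geq 0$. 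For these two choices we have $h(u)\geq 0$ and $h(v)<0$, so $h(u)h(v)\leq 0$, which is exactly the claim, with $u,v\in ]a,c[$.

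I do not expect a genuine obstacle; the statement is elementary and the only steps meriting a line of care are measure-theoretic bookkeeping. The strict inequality invoked on $]a,b[$ rests on the observation that if a measurable function is pointwise $>0$ on $]a,b[$ then, writing $]a,b[=\bigcup_{n\geq 1}\{x:-h(x)>1/n\}$, at least one of these measurable subsets has positive measure, which forces $\int_a^b(-h)$ to be strictly positive. An equivalent way to organize the whole argument is a single proof by contradiction: if no pair $u,v\in ]a,c[$ with $h(u)h(v)\leq 0$ existed, then $h$ would be of one fixed strict sign throughout $]a,c[$, and each sign contradicts one of the two integral hypotheses.
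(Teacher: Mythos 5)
Your proof is correct, and it takes a genuinely different route from the paper. The paper applies the mean value theorem for integrals (it calls it Bonnet's theorem) on each of $[a,b]$ and $[b,c]$ to produce $u\in\,]a,b[$ with $\int_a^b h = h(u)(b-a)$ and $v\in\,]b,c[$ with $\int_b^c h = h(v)(c-b)$, then reads off the signs $h(u)\geq 0$ and $h(v)\leq 0$. That representation of the integral as ``value at a point times length'' requires $h$ to be continuous (or at least to have the intermediate value property), which the hypothesis of mere measurability does not supply; so the paper's argument, as written, has a hidden gap for general measurable $h$. Your contrapositive sign argument --- if $h<0$ everywhere on $]a,b[$ then $\int_a^b h<0$, and if $h\geq 0$ everywhere on $]b,c[$ then $\int_b^c h\geq 0$ --- uses only the monotonicity and strict positivity properties of the Lebesgue integral, and you correctly justify the strict inequality by decomposing $]a,b[$ into the sets $\{-h>1/n\}$, one of which must have positive measure. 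Your version is therefore both more elementary and valid under the stated hypotheses, at the cost of not producing the explicit ``mean value'' points that the paper's phrasing suggests; for the way the lemma is used later (only the existence of a sign change is needed), nothing is lost.
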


\begin{proof}
Applying Bonnet's theorem (second mean value theorem for integrals) to the
function $h$ on the interval $[a, b]$, then there exists an average value $%
\langle h \rangle_{[a,b]} = u \in ]a, b[$, such that 
\begin{equation*}
\int_a^b h(x) \,\mathrm{d}x = h(u) (b - a).
\end{equation*}
Since $\int_a^b h(x) \,\mathrm{d}x \geq 0$, and $b - a \geq 0$, then $h(u)
\geq 0$. Similarly, for the interval $[b, c]$, there exists an average value 
$\langle h \rangle_{[b,c]} = v \in ]b, c[$, such that 
\begin{equation*}
\int_b^c h(x) \,\mathrm{d}x = h(v) (c - b).
\end{equation*}
Since $\int_b^c h(x) \,\mathrm{d}x \leq 0$, and $c - b \geq 0$, then $h(v)
\leq 0$, whence the existence of $u, v$ in $]a, c[$, such that $h(u) h(v)
\leq 0$.
\end{proof}

\begin{theorem}[~\cite{Bo-Cr-Si-1},\, (Basic inequality)]
\label{2.9}   If $p \in [1, +\infty[$, $%
t_0 \in I$, $\alpha \in \mathbb{R}$, $h \in \mathcal{V}_p(I)$, such that 
\begin{eqnarray*}
g(t) &=& \alpha + \int_{t_0}^t h(x) \,\mathrm{d}x, \quad f \in \mathcal{V}%
_p(g(I)), \text{ then} \\
\nu_p((f \circ g) \cdot h, I) &\leq& \nu_p(f, g(I)) \cdot (\sup_I |h| +
2^{1/p} \cdot \nu_p(h, I)) + \nu_p(h, I) \cdot \sup_{g(I)} |f|,
\label{[2.1]} \\
\| (f \circ g) \cdot h \|_{\mathcal{V}_p(I)} &\leq& 2^{1/p} \| f \|_{%
\mathcal{V}_p(g(I))} \| h \|_{\mathcal{V}_p(I)}.  \label{[2.2]}
\end{eqnarray*}
\end{theorem}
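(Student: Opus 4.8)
The plan is to fix an arbitrary finite increasing sequence $t_0<t_1<\dots<t_N$ in $I$, to estimate $\sum_{k=1}^N|D_k|^p$ with $D_k:=(f\circ g)(t_k)h(t_k)-(f\circ g)(t_{k-1})h(t_{k-1})$, and to pass to the supremum at the end. Since $h\in\mathcal V_p(I)$ is bounded (Proposition \ref{1.18}), $g$ is Lipschitz, hence continuous, so each $g([t_{k-1},t_k])$ is a compact subinterval of the interval $g(I)$; this is the only property of $g$ that I would use. First I would write
\begin{equation*}
D_k=h(t_{k-1})\,[(f\circ g)(t_k)-(f\circ g)(t_{k-1})]+(f\circ g)(t_k)\,[h(t_k)-h(t_{k-1})]
\end{equation*}
and apply the triangle inequality for the $\ell^p$-norm (legitimate since $p\ge1$). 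The second piece is harmless: $|(f\circ g)(t_k)|\le\sup_{g(I)}|f|$ and $\sum_k|h(t_k)-h(t_{k-1})|^p\le\nu_p(h,I)^p$ produce a contribution at most $\nu_p(h,I)\sup_{g(I)}|f|$, which is exactly the last term of the asserted inequality.

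Everything then reduces to the bound
\begin{equation*}
\Sigma:=\sum_{k=1}^N|h(t_{k-1})|^p\,|(f\circ g)(t_k)-(f\circ g)(t_{k-1})|^p\ \le\ \nu_p(f,g(I))^p\,(\sup_I|h|+2^{1/p}\nu_p(h,I))^p .
\end{equation*}
Here I would decompose $\{1,\dots,N\}$ into the maximal runs $J_1,\dots,J_r$ on which the finite sequence $j\mapsto g(t_j)$ is monotone; their directions alternate, and the turning indices $0=k_0<k_1<\dots<k_r=N$ separate them. On one run $J_i$ the intervals with endpoints $g(t_{k-1}),g(t_k)$, $k\in J_i$, are pairwise non-overlapping and contained in $g(I)$, so (after discarding the terms with $g(t_{k-1})=g(t_k)$) $\sum_{k\in J_i}|(f\circ g)(t_k)-(f\circ g)(t_{k-1})|^p\le\nu_p(f,g(I))^p$; hence $\Sigma\le\nu_p(f,g(I))^p\sum_{i=1}^rM_i^p$ with $M_i:=\max_{k\in J_i}|h(t_{k-1})|$.

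Lemma \ref{2.8} is the device that stops $\sum_iM_i^p$ from growing with $r$. At an interior turning index $k_i$ ($1\le i\le r-1$) the integrals $\int_{t_{k_i-1}}^{t_{k_i}}h$ and $\int_{t_{k_i}}^{t_{k_i+1}}h$ have opposite signs, so Lemma \ref{2.8} (applied to $h$ or $-h$) yields $u_i,v_i\in\,]t_{k_i-1},t_{k_i+1}[$ with $h(u_i)h(v_i)\le0$; since $|h(x)-h(y)|\le\nu_p(h,I)$ for all $x,y\in I$, a one-line sign argument — one of $h(u_i),h(v_i)$, call it $c$, satisfies $c\,h(t_{k_i})\le0$, whence $|h(t_{k_i})|\le|h(t_{k_i})-c|\le\nu_p(h,I)$ — gives $|h(t_{k_i})|\le\nu_p(h,I)$ at every interior turning index, whereas $|h(t_{k_0})|=|h(t_0)|$ and $|h(t_{k_r})|=|h(t_N)|$ are only bounded by $\sup_I|h|$. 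Converting this ``$h$ is small at the turns'' into $\sum_{i=1}^rM_i^p\le(\sup_I|h|+2^{1/p}\nu_p(h,I))^p$ is the step I expect to be the main obstacle: one has to compare each $M_i=|h(t_{j_i})|$ with the (small) values of $h$ at the turning indices bordering $J_i$, use that the $t_{j_i}$ and the points $u_i,v_i$ are interlaced and that the $p$-variation of $h$ along any increasing finite subfamily of $I$ is $\le\nu_p(h,I)$, and let the single uncontrolled quantity $\sup_I|h|$ be absorbed by the two extremal runs. The exponent $2^{1/p}$ should appear precisely when the pair $u_i,v_i$ flanking a turn is recombined in $\ell^p$ (through $(a+b)^p$ versus $a^p+b^p$). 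Combining with the preceding paragraph then gives the first inequality.

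The norm inequality is afterwards purely algebraic: using the definition of $\|\cdot\|_{\mathcal V_p}$, the bound $\sup_I|(f\circ g)h|\le\sup_{g(I)}|f|\cdot\sup_I|h|$ and the first inequality,
\begin{equation*}
\|(f\circ g)h\|_{\mathcal V_p(I)}\le\nu_p(f,g(I))\sup_I|h|+2^{1/p}\nu_p(f,g(I))\nu_p(h,I)+\nu_p(h,I)\sup_{g(I)}|f|+\sup_{g(I)}|f|\sup_I|h|,
\end{equation*}
and since $2^{1/p}\ge1$ each of these four summands is at most $2^{1/p}$ times the corresponding summand in the expansion of $(\nu_p(f,g(I))+\sup_{g(I)}|f|)(\nu_p(h,I)+\sup_I|h|)=\|f\|_{\mathcal V_p(g(I))}\|h\|_{\mathcal V_p(I)}$, which is the second claim.
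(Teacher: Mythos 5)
Your overall strategy is the paper's: split $D_k$ by the product rule, bound the $h$-increment piece by $\nu_p(h,I)\cdot\sup_{g(I)}|f|$, decompose $\{1,\dots,N\}$ into maximal runs on which $k\mapsto g(t_k)$ is monotone, and invoke Lemma \ref{2.8} at the turns. But the step you flag as ``the main obstacle'' is the actual content of the theorem, and the partial argument you offer for it does not close the gap. From Lemma \ref{2.8} you derive the pointwise bound $|h(t_{k_i})|\le\nu_p(h,I)$ at each interior turning index. That bound cannot be summed: you need $\sum_i M_i^p\le 2\nu_p^p(h,I)+\sup_I|h|^p$ with $M_i=\max_{k\in J_i}|h(t_{k-1})|$, and inserting $M_i\le|h(t_{j_i})-h(t_{k_i})|+|h(t_{k_i})|\le 2\nu_p(h,I)$ term by term gives only $r\,(2\nu_p(h,I))^p$, which grows with the number $r$ of runs; a highly oscillating $g$ forces $r$ to be comparable to $N$, so the estimate collapses.

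The device that keeps the sum uniformly bounded --- and the one the paper uses --- is to apply Lemma \ref{2.8} not at the turning index but at the maximizer: if $t_{j_i}$ is the point of the $i$-th run where $|h|$ attains $M_i$, then $\int_{t_{j_i}}^{t_{k_i}}h=g(t_{k_i})-g(t_{j_i})$ and $\int_{t_{k_i}}^{t_{k_i+1}}h$ have opposite signs by maximality of the run, so Lemma \ref{2.8} produces $a_i\in\,]t_{j_i},t_{k_i+1}[$ with $h(a_i)h(t_{j_i})\le0$, hence $M_i^p\le|h(t_{j_i})-h(a_i)|^p$. Since $a_i<t_{k_i+1}\le t_{k_{i+1}}\le t_{j_{i+2}}$, the intervals $[t_{j_i},a_i]$ with $i$ even are pairwise disjoint, as are those with $i$ odd; each parity class is therefore a legitimate $p$-variation sum for $h$, bounded by $\nu_p^p(h,I)$, and the two classes together produce the factor $2$ (hence $2^{1/p}$ after taking $p$-th roots) --- not, as you suggest, a recombination of the pair $u_i,v_i$ in $\ell^p$. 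The last run, which has no turn to its right, is the one absorbed by $\sup_I|h|^p$; the first run does have a turn and needs no such concession, so only one copy of $\sup_I|h|^p$ appears. Once $\sum_i M_i^p\le 2\nu_p^p(h,I)+\sup_I|h|^p\le\bigl(2^{1/p}\nu_p(h,I)+\sup_I|h|\bigr)^p$ is in hand, the rest of your write-up, including the passage from the first inequality to the second, is correct.
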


\begin{proof}
1) We have 
\begin{eqnarray*}
\sum_{j=1}^N | (f \circ g) \cdot h(t_j) - (f \circ g) \cdot h(t_{j-1}) | &=&
\sum_{k=0}^{N-1} | (f \circ g) \cdot h(t_{k+1}) - (f \circ g) \cdot h(t_k) |
\\
&=& \sum_{k=0}^{N-1} \left| [ f \circ g(t_{k+1}) - f \circ g(t_k) ] \cdot
h(t_k) + f \circ g(t_{k+1}) \cdot (h(t_{k+1}) - h(t_k)) \right|,
\end{eqnarray*}
hence by Minkowski's inequality 
\begin{eqnarray*}
\left( \sum_{j=1}^N | (f \circ g) \cdot h(t_j) - (f \circ g) \cdot
h(t_{j-1}) |^p \right)^{1/p} &\leq& A_1^{1/p} + B_1^{1/p},
\end{eqnarray*}
such that 
\begin{eqnarray*}
A_1 &=& \sum_{k=0}^{N-1} \left| [ f \circ g(t_{k+1}) - f \circ g(t_k) ]
\cdot h(t_k) \right|^p, \\
B_1 &=& \sum_{k=0}^{N-1} \left| f \circ g(t_{k+1}) \cdot [ h(t_{k+1}) -
h(t_k) ] \right|^p.
\end{eqnarray*}
Since $| f \circ g(t_{k+1}) | \leq \sup_{g(I)} |f|$, for all $k \in \mathbb{N%
}$, then $$B_1^{1/p} \leq (\sup_{g(I)} |f|) \cdot \left( \sum_{k=0}^{N-1} |
h(t_{k+1}) - h(t_k) |^p \right)^{1/p} \leq (\sup_{g(I)} |f|) \cdot \nu_p(h,
I).$$

There exists a sequence $0 = n_0 < n_1 < ... < n_J = N$, such that for all
indices $j = 1, ..., J$, we have

\begin{itemize}
\item[(i)] The restriction $s_j$ of $(g(t_k))_{0 \leq k \leq N}$ to $%
\{k\}_{n_{j-1} \leq k \leq n_j, k \in \mathbb{N}}$ is monotone.

\item[(ii)] The restriction of $(g(t_k))_{0 \leq k \leq N}$ to $%
\{k\}_{n_{j-1} \leq k \leq n_j + 1}$ is not monotone.
\end{itemize}

Thus, $[0, N] = [n_0, n_1 - 1] \cup [n_1, n_2 - 1] \cup ... \cup [n_{J-1},
n_J - 1]$.

Set $A_1 = A_2 + B_2$, such that 
\begin{eqnarray*}
A_2 &=& \sum_{j=1}^{J-1} \sum_{k = n_{j-1}}^{n_j - 1} \left| f(g(t_{k+1})) -
f(g(t_k)) \right|^p \cdot | h(t_k) |^p, \\
B_2 &=& \sum_{k = n_{J-1}}^{n_J - 1} \left| f(g(t_{k+1})) - f(g(t_k))
\right|^p \cdot | h(t_k) |^p.
\end{eqnarray*}
- If $J \neq 1$, then $B_2 \leq \nu_p^p(f, g(I)) \cdot \sup_I |h|^p$, and $%
A_2 \leq \nu_p^p(f, g(I)) \cdot \sum_{j=1}^{J-1} | h(t_{k_j}) |^p$.

There always exists some $a_j \in ] t_{k_j}, t_{n_j + 1} [$, such that $%
h(a_j) h(t_{k_j}) \leq 0$. If $h(t_{k_j}) = 0$ or if $h$ changes sign on the
interval $] t_{k_j}, t_{n_j + 1} [$, the result is direct; otherwise, we can
consider a function $\hat{h}$ defined from functions of the form $\pm h$
such that 
\begin{equation*}
\int_{t_{k_j}}^b \hat{h}(x) \,\mathrm{d}x \geq 0 \quad \text{and} \quad
\int_b^{t_{n_j + 1}} \hat{h}(x) \,\mathrm{d}x < 0, \quad \text{where } b \in
] t_{k_j}, t_{n_j + 1} [,
\end{equation*}
then according to Lemma \ref{2.8}, there exist $a_j^{\prime }, a_j^{\prime
\prime }\in ] t_{k_j}, t_{n_j + 1} [$, such that $\hat{h}(a_j^{\prime }) 
\hat{h}(a_j^{\prime \prime }) < 0$, hence $$\forall j = 1, ..., J-1, \exists
a_j \in ] t_{k_j}, t_{n_j + 1} [: \hat{h}(a_j) \hat{h}(t_{k_j}) \leq 0,$$
such that 
\begin{equation*}
\sum_{j=1}^{J-1} | h(t_{k_j}) |^p = \sum_{j=1}^{J-1} | \hat{h}(t_{k_j}) |^p
\quad \text{and} \quad \nu_p^p(h, I) = \nu_p^p(\hat{h}, I).
\end{equation*}
Let $M = \max \{ m \in \mathbb{N} : 2m + 1 \leq J \}$, then 
\begin{eqnarray*}
\sum_{j=1}^{J-1} | h(t_{k_j}) |^p &=& \left\{ 
\begin{array}{cl}
\sum_{j=1}^{2M} | h(t_{k_j}) |^p, & \text{if } J-1 \text{ is even}, \\ 
\sum_{j=1}^{2M} | h(t_{k_j}) |^p + | h(t_{k_{J-1}}) |^p, & \text{if } J-1 
\text{ is odd},%
\end{array}
\right. \\
\text{hence} \quad \sum_{j=1}^{2M} | h(t_{k_j}) |^p &\leq& \sum_{l=1}^M |
h(t_{k_{2l}}) - h(a_{2l}) |^p + \sum_{l=1}^M | h(t_{k_{2l-1}}) - h(a_{2l-1})
|^p, \\
\text{and} \quad | h(t_{k_{J-1}}) |^p &\leq& | h(t_{k_{J-1}}) - h(a_{J-1})
|^p.
\end{eqnarray*}
By the inequalities $t_{n_{j-1}} \leq t_{k_j} < t_{n_j}$, and $t_{k_j} < a_j
< t_{n_j + 1}$, for all $j = 1, ..., J-1$, we deduce that $a_j < t_{n_j + 1}
\leq t_{n_{j+1}} \leq t_{k_{j+2}}$. - If $J \geq 4$ and $j = 1, ..., J-3$,
then the intervals $[t_{k_{2l}}, a_{2l}]$ are pairwise disjoint for $l = 1,
..., M$, and the intervals $[t_{k_{2l-1}}, a_{2l-1}]$ are pairwise disjoint
if $J$ is odd, for $l = 1, ..., M+1$. - If $J$ is even, then 
\begin{eqnarray*}
\sum_{j=1}^{J-1} | \hat{h}(t_{k_j}) |^p &\leq& \sum_{l=1}^M | \hat{h}%
(t_{k_{2l}}) - \hat{h}(t_{k_{2l+2}}) |^p + \sum_{l=1}^M | \hat{h}%
(t_{k_{2l-1}}) - \hat{h}(t_{k_{2l+1}}) |^p \\
&\leq& 2 \nu_p^p(\hat{h}, I),
\end{eqnarray*}
hence $\sum_{j=1}^{J-1} | h(t_{k_j}) |^p \leq 2 \nu_p^p(h, I)$.

Since $(s_J)$ is monotone and $p \geq 1$, and given the above inequalities,
we have 
\begin{eqnarray*}
\nu_p((f \circ g) \cdot h, I) &\leq& \nu_p(f, g(I)) \cdot \left( 2
\nu_p^p(h, I) + \sup_I |h|^p \right)^{1/p} + \nu_p(h, I) \cdot \sup_{g(I)}
|f|, \\
\text{and} \quad \left( 2 \nu_p^p(h, I) + \sup_I |h|^p \right)^{1/p} &\leq&
2^{1/p} \nu_p(h, I) + \sup_I |h|,
\end{eqnarray*}
hence: 
\begin{equation*}
\nu_p((f \circ g) \cdot h, I) \leq \nu_p(f, g(I)) \cdot \left( 2^{1/p}
\nu_p(h, I) + \sup_I |h| \right) + \nu_p(h, I) \cdot \sup_{g(I)} |f|.
\end{equation*}
2) Inequality (\ref{2.2}) follows directly from inequality (\ref{2.1}).
\end{proof}

\begin{theorem}[~\cite{Mau}]
\label{2.10}  Let $f_1(x)$ and $f_2(x)$ be two functions defined,
continuous and differentiable on a compact set of $[a, b]$, such that $%
f_1^{\prime }(x) = f_2^{\prime }(x)$, (a.e.), then $f_1(x) - f_2(x)$ is
constant.
\end{theorem}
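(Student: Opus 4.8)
The plan is to set $g = f_1 - f_2$ and prove that $g$ is constant on $[a,b]$. By hypothesis $g$ is continuous on $[a,b]$ and $g'(x) = 0$ for almost every $x \in [a,b]$. Since $g$ is continuous on the interval $[a,b]$, its image $g([a,b])$ is a compact interval (intermediate value theorem), so it suffices to show that $g([a,b])$ has Lebesgue measure zero: then $g([a,b])$ reduces to a single point and $g$, hence $f_1 - f_2$, is constant. I would split $[a,b] = A \cup N$, where $A = \{x \in [a,b] : g'(x) = 0\}$ has full measure and $N = [a,b] \setminus A$ is Lebesgue-null, and bound $m(g(A))$ and $m(g(N))$ separately.

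For the null part, the claim is that $m(g(N)) = 0$. Since $g$ is differentiable at each point of $[a,b]$ (or, in the situation where this theorem is applied to primitives of functions in $BV_p(I)$, merely Lipschitz), $g$ enjoys the Luzin property (N): writing $N = \bigcup_{k \ge 1} N_k$ with $N_k = \{x \in N : |g'(x)| \le k\}$, a Vitali covering argument on each $N_k$ — cover $N_k$ by an open set of measure $< \eta$, pick for each point an interval on which $g$ oscillates by at most $(k+1)$ times the length, extract a countable disjoint Vitali subfamily — yields $m^*(g(N_k)) \le (k+1)\, m^*(N_k) = 0$, and summing over $k$ gives $m(g(N)) = 0$. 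In the Lipschitz case this is immediate, since a Lipschitz map sends null sets to null sets.

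For the second step, the claim is $m(g(A)) = 0$. Fix $\varepsilon > 0$. For each $x \in A$, from $g'(x) = 0$ we get $\delta_x > 0$ with $|g(y) - g(x)| \le \varepsilon |y - x|$ whenever $|y - x| < \delta_x$. The family of closed intervals $[x - r,\, x + r] \subseteq [a,b]$ with $x \in A$ and $0 < r < \delta_x$ is a Vitali cover of $A$, so I would extract a countable pairwise disjoint subfamily $\{J_i = [x_i - r_i,\, x_i + r_i]\}_i$ with $m\bigl(A \setminus \bigcup_i J_i\bigr) = 0$. For each $i$, $g(J_i) \subseteq [\,g(x_i) - \varepsilon r_i,\ g(x_i) + \varepsilon r_i\,]$, an interval of length $\varepsilon\, |J_i|$, so $m^*(g(A)) \le \sum_i \varepsilon\, |J_i| \le \varepsilon (b - a)$; letting $\varepsilon \to 0$ gives $m(g(A)) = 0$.

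Combining the two steps, $m(g([a,b])) \le m(g(A)) + m(g(N)) = 0$, and since $g([a,b])$ is an interval it must be a single point, whence $g = f_1 - f_2$ is constant. The step I expect to be the real obstacle is the first one in full generality: justifying that an everywhere-differentiable function maps null sets to null sets is exactly what forces the stratified Vitali argument (it is false for general continuous functions — the Cantor function is the standard counterexample). In the concrete setting in which this theorem is used here, however, $f_1$ and $f_2$ are primitives of $BV_p(I)$-functions, hence Lipschitz and absolutely continuous, and one can bypass everything above: $g$ is absolutely continuous, so the fundamental theorem of calculus gives $g(x) - g(a) = \int_a^x g'(t)\,\mathrm{d}t = 0$ for every $x \in [a,b]$, and $g$ is constant.
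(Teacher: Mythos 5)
The paper does not actually prove this statement: its ``proof'' consists of attributing the result to de la Vall\'ee Poussin and referring the reader to the cited notes. Your proposal therefore supplies something the paper omits, and the strategy you choose is the standard and correct one for the theorem as stated (with \emph{everywhere} differentiability): reduce to $g=f_1-f_2$, show $g([a,b])$ is a compact interval of Lebesgue measure zero by splitting $[a,b]$ into the full-measure set $A=\{g'=0\}$ and the null set $N$, and bound the image of each piece. Your closing remark is also exactly right on both counts: the hypothesis of genuine differentiability (or at least a Luzin-(N)-type property) cannot be dropped because of the Cantor function, and in the only place the paper uses this theorem (the proof of Theorem \ref{2.13}) the functions involved are Lipschitz primitives of $BV_p$ functions, so the whole argument collapses to absolute continuity plus the fundamental theorem of calculus via Proposition \ref{2.12}.

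One step needs repair as written. In your treatment of $m(g(N))$ you invoke the Vitali covering theorem, which produces a disjoint subfamily covering $N_k$ only up to a Lebesgue-null residual set $R$; controlling $m^*(g(R))$ is precisely the Luzin (N) property you are in the middle of proving, so the argument is circular at that point (the same residual issue then propagates to your bound on $m(g(A))$). The fix is standard and cheap: either replace the disjoint Vitali family by a countable subfamily of intervals centered at points of $N_k$, contained in the open set $U$ with $m(U)<\eta$, and covering \emph{all} of $N_k$ with overlap multiplicity at most $2$ (possible in dimension one), which gives $m^*(g(N_k))\leq 2(k+1)\,m(U)$ directly; or use the grid decomposition of $U$ into disjoint intervals of small length and bound the diameter of the image of $N_k$ intersected with each, \`a la Rudin's Lemma 7.25. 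With that adjustment your proof is complete and self-contained, which is more than the paper offers.
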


\begin{proof}
This theorem is due to De la Valle de-Poussin, and for the proof, see ~\cite%
{Mau}.
\end{proof}

\begin{theorem}[~\cite{Hein} ]
\label{2.11} Let $\Omega \subset \mathbb{R}^n$ be an open set,
and let $f: \Omega \to \mathbb{R}^n$ be a Lipschitz function, then $f$ is
differentiable almost everywhere on $\Omega$; moreover, the following
assertions are verified:

\begin{itemize}
\item[(i)] $f$ is differentiable almost everywhere on the set $L(f)$ such
that 
\begin{eqnarray*}
L(f) &=& \left\{ x \in \Omega : \text{Lip } f(x) < \infty \right\}, \\
\text{where} \quad \text{Lip } f(x) &=& \lim_{y \to x} \sup_{y \in \Omega} 
\frac{| f(x) - f(y) |}{|x - y|}.
\end{eqnarray*}

\item[(ii)] Every function $f \in W^{1,p}(\Omega)$, $p \in [1, +\infty]$, is
differentiable (a.e.).
\end{itemize}
\end{theorem}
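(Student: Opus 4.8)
The plan is to prove the unnumbered main claim (Rademacher's theorem), from which assertions (i) and (ii) follow with modest extra work. First I would reduce to a scalar, globally Lipschitz $f$: a map $\Omega\to\mathbb{R}^n$ is differentiable at a point iff each coordinate function is, and since differentiability is local, every $x_0\in\Omega$ lies in a ball $B\subset\Omega$ on which $f$ is $L$-Lipschitz for some $L$. Fix a unit vector $v$. For each line $\ell$ parallel to $v$, the restriction $t\mapsto f(x+tv)$ is $L$-Lipschitz on an interval, hence absolutely continuous, hence differentiable for a.e.\ $t$ (the one-dimensional fact, a consequence of Lebesgue's differentiation theorem). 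By Fubini's theorem the set of $x\in\Omega$ where $D_vf(x)=\lim_{t\to0}t^{-1}(f(x+tv)-f(x))$ fails to exist is Lebesgue-null; taking $v=e_1,\dots,e_n$ shows $\nabla f(x)=(\partial_1f(x),\dots,\partial_nf(x))$ exists for a.e.\ $x$, with $|\nabla f|\le L$.

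\textbf{Identifying the directional derivative.} For a fixed $v$ and any $\varphi\in C_0^\infty(\Omega)$ I would use a change of variables to write
\[
\int_\Omega \frac{f(x+tv)-f(x)}{t}\,\varphi(x)\,\mathrm{d}x \;=\; -\int_\Omega f(x)\,\frac{\varphi(x)-\varphi(x-tv)}{t}\,\mathrm{d}x ,
\]
and let $t\to0$. The left side tends to $\int_\Omega D_vf\,\varphi$ by dominated convergence (the difference quotients are bounded by $L$), and the right side tends to $-\int_\Omega f\,(\nabla\varphi\cdot v)=\int_\Omega(\nabla f\cdot v)\,\varphi$, using that a Lipschitz function is absolutely continuous on a.e.\ line, so integration by parts against its a.e.\ partial derivatives is legitimate. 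Since $\varphi$ is arbitrary, $D_vf=\nabla f\cdot v$ a.e.

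\textbf{From a dense set of directions to differentiability; assertion (i).} Choose a countable dense set $\{v_k\}\subset S^{n-1}$ and let $A\subseteq\Omega$ be the full-measure set of points where $\nabla f$ exists, every $D_{v_k}f$ exists, and $D_{v_k}f=\nabla f\cdot v_k$ for all $k$. Fix $x\in A$ and set $Q(v,t)=t^{-1}(f(x+tv)-f(x))-\nabla f(x)\cdot v$. Then $Q(v_k,t)\to0$ as $t\to0$ for each $k$, and $v\mapsto Q(v,t)$ is Lipschitz with constant $L+|\nabla f(x)|$ \emph{uniformly in} $t$. Given $\varepsilon>0$, cover $S^{n-1}$ by finitely many $\varepsilon$-balls centred at $v_{k_1},\dots,v_{k_m}$; for $t$ small all $|Q(v_{k_i},t)|<\varepsilon$, so $\sup_v|Q(v,t)|\le\varepsilon(1+L+|\nabla f(x)|)$. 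Writing $h=tv$ this says $f(x+h)-f(x)-\nabla f(x)\cdot h=o(|h|)$, i.e.\ $f$ is differentiable at $x$. For assertion (i): $L(f)=\bigcup_{j,k}E_{j,k}$ with $E_{j,k}=\{x:\ |f(x)-f(y)|\le k|x-y|\text{ whenever }|x-y|\le 1/j\}$; partition each $E_{j,k}$ into countably many sets of diameter $\le 1/j$, on each of which $f$ is $k$-Lipschitz; extend such a piece to a $k$-Lipschitz function on $\mathbb{R}^n$ (McShane extension) and apply the main claim; at every point of density $1$ of the piece the derivative of the extension is genuinely a derivative of $f$, so $f$ is differentiable a.e.\ on each piece, hence a.e.\ on $L(f)$.

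\textbf{Assertion (ii) and the main obstacle.} For $p=\infty$, $f\in W^{1,\infty}(\Omega)$ is locally Lipschitz, so the main claim applies; for $p>n$, Morrey's embedding puts $f$ locally in $C^{0,1-n/p}$ and the Lebesgue-point/Poincaré estimate for $\nabla f$ upgrades this to classical differentiability a.e.; for general $p$ one appeals to the fine-property statement for Sobolev functions (differentiability in the $L^p$ sense a.e., cf.\ \cite{Hein}). The genuinely delicate point is the passage in Step~3 from directional derivatives along a countable dense set of directions --- compatible with a candidate gradient --- to honest, uniform-in-direction differentiability, which is exactly where the Lipschitz bound enters quantitatively; for assertion (i) the analogous subtlety is checking that differentiating the Lipschitz extension of $f$ restricted to a piece $E$ recovers a derivative of $f$ itself at density points of $E$.
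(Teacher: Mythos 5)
The paper does not actually prove this theorem: its ``proof'' consists of attributions (Rademacher for the main claim, Stepanov for (i), Calder\'on for (ii)) and a pointer to \cite{Hein}. Your sketch therefore supplies something the paper omits, and the main body of it --- reduction to scalar, locally Lipschitz $f$; existence of $D_vf$ a.e.\ by the one-dimensional theorem plus Fubini; the identification $D_vf=\nabla f\cdot v$ through the weak formulation; and the upgrade to full differentiability via a countable dense set of directions together with the Lipschitz bound on $v\mapsto Q(v,t)$, uniform in $t$ --- is the standard and correct argument.

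Two points deserve attention. For (i), the step you yourself flag as delicate is genuinely the crux, and it closes only because of how you defined $E_{j,k}$: at a density point $x$ of a piece $E\subset E_{j,k}$ of diameter $\le 1/j$ where the McShane extension $F$ is differentiable, take for $y\in\Omega$ near $x$ a point $z\in E$ with $|z-y|=o(|y-x|)$; then $|f(y)-f(z)|\le k|y-z|=o(|y-x|)$ because $z$ itself lies in $E_{j,k}$, so the defining inequality of $E_{j,k}$ is applied \emph{at $z$}, not at $x$, while the remaining terms are handled by the differentiability of $F$ at $x$ and $|z-x|\le|z-y|+|y-x|$. Without that observation the term $f(y)-f(z)$ cannot be bounded, so this should be written out rather than left as a remark. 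For (ii), the statement as printed is false for $n\ge2$ and $1\le p\le n$: a $W^{1,p}$ function with $p\le n$ need not be a.e.\ equal to a locally bounded function, let alone one differentiable a.e.\ in the classical sense; Calder\'on's theorem requires $p>n$ (your Morrey argument), and for $p\le n$ one only obtains $L^p$-differentiability or approximate differentiability a.e. Your hedge toward ``differentiability in the $L^p$ sense'' is the right instinct, but it means the proposal does not --- and cannot --- establish assertion (ii) in the literal form the paper states it; the defect is in the theorem statement rather than in your argument.
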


\begin{proof}
This theorem of Rademacher is a generalization of Lebesgue's theorem to the
case $$n = m = 1,\quad \Omega = ]a, b[$$ for functions of bounded variation, and
for the proof, see ~\cite{Hein}; assertion (i) is due to a theorem of
Stepanov, and (ii) to a theorem of Calderon.
\end{proof}

\begin{proposition}[~\cite{Hein}]
\label{2.12}  Let $f: [a, b] \to \mathbb{R}$ be a Lipschitz
function, then 
\begin{equation*}
f(b) - f(a) = \int_a^b f^{\prime }(t) \,\mathrm{d}t,
\end{equation*}
moreover, $f: [a, b] \to \mathbb{R}$ is absolutely continuous if and only if
there exists a function $g \in L^1[a, b]$ such that $f(x) = f(a) + \int_a^x
g(t) \,\mathrm{d}t$, $x \in [a, b]$, and in this case the derivative $\frac{%
\mathrm{d}f}{\mathrm{d}x}$ exists for almost every $x \in [a, b]$ such that $%
\frac{\mathrm{d}f}{\mathrm{d}x} = g \in L^1[a, b]$.
\end{proposition}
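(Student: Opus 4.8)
The plan is to derive everything from the fundamental theorem of calculus for absolutely continuous functions, treating the Lipschitz identity as a special case. First I would record that a function $f$ which is Lipschitz with constant $L$ on $[a,b]$ is absolutely continuous: given $\varepsilon>0$, the choice $\delta=\varepsilon/L$ forces $\sum_{k}|f(b_k)-f(a_k)|\le L\sum_{k}(b_k-a_k)<\varepsilon$ for every finite family of pairwise disjoint subintervals $(a_k,b_k)\subset[a,b]$ of total length $<\delta$.

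For the equivalence, the easy direction is $(\Leftarrow)$. If $f(x)=f(a)+\int_{a}^{x}g(t)\,\mathrm{d}t$ with $g\in L^{1}[a,b]$, then $f$ is absolutely continuous by absolute continuity of the Lebesgue integral: choosing $\delta>0$ so that $\int_{E}|g|<\varepsilon$ whenever $|E|<\delta$ gives $\sum_{k}|f(b_k)-f(a_k)|=\sum_{k}\bigl|\int_{a_k}^{b_k}g\bigr|\le\int_{\bigcup_k(a_k,b_k)}|g|<\varepsilon$. The Lebesgue differentiation theorem then yields $\frac{\mathrm{d}}{\mathrm{d}x}\int_{a}^{x}g(t)\,\mathrm{d}t=g(x)$ a.e., so $\frac{\mathrm{d}f}{\mathrm{d}x}=g$ a.e.\ and in particular $\frac{\mathrm{d}f}{\mathrm{d}x}\in L^{1}[a,b]$.

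The substantive direction is $(\Rightarrow)$. If $f$ is absolutely continuous it has bounded variation, so $f=f_1-f_2$ with $f_1,f_2$ non-decreasing; Lebesgue's classical theorem on differentiation of monotone functions (the one-variable instance alluded to in the proof of Theorem \ref{2.11}) shows $f$ is differentiable a.e.\ with $\int_{a}^{b}f_i'\le f_i(b)-f_i(a)$, hence $f'\in L^{1}[a,b]$. Set $F(x)=\int_{a}^{x}f'(t)\,\mathrm{d}t$; by the $(\Leftarrow)$ part $F$ is absolutely continuous with $F'=f'$ a.e., so $h:=f-F$ is absolutely continuous with $h'=0$ a.e. It remains to show $h$ is constant, and I would do this by a Vitali covering argument: fix $\varepsilon,\eta>0$ and let $\delta>0$ be a modulus of absolute continuity of $h$ for $\varepsilon$; since $h'=0$ a.e., for almost every $y$ there are arbitrarily short intervals $[y,y+r]$ with $|h(y+r)-h(y)|\le\eta r$, and these form a Vitali cover of $[a,x]$ up to a null set, so one may select finitely many pairwise disjoint such intervals whose complement in $[a,x]$ has measure $<\delta$. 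Telescoping $h(x)-h(a)$ over the selected intervals (contribution $\le\eta(b-a)$) and the complementary gaps (contribution $<\varepsilon$, by absolute continuity) gives $|h(x)-h(a)|\le\eta(b-a)+\varepsilon$ for every $x$; letting $\varepsilon,\eta\to0$ forces $h\equiv h(a)$, that is $f(x)=f(a)+\int_{a}^{x}f'(t)\,\mathrm{d}t$, with $g=f'$. I expect this constancy lemma to be the main obstacle, since Theorem \ref{2.10} handles only the everywhere-differentiable case and does not apply verbatim to $h$.

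Finally, the Lipschitz identity: by Theorem \ref{2.11} (Rademacher) $f$ is differentiable a.e.\ with $|f'|\le L$, so $f'\in L^{\infty}[a,b]\subset L^{1}[a,b]$, and since $f$ is absolutely continuous the ``moreover'' part already yields $f(x)=f(a)+\int_{a}^{x}f'(t)\,\mathrm{d}t$, hence $f(b)-f(a)=\int_{a}^{b}f'(t)\,\mathrm{d}t$. Alternatively, one can bypass the Vitali argument here: extending $f$ to a Lipschitz function past $b$ (or restricting to $x\le b-\tfrac1n$) and putting $g_n(x)=n\bigl(f(x+\tfrac1n)-f(x)\bigr)$, one has $g_n\to f'$ a.e.\ and $|g_n|\le L$, so dominated convergence gives $\int_{a}^{x}g_n\to\int_{a}^{x}f'$, while $\int_{a}^{x}g_n=n\int_{x}^{x+1/n}f-n\int_{a}^{a+1/n}f\to f(x)-f(a)$ by continuity of $f$; comparing the two limits gives the identity.
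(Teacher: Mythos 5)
Your proof is correct. Note first that the paper gives no proof of Proposition \ref{2.12} at all: it is quoted from \cite{Hein} and used as a black box in the proof of Theorem \ref{2.13}, so there is no argument in the text to compare yours against. What you supply is the standard real-analysis proof, and every step checks out: Lipschitz $\Rightarrow$ absolutely continuous with $\delta=\varepsilon/L$; the $(\Leftarrow)$ direction via absolute continuity of the Lebesgue integral plus the Lebesgue differentiation theorem; and the $(\Rightarrow)$ direction via bounded variation, Lebesgue's theorem on differentiation of monotone functions to get $f'\in L^1$, and the Vitali-covering argument to show that an absolutely continuous $h$ with $h'=0$ a.e.\ is constant. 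You are right to flag that last constancy lemma as the crux and to observe that Theorem \ref{2.10} cannot be invoked for it, since $h=f-F$ is only differentiable almost everywhere while Theorem \ref{2.10} presupposes differentiability; this is exactly the gap one would fall into by citing \ref{2.10} carelessly, and your Vitali argument (telescoping over the selected intervals, bounding the gaps by the modulus of absolute continuity) closes it properly. The dominated-convergence alternative for the Lipschitz identity, using $g_n(x)=n\bigl(f(x+\tfrac1n)-f(x)\bigr)$ with $|g_n|\le L$ and $\int_a^x g_n=n\int_x^{x+1/n}f-n\int_a^{a+1/n}f\to f(x)-f(a)$, is a nice self-contained shortcut that avoids the Vitali machinery entirely in the case the paper actually needs. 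In short, your proposal is a genuine completion of what the paper delegates to \cite{Hein}.
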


\begin{theorem}[~\cite{Bo-Cr-Si-1}]
\label{2.13}  Let $1 \leq p < \infty$, then the following
assertions are verified:

\begin{itemize}
\item[(i)] If $f, g \in BV_p^1(\mathbb{R})$, then $f \circ g \in BV_p^1(%
\mathbb{R})$, and 
\begin{equation*}
\| f \circ g \|_{BV_p^1(\mathbb{R})} \leq \| f \|_{BV_p^1(\mathbb{R})}
\left( 1 + 2^{1/p} \| g \|_{BV_p^1(\mathbb{R})} \right).
\end{equation*}

\item[(ii)] Let $f: \mathbb{R} \to \mathbb{R}$ be a Borel measurable
function, then:\\ \begin{center} The operator $T_f$ operates on $BV_p^1(\mathbb{R})$ if and
only if $f \in BV_p^1(\mathbb{R})$.\end{center} 
\end{itemize}
\end{theorem}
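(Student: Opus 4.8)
The plan is to deduce (i) from the basic inequality (Theorem~\ref{2.9}) via the chain rule, and then to obtain (ii) almost immediately. For (i), note first that $f,g\in BV_p^1(\mathbb R)$ are both Lipschitz continuous (the remark following Definition~\ref{1.23}), hence $f\circ g$ is Lipschitz; by Proposition~\ref{2.12} it is then absolutely continuous and $(f\circ g)(x)=(f\circ g)(0)+\int_0^x(f\circ g)'(t)\,\mathrm{d}t$ for every $x$. In view of Definition~\ref{1.23}, proving $f\circ g\in BV_p^1(\mathbb R)$ therefore amounts to proving $(f\circ g)'\in BV_p(\mathbb R)$. For this I would invoke the chain rule for absolutely continuous functions, giving $(f\circ g)'(t)=f'(g(t))\,g'(t)$ for almost every $t$ — with the product read as $0$ wherever $g'(t)=0$, which also takes care of the null set on which $f'\circ g$ may fail to be defined, since $g'$ vanishes almost everywhere on the $g$-preimage of a null set.

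Next I would apply Theorem~\ref{2.9}. Choosing $g'$ and $f'$ to be their normalized representatives in $\mathcal V_p(\mathbb R)$ (Proposition~\ref{1.22}), one has $g(t)=g(0)+\int_0^t g'(x)\,\mathrm{d}x$ and $f'\in\mathcal V_p(\mathbb R)\subset\mathcal V_p(g(\mathbb R))$, so Theorem~\ref{2.9}, applied with $(f',g')$ in the roles of $(f,h)$ and $I=\mathbb R$, yields $(f'\circ g)\cdot g'\in\mathcal V_p(\mathbb R)$ together with
\[
\|(f'\circ g)\cdot g'\|_{\mathcal V_p(\mathbb R)}\le 2^{1/p}\|f'\|_{\mathcal V_p(g(\mathbb R))}\|g'\|_{\mathcal V_p(\mathbb R)}\le 2^{1/p}\|f'\|_{BV_p(\mathbb R)}\|g'\|_{BV_p(\mathbb R)} .
\]
Hence $(f\circ g)'\in BV_p(\mathbb R)$, so $f\circ g\in BV_p^1(\mathbb R)$, with $\|(f\circ g)'\|_{BV_p}\le 2^{1/p}\|f'\|_{BV_p}\|g'\|_{BV_p}$.

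To get the stated norm inequality, write $\|f\circ g\|_{BV_p^1}=|f(g(0))|+\|(f\circ g)'\|_{BV_p}$ and bound the first term by $|f(g(0))|\le|f(0)|+\bigl|\int_0^{g(0)}f'(t)\,\mathrm{d}t\bigr|\le|f(0)|+\|f'\|_\infty|g(0)|\le|f(0)|+\|f'\|_{BV_p}|g(0)|$, using that the $BV_p$-norm dominates the sup-norm. Adding the two estimates and using $2^{1/p}\ge1$ gives $\|f\circ g\|_{BV_p^1}\le|f(0)|+2^{1/p}\|f'\|_{BV_p}(|g(0)|+\|g'\|_{BV_p})$, and comparing this term by term with the expansion $\|f\|_{BV_p^1}(1+2^{1/p}\|g\|_{BV_p^1})=(|f(0)|+\|f'\|_{BV_p})(1+2^{1/p}(|g(0)|+\|g'\|_{BV_p}))$ — whose difference from the previous bound equals $\|f'\|_{BV_p}+2^{1/p}|f(0)|(|g(0)|+\|g'\|_{BV_p})\ge0$ — establishes (i). For (ii), the ``if'' direction is exactly (i); for ``only if'', observe that the identity map belongs to $BV_p^1(\mathbb R)$, its derivative being the constant $1$, which has zero $p$-variation and hence lies in $BV_p(\mathbb R)$ (Definitions~\ref{1.19} and~\ref{1.23}); thus if $T_f$ operates on $BV_p^1(\mathbb R)$ then $f=T_f(\mathrm{id})\in BV_p^1(\mathbb R)$.

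The step I expect to be the main obstacle is the chain rule identity $(f\circ g)'=(f'\circ g)\,g'$ almost everywhere for a composition of Lipschitz functions — specifically the measure-theoretic point that $f'$ is only defined almost everywhere while $g$ could carry a set of positive measure into its exceptional set, resolved by the fact that $g'=0$ almost everywhere on the $g$-preimage of any null set. Once that is in hand, the remainder is a direct application of Theorem~\ref{2.9} and routine bookkeeping in passing between the $\mathcal V_p$- and $BV_p$-norms through normalized representatives.
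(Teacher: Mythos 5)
Your proof follows essentially the same route as the paper's: establish the integral representation $(f\circ g)(x)=(f\circ g)(0)+\int_0^x f'(g(t))\,g'(t)\,\mathrm{d}t$ via the chain rule for Lipschitz functions, apply the basic inequality of Theorem~\ref{2.9} with $(f',g')$ in the roles of $(f,h)$, and obtain (ii) from (i) together with $f=T_f(\mathrm{id}_{\mathbb R})$. If anything, your write-up is more careful than the paper's on the measure-theoretic point about $f'\circ g$ being well defined (the paper invokes Rademacher and De la Vall\'ee Poussin without addressing the exceptional null set), and your term-by-term norm comparison makes explicit what the paper leaves to a chain of $\lesssim$'s.
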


This Theorem \ref{2.13} is fundamental because it solves the composition
operator problem such that 
\begin{equation*}
S(BV_p^1(\mathbb{R})) = BV_p^1(\mathbb{R}) = \left\{ \phi: \mathbb{R} \to 
\mathbb{R} ; T_\phi(BV_p^1(\mathbb{R})) \subseteq BV_p^1(\mathbb{R})
\right\}.
\end{equation*}

\begin{proof}
\begin{description}
\item 

\item[(i)] Since $f, g \in BV_p^1(\mathbb{R})$, then $f, g$ are continuous
and Lipschitz, and by Rademacher's theorem, they are differentiable almost
everywhere, and we thus have $(f \circ g)^{\prime }(x) = (f^{\prime }\circ
g) \cdot g^{\prime }(x)$. By De la Vallee Poussin's Theorem \ref{2.10}, the
quantity $(f \circ g)(x) - \int_0^x f^{\prime }(g(t)) g^{\prime }(t) \,%
\mathrm{d}t$ is constant, and applying Proposition \ref{2.12}, we obtain 
\begin{equation*}
(f \circ g)(x) = (f \circ g)(0) + \int_0^x f^{\prime }(g(t)) g^{\prime }(t)
\,\mathrm{d}t.
\end{equation*}
Since $f^{\prime }, g^{\prime }\in \mathcal{V}_p(\mathbb{R})$, then we can
apply Theorem \ref{2.9}, and we conclude that $(f^{\prime }\circ g)
g^{\prime }\in \mathcal{V}_p(\mathbb{R})$, such that 
\begin{eqnarray*}
\| (f^{\prime }\circ g) g^{\prime }\|_{\mathcal{V}_p(\mathbb{R})} &\leq&
2^{1/p} \| f^{\prime }\|_{\mathcal{V}_p(\mathbb{R})} \| g^{\prime }\|_{%
\mathcal{V}_p(\mathbb{R})} < \infty.
\end{eqnarray*}
We have 
\begin{eqnarray*}
\| f \circ g \|_{BV_p^1(\mathbb{R})} &\approx& | f \circ g(x_0) | + | (%
\widetilde{f \circ g})^{\prime }|_{\mathcal{V}_p(\mathbb{R})} \\
&\lesssim& | f \circ g(x_0) | + 2^{1/p} \| (\tilde{f})^{\prime }\|_{\mathcal{%
V}_p(\mathbb{R})} \| (\tilde{g})^{\prime }\|_{\mathcal{V}_p(\mathbb{R})}.
\end{eqnarray*}
Set $g(x_0) = y_0$, then we have 
\begin{eqnarray*}
\| f \circ g \|_{BV_p^1(\mathbb{R})} &\lesssim& | \tilde{f}(y_0) | + 2^{1/p}
\| \tilde{f}^{\prime }\|_{\mathcal{V}_p(\mathbb{R})} \| \tilde{g}^{\prime
}\|_{\mathcal{V}_p(\mathbb{R})} \\
&\lesssim& | \tilde{f}(y_0) | + \| \tilde{f}^{\prime }\|_{\mathcal{V}_p(%
\mathbb{R})} + 2^{1/p} \| \tilde{f}^{\prime }\|_{\mathcal{V}_p(\mathbb{R})}
\cdot \left( \| \tilde{g}^{\prime }\|_{\mathcal{V}_p(\mathbb{R})} + | g(y_0)
| \right) \\
&+& 2^{1/p} \left( | g(y_0) | + \| \tilde{g}^{\prime }\|_{\mathcal{V}_p(%
\mathbb{R})} \right) | f(y_0) | \\
&\lesssim& \left( | f(y_0) | + \| \tilde{f}^{\prime }\|_{\mathcal{V}_p(%
\mathbb{R})} \right) \cdot \left[ 1 + 2^{1/p} \left( | g(y_0) | + \| \tilde{g%
}^{\prime }\|_{\mathcal{V}_p(\mathbb{R})} \right) \right] \\
&\approx& \| f \|_{BV_p^1(\mathbb{R})} \cdot \left( 1 + 2^{1/p} \| g
\|_{BV_p^1(\mathbb{R})} \right),
\end{eqnarray*}
hence $\| f \circ g \|_{BV_p^1(\mathbb{R})} \lesssim \| f \|_{BV_p^1(\mathbb{%
R})} \left( 1 + 2^{1/p} \| g \|_{BV_p^1(\mathbb{R})} \right)$.

\item[(ii)] 

\begin{itemize}
\item[--] If $f \in BV_p^1(\mathbb{R})$, then according to (i), 
\begin{eqnarray*}
\| f \circ g \|_{BV_p^1(\mathbb{R})} &\lesssim& \| f \|_{BV_p^1(\mathbb{R})}
\left( 1 + 2^{1/p} \| g \|_{BV_p^1(\mathbb{R})} \right) < \infty, \quad 
\text{for all } g \in BV_p^1(\mathbb{R}), \\
\text{hence } f \circ g &\in& BV_p^1(\mathbb{R}), \quad \text{for all } g
\in BV_p^1(\mathbb{R}),
\end{eqnarray*}
and thus $f$ operates on $BV_p^1(\mathbb{R})$.

\item[--] If $f$ operates on $BV_p^1(\mathbb{R})$, then $T_f(BV_p^1(\mathbb{R%
})) \subseteq BV_p^1(\mathbb{R})$.\\ We have $$\text{id}_\mathbb{R}(x) = x =
x_0 + \int_{x_0}^x \mathrm{d}t = x_0 + \int_{x_0}^x 1 \,\mathrm{d}t,$$ such:
that $$1 \in BV_p(\mathbb{R}).$$ Moreover: 
\begin{eqnarray*}
\| \text{id}_\mathbb{R} \|_{BV_p^1(\mathbb{R})} &\approx& | f(x_0) | + \| (%
\text{id}_\mathbb{R})^{\prime }\|_{BV_p(\mathbb{R})} \\
&\approx& | x_0 | + \| 1 \|_{BV_p(\mathbb{R})} \\
&\approx& | x_0 | + \nu_p(1) + \sup_{x \in \mathbb{R}} |1| \\
&=& | x_0 | + 0 + 1 < \infty,
\end{eqnarray*}
hence $\text{id}_\mathbb{R} \in BV_p^1(\mathbb{R})$, and thus $f = f \circ 
\text{id}_\mathbb{R} = T_f(\text{id}_\mathbb{R}) \in BV_p^1(\mathbb{R})$.
\end{itemize}
\end{description}
\end{proof}

\section{\textbf{Statement of results}}

\begin{quote}
In this paragraph, we present Theorem \ref{2.15}, which represents our
contribution, namely a generalization of the basic inequality introduced in
Theorem \ref{2.9} (~\cite{Bo-Cr-Si-1}) by Lemma \ref{2.14}, giving the
derivative of $n$ functions, followed by Example \ref{2.16} to affirm the
obtained results, and we agree to take
\end{quote}

\begin{eqnarray*}
\overset{i=m}{\underset{i=n}{{\Large \circ}}} g_i &=& \left\{ 
\begin{array}{cl}
g_n \circ g_{n+1} \circ \cdots \circ g_k \circ \cdots \circ g_m, & \text{if }
m \geq n \\ 
\text{id}, & \text{if } m < n%
\end{array}
\right. \\
&\text{and}& \\
\prod_{i=n}^{i=m} A_i &=& \left\{ 
\begin{array}{cl}
A_n \times A_{n+1} \times \cdots \times A_m, & \text{if } m \geq n \\ 
1, & \text{if } m < n%
\end{array}
\right.
\end{eqnarray*}

\begin{lemma}
\label{2.14} (Basic algorithm) Let $(I_k)_{2 \leq k \leq n}$ be a sequence
of intervals of $\mathbb{R}$, and let $(g_k)_{1 \leq k \leq n}$ be a
sequence of differentiable functions such that $g_k: I_k \to I_{k-1}$, then
for all $n \geq 2$, 
\begin{equation*}
\left[ \overset{i=n}{\underset{i=1}{{\Large \circ}}} g_i \right]^{\prime }=
g_n^{\prime }\times \prod_{i=1}^{n-1} \left[ g_i^{\prime }\circ \left( 
\overset{j=n}{\underset{j=i+1}{{\Large \circ}}} g_j \right) \right] =
\prod_{i=1}^n \left[ g_i^{\prime }\circ \left( \overset{j=n}{\underset{j=i+1}%
{{\Large \circ}}} g_j \right) \right].
\end{equation*}
\end{lemma}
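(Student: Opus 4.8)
The statement is the multivariate chain rule for an $n$-fold composition, and the natural approach is induction on $n$. The plan is to verify the base case $n=2$ directly from the ordinary chain rule, and then to carry out the inductive step by peeling off either the outermost or the innermost function; peeling off the innermost function $g_n$ turns out to be the cleaner choice because the product formula is indexed so that $g_n'$ appears as a standalone factor.

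\textbf{Base case.} For $n=2$ we have $g_2\colon I_2\to I_1$ and $g_1\colon I_1\to I_0$, and the classical chain rule gives $(g_1\circ g_2)'(x)=g_1'(g_2(x))\,g_2'(x)$, i.e. $[g_1\circ g_2]'=g_2'\times\bigl[g_1'\circ g_2\bigr]$. Matching this against the claimed formula: the middle expression reads $g_2'\times\prod_{i=1}^{1}\bigl[g_i'\circ(\overset{j=2}{\underset{j=i+1}{\circ}}g_j)\bigr]=g_2'\times\bigl[g_1'\circ g_2\bigr]$, using the convention that $\overset{j=2}{\underset{j=2}{\circ}}g_j=g_2$, and the rightmost expression reads $\prod_{i=1}^{2}\bigl[g_i'\circ(\overset{j=2}{\underset{j=i+1}{\circ}}g_j)\bigr]=\bigl[g_1'\circ g_2\bigr]\times\bigl[g_2'\circ\mathrm{id}\bigr]=\bigl[g_1'\circ g_2\bigr]\times g_2'$, using the convention that $\overset{j=2}{\underset{j=3}{\circ}}g_j=\mathrm{id}$. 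All three agree.

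\textbf{Inductive step.} Assume the formula holds for $n$; I would then consider an $(n+1)$-fold composition $G:=\overset{i=n+1}{\underset{i=1}{\circ}}g_i$. Write $G=H\circ g_{n+1}$ where $H:=\overset{i=n}{\underset{i=1}{\circ}}g_i\colon I_n\to I_0$. By the ordinary chain rule, $G'=(H'\circ g_{n+1})\times g_{n+1}'$. Now substitute the inductive hypothesis for $H'$, namely $H'=\prod_{i=1}^{n}\bigl[g_i'\circ(\overset{j=n}{\underset{j=i+1}{\circ}}g_j)\bigr]$, compose each factor with $g_{n+1}$, and use the associativity identity $\bigl(\overset{j=n}{\underset{j=i+1}{\circ}}g_j\bigr)\circ g_{n+1}=\overset{j=n+1}{\underset{j=i+1}{\circ}}g_j$ to absorb $g_{n+1}$ into the inner composition. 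This produces $\prod_{i=1}^{n}\bigl[g_i'\circ(\overset{j=n+1}{\underset{j=i+1}{\circ}}g_j)\bigr]\times g_{n+1}'$. Finally, observe that the leftover factor $g_{n+1}'$ is exactly the $i=n+1$ term of the product $\prod_{i=1}^{n+1}\bigl[g_i'\circ(\overset{j=n+1}{\underset{j=i+1}{\circ}}g_j)\bigr]$, since for $i=n+1$ the inner composition $\overset{j=n+1}{\underset{j=n+2}{\circ}}g_j$ collapses to $\mathrm{id}$ by the stated convention; this yields the rightmost form of the claimed identity for $n+1$, and pulling the $g_{n+1}'$ factor to the front (all factors are real-valued functions, so multiplication is commutative) gives the middle form.

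\textbf{Main obstacle.} There is no analytic difficulty here — differentiability of each $g_k$ is assumed, and nothing more than the single-variable chain rule is needed. The only thing requiring care is bookkeeping with the composition-index conventions $\overset{i=m}{\underset{i=n}{\circ}}g_i$ and $\prod_{i=n}^{m}$ introduced just before the lemma, especially the edge cases $m<n$ (empty composition $=\mathrm{id}$, empty product $=1$), and making sure the associativity rearrangement $\bigl(\overset{j=n}{\underset{j=i+1}{\circ}}g_j\bigr)\circ g_{n+1}=\overset{j=n+1}{\underset{j=i+1}{\circ}}g_j$ is applied with the correct orientation. I would state that associativity identity explicitly as a preliminary remark so that the inductive step reads cleanly.
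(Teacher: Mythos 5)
Your proof is correct and follows essentially the same route as the paper: induction on $n$ using the single-variable chain rule together with associativity of composition. The only (cosmetic) difference is in the inductive step — you peel off the innermost map, writing the $(n+1)$-fold composition as $\bigl(\overset{i=n}{\underset{i=1}{\circ}}g_i\bigr)\circ g_{n+1}$ and applying the chain rule once before invoking the hypothesis, whereas the paper applies the $n$-term formula to the regrouped list $g_1,\dots,g_{n-1},(g_n\circ g_{n+1})$ and then expands $(g_n\circ g_{n+1})'$; both handle the index conventions correctly and yield the same identity.
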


\begin{proof}
For $n = 3$, we have 
\begin{eqnarray*}
(g_1 \circ g_2 \circ g_3)^{\prime }&=& (g_1 \circ (g_2 \circ g_3))^{\prime
}= (g_2 \circ g_3)^{\prime }\times (g_1^{\prime }\circ (g_2 \circ g_3)) \\
&=& g_3^{\prime }\times (g_2^{\prime }\circ g_3) \times (g_1^{\prime }\circ
(g_2 \circ g_3)) = g_3^{\prime }\times (g_1^{\prime }\circ g_2 \circ g_3)
\times (g_2^{\prime }\circ g_3).
\end{eqnarray*}
By induction, for $n$ terms, 
\begin{equation*}
(g_1 \circ g_2 \circ \cdots \circ g_n)^{\prime }= g_n^{\prime }\times
(g_1^{\prime }\circ g_2 \circ \cdots \circ g_n) \times \cdots \times
(g_k^{\prime }\circ g_{k+1} \circ \cdots \circ g_n) \times \cdots \times
g_{n-1}^{\prime }\circ g_n.
\end{equation*}
We deduce that for $n+1$ terms, taking as the $n$-th term $(g_n \circ
g_{n+1})$, 
\begin{eqnarray*}
(g_1 \circ \cdots \circ (g_n \circ g_{n+1}))^{\prime }&=& (g_n \circ
g_{n+1})^{\prime }\times (g_1^{\prime }\circ \cdots \circ (g_n \circ
g_{n+1})) \times \cdots \times g_{n-1}^{\prime }\circ (g_n \circ g_{n+1}) \\
&=& g_{n+1}^{\prime }\times (g_n^{\prime }\circ g_{n+1}) \times (g_1^{\prime
}\circ \cdots \circ g_n \circ g_{n+1}) \times \cdots \times g_{n-1}^{\prime
}\circ (g_n \circ g_{n+1}) \\
&=& g_{n+1}^{\prime }\times \left[ g_n^{\prime }\circ g_{n+1} \times
\prod_{i=1}^{n-1} g_i^{\prime }\circ \left( \overset{j=n+1}{\underset{j=i+1}{%
{\Large \circ}}} g_j \right) \right] \\
&=& g_{n+1}^{\prime }\times \left[ \prod_{i=1}^n g_i^{\prime }\circ \left( 
\overset{j=n+1}{\underset{j=i+1}{{\Large \circ}}} g_j \right) \right].
\end{eqnarray*}
\end{proof}

\begin{theorem}
\label{2.15} Let $p \in [1, +\infty[$, $n \geq 2$, $h \in \mathcal{V}_p(I)$,
such that for all $t \in I$, $t_0 \in I$, $\alpha \in \mathbb{R}$, 
\begin{equation*}
\overset{i=n}{\underset{i=1}{{\Large \circ}}} g_i(t) = \alpha + \int_{t_0}^t
h(x) \,\mathrm{d}x \quad \text{and} \quad f \in \mathcal{V}_p \left( \overset%
{i=n}{\underset{i=1}{{\Large \circ}}} g_i(I) \right), \text{ then}
\end{equation*}

\begin{itemize}
\item[(i)] $\displaystyle \left\| \left[ \overset{i=n}{\underset{i=1}{%
{\Large \circ}}} g_i \right]^{\prime }\right\|_{\mathcal{V}_p(I)} = \left\|
g_n^{\prime }\times \prod_{i=1}^{n-1} \left[ g_i^{\prime }\circ \left( 
\overset{j=n}{\underset{j=i+1}{{\Large \circ}}} g_j \right) \right]
\right\|_{\mathcal{V}_p(I)} \leq 2^{\frac{n-1}{p}} \| g_n^{\prime }\|_{%
\mathcal{V}_p(I)} \times \prod_{k=1}^{n-1} \| g_k^{\prime }\|_{\mathcal{V}_p
\left( \overset{n}{\underset{\ell=k+1}{\circ}} g_\ell(I) \right)}$.

\item[(ii)] $\displaystyle \left\| f \circ \left( \overset{i=n}{\underset{i=1%
}{{\Large \circ}}} g_i \right) \times h \right\|_{\mathcal{V}_p(I)} \leq 2^{%
\frac{n}{p}} \| f \|_{\mathcal{V}_p \left( \overset{i=n}{\underset{i=1}{%
{\Large \circ}}} g_i(I) \right)} \times \| g_n^{\prime }\|_{\mathcal{V}%
_p(I)} \times \prod_{k=1}^{n-1} \| g_k^{\prime }\|_{\mathcal{V}_p \left( 
\overset{n}{\underset{\ell=k+1}{\circ}} g_\ell(I) \right)}$.
\end{itemize}
\end{theorem}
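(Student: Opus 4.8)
Here is a plan for proving Theorem \ref{2.15}.

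The plan is to reduce everything to the two‑function case already handled by the basic inequality (Theorem \ref{2.9}), iterated $n-1$ times, with the derivative of the $n$‑fold composition supplied by Lemma \ref{2.14}. Write $G_k:=\overset{i=n}{\underset{i=k}{{\Large \circ}}} g_i=g_k\circ g_{k+1}\circ\cdots\circ g_n$ for $1\le k\le n$, with the convention $G_{n+1}:=\text{id}$, so that $G_1=\overset{i=n}{\underset{i=1}{{\Large \circ}}} g_i$, $G_k=g_k\circ G_{k+1}$, and $G_{k+1}(I)=\overset{n}{\underset{\ell=k+1}{\circ}} g_\ell(I)$. As the right‑hand sides already demand, I take as standing hypothesis that each $g_k$ is differentiable with $g_k'$ of bounded $p$‑variation on its domain (if this fails, both inequalities are trivial with an infinite right‑hand side). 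Then each $g_k$ is Lipschitz (bounded derivative), hence so is every $G_k$; the pointwise chain rule gives $G_k'=(g_k'\circ G_{k+1})\cdot G_{k+1}'$, and iterating this identity is exactly Lemma \ref{2.14}, i.e. $[G_1]'=g_n'\times\prod_{i=1}^{n-1}\bigl[g_i'\circ G_{i+1}\bigr]$. Moreover, being Lipschitz, every $G_k$ satisfies $G_k(t)=G_k(t_0)+\int_{t_0}^t [G_k]'(x)\,\mathrm{d}x$ for $t\in I$ (Proposition \ref{2.12}, as in the proof of Theorem \ref{2.13}), which is the integral representation needed to feed $G_{k+1}$ into Theorem \ref{2.9}.

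For part (i) I would argue by descending induction on $k$, from $k=n$ down to $k=1$, that $[G_k]'\in\mathcal{V}_p(I)$ with $\|[G_k]'\|_{\mathcal{V}_p(I)}\le 2^{(n-k)/p}\,\|g_n'\|_{\mathcal{V}_p(I)}\prod_{j=k}^{n-1}\|g_j'\|_{\mathcal{V}_p(G_{j+1}(I))}$. The base case $k=n$ is the identity $[G_n]'=g_n'$. For the inductive step, assuming the bound at $k+1$, I apply Theorem \ref{2.9} with the $f$ there replaced by $g_k'$, the $g$ there replaced by $G_{k+1}$, and the $h$ there replaced by $[G_{k+1}]'$; this is legitimate because $[G_{k+1}]'\in\mathcal{V}_p(I)$ by the induction hypothesis, $G_{k+1}$ has the integral representation above with integrand $[G_{k+1}]'$, and the restriction of $g_k'$ to $G_{k+1}(I)$ lies in $\mathcal{V}_p(G_{k+1}(I))$. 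Since $[G_k]'=(g_k'\circ G_{k+1})\cdot[G_{k+1}]'$, Theorem \ref{2.9} gives $\|[G_k]'\|_{\mathcal{V}_p(I)}\le 2^{1/p}\,\|g_k'\|_{\mathcal{V}_p(G_{k+1}(I))}\,\|[G_{k+1}]'\|_{\mathcal{V}_p(I)}$, and inserting the induction hypothesis together with $2^{1/p}\cdot 2^{(n-k-1)/p}=2^{(n-k)/p}$ closes the step. Taking $k=1$ and rewriting the left‑hand side by Lemma \ref{2.14} yields (i).

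For part (ii) I would invoke Theorem \ref{2.9} once more, directly on the data of the statement: since $\overset{i=n}{\underset{i=1}{{\Large \circ}}} g_i(t)=G_1(t)=\alpha+\int_{t_0}^t h(x)\,\mathrm{d}x$ with $h\in\mathcal{V}_p(I)$ and $f\in\mathcal{V}_p(G_1(I))$, it gives $\bigl\|f\circ G_1\times h\bigr\|_{\mathcal{V}_p(I)}\le 2^{1/p}\,\|f\|_{\mathcal{V}_p(G_1(I))}\,\|h\|_{\mathcal{V}_p(I)}$. Differentiating the integral shows $h=[G_1]'$ almost everywhere, so $\|h\|_{\mathcal{V}_p(I)}=\|[G_1]'\|_{\mathcal{V}_p(I)}$; substituting the bound from part (i) and using $2^{1/p}\cdot 2^{(n-1)/p}=2^{n/p}$ and $G_1(I)=\overset{i=n}{\underset{i=1}{{\Large \circ}}} g_i(I)$ gives exactly (ii).

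The substantive point — and the only real obstacle — is the differential‑calculus bookkeeping already encountered in Theorem \ref{2.13}: one must verify that the pointwise chain rule produces $(g_k'\circ G_{k+1})\cdot[G_{k+1}]'$ as a genuine element of $\mathcal{V}_p(I)$ which is the derivative of $G_k$, and that each $G_k$ is absolutely continuous with that derivative, so that Theorem \ref{2.9} is applicable at every stage of the induction (Rademacher's theorem \ref{2.11}, De la Vallée Poussin's Theorem \ref{2.10}, and Proposition \ref{2.12} are the tools, exactly as in the proof of Theorem \ref{2.13}). Once that is in place, the accounting of the constant $2^{n/p}$ and of the nested images $\overset{n}{\underset{\ell=k+1}{\circ}} g_\ell(I)$ is routine.
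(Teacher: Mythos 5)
Your proof is correct and follows essentially the same route as the paper: Lemma \ref{2.14} for the chain rule, then $n-1$ iterated applications of the basic inequality (Theorem \ref{2.9}) organized as an induction, with part (ii) obtained by one further application of that inequality together with the identification $h=[G_1]'$. The only difference is the direction of the bookkeeping: your descending induction feeds the single factor $g_k'$ into Theorem \ref{2.9} as the outer function $f$ and the accumulated inner derivative $[G_{k+1}]'$ as the $h$, whereas the paper peels off the innermost map (taking $g_{n+1}$ as the $g$ and $g_{n+1}'$ as the $h$, with the whole remaining product as $f$); both orderings give the same constant $2^{(n-1)/p}$ and the same nested image intervals, and yours has the mild advantage that every hypothesis of Theorem \ref{2.9} is already guaranteed by the induction hypothesis at the moment it is invoked.
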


\begin{proof}
\begin{description}
\item 

\item[(i)] According to Lemma \ref{2.14}, 
\begin{equation*}
\left\| \left[ \overset{i=n}{\underset{i=1}{{\Large \circ}}} g_i \right]%
^{\prime }\right\|_{\mathcal{V}_p(I)} = \left\| \left[ \prod_{i=1}^{n-1}
g_i^{\prime }\circ \left( \overset{j=n}{\underset{j=i+1}{{\Large \circ}}}
g_j \right) \right] \times g_n^{\prime }\right\|_{\mathcal{V}_p(I)},
\end{equation*}
hence 
\begin{eqnarray*}
\left\| \left[ \overset{i=n}{\underset{i=1}{{\Large \circ}}} g_i \right]%
^{\prime }\right\|_{\mathcal{V}_p(I)} &=& \left\| \left[ \left(
\prod_{i=1}^{n-1} g_i^{\prime }\circ \left( \overset{j=n-1}{\underset{j=i+1}{%
{\Large \circ}}} g_j \right) \right) \circ g_n \right] \times g_n^{\prime
}\right\|_{\mathcal{V}_p(I)} \\
&\leq& 2^{1/p} \left\| \prod_{i=1}^{n-1} g_i^{\prime }\circ \left( \overset{%
j=n-1}{\underset{j=i+1}{{\Large \circ}}} g_j \right) \right\|_{\mathcal{V}%
_p(g_n(I))} \times \| g_n^{\prime }\|_{\mathcal{V}_p(I)}, \quad \text{see } (%
\ref{2.2}).
\end{eqnarray*}
And 
\begin{eqnarray*}
\left\| \prod_{i=1}^{n-1} g_i^{\prime }\circ \left( \overset{j=n-1}{\underset%
{j=i+1}{{\Large \circ}}} g_j \right) \right\|_{\mathcal{V}_p(g_n(I))} &=&
\left\| \left[ \prod_{i=1}^{n-2} g_i^{\prime }\circ \left( \overset{j=n-1}{%
\underset{j=i+1}{{\Large \circ}}} g_j \right) \right] \times g_{n-1}^{\prime
}\circ \left( \overset{j=n-1}{\underset{j=n}{{\Large \circ}}} g_j \right)
\right\|_{\mathcal{V}_p(g_n(I))} \\
&=& \left\| \left[ \left( \prod_{i=1}^{n-2} g_i^{\prime }\circ \left( 
\overset{j=n-2}{\underset{j=i+1}{{\Large \circ}}} g_j \right) \right) \circ
g_{n-1} \right] \times g_{n-1}^{\prime }\right\|_{\mathcal{V}_p(g_n(I))},
\quad \text{since } \overset{j=n-1}{\underset{j=n}{{\Large \circ}}} g_j = 
\text{id} \\
&\leq& 2^{1/p} \left\| \prod_{i=1}^{n-2} g_i^{\prime }\circ \left( \overset{%
j=n-2}{\underset{j=i+1}{{\Large \circ}}} g_j \right) \right\|_{\mathcal{V}%
_p(g_{n-1}(g_n(I)))} \times \| g_{n-1}^{\prime }\|_{\mathcal{V}_p(g_n(I))}.
\end{eqnarray*}
Step by step, we obtain 
\begin{eqnarray*}
\left\| \left[ \overset{i=n}{\underset{i=1}{{\Large \circ}}} g_i \right]%
^{\prime }\right\|_{\mathcal{V}_p(I)} &=& \left\| g_n^{\prime }\times
\prod_{i=1}^{n-1} \left[ g_i^{\prime }\circ \left( \overset{j=n}{\underset{%
j=i+1}{{\Large \circ}}} g_j \right) \right] \right\|_{\mathcal{V}_p(I)} \\
&\leq& 2^{\frac{n-1}{p}} \| g_n^{\prime }\|_{\mathcal{V}_p(I)} \times
\prod_{k=1}^{n-1} \| g_k^{\prime }\|_{\mathcal{V}_p \left( \overset{n}{%
\underset{\ell=k+1}{\circ}} g_\ell(I) \right)}.
\end{eqnarray*}
On the other hand, by induction, we have 
\begin{eqnarray*}
\left\| \left[ \overset{i=n+1}{\underset{i=1}{{\Large \circ}}} g_i \right]%
^{\prime }\right\|_{\mathcal{V}_p(I)} &=& \left\| \left[ \left( \overset{i=n}%
{\underset{i=1}{{\Large \circ}}} g_i \right) \circ g_{n+1} \right]^{\prime
}\right\|_{\mathcal{V}_p(I)} = \left\| \left[ \left( \overset{i=n}{\underset{%
i=1}{{\Large \circ}}} g_i \right)^{\prime }\circ g_{n+1} \right] \times
g_{n+1}^{\prime }\right\|_{\mathcal{V}_p(I)} \\
&\leq& 2^{1/p} \left\| \left( \overset{i=n}{\underset{i=1}{{\Large \circ}}}
g_i \right)^{\prime }\right\|_{\mathcal{V}_p(g_{n+1}(I))} \times \|
g_{n+1}^{\prime }\|_{\mathcal{V}_p(I)}, \quad \text{according to } (\ref%
{[2.2]}) \text{ of Theorem } \ref{2.9} \\
&\leq& 2^{\frac{n-1}{p}} 2^{1/p} \| g_n^{\prime }\|_{\mathcal{V}%
	_p(g_{n+1}(I))} \times \prod_{k=1}^{n-1} \| g_k^{\prime }\|_{\mathcal{V}_p
	\left( \overset{n}{\underset{\ell=k+1}{\circ}} g_\ell(g_{n+1}(I)) \right)}
\times \| g_{n+1}^{\prime }\|_{\mathcal{V}_p(I)} \\
\end{eqnarray*}

\begin{eqnarray*}
&=& 2^{\frac{n}{p}} \| g_{n+1}^{\prime }\|_{\mathcal{V}_p(I)} \times \left[
\prod_{k=1}^{n-1} \| g_k^{\prime }\|_{\mathcal{V}_p \left( \overset{n}{%
\underset{\ell=k+1}{\circ}} g_\ell(g_{n+1}(I)) \right)} \times \|
g_n^{\prime }\|_{\mathcal{V}_p(g_{n+1}(I))} \right] \\
&=& 2^{\frac{n}{p}} \| g_{n+1}^{\prime }\|_{\mathcal{V}_p(I)} \times
\prod_{k=1}^n \| g_k^{\prime }\|_{\mathcal{V}_p \left( \overset{n+1}{%
\underset{\ell=k+1}{\circ}} g_\ell(I) \right)}.
\end{eqnarray*}

\item[(ii)] Since $h(t) = \left( \overset{i=n}{\underset{i=1}{{\Large \circ}}%
} g_i(t) \right)^{\prime }$, and according to (\ref{2.2}), we have 
\begin{equation*}
\left\| \left[ f \circ \left( \overset{i=n}{\underset{i=1}{{\Large \circ}}}
g_i \right) \right] \times h \right\|_{\mathcal{V}_p(I)} \leq 2^{1/p} \| f
\|_{\mathcal{V}_p \left( \overset{i=n}{\underset{i=1}{{\Large \circ}}}
g_i(I) \right)} \times \left\| \left( \overset{i=n}{\underset{i=1}{{\Large %
\circ}}} g_i \right)^{\prime }\right\|_{\mathcal{V}_p(I)},
\end{equation*}
\begin{eqnarray*}
&\leq& \left[ 2^{1/p} \| f \|_{\mathcal{V}_p \left( \overset{i=n}{\underset{%
i=1}{{\Large \circ}}} g_i(I) \right)} \right] \times \left[ 2^{\frac{n-1}{p}%
} \| g_n^{\prime }\|_{\mathcal{V}_p(I)} \times \prod_{k=1}^{n-1} \|
g_k^{\prime }\|_{\mathcal{V}_p \left( \overset{n}{\underset{\ell=k+1}{\circ}}
g_\ell(I) \right)} \right] \\
&=& 2^{\frac{n}{p}} \| f \|_{\mathcal{V}_p \left( \overset{i=n}{\underset{i=1%
}{{\Large \circ}}} g_i(I) \right)} \times \| g_n^{\prime }\|_{\mathcal{V}%
_p(I)} \times \prod_{k=1}^{n-1} \| g_k^{\prime }\|_{\mathcal{V}_p \left( 
\overset{n}{\underset{\ell=k+1}{\circ}} g_\ell(I) \right)}.
\end{eqnarray*}
\end{description}
\end{proof}

\begin{example}\text{ \ }
\label{2.16}
\vspace{-20pt}
\begin{itemize}
\item For $n = 2$, let $g_1 \circ g_2 = h$, then we have

\begin{description}
\item[(i)] $\left\| (g_1 \circ g_2)^{\prime }\right\|_{\mathcal{V}_p(I)} =
\left\| g_2^{\prime }\times (g_1^{\prime }\circ g_2) \right\|_{\mathcal{V}%
_p(I)} = \left\| (g_1^{\prime }\circ g_2) \times g_2^{\prime }\right\|_{%
\mathcal{V}_p(I)} \leq 2^{1/p} \| g_1^{\prime }\|_{\mathcal{V}_p(g_2(I))} \|
g_2^{\prime }\|_{\mathcal{V}_p(I)}$.

\item[(ii)] $\displaystyle \left\| f \circ (g_1 \circ g_2) \times h
\right\|_{\mathcal{V}_p(I)} \leq 2^{1/p} \| f \|_{\mathcal{V}_p(g_1 \circ
g_2(I))} \| h \|_{\mathcal{V}_p(I)} = 2^{1/p} \| f \|_{\mathcal{V}_p(g_1
\circ g_2(I))} \| (g_1 \circ g_2)^{\prime }\|_{\mathcal{V}_p(I)}$. 
\begin{eqnarray*}
&\leq& \left[ 2^{1/p} \| f \|_{\mathcal{V}_p(g_1 \circ g_2(I))} \right]
\cdot \left[ 2^{1/p} \| g_1^{\prime }\|_{\mathcal{V}_p(g_2(I))} \|
g_2^{\prime }\|_{\mathcal{V}_p(I)} \right] \\
&\leq& 2^{2/p} \| f \|_{\mathcal{V}_p(g_1 \circ g_2(I))} \| g_1^{\prime }\|_{%
\mathcal{V}_p(g_2(I))} \| g_2^{\prime }\|_{\mathcal{V}_p(I)},
\end{eqnarray*}
and we have $$2^{2/p} \| f \|_{\mathcal{V}_p \left( \overset{i=2}{\underset{%
i=1}{{\Large \circ}}} g_i(I) \right)} \| g_2^{\prime }\|_{\mathcal{V}_p(I)}
\prod_{k=1}^1 \| g_k^{\prime }\|_{\mathcal{V}_p \left( \overset{2}{\underset{%
\ell=k+1}{\circ}} g_\ell(I) \right)} = 2^{2/p} \| f \|_{\mathcal{V}_p(g_1
\circ g_2(I))} \| g_2^{\prime }\|_{\mathcal{V}_p(I)} \| g_1^{\prime }\|_{%
\mathcal{V}_p(g_2(I))}$$
\end{description}

\item For $n = 3$, let $g_1 \circ g_2 \circ g_3 = h$, then we have

\begin{description}
\item[(i)] 
\begin{eqnarray*}
\left\| (g_1 \circ g_2 \circ g_3)^{\prime }\right\|_{\mathcal{V}_p(I)} &=&
\left\| ((g_1 \circ g_2) \circ g_3)^{\prime }\right\|_{\mathcal{V}_p(I)} =
\left\| g_3^{\prime }\times (g_1 \circ g_2)^{\prime }\circ g_3 \right\|_{%
\mathcal{V}_p(I)} \\
&=& \left\| [ (g_1 \circ g_2)^{\prime }\circ g_3 ] \times
g_3^{\prime }\right\|_{\mathcal{V}_p(I)} \\
&\leq& 2^{1/p} \left\| (g_1 \circ g_2)^{\prime }\right\|_{\mathcal{V}%
_p(g_3(I))} \| g_3^{\prime }\|_{\mathcal{V}_p(I)} \\
&\leq& 2^{1/p} \left[ 2^{1/p} \| g_1^{\prime }\|_{\mathcal{V}%
_p(g_2(g_3(I)))} \| g_2^{\prime }\|_{\mathcal{V}_p(g_3(I))} \right] \|
g_3^{\prime }\|_{\mathcal{V}_p(I)} \\
&=& 2^{2/p} \| g_1^{\prime }\|_{\mathcal{V}_p(g_2(g_3(I)))} \| g_2^{\prime
}\|_{\mathcal{V}_p(g_3(I))} \| g_3^{\prime }\|_{\mathcal{V}_p(I)},
\end{eqnarray*}
and we have $2^{\frac{3-1}{p}} \| g_3^{\prime }\|_{\mathcal{V}_p(I)}
\prod_{k=1}^{3-1} \| g_k^{\prime }\|_{\mathcal{V}_p \left( \overset{3}{%
\underset{\ell=k+1}{\circ}} g_\ell(I) \right)} = 2^{2/p} \| g_3^{\prime }\|_{%
\mathcal{V}_p(I)} \prod_{k=1}^2 \| g_k^{\prime }\|_{\mathcal{V}_p \left( 
\overset{3}{\underset{\ell=k+1}{\circ}} g_\ell(I) \right)}$ 
\begin{equation*}
= 2^{2/p} \| g_3^{\prime }\|_{\mathcal{V}_p(I)} \| g_1^{\prime }\|_{\mathcal{%
V}_p \left( \overset{3}{\underset{\ell=2}{\circ}} g_\ell(I) \right)} \|
g_2^{\prime }\|_{\mathcal{V}_p \left( \overset{3}{\underset{\ell=3}{\circ}}
g_\ell(I) \right)} = 2^{2/p} \| g_3^{\prime }\|_{\mathcal{V}_p(I)} \|
g_1^{\prime }\|_{\mathcal{V}_p(g_2 \circ g_3(I))} \| g_2^{\prime }\|_{%
\mathcal{V}_p(g_3(I))}.
\end{equation*}

\item[(ii)] $\left\| f \circ (g_1 \circ g_2 \circ g_3) \times h \right\|_{%
\mathcal{V}_p(I)} \leq 2^{1/p} \| f \|_{\mathcal{V}_p(g_1 \circ g_2 \circ
g_3(I))} \| h \|_{\mathcal{V}_p(I)}$ 
\begin{eqnarray*}
&=& 2^{1/p} \| f \|_{\mathcal{V}_p(g_1 \circ g_2 \circ g_3(I))} \| (g_1
\circ g_2 \circ g_3)^{\prime }\|_{\mathcal{V}_p(I)} \\
&\leq& \left[ 2^{1/p} \| f \|_{\mathcal{V}_p(g_1 \circ g_2 \circ g_3(I))} %
\right] \left[ 2^{2/p} \| g_3^{\prime }\|_{\mathcal{V}_p(I)} \| g_1^{\prime
}\|_{\mathcal{V}_p(g_2 \circ g_3(I))} \| g_2^{\prime }\|_{\mathcal{V}%
_p(g_3(I))} \right] \\
&\leq& 2^{3/p} \| f \|_{\mathcal{V}_p(g_1 \circ g_2 \circ g_3(I))} \|
g_3^{\prime }\|_{\mathcal{V}_p(I)} \| g_1^{\prime }\|_{\mathcal{V}_p(g_2
\circ g_3(I))} \| g_2^{\prime }\|_{\mathcal{V}_p(g_3(I))}.
\end{eqnarray*}
On the other hand, since $\prod_{k=1}^2 \| g_k^{\prime }\|_{\mathcal{V}_p
\left( \overset{3}{\underset{\ell=k+1}{\circ}} g_\ell(I) \right)} = \|
g_1^{\prime }\|_{\mathcal{V}_p \left( \overset{3}{\underset{\ell=1+1}{\circ}}
g_\ell(I) \right)} \| g_2^{\prime }\|_{\mathcal{V}_p \left( \overset{3}{%
\underset{\ell=2+1}{\circ}} g_\ell(I) \right)}$, then 

\begin{equation*}
\hspace{-1in}
2^{3/p} \| f \|_{\mathcal{V}_p \left( \overset{i=3}{\underset{i=1}{{\Large %
\circ}}} g_i(I) \right)} \| g_3^{\prime }\|_{\mathcal{V}_p(I)} \prod_{k=1}^2
\| g_k^{\prime }\|_{\mathcal{V}_p \left( \overset{3}{\underset{\ell=k+1}{%
\circ}} g_\ell(I) \right)} = 2^{3/p} \| f \|_{\mathcal{V}_p(g_1 \circ g_2
\circ g_3(I))} \| g_3^{\prime }\|_{\mathcal{V}_p(I)} \| g_1^{\prime }\|_{%
\mathcal{V}_p(g_2 \circ g_3(I))} \| g_2^{\prime }\|_{\mathcal{V}_p(g_3(I))}.
\end{equation*}
\end{description}
\end{itemize}
\end{example}

\chapter{\textbf{Composition of Operators in Homogeneous Besov Spaces} $\dot{%
B}_{p}^{s,q}\left( \mathbb{R}^{n}\right) $}

\begin{quote}
In this chapter, we first recall some basic notions, then define the norms
of homogeneous Besov spaces and their realizations, as well as functional
calculus by presenting two fundamental theorems (\ref{3.12}) and (\ref{3.13}%
). Finally, we give an extension of Peetre's Theorem (\ref{3.17}) along with
its proof by introducing certain functional spaces based on the works of ~%
\cite{Bo-Cr-Si-1}, ~\cite{Bo-Cr-Si-2}.
\end{quote}

\begin{definition}
\label{3.1} 
Let $\left( X,\left\Vert .\right\Vert _{X}\right) ,\ \left(Y,\left\Vert
.\right\Vert _{Y}\right)$ be two normed vector spaces. We denote by $%
X\hookrightarrow Y$ the injection of $X$ into $Y$, which is defined by the
following conditions:

\begin{itemize}
\item[(i)] $X$ is a vector subspace of $Y$.

\item[(ii)] The identity map $J:X\mathbf{\longrightarrow } Y$, such that $%
J\left( x\right) =x$ for all $x\in X$, is continuous.
\end{itemize}
\end{definition}

\begin{quote}
Being linear, the continuity of the identity operator is equivalent to the
existence of a constant $C>0$ such that 
\begin{equation}
\left\Vert f\right\Vert _{Y}\leq C\left\Vert f\right\Vert _{X}\ ,\ \ \ \ \ \ 
\text{for all } f\in X,  \label{[3.1]}
\end{equation}
- If (\ref{3.1}) holds, then we say that $X$ embeds into $Y$. We say that
the space $X$ is embedded in $Y$ if there exists a continuous operator $U :X%
\mathbf{\longrightarrow }Y$ such that 
\begin{equation*}
\left\Vert U \left( f\right) \right\Vert _{Y}\leq C\left\Vert f\right\Vert
_{X}\ ,\ \ \ \ \ \ \text{for all } f\in X.
\end{equation*}
\end{quote}

\begin{definition}[~\cite{Pee}]
\label{3.2}  
Let $A_{0}$ and $A_{1}$ be two compatible quasi-Banach spaces embedded in a
Hausdorff locally convex space. Set $A=\left(A_{0},A_{1}\right)$, such that 
\begin{eqnarray*}
\ \ \ \sum \left( A\right) &=&A_{0}+A_{1}\ ,\ \left\Vert a\right\Vert
_{A_{0}+A_{1}}\ =\inf_{a=a_{0}+a_{1}}\left( \left\Vert a_{0}\right\Vert
_{A_{0}}+\left\Vert a_{1}\right\Vert _{A_{1}}\right) \\
&&\text{and} \\
\Delta \left( A\right) &=&A_{0}\cap A_{1},\ \left\Vert a\right\Vert
_{A_{0}\cap A_{1}}\ =\max_{a\in A_{0}\cap A_{1}} \left( \left\Vert
a\right\Vert _{A_{0}},\left\Vert a\right\Vert _{A_{1}}\right) ,
\end{eqnarray*}
and we define for each $t>0$, the Peetre functionals 
\begin{eqnarray*}
\ K\left( t,a\right) &=&\inf_{a=a_{0}+a_{1}}\left( \left\Vert
a_{0}\right\Vert _{A_{0}}+t\left\Vert a_{1}\right\Vert _{A_{1}}\right) , \ 
\text{for } a\in \sum \left( A\right) \\
&&\text{and} \\
\ J\left( t,a\right) &=&\max \left( \left\Vert a\right\Vert _{A_{0}}\ ,\
t\left\Vert a\right\Vert _{A_{1}}\right) , \ \text{for } a\in \Delta
\left(A\right) .
\end{eqnarray*}
\end{definition}

\begin{quote}
The function $t$ $\mathbf{\mapsto }$ $K\left( t,a\right) $ is positive,
increasing, and concave. For each $0<\theta <1,\ 0<p\leq \infty $, we define
the interpolation space $\left( A\right) _{\theta ,p}$ of the spaces $A_{0}$
and $A_{1}$ by 
\begin{equation*}
\left( A\right) _{\theta ,p}=\left( A_{0}\ ,\ A_{1}\right) _{\theta ,p}\
=\left\{ a\in \sum \left( A\right) \,:\quad \left\Vert a\right\Vert _{\left(
A\right) _{\theta ,p}}<\infty \ \ \ \right\} ,
\end{equation*}
and we define its quasi-norm by 
\begin{eqnarray*}
\ \left\Vert a\right\Vert _{\left( A\right) _{\theta ,p}} &=&\left(
\int_{0}^{\infty }\left( t^{-\theta }K\left( t,a\right) \right) ^{p} \frac{\,%
\mathrm{d}{t}}{t}\right) ^{\frac{1}{p}}\ ,\text{ if }1\leq p<+\infty \\
&&\text{and} \\
\left\Vert a\right\Vert _{\left( A\right) _{\theta ,\infty }}
&=&\sup_{0<t<\infty }t^{-\theta }K\left( t,a\right) .
\end{eqnarray*}
The quasi-norm of $\left( A\right) _{\theta ,p}$ can be replaced by the norm 
\begin{equation*}
\ \ \displaystyle\left\Vert a\right\Vert _{A_{0}+A_{1}}+\left(
\int_{0}^{\infty }\left( t^{-\theta }K\left( t,a\right) \right) ^{p} \frac{\,%
\mathrm{d}{t}}{t}\right) ^{\frac{1}{p}}.
\end{equation*}
\end{quote}
\bigskip \bigskip
\begin{theorem}[~\cite{Pee}]
\label{3.3}  
Let $\left( A_{0},A_{1}\right)$ and $\left( B_{0},B_{1}\right)$ be two
compatible couples of Banach spaces, where the bounded linear operator $T$
embeds $A_{i}$ into $B_{i}$ $\left( i=0,1\right)$, such that 
\begin{equation*}
\ \ \ \ \ \ \ \ \ \ \ \ \ \ \ \ \ \displaystyle \left\Vert Tf\right\Vert
_{B_{0}}\leq M_{0}\left\Vert f\right\Vert _{A_{0}}\,,\ \text{and } %
\displaystyle \left\Vert Tf\right\Vert _{B_{1}}\leq M_{1}\left\Vert
f\right\Vert _{A_{1}},
\end{equation*}
then for all $0<\theta <1,\ 0<p\leq +\infty$, the operator $T$ embeds the
interpolation space $\left( A_{0},A_{1}\right) _{\theta ,p}$ into the
interpolation space $\left( B_{0},B_{1}\right) _{\theta ,p}$, such that 
\begin{equation*}
\ \ \ \ \ \ \ \ \ \ \ \ \ \ \ \ \ \ \displaystyle\left\Vert Tf\right\Vert
_{\left( B_{0},B_{1}\right) _{\theta ,p}}\leq M_{\theta }\left\Vert
f\right\Vert _{\left( A_{0},A_{1}\right) _{\theta ,p}},\ \text{and } %
\displaystyle M_{\theta }\leq M_{0}^{1-\theta }M_{1}^{\theta }.
\end{equation*}
\end{theorem}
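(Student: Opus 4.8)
The plan is to reduce everything to a single pointwise estimate for Peetre's $K$-functional, followed by a change of variables. First I would note that $T$, being linear on $\sum(A)=A_0+A_1$ with $\|Tf\|_{B_i}\le M_i\|f\|_{A_i}$ on $A_i$, automatically maps $\sum(A)$ into $\sum(B)=B_0+B_1$, so $Tf$ is defined for every $f$ in the interpolation space $(A_0,A_1)_{\theta,p}$. I may also assume $M_0>0$ and $M_1>0$: if, say, $M_0=0$, the hypothesis still holds with $M_0$ replaced by an arbitrary $\varepsilon>0$, and letting $\varepsilon\to 0^+$ at the end recovers the asserted constant.

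The key step is an inequality between $K$-functionals. Fix $f\in\sum(A)$ and $t>0$. For any decomposition $f=a_0+a_1$ with $a_i\in A_i$, the pair $(Ta_0,Ta_1)$ is an admissible decomposition of $Tf$ in $B_0+B_1$, whence
\[
K(t,Tf;B)\le \|Ta_0\|_{B_0}+t\|Ta_1\|_{B_1}\le M_0\|a_0\|_{A_0}+tM_1\|a_1\|_{A_1}=M_0\left(\|a_0\|_{A_0}+\frac{tM_1}{M_0}\,\|a_1\|_{A_1}\right).
\]
Taking the infimum over all decompositions of $f$ gives the fundamental bound
\[
K(t,Tf;B)\le M_0\,K\!\left(\frac{tM_1}{M_0},\,f;\,A\right),\qquad t>0.
\]

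From here the result is a one-line computation. For $1\le p<\infty$, substitute the bound into the definition of $\|Tf\|_{(B)_{\theta,p}}$, factor out $M_0^{p}$, and change variables by $s=tM_1/M_0$, so that $\mathrm{d}t/t=\mathrm{d}s/s$ and $t^{-\theta}=(M_1/M_0)^{\theta}s^{-\theta}$:
\[
\|Tf\|_{(B)_{\theta,p}}^{p}\le M_0^{p}\int_0^{\infty}\!\left(t^{-\theta}K\!\left(\tfrac{tM_1}{M_0},f;A\right)\right)^{p}\frac{\mathrm{d}t}{t}=M_0^{p}\left(\frac{M_1}{M_0}\right)^{\theta p}\|f\|_{(A)_{\theta,p}}^{p}=\left(M_0^{1-\theta}M_1^{\theta}\right)^{p}\|f\|_{(A)_{\theta,p}}^{p}.
\]
Taking $p$-th roots yields $\|Tf\|_{(B)_{\theta,p}}\le M_0^{1-\theta}M_1^{\theta}\|f\|_{(A)_{\theta,p}}$, i.e. $M_\theta\le M_0^{1-\theta}M_1^{\theta}$. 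The case $p=\infty$ is identical with $\sup_{t>0}$ in place of $\left(\int_0^{\infty}(\cdot)^{p}\,\mathrm{d}t/t\right)^{1/p}$, and the case $0<p<1$ needs no modification, since the argument only uses monotonicity of $x\mapsto x^{p}$ on $[0,\infty)$ together with the pointwise bound above and never invokes a triangle inequality. If one prefers the equivalent norm $\|a\|_{A_0+A_1}+\left(\int_0^{\infty}(\cdot)^{p}\,\mathrm{d}t/t\right)^{1/p}$ from the remark after Definition 3.2, one simply adds the trivial estimate $\|Tf\|_{B_0+B_1}\le\max(M_0,M_1)\|f\|_{A_0+A_1}$.

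I do not expect a serious obstacle; the only point demanding care is conceptual — checking that $K(t,Tf;B)$ is an infimum over decompositions in $B_0+B_1$ and that $Tf=Ta_0+Ta_1$ is one such decomposition, so that the estimate on $f$ transfers to $Tf$. Once the fundamental $K$-functional inequality is in place, the geometric-mean constant $M_0^{1-\theta}M_1^{\theta}$ emerges automatically from the scaling $s=tM_1/M_0$, and there is nothing left to do.
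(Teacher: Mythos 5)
Your proof is correct, and it is the standard argument for the exact interpolation property of the real $(\theta,p)$ method: the pointwise $K$-functional inequality $K(t,Tf;B)\le M_{0}\,K(tM_{1}/M_{0},f;A)$ obtained by pushing an arbitrary decomposition $f=a_{0}+a_{1}$ forward through $T$, followed by the substitution $s=tM_{1}/M_{0}$ which produces the geometric-mean constant $M_{0}^{1-\theta}M_{1}^{\theta}$. The paper itself offers no proof of Theorem \ref{3.3} --- it merely remarks on the theorem's consequences and refers the reader to \cite{KUF} and to Chapter 1 of Peetre's monograph \cite{Pee} --- so there is nothing in the text to compare against; your argument is precisely the one those references supply, and your side remarks (handling $M_{0}=0$ by an $\varepsilon$-perturbation, observing that the case $0<p<1$ needs only monotonicity of $x\mapsto x^{p}$ and never a triangle inequality, and the trivial extra estimate $\|Tf\|_{B_{0}+B_{1}}\le\max(M_{0},M_{1})\|f\|_{A_{0}+A_{1}}$ for the normed variant) are all sound. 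No gaps.
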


\begin{proof}

From this main Theorem \ref{3.3}, all classical theorems such as
Riesz-Thorin or Marcinkiewicz interpolation theorems, and classical
inequalities such as Young's and Bernstein's inequalities follow. For the
proof, see ~\cite{KUF}, as well as Peetre's monograph ~\cite{Pee}, Chap 1.
\end{proof}

\section{\textbf{Composition Problem in Homogeneous Besov Spaces} $\dot{B}%
_{p}^{s,q}\left( \mathbb{R}^{n}\right) $}

\begin{quote}
In this section, we study functional calculus in homogeneous Besov spaces
and introduce the spaces $\mathcal{\dot{B}}_{p}^{s,q}\left(\mathbb{R}%
^{n}\right)$ defined via the set of distributions tending to $0$ at infinity 
$\widetilde{C}_{0}\left(\mathbb{R}^{n}\right)$, which greatly facilitates
functional calculus by allowing the truncation of polynomials.
\end{quote}

\subsection{\textbf{Properties of Homogeneous Besov Spaces}}

\begin{quote}
Let $\psi$ be an infinitely differentiable, even, and positive function,
with support a compact subset of $\mathbb{R}^{n}\backslash \{0\}$, and such
that 
\begin{equation}
\sum_{j\in \mathbb{Z}}\psi \left( 2^{j}\xi \right) =1,\ \text{for all } \xi
\neq 0\,.\   \label{[3.2]}
\end{equation}

The operator $Q_{j}:\mathcal{S}^{\prime }\left( \mathbb{R}^{n}\right) 
\mathbf{\longrightarrow }\mathcal{S}^{\prime }\left( \mathbb{R}^{n}\right)$
is defined by the identity 
\begin{equation*}
\displaystyle\ \ \widehat{Q_{j}f}\left( \xi \right) =\psi \left( 2^{-j}\xi
\right) \hat{f}\ \left( \xi \right) ,\ \left( j\in \mathbb{Z}\right).
\end{equation*}
\end{quote}

\begin{definition}
\label{3.4}
Let $s\in\mathbb{R},\ p,q\in \lbrack 1,+\infty ]$. Then the homogeneous
Besov space $\dot{\tilde{B}}_{p}^{s,q}\left(\mathbb{R}^{n}\right)$ is the
set of equivalence classes of distributions $f\in \mathcal{S}^{\prime }\left(%
\mathbb{R}^{n}\right) /\mathcal{P}_{\infty }( \mathbb{R}^{n})$ such that 
\begin{equation}
\left\Vert f\right\Vert _{\dot{\tilde{B}}_{p}^{s,q}\left(\mathbb{R}%
^{n}\right) }=\left( \sum_{j\in\mathbb{Z}}\left( 2^{js}\left\Vert
Q_{j}f\right\Vert _{p}\right) ^{q}\right)^{1/q}<+\infty\,.  \label{[3.3]}
\end{equation}
\end{definition}
The norm (\ref{3.3}) makes $\dot{\tilde{B}}_{p}^{s,q}\left( \mathbb{R}%
^{n}\right)$ a homogeneous Banach space, satisfying homogeneity properties
such that for all $\lambda >0$, and for all $a\in \mathbb{R}^{n}$, we have:
\begin{eqnarray}
\left\Vert \tau _{a}f\right\Vert _{\overset{\cdot }{ \tilde{B}}%
_{p}^{s,q}\left( \mathbb{R}^{n}\right) } &=&\left\Vert f\right\Vert _{%
\overset{\cdot }{\tilde{B}}_{p}^{s,q}\left( \mathbb{R} ^{n}\right) },\ \ \ \
\   \label{[3.4]} \\
&&  \notag \\
c_{1}\left\Vert f\right\Vert _{\dot{\tilde{B}}_{p}^{s,q}\left( \mathbb{R}%
^{n}\right) } &\leq &\lambda ^{\left( n/p\right) -s}\left\Vert f\left(
\lambda \left( \cdot \right) \right) \right\Vert _{\overset{\cdot }{\tilde{B}%
}_{p}^{s,q}\left( \mathbb{R}^{n}\right) }\leq c_{2}\left\Vert f\right\Vert
_{ \overset{\cdot }{\tilde{B}}_{p}^{s,q}\left( \mathbb{R}^{n}\right) }.
\label{[3.5]}
\end{eqnarray}

\begin{quote}
We can replace the norm $\left\Vert -\right\Vert _{\dot{B}_{p}^{s,q}\left( 
\mathbb{R}^{n}\right) }$ by an equivalent norm $\left\Vert -\right\Vert
^{\prime }$, which satisfies (\ref{3.4}) and improves (\ref{3.5}), by
replacing the discrete partition (\ref{3.2}) with a continuous partition
such that 
\begin{equation*}
\lambda ^{\left( n/p\right) -s}\left\Vert f\left( \lambda \left( \cdot
\right) \right) \right\Vert ^{\prime }=\left\Vert f\right\Vert ^{\prime },\
\ \ \ \text{for all } \lambda >0.\ \ \ \ \ \ 
\end{equation*}
\end{quote}

\begin{definition}
\label{3.5} 
Let $p,q\in \left[ 1,+\infty \right]$. If $s\in \left] 0,1\right]$, then we
denote by $\dot{B}_{p}^{s,q}\left(\mathbb{R}^{n}\right)$ the set of tempered
distributions $f$ in $L_{\ell oc}^{p} \ \left(\mathbb{R}^{n}\right)$ such
that $\left\Vert f\right\Vert _{\dot{B}_{p}^{s,q}\left(\mathbb{R}^{n}\right)
}<+\infty$, with 
\begin{eqnarray*}
\left\Vert f\right\Vert _{\dot{B}_{p}^{s,q}\left(\mathbb{R}^{n}\right) }
&\sim &\left( \int_{\mathbb{R} ^{n}}\frac{\omega ^{q}\left( h\right) }{%
\left\vert h\right\vert ^{sq}}\frac{\,\mathrm{d}{h}}{\left\vert h\right\vert
^{n}}\right) ^{\frac{1}{q}}\ \,, \ \ \ \text{if } s\neq 1 \\
&& \\
&\sim &\left( \int_{\mathbb{R}^{n}}\left( \frac{1}{\left\vert h\right\vert
^{s}}\left( \int_{\mathbb{R}^{n}}\left\vert f\left( x+h\right) -f\left(
x\right) \right\vert ^{p}\,\mathrm{d}{x}\right) ^{1/p}\right) ^{q}\frac{\,%
\mathrm{d}{h}}{\left\vert h\right\vert ^{n}} \right) ^{1/q}\ , \\
&& \\
\text{and } \ \ \left\Vert f\right\Vert _{\dot{B}_{p}^{1,q}\left(\mathbb{R}%
^{n}\right) } &\sim &\left( \int_{\mathbb{R} ^{n}}\left( \frac{1}{\left\vert
h\right\vert }\left( \int_{\mathbb{R}^{n}}\left\vert f\left( x+h\right)
+f\left( x-h\right) -2f\left( x\right) \right\vert ^{p}\,\mathrm{\ d }{x}%
\right) ^{\frac{1}{p}}\right) ^{q}\frac{\,\mathrm{d}{h}}{\left\vert
h\right\vert ^{n}} \right) ^{\frac{1}{q}}\ \ ,
\end{eqnarray*}

with the usual modifications for $p=+\infty$ or $q=+\infty$, \newline
where $\omega \left( h\right) =\left\Vert f\left( x+h\right)-f\left(x\right)
\right\Vert _{p}$ denotes the modulus of continuity. \newline
If $s\in \left] 1,2\right]$, then we denote by $\dot{B}_{p}^{s,q}\left(%
\mathbb{R}^{n}\right)$ the set of tempered distributions $f$ in $L_{\ell
oc}^{p}\left(\mathbb{R}^{n}\right)$ such that $\partial _{j}f\in\dot{B}%
_{p}^{s-1,q}\left(\mathbb{\ R}^{n}\right)$ for all $j=1,....,n$, and we
define its semi-norm by 
\begin{equation*}
\ \displaystyle\left\Vert f\right\Vert _{\dot{B}_{p}^{s,q}\left(\mathbb{R}%
^{n}\right) }\sim \sum_{j=1}^{n}\left\Vert \partial _{j}f\right\Vert _{ \dot{%
B}_{p}^{s-1,q}\left(\mathbb{R}^{n}\right) }.\ \ \ \ 
\end{equation*}
\end{definition}

\begin{quote}
We distinguish between the homogeneous Besov space $\dot{\tilde{B}}%
_{p}^{s,q}\left(\mathbb{R}^{n}\right)$ included in $\mathcal{S}^{\prime
}\left(\mathbb{R}^{n}\right) /\mathcal{P}_{\infty }(\mathbb{R}^{n})$ from
Definition \ref{3.4} and the space $\dot{B}_{p}^{s,q}\left(\mathbb{R}%
^{n}\right)$ from Definition \ref{3.5} included in $L_{\ell oc}^{p} \ \left(%
\mathbb{R}^{n}\right)$.
\end{quote}

\begin{lemma}[~\cite{Bo-Cr-Si-1}]
\label{3.6}  
Let $f\in L_{\ell oc}^{p}\left(\mathbb{R}^{n}\right) ,\ a\in\mathbb{R}^{n},\
\lambda>0$. Then the space $\dot{B}_{p}^{s,q}\left(\mathbb{R}^{n}\right)$
satisfies the following homogeneity properties 
\begin{equation*}
\displaystyle\left\Vert f\left( \cdot -a\right) \right\Vert _{\dot{B}%
_{p}^{s,q}\left(\mathbb{R}^{n}\right) }=\lambda ^{\left(
n/p\right)-s}\left\Vert f\left( \lambda \left( \cdot \right) \right)
\right\Vert _{ \dot{B}_{p}^{s,q}\left(\mathbb{R}^{n}\right) }=\left\Vert
f\right\Vert _{\dot{B}_{p}^{s,q}\left(\mathbb{R}^{n}\right) },\ \ \ 
\end{equation*}
moreover, $\left\Vert f\right\Vert _{\dot{B}_{p}^{s,q}\left(\mathbb{R}%
^{n}\right) }=0$ if and only if $f\in \mathcal{P}_{\left[ s\right] }\left(%
\mathbb{R}^{n}\right)$.
\end{lemma}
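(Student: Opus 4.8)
The plan is to read off both homogeneity identities directly from the finite-difference expression in Definition \ref{3.5} and to identify the null space of the seminorm with $\mathcal{P}_{[s]}(\mathbb{R}^n)$ via the classical characterization of functions with vanishing higher-order differences recalled in Remark \ref{1.7}(d).

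First I would settle the range $0<s\le 1$, where (with $m=1$ if $s\neq 1$ and $m=2$ if $s=1$, so that $m-1=[s]$) the seminorm equals $\bigl(\int_{\mathbb{R}^n}|h|^{-sq}\,\|\Delta_h^{m}f\|_p^{q}\,|h|^{-n}\,\mathrm{d}h\bigr)^{1/q}$. For the translation identity, note $\Delta_h^{m}(\tau_a f)=\tau_a(\Delta_h^{m}f)$ and that the $L^p$-norm is translation invariant, so $\|\Delta_h^{m}(\tau_a f)\|_p=\|\Delta_h^{m}f\|_p$ for every $h$ and the integral is unchanged. For the dilation identity, set $g=f(\lambda\,\cdot)$; an easy induction gives $\Delta_h^{m}g(x)=(\Delta_{\lambda h}^{m}f)(\lambda x)$, hence $\|\Delta_h^{m}g\|_p=\lambda^{-n/p}\|\Delta_{\lambda h}^{m}f\|_p$. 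Substituting $u=\lambda h$ and using that $|h|^{-n}\,\mathrm{d}h$ is dilation invariant while $|h|^{-sq}=\lambda^{sq}|u|^{-sq}$, the integral for $g$ becomes $\lambda^{q(s-n/p)}$ times the integral for $f$, which is exactly $\lambda^{(n/p)-s}\|f(\lambda\,\cdot)\|_{\dot B_p^{s,q}}=\|f\|_{\dot B_p^{s,q}}$. Since both sides are computed from the same explicit finite-difference expression, these are genuine equalities. The range $1<s\le 2$ then follows by induction from Definition \ref{3.5}: from $\partial_j(\tau_a f)=\tau_a(\partial_j f)$ and $\partial_j(f(\lambda\,\cdot))=\lambda\,(\partial_j f)(\lambda\,\cdot)$, applying the already-known case $s-1\in\,]0,1]$ to each $\partial_j f$ and summing over $j$ (the extra factor $\lambda$ combining with $\lambda^{(s-1)-n/p}$ to give $\lambda^{s-n/p}$).

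For the null space, if $\|f\|_{\dot B_p^{s,q}}=0$ with $0<s\le 1$, then the nonnegative integrand forces $\|\Delta_h^{m}f\|_p=0$ for a.e.\ $h$; since $h\mapsto\Delta_h^{m}f$ is continuous into $L_{\ell oc}^{p}(\mathbb{R}^n)$ by strong continuity of translations, $\Delta_h^{m}f=0$ for every $h$, and by the classical fact behind Remark \ref{1.7}(d) this means $f$ coincides a.e.\ with a polynomial of degree $\le m-1=[s]$. Conversely, if $f\in\mathcal{P}_{[s]}(\mathbb{R}^n)$ then $\Delta_h^{[s]+1}f\equiv 0$, so the seminorm vanishes. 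For $1<s\le 2$ one induces again: $\|f\|_{\dot B_p^{s,q}}=0$ iff $\|\partial_j f\|_{\dot B_p^{s-1,q}}=0$ for all $j$, iff each $\partial_j f$ is a polynomial of degree $\le[s-1]=[s]-1$, iff $f$ is a polynomial of degree $\le[s]$.

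The change of variables and the commutation formulas $\Delta_h^{m}\tau_a=\tau_a\Delta_h^{m}$ and $\Delta_h^{m}(f(\lambda\,\cdot))=(\Delta_{\lambda h}^{m}f)(\lambda\,\cdot)$ are routine; the only delicate point is the passage from ``$\Delta_h^{m}f=0$ for a.e.\ $h$'' to ``for every $h$'', which I would justify by the strong continuity of translations on $L^p$ applied locally, since a priori $f\in L_{\ell oc}^{p}$, followed by the appeal to Remark \ref{1.7}(d). Alternatively, the whole lemma can be obtained by transferring Proposition \ref{1.9} and the description of the kernel in Definition \ref{1.8} through the norm equivalences of Theorems \ref{1.14}--\ref{1.15}, but the direct argument above is self-contained and also pins down the exact constants.
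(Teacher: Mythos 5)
Your proposal is correct and follows essentially the same route as the paper: both read the translation and dilation identities directly off the finite-difference seminorm of Definition \ref{3.5} via the change of variables $y=\lambda x$, $t=\lambda h$, handle $1<s\le 2$ by induction through the partial derivatives, and identify the kernel with $\mathcal{P}_{[s]}(\mathbb{R}^n)$ by the vanishing of $\Delta_h^{m}f$ together with Remark \ref{1.7}(d). Your treatment is in fact slightly more careful than the paper's on the passage from ``$\Delta_h^m f=0$ for a.e.\ $h$'' to ``for all $h$'' and on the (trivial) converse inclusion, but these are refinements of the same argument, not a different one.
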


\begin{proof}

$-$ For $s\in \left] 0,1\right]$, take $x-a=y$, so $\,\mathrm{d}{x}=\,%
\mathrm{d}{y}$, 
\begin{equation*}
\text{hence} \ \ \ \ \ \ \ \ \ \ \ \ \ \ \ \ \left\Vert f\left(\cdot
-a\right) \right\Vert _{\dot{B}_{p}^{s,q}\left(\mathbb{R}^{n}\right)
}=\left\Vert f\right\Vert _{\dot{B}_{p}^{s,q}\left(\mathbb{R}^{n}\right) }.
\end{equation*}
$-$ For $s\in \left] 1,2\right]$, we have 
\begin{equation*}
\ \left\Vert f\left( \cdot -a\right) \right\Vert _{\dot{B}_{p}^{s,q}\left( 
\mathbb{R}^{n}\right) }=\sum_{j=1}^{n}\left\Vert \partial _{j}f\left(
\cdot-a\right) \right\Vert_{\dot{B}_{p}^{s-1,q}\left(\mathbb{R} ^{n}\right)
}=\sum_{j=1}^{n}\left\Vert \partial _{j}f\right\Vert _{ \dot{B}%
_{p}^{s-1,q}\left(\mathbb{R}^{n}\right) }=\left\Vert f\right\Vert _{\dot{B}%
_{p}^{s,q}\left(\mathbb{R}^{n}\right) } ,
\end{equation*}
and similarly by induction, for all $s>0$. \newline
\ \ \ \ \ \ \ $-$ Set $\lambda\,.\,x=y$ and $\lambda\,.\,h=t$, then 
\begin{equation*}
\,\mathrm{d}{y}\ =\lambda^{n}\,.\,\,\mathrm{d}{x} \,, \ \,\mathrm{d}{t}\
=\lambda ^{n}\,.\,\,\mathrm{d}{h}\, , \ \left\vert t \right\vert =\left\vert
\lambda \right\vert ^{n}\,.\,\left\vert h\right\vert \,,
\end{equation*}
applying Definition \ref{3.5} we obtain 
\begin{eqnarray*}
\left\Vert f\left( \lambda \left( \cdot \right) \right) \right\Vert _{\dot{B}%
_{p}^{s,q}\left(\mathbb{R}^{n}\right) } &=&\left( \int_{\mathbb{R}
^{n}}\left( \frac{1}{\left\vert \lambda \right\vert ^{-s}\left\vert
t\right\vert ^{s}}\left( \int_{\mathbb{R}^{n}}\lambda ^{-n}\left\vert
f\left( y+t\right) -f\left( y\right)\right\vert ^{p}\, \mathrm{d}{y}\right)
^{1/p}\right) ^{q}\frac{\lambda ^{-n}\,\mathrm{d}{t}}{ \left\vert \lambda
\right\vert ^{-n}\left\vert t\right\vert ^{n}}\right) ^{1/q}\ \  \\
&& \\
&=&\lambda ^{-\left( n/p\right) +s}\left\Vert f\right\Vert _{\dot{B}%
_{p}^{s,q}\left(\mathbb{R}^{n}\right) },\ 
\end{eqnarray*}

\begin{itemize}
\item[-] If $\left\Vert f\right\Vert _{\dot{B}_{p}^{1,q}\left(\mathbb{R}
^{n}\right) }=0$, then 
\begin{equation*}
f\left( x+h\right) +f\left( x-h\right) -2f\left( x\right) =0,
\end{equation*}
hence $\omega _{ p}^{2}\left( h,f\right) =0$, and so $f$ is a polynomial of
first degree.

\item[-] If $\left\Vert f\right\Vert _{\dot{B}_{p}^{s,q}\left(\mathbb{R}
^{n}\right) }=0$, then 
\begin{equation*}
\left\Vert \partial _{j}f\right\Vert _{\dot{B}_{p}^{s-1,q}\left(\mathbb{R}
^{n}\right) }=0,\text{ }\left( j=1,..n\right) ,
\end{equation*}
hence by induction $\partial _{j}f$ is a polynomial of degree $\left[ s-1%
\right] =$ $\left[ s\right] -1$, and so $f$ is a polynomial of degree $\left[
s\right]$.
\end{itemize}
\end{proof}

\begin{proposition}[~\cite{Bo-Cr-Si-1} ]
\label{3.7} 
Let $s\in\mathbb{R},\ p,q\in \lbrack 1,+\infty ]$. Then the quotient space $%
\left. \dot{B}_{p}^{s,q}\left(\mathbb{R}^{n}\right)\right/ \mathcal{P}_{%
\left[ s\right] }\ \left(\mathbb{R}^{n}\right)$ can be identified with the
homogeneous Besov space, which we can denote by $\dot{\tilde{B}}%
_{p}^{s,q}\left(\mathbb{R}^{n}\right)$, and can be taken as the set of
equivalence classes of tempered distributions modulo polynomials $f\in 
\mathcal{S}^{\prime }\left(\mathbb{R}^{n}\right) /\mathcal{P}_{\infty }\left(%
\mathbb{R} ^{n}\right)$ such that $\partial _{j}f\in \ \dot{\tilde{B}}%
_{p}^{s-1,q}\left( \mathbb{R}^{n}\right)$ for all $j=1,,..n$. Moreover, the
following expression is an equivalent norm for $\dot{\tilde{B}}%
_{p}^{s,q}\left(\mathbb{R}^{n}\right)$ 
\begin{equation*}
\ \left\Vert f\right\Vert _{\dot{\tilde{B}}_{p}^{s,q}\left(\mathbb{R}%
^{n}\right) }\approx \sum_{j=1}^{n}\left\Vert\partial_{j}f\right\Vert _{ 
\dot{\tilde{B}}_{p}^{s-1,q}\left(\mathbb{R}^{n}\right) }.
\end{equation*}
\end{proposition}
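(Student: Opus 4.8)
The plan is to prove the norm equivalence $\|f\|_{\dot{\tilde{B}}_p^{s,q}(\mathbb{R}^n)}\approx\sum_{k=1}^n\|\partial_kf\|_{\dot{\tilde{B}}_p^{s-1,q}(\mathbb{R}^n)}$ directly from the Littlewood--Paley description of $\dot{\tilde{B}}_p^{s,q}$ in Definition \ref{3.4}, and then to read off the two claimed identifications. Throughout, $j$ is the dyadic frequency index and $k=1,\dots,n$ a coordinate. For the bound $\sum_k\|\partial_kf\|_{\dot{\tilde{B}}_p^{s-1,q}}\lesssim\|f\|_{\dot{\tilde{B}}_p^{s,q}}$ I would use a Bernstein-type estimate: since $\widehat{Q_j\partial_kf}(\xi)=i\xi_k\psi(2^{-j}\xi)\hat f(\xi)=\widehat{\partial_kQ_jf}(\xi)$, choosing $\rho\in C_0^\infty(\mathbb{R}^n\setminus\{0\})$ with $\rho\equiv1$ on $\mathrm{supp}\,\psi$ gives $i\xi_k\psi(2^{-j}\xi)=2^j\sigma_k(2^{-j}\xi)\psi(2^{-j}\xi)$ with $\sigma_k(\zeta)=i\zeta_k\rho(\zeta)\in C_0^\infty$, hence $Q_j\partial_kf=2^j\,\mathcal F^{-1}[\sigma_k(2^{-j}\cdot)]\ast Q_jf$ where $\|\mathcal F^{-1}[\sigma_k(2^{-j}\cdot)]\|_1=\|\mathcal F^{-1}\sigma_k\|_1$ is independent of $j$. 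Young's inequality then gives $2^{j(s-1)}\|Q_j\partial_kf\|_p\lesssim 2^{js}\|Q_jf\|_p$; taking $\ell^q(\mathbb{Z})$ norms and summing over $k$ finishes this direction.

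For the reverse inequality — the crux — I would invert the gradient on the Fourier side. On $\mathrm{supp}\,\psi(2^{-j}\cdot)$ one has $|\xi|^2=\sum_k\xi_k^2\sim 2^{2j}$, so
$$\widehat{Q_jf}(\xi)=\psi(2^{-j}\xi)\hat f(\xi)=\sum_{k=1}^n\frac{-i\xi_k\,\psi(2^{-j}\xi)}{|\xi|^2}\bigl(i\xi_k\hat f(\xi)\bigr)=\sum_{k=1}^n\frac{-i\xi_k\,\psi(2^{-j}\xi)}{|\xi|^2}\,\widehat{\partial_kf}(\xi).$$
Choosing $\tilde\psi\in C_0^\infty(\mathbb{R}^n\setminus\{0\})$ with $\tilde\psi\equiv1$ on $\mathrm{supp}\,\psi$ and writing $\tilde Q_j$ for the multiplier with symbol $\tilde\psi(2^{-j}\cdot)$, the functions $m_{j,k}(\xi):=-i\xi_k\psi(2^{-j}\xi)/|\xi|^2$ are smooth, compactly supported away from the origin (the denominator never vanishes there), and dilate as $m_{j,k}(\xi)=2^{-j}\mu_k(2^{-j}\xi)$ with $\mu_k(\zeta)=-i\zeta_k\psi(\zeta)/|\zeta|^2\in C_0^\infty$; since $\psi(2^{-j}\xi)=\tilde\psi(2^{-j}\xi)\psi(2^{-j}\xi)$ this yields $Q_jf=\sum_k 2^{-j}\,\mathcal F^{-1}[\mu_k(2^{-j}\cdot)]\ast\tilde Q_j\partial_kf$ with uniform-in-$j$ $L^1$ kernel bounds. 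Moreover, because $\sum_{m\in\mathbb{Z}}\psi(2^{-m}\cdot)\equiv1$ off the origin and only boundedly many of the annuli $\mathrm{supp}\,\psi(2^{-m}\cdot)$ meet $\mathrm{supp}\,\tilde\psi(2^{-j}\cdot)$, there is a fixed $N_0$ with $\tilde Q_j\partial_kf=\sum_{|m-j|\le N_0}\tilde Q_jQ_m\partial_kf$, and $\tilde Q_j$ is $L^p$-bounded uniformly in $j$. Combining, $2^{js}\|Q_jf\|_p\lesssim\sum_k\sum_{|m-j|\le N_0}2^{(j-m)(s-1)}\,2^{m(s-1)}\|Q_m\partial_kf\|_p$, and taking $\ell^q(\mathbb{Z})$ norms — the right side is, for each $k$, a convolution in $m$ with a finitely supported sequence — gives $\|f\|_{\dot{\tilde{B}}_p^{s,q}}\lesssim\sum_k\|\partial_kf\|_{\dot{\tilde{B}}_p^{s-1,q}}$. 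This establishes the equivalence for every $s\in\mathbb{R}$.

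From this the descriptive part is immediate: $f\in\dot{\tilde{B}}_p^{s,q}(\mathbb{R}^n)$ if and only if $f\in\mathcal S'(\mathbb{R}^n)/\mathcal P_\infty(\mathbb{R}^n)$ with $\partial_kf\in\dot{\tilde{B}}_p^{s-1,q}(\mathbb{R}^n)$ for all $k$, and $\|f\|_{\dot{\tilde{B}}_p^{s,q}}\approx\sum_k\|\partial_kf\|_{\dot{\tilde{B}}_p^{s-1,q}}$ — the forward implication being the Bernstein direction and the converse the multiplier direction together with the reconstruction $f=\sum_jQ_jf$ in $\mathcal S'/\mathcal P_\infty$ recalled in Chapter 1. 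To identify this with $\dot B_p^{s,q}(\mathbb{R}^n)/\mathcal P_{[s]}(\mathbb{R}^n)$ in the finite-difference/$L^p_{\ell oc}$ sense of Definition \ref{3.5}, I would induct on $[s]$. For $0<s\le1$ the finite-difference seminorm of Definition \ref{3.5} coincides with the Littlewood--Paley seminorm of Definition \ref{3.4} by Proposition \ref{1.13} and Theorems \ref{1.14}, \ref{1.15} (here $\sigma_p=0$ since $p\ge1$); the map sending an $L^p_{\ell oc}$ representative to its class in $\mathcal S'/\mathcal P_\infty$ is well defined on the quotient by $\mathcal P_{[s]}$, injective (a polynomial with finite Besov seminorm has degree $\le[s]$, since $\Delta_h^{[s]+1}$ of a polynomial of larger degree is a nonzero polynomial, hence not in $L^p(\mathbb{R}^n)$), and surjective by the reconstruction contained in those same results. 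For $s>1$, set $m=[s]\ge1$; Definition \ref{3.5} gives $f\in\dot B_p^{s,q}$ iff $\partial_kf\in\dot B_p^{s-1,q}$ for all $k$ with $\|f\|_{\dot B_p^{s,q}}\sim\sum_k\|\partial_kf\|_{\dot B_p^{s-1,q}}$, so combining the induction hypothesis $\dot B_p^{s-1,q}/\mathcal P_{m-1}\cong\dot{\tilde{B}}_p^{s-1,q}$ with the equivalence just proved, and noting that $\mathcal P_{[s-1]+1}=\mathcal P_{[s]}$ matches the ambiguity of antidifferentiation, yields the identification. For $s\le0$ only the Littlewood--Paley description applies and nothing beyond the norm equivalence is needed.

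The main obstacle is the reverse inequality of the norm equivalence: one cannot divide by a single $\xi_k$, since each coordinate vanishes on part of every dyadic annulus, so one must invert the full gradient via $|\xi|^2=\sum_k\xi_k^2$. What makes this clean is precisely that $\psi$ is supported away from the origin, so the resulting multipliers $-i\xi_k\psi(2^{-j}\xi)/|\xi|^2$ are smooth, compactly supported and correctly dilated, which is exactly what gives the uniform-in-$j$ $L^1$ kernel bounds feeding Young's inequality. The only other delicate point is the realization step in the $L^p_{\ell oc}$ identification — producing an honest locally integrable representative modulo $\mathcal P_{[s]}$ — which I would dispatch by quoting the equivalent-norm results of Chapter 1 rather than reproving them.
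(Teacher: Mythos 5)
The paper offers no argument for Proposition \ref{3.7}: its ``proof'' is a single line deferring to \cite{Bo-Me}. Your proposal therefore supplies a proof where the text supplies none, and what you give is the standard Littlewood--Paley lifting argument, which is correct. The easy direction via $i\xi_k\psi(2^{-j}\xi)=2^j\sigma_k(2^{-j}\xi)\psi(2^{-j}\xi)$ and Young's inequality, and the reverse direction via the gradient inversion $\psi(2^{-j}\xi)=\sum_k\bigl(-i\xi_k\psi(2^{-j}\xi)/|\xi|^2\bigr)\cdot i\xi_k$ --- where the whole point, as you say, is that $\psi$ lives on an annulus so $\mu_k(\zeta)=-i\zeta_k\psi(\zeta)/|\zeta|^2$ is in $C_0^\infty$ and scales to give uniform-in-$j$ $L^1$ kernels --- together with the finite-overlap identity $\tilde Q_j\partial_kf=\sum_{|m-j|\le N_0}\tilde Q_jQ_m\partial_kf$ and the finitely supported convolution in $\ell^q(\mathbb{Z})$, is exactly the lifting proof in \cite{Bo-Me} and in Triebel's books; your closing remark that one cannot divide by a single $\xi_k$ correctly identifies the one real idea. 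Two bookkeeping points deserve care. First, all of this must be read in $\mathcal S'(\mathbb{R}^n)/\mathcal P_\infty(\mathbb{R}^n)$, which is harmless precisely because $\psi$ vanishes near the origin, so $Q_jP=0$ for every polynomial and both $Q_j$ and $\partial_k$ descend to the quotient. Second, in the identification with Definition \ref{3.5}, your injectivity argument is essentially the last assertion of Lemma \ref{3.6}, but at $p=\infty$ the case $\deg P=[s]+1$ gives $\Delta_h^{[s]+1}P$ equal to a constant that \emph{is} in $L^\infty$; one must then use the growth in $h$ of the weighted seminorm rather than non-membership in $L^p$. Also, the induction on $[s]$ tacitly uses that Definition \ref{3.5} extends recursively to all $s>2$, which the paper asserts only up to $s\le2$. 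Neither point is a genuine gap. What your route buys is a self-contained proof from Definition \ref{3.4} and the Chapter 1 equivalences (Proposition \ref{1.13}, Theorems \ref{1.14} and \ref{1.15}); what the paper's citation buys is only brevity.
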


\begin{proof}

For the proof, see $\ ~\cite{Bo-Me}$.
\end{proof}

\subsection{\textbf{Realizations of Homogeneous Besov Spaces}}

\begin{definition}
\label{3.8} 
Let $\sigma :\dot{B}_{p}^{s,q}\left( \mathbb{R}^{n}\right) \mathbf{\
\longrightarrow }\left. \mathcal{S}^{\prime }\left( \mathbb{R}^{n}\right)
\right/ \mathcal{P}_{m}\left( \mathbb{R}^{n}\right)$ be a continuous linear
map such that $\sigma \left( f\right) =\left[ f\right]$ is the equivalence
class of $f$ modulo $\mathcal{P}_{m}\left( \mathbb{R}^{n}\right)$. Then for
all $f\in \dot{B}_{p}^{s,q}\left( \mathbb{R}^{n}\right)$, we say that $%
\sigma $ is a \textbf{realization} modulo $\mathcal{P}_{m}\left( \mathbb{R}%
^{n}\right)$ of $\dot{B}_{p}^{s,q}\left( \mathbb{R}^{n}\right)$. It is a
linear isomorphism from $\dot{B}_{p}^{s,q}\left( \mathbb{R}^{n}\right)$ onto
its image, such that the space $\sigma \left( \dot{B} _{p}^{s,q}\left( 
\mathbb{R}^{n}\right) \right)$ endowed with the norm 
\begin{equation*}
\ \left\Vert \sigma \left( f\right) \right\Vert =\left\Vert f\right\Vert _{ 
\dot{B}_{p}^{s,q}\left( \mathbb{R}^{n}\right) }\ 
\end{equation*}
becomes a Banach space.
\end{definition}

If $f\in \left. \mathcal{S}^{\prime }\left(\mathbb{R}^{n}\right) \right/ 
\mathcal{P}_{\infty }\left(\mathbb{R}^{n}\right)$ and if the series $%
\sum_{j\in\mathbb{Z}}Q_{j}f$ converges in $\left. \mathcal{S}^{\prime }\left(%
\mathbb{R}^{n}\right) \right/ \mathcal{P}_{m}\left(\mathbb{R} ^{n}\right)$,
then we have

\begin{center}
$\displaystyle\ \sigma _{m}\left( f\right) =$ $\sum_{j\in\mathbb{Z}
}Q_{j}f\in \left. \mathcal{S}^{\prime }\left(\mathbb{R}^{n}\right) \right/ 
\mathcal{P}_{m}\left(\mathbb{R}^{n}\right) .$
\end{center}

\begin{definition}
\label{3.9} 
We say that a tempered distribution $f\in\mathcal{S}^{\prime }\left(\mathbb{R%
}^{n}\right)$ \textbf{tends to $0$ at infinity} if we have 
\begin{equation*}
\ \lim_{\lambda \rightarrow 0}f\left( \frac{\cdot }{\lambda } \right)=0,\ 
\text{in } \mathcal{S}^{\prime }\left(\mathbb{R}^{n}\right) .
\end{equation*}
The set of such distributions is denoted by $\widetilde{C} _{0}\left(\mathbb{%
R}^{n}\right) .$
\end{definition}

\begin{quote}
If $C\left(\mathbb{R}\right)$ denotes the set of real-valued continuous
functions and $C_{b}\left(\mathbb{R}\right)$ the Banach space of bounded
functions in $C\left(\mathbb{R}\right)$ endowed with the sup norm, and $%
C_{0}\left(\mathbb{R}\right)$ the Banach subspace of functions in $%
C_{b}\left(\mathbb{R} \right)$ with limit zero at infinity, then the
following distributions tend to $0$ at infinity.
\end{quote}

\begin{itemize}
\item[--] Functions belonging to $C_{0}\left(\mathbb{R}^{n}\right)$ or to $%
L^{p}(\mathbb{R}^{n})$, $1\leq p<+\infty$.

\item[--] Bounded Borel measures.

\item[--] Derivatives of bounded continuous functions.

\item[--] Derivatives of distributions belonging to $\ \widetilde{C}%
_{0}\left(\mathbb{R}^{n}\right)$.
\end{itemize}

\begin{proposition}[~\cite{Bo-Me}]
 \label{3.10}
Let $1\leq p<\infty$, such that 
\begin{equation*}
(\displaystyle0<s<1+ \frac{1}{p} \quad \text{and} \quad 1\leq q\leq +\infty
)\quad \text{or} \quad ( \displaystyle s=1+\frac{1}{p} \quad \text{and}\quad
q=1){}.
\end{equation*}
If we denote by $\mathcal{\dot{B}}_{p}^{s,q}\left(\mathbb{R}^{n}\right)$ the
set of tempered distributions $f$ such that 
\begin{equation*}
\left[ f\right] \in\dot{B}_{p}^{s,q}\left(\mathbb{R}^{n}\right) \ \text{and}
\ \ \partial _{j}f\in\widetilde{C}_{0}, \text{ for } j=1,....n,
\end{equation*}
then every element of $\dot{B}_{p}^{s,q}\left(\mathbb{R}^{n}\right)$ admits
a representative in $\mathcal{\ \dot{B}}_{p}^{s,q}\left(\mathbb{R}%
^{n}\right) $ unique up to addition of a constant, and the space $\mathcal{%
\dot{B}}_{p}^{s,q}\left(\mathbb{R }^{n}\right)$ thus defined can be endowed
with the semi-norm $\left\Vert -\right\Vert _{ \dot{B}_{p}^{s,q}\left(%
\mathbb{R}^{n}\right)}$.
\end{proposition}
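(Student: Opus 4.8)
The plan is to study the ``class map'' $\Sigma\colon \mathcal{\dot{B}}_{p}^{s,q}(\mathbb{R}^{n})\to \dot{B}_{p}^{s,q}(\mathbb{R}^{n})/\mathcal{P}_{[s]}(\mathbb{R}^{n})\cong \dot{\tilde{B}}_{p}^{s,q}(\mathbb{R}^{n})$, $g\mapsto [g]$ (the identification being that of Proposition \ref{3.7}), to show that its kernel is exactly the constant functions (uniqueness), that it is surjective (existence of a representative), and finally to transport the seminorm $\|\cdot\|_{\dot{B}_{p}^{s,q}}$ along $\Sigma$. Two elementary facts will be used throughout: a nonzero polynomial cannot tend to $0$ at infinity in the sense of Definition \ref{3.9} (its dilate $P(\,\cdot/\lambda)$ has a component scaling like $\lambda^{-\deg P}$, which does not vanish in $\mathcal{S}'$ as $\lambda\to0$); and, from the list following Definition \ref{3.9}, $C_{0}$-functions, $L^{p}$-functions, and derivatives of elements of $\widetilde{C}_{0}$ all tend to $0$ at infinity, while $\widetilde{C}_{0}(\mathbb{R}^{n})$ is a vector space.

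\emph{Uniqueness.} If $g_{1},g_{2}\in\mathcal{\dot{B}}_{p}^{s,q}(\mathbb{R}^{n})$ have the same class, then $P:=g_{1}-g_{2}$ is a polynomial (of degree $\le[s]$ by Lemma \ref{3.6}), and each $\partial_{j}P=\partial_{j}g_{1}-\partial_{j}g_{2}\in\widetilde{C}_{0}(\mathbb{R}^{n})$. Being a polynomial that tends to $0$ at infinity, $\partial_{j}P$ vanishes for every $j$, so $P$ is constant; hence $\ker\Sigma=\mathbb{R}$.

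\emph{Existence.} Given $f\in\dot{B}_{p}^{s,q}(\mathbb{R}^{n})$, the idea is to correct $f$ by a polynomial. By Proposition \ref{3.7} the class of $\partial_{j}f$ lies in $\dot{\tilde{B}}_{p}^{s-1,q}(\mathbb{R}^{n})$, and the hypotheses yield $s>0$ and $s-1<1/p\le n/p$, with $s-1=n/p$ only if $n=1$ and $q=1$. I would set $h_{j}:=\sum_{k\in\mathbb{Z}}Q_{k}(\partial_{j}f)$ and prove $h_{j}\in\widetilde{C}_{0}(\mathbb{R}^{n})$ by splitting at $k=0$: the low-frequency part $\sum_{k<0}$ converges absolutely in $L^{\infty}$ by Bernstein's inequality, $\|Q_{k}(\partial_{j}f)\|_{\infty}\lesssim 2^{k(n/p-s+1)}\bigl(2^{k(s-1)}\|Q_{k}(\partial_{j}f)\|_{p}\bigr)$, which is summable over $k<0$ because $n/p-s+1>0$ (the endpoint $s-1=n/p$ being covered by the $\ell^{1}$-summability available when $q=1$), and it is a $C_{0}$-function; the high-frequency part $\sum_{k\ge0}$, after writing the symbol as $\psi(\xi)=\sum_{\ell}\xi_{\ell}\chi_{\ell}(\xi)$ with $\chi_{\ell}\in C_{0}^{\infty}(\mathbb{R}^{n}\setminus\{0\})$, becomes a finite sum of $\partial_{\ell}$ applied to series converging in $L^{p}$ (this is where $s>0$ is used), hence lies in $\widetilde{C}_{0}(\mathbb{R}^{n})$. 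The difference $h_{j}-\partial_{j}f$ has Fourier transform supported at the origin, so it is a polynomial $P_{j}$; and $\partial_{k}h_{j}-\partial_{j}h_{k}$, being simultaneously a polynomial and an element of $\widetilde{C}_{0}$, vanishes, so the polynomial $1$-form $\sum_{j}P_{j}\,\mathrm{d}x_{j}$ is closed and admits a polynomial primitive $P$ with $\partial_{j}P=P_{j}$. Then $g:=f+P$ is a tempered distribution lying in $L_{\ell oc}^{p}$ with $\partial_{j}g=h_{j}\in\widetilde{C}_{0}(\mathbb{R}^{n})$ for all $j$; by Proposition \ref{3.7}, $\|g\|_{\dot{B}_{p}^{s,q}}\approx\sum_{j}\|h_{j}\|_{\dot{\tilde{B}}_{p}^{s-1,q}}=\sum_{j}\|\partial_{j}f\|_{\dot{\tilde{B}}_{p}^{s-1,q}}\approx\|f\|_{\dot{B}_{p}^{s,q}}<\infty$, so the polynomial $g-f=P$ has finite $\dot{B}_{p}^{s,q}$-seminorm, forcing $\deg P\le[s]$ by Lemma \ref{3.6}. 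Hence $g\in\dot{B}_{p}^{s,q}(\mathbb{R}^{n})$ represents the class of $f$ and belongs to $\mathcal{\dot{B}}_{p}^{s,q}(\mathbb{R}^{n})$: $\Sigma$ is onto.

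\emph{Conclusion and the main obstacle.} Combining the above, $\Sigma$ is a linear surjection with kernel $\mathbb{R}$, so the rule $g\mapsto\|g\|_{\dot{B}_{p}^{s,q}(\mathbb{R}^{n})}$ defines a seminorm on $\mathcal{\dot{B}}_{p}^{s,q}(\mathbb{R}^{n})$ with null space precisely the constants, which is the final assertion. The polynomial bookkeeping (defect polynomials, closedness of the $1$-form, the degree bound) is routine; the one genuinely delicate point, and the place where the hypotheses are used in full, is the verification $h_{j}\in\widetilde{C}_{0}(\mathbb{R}^{n})$: the low-frequency estimate requires $s<1+1/p$ (equivalently $s-1<n/p$), its endpoint $s=1+1/p$ requires $q=1$, and the high-frequency estimate requires $s>0$.
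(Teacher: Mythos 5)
The thesis does not actually prove Proposition \ref{3.10}: it is quoted from \cite{Bo-Me} and no argument is given, so there is nothing in the paper to compare your proof against; your proposal has to be judged on its own. Its skeleton is the standard one and is sound. Uniqueness correctly reduces to the fact that the only polynomial in $\widetilde{C}_{0}(\mathbb{R}^{n})$ is $0$ (the dilates $P(\cdot/\lambda)$ scale like $\lambda^{-\deg P}$). For existence, taking $h_{j}=\sum_{k\in\mathbb{Z}}Q_{k}(\partial_{j}f)$, verifying $h_{j}\in\widetilde{C}_{0}$ by a Bernstein estimate for $k<0$ (where $s<1+1/p$, with $q=1$ at the endpoint, is exactly what makes the series converge) and by integration by parts for $k\ge 0$ (using $s>0$), then integrating the closed polynomial $1$-form $(h_{j}-\partial_{j}f)_{j}$ to a corrector $P$ whose degree is forced into $\mathcal{P}_{[s]}$ by Lemma \ref{3.6}, is precisely how one localizes where each hypothesis enters. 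The one assertion I would not let stand as written is that the low-frequency tail $\sum_{k<0}Q_{k}(\partial_{j}f)$ ``is a $C_{0}$-function'': absolute convergence in $L^{\infty}$ gives a bounded continuous function, not spatial decay (a bounded function whose Fourier transform is concentrated near one point of an annulus need not vanish at infinity). What is true, and what you should say instead, is that each block has Fourier support in an annulus away from the origin and therefore tends to $0$ at infinity in the dilation sense of Definition \ref{3.9} (under $x\mapsto x/\lambda$ its Fourier support escapes to infinity), and that membership in $\widetilde{C}_{0}$ is preserved under uniform limits of uniformly bounded functions. With that local repair your argument is complete and agrees with the proof in \cite{Bo-Me}.
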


\subsection{\textbf{Statement of Results}}

In this subsection, we compare the different spaces $B_{p}^{s,q}$, $\dot{%
\tilde{B}}_{p}^{s,q}\approx \dot{B}_{p}^{s,q}$, $\mathcal{\dot{B}}_{p}^{s,q}$%
, concerning functional calculus to solve the composition problem by
specifying the conditions related to each of them and presenting two
comparative theorems \ref{3.12} and \ref{3.13}.

\begin{definition}[~\cite{Bo-Me} ]
\label{3.11} 
Let $p\in \lbrack 1,+\infty \lbrack$ and $J$ an interval of $\mathbb{R}$.
Then we denote by $\mathcal{U}_{p}\left(J\right)$ the set of measurable
functions $f:\mathbb{R}\mathbf{\longrightarrow }\mathbb{R}$ such that 
\begin{equation*}
\sup_{\left\vert h\right\vert \leq t}\ \left\vert f\left(x+h\right)-f\left(
x\right) \right\vert \ \text{is measurable on } J, \ \text{for all } t>0\,,
\end{equation*}
endowed with the norm 
\begin{equation*}
\left\Vert f\right\Vert _{\mathcal{U}_{p}\left(J\right)}^{p}=\sup_{t\
>0}t^{-1}\int_{J}\ \ \sup_{\left\vert h\right\vert \leq t}\left\vert f\left(
x+h\right)-f\left( x\right)\right\vert ^{p}\,\mathrm{d}{\ x}<+\infty .
\end{equation*}
\end{definition}

\begin{quote}
We say that a continuous function $f$ belongs to $U_{p}^{1}\left( \mathbb{R}%
\right)$ if there exists a bounded Borel function $h\in \mathcal{U}_{p}\left(%
\mathbb{R}\right)$ such that
\end{quote}
\begin{equation*}
\displaystyle\ f\left(x\right) -f\left( 0\right)=\int_{0}^{x}h\left(
t\right) \,\mathrm{d}{t},\ \ \ \ \ \ \text{for all } x\in\mathbb{R},
\end{equation*}
and for every bounded Borel measurable function $h$ of class $\mathcal{U}%
_{p}\left(\mathbb{R}\right)$, we endow $U_{p}^{1}\left( \mathbb{R}\right)$
with the semi-norm 
\begin{equation*}
\left\Vert f\right\Vert _{U_{p}^{1}}=\inf \left\{ \sup_{\mathbb{R}
}\left\vert h\right\vert +\left\Vert h\right\Vert _{\mathcal{U}_{p}},\
h=f^{\prime }\ \left( p.p\right) \right\} \sim \left\vert
f\left(0\right)\right\vert +\left\Vert f^{\prime }\right\Vert _{\infty
}+\left\Vert f^{\prime }\right\Vert _{U_{p}\left(\mathbb{R}\right) }.
\end{equation*}
And just as we defined $BV_{p}(\mathbb{R})$ from $\mathcal{V}_{p}\left(%
\mathbb{R} \right)$ (cf. Definition \ref{1.19}), we also define $U_{p}\left(%
\mathbb{R}\right)$ from $\mathcal{U}_{p}\left(\mathbb{R}\right)$, taking
functions that are almost everywhere equal to at least one element of $%
\mathcal{U}_{p}\left(\mathbb{R}\right)$, and we endow $U_{p}\left( \mathbb{R}
\right)$ with the semi-norm 
\begin{equation*}
\displaystyle\left\Vert f\right\Vert _{U_{p}\left(\mathbb{R}\right) }=\inf
\left\{ \ \left\Vert g\right\Vert _{\mathcal{U}_{p}\left(\mathbb{R}\right)
}\,;\quad g\in \mathcal{U}_{p}\left(\mathbb{R}\right)\,, \quad g=f\ \ \
\left( p.p\ \right) \right\}.
\end{equation*}
\begin{theorem}[~\cite{Bo-Cr-Si-1}]
\label{3.12}  
Let 
\begin{equation*}
p\in ]1,+\infty \lbrack\,,\ s\in ]0,1+\frac{1}{p}[\,, \ q\in \lbrack
1,+\infty ]\,, \ f\in U_{p}^{1}\left(\mathbb{R}\right).
\end{equation*}

\begin{itemize}
\item[(i)] If $f(0)=0$, then $\displaystyle T_{f}\left( B_{p}^{1+\left(
1/p\right),1}\left(\mathbb{R}^{n}\right)\right) \subseteq B_{p}^{1+\left(
1/p\right) ,\infty }\left(\mathbb{R}^{n}\right)$, 
\begin{equation*}
\ \text{and } \left\Vert f\circ g\right\Vert _{B_{p}^{1+\left(
1/p\right),\infty}\left(\mathbb{R}^{n}\right) }\leq c\left\Vert f\right\Vert
_{U_{p}^{1}\left(\mathbb{R}\right) }\left\Vert g\right\Vert
_{B_{p}^{1+\left( 1/p\right) ,1}\left(\mathbb{R}^{n}\right) } ,\ \text{for
all } g\in B_{p}^{1+\left( 1/p\right) ,1}\left(\mathbb{R} ^{n}\right).
\end{equation*}

\item[(ii)] If $f(0)=0$, then $T_{f}\left( B_{p}^{s,q}\left( \mathbb{R}%
^{n}\right) \right) \subseteq B_{p}^{s,q}\left(\mathbb{R} ^{n}\right)$, 
\begin{equation*}
\text{and } \left\Vert f\circ g\right\Vert _{B_{p}^{s,q}\left(\mathbb{R}
^{n}\right)}\leq c\left\Vert f\right\Vert _{U_{p}^{1}\left(\mathbb{R}
\right)}\left\Vert g\right\Vert _{B_{p}^{s,q}\left(\mathbb{R}^{n}\right) }
,\ \ \ \ \ \text{for all } g\in B_{p}^{s,q}\left(\mathbb{R}^{n}\right).
\end{equation*}

\item[(iii)] If $g\in \dot{B}_{p}^{1+\frac{1}{p},1}\left(\mathbb{R}
\right)\,,$ then $f\circ g\in \dot{B}_{p}^{1+\frac{1}{p},\infty }\left(%
\mathbb{R}\right) ,\ $ 
\begin{equation*}
\text{and } \left\Vert f\circ g\right\Vert _{\dot{B}_{p}^{1+\frac{1}{p}
,\infty}\left( \mathbb{R}\right) }\leq c_{p}\left\Vert f\right\Vert
_{U_{p}^{1}\left( \mathbb{R}\right) }(\left\Vert g\right\Vert _{\dot{B}
_{p}^{1+\frac{1}{p} ,1}\left( \mathbb{R}\right) }+\left\vert L_{g^{\prime
}}\right\vert ),\ \ \left( c_{p}>0\right) .
\end{equation*}

\item[(iv)] If $g\in BV_{p}^{1}\left(\mathbb{R}\right) ,$ then $f\circ g\in 
\dot{B}_{p}^{1+\frac{1}{p},\infty }\left(\mathbb{R} \right) ,$ 
\begin{equation*}
\text{and } \ \ \left\Vert f\circ g\right\Vert _{\dot{B}_{p}^{1+\frac{1}{p}
,\infty}\left(\mathbb{R}\right) }\leq c_{p}\left\Vert f\right\Vert
_{U_{p}^{1}\left(\mathbb{R}\right) } \left\Vert g^{\prime }\right\Vert
_{BV_{p}\left(\mathbb{R} \right) },\ \ \ \left( c_{p}>0\right).
\end{equation*}
\end{itemize}
\end{theorem}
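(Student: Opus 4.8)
The plan is to reduce every assertion to an estimate on the derivative $(f\circ g)'=(f'\circ g)\cdot g'$, exactly as in the proof of Theorem \ref{2.13}. Since $f\in U_p^1(\mathbb{R})$ and $g$ lie in spaces of Lipschitz functions, Rademacher's theorem gives differentiability almost everywhere, De la Vall\'ee Poussin's Theorem \ref{2.10} together with Proposition \ref{2.12} yields $(f\circ g)(x)=(f\circ g)(0)+\int_0^x f'(g(t))g'(t)\,\mathrm{d}t$, and on $\mathbb{R}^n$ the chain rule reads $\partial_j(f\circ g)=(f'\circ g)\,\partial_j g$. Combined with the differentiation characterisations of Definition \ref{3.5} and Proposition \ref{3.7} (so that $\|F\|_{\dot B_p^{1+1/p,q}}\approx\sum_j\|\partial_j F\|_{\dot B_p^{1/p,q}}$) and the finite-difference norms of Proposition \ref{1.13} and Theorems \ref{1.14}--\ref{1.15}, each claim becomes a bound on $(f'\circ g)\cdot g'$ in $\dot B_p^{1/p,\infty}$ (resp. in $B_p^{s-1,q}$), plus a trivial bound on $\|f\circ g\|_p$ in the non-homogeneous cases.

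First I would dispose of the low-order range $0<s\le 1$ inside (ii), where no derivative is needed. Since $f(0)=0$ and $f'$ is a bounded Borel function, $f$ is globally Lipschitz with constant $\|f'\|_\infty\lesssim\|f\|_{U_p^1}$, whence $|\Delta_h^1(f\circ g)(x)|\le\|f'\|_\infty\,|\Delta_h^1 g(x)|$ and $|f\circ g(x)|\le\|f'\|_\infty\,|g(x)|$; feeding this into the difference norm for $B_p^{s,q}$ gives $\|f\circ g\|_{B_p^{s,q}}\lesssim\|f\|_{U_p^1}\|g\|_{B_p^{s,q}}$ at once, and likewise for the homogeneous norm. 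The hypothesis $f(0)=0$ is used only here, to keep $f\circ g\in L^p$; in the homogeneous parts (iii)--(iv) one works modulo constants and it is not needed.

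The heart of the matter is the range $1\le s<1+\tfrac1p$ and the endpoint $s=1+\tfrac1p$, i.e. bounding $(f'\circ g)\cdot g'$. I would split it by the Leibniz rule for finite differences,
\[
\Delta_h^1\big((f'\circ g)\,v\big)=\Delta_h^1(f'\circ g)\cdot\tau_{-h}v+(f'\circ g)\cdot\Delta_h^1 v,\qquad v=\partial_j g,
\]
and estimate the two pieces separately. The second term is controlled by $\|f'\|_\infty$ times the relevant (semi-)norm of $v$ ($\|\partial_j g\|_{B_p^{s-1,q}}\lesssim\|g\|_{B_p^{s,q}}$ in (ii), $\|g'\|_{BV_p}$ in (iv), $\|g'\|_{\dot B_p^{1/p,1}}+|L_{g'}|$ in (iii)). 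For the first term I would establish a basic inequality in the spirit of Theorem \ref{2.9}, but with the outer function $f$ there replaced by the primitive of $f'\in\mathcal U_p$. In case (iv), $g\in BV_p^1(\mathbb{R})$ means $g'$ has a normalised representative of bounded $p$-variation, so the monotonicity-splitting argument and Lemma \ref{2.8} can be run almost verbatim to get $\|(f'\circ g)g'\|_{\mathcal U_p}\lesssim\|f'\|_{\mathcal U_p}\|g'\|_{\mathcal V_p}$, and one concludes via the embedding $U_p(\mathbb{R})\hookrightarrow\dot B_p^{1/p,\infty}(\mathbb{R})$, which follows from $\sup_{h\neq 0}|h|^{-1}\|\Delta_h^1 F\|_p^p\le\|F\|_{U_p}^p$. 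In cases (i) and (iii) one does the same with $g'$ taken in the $U_p$-class itself: the summability $q=1$ in the hypothesis $g\in B_p^{1+1/p,1}$ (resp. $\dot B_p^{1+1/p,1}$) is precisely what puts $g'$ into $\mathcal U_p$, the constant $|L_{g'}|$ pins down the realisation of $g'$ in the sense of Proposition \ref{3.10} and the space $\widetilde C_0$, and the loss from $q=1$ to $q=\infty$ in the conclusion is intrinsic.

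Part (ii) for the full range $1\le s<1+\tfrac1p$ and arbitrary $q$ I would obtain not by interpolating the operator (which is nonlinear) but by running the finite-difference characterisation directly: the modulus $\omega_p^{M+1}(t,f\circ g)$ is dominated term by term using the sublinear bounds from the previous two paragraphs, so $\|f\circ g\|_{B_p^{s,q}}\lesssim\|f\|_{U_p^1}\|g\|_{B_p^{s,q}}$ for every admissible $q$. The main obstacle, as anticipated, is the mixed basic inequality: in Theorem \ref{2.9} the outer function is of bounded $p$-variation and its pointwise values are manipulated, whereas here $f'$ is only of class $\mathcal U_p$, so the same oscillation estimate must be carried out with $L^p$-averaged moduli $\sup_{|h|\le t}|f'(\cdot+h)-f'(\cdot)|$ in place of pointwise differences; adapting the Lemma \ref{2.8} / monotonicity argument to that setting is the technical core of the proof.
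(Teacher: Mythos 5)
The paper does not actually prove Theorem \ref{3.12}: it defers entirely to \cite{Bo-Cr-Si-1}, so there is no internal argument to measure yours against. Your outline is consistent with the known strategy of that reference (reduce to $(f\circ g)'=(f'\circ g)\,g'$, prove a $\mathcal U_p$-analogue of the basic inequality of Theorem \ref{2.9}, and use the embedding chain of Theorem \ref{3.17} to pass from (iv) to (iii) and (i)). But as a proof it has a genuine hole: the estimate you yourself flag as ``the technical core'' --- controlling $(f'\circ g)\cdot g'$ when $f'$ is only of class $\mathcal U_p$ --- is never carried out, and it is essentially the whole content of the theorem. Moreover, the form in which you state it, $\|(f'\circ g)g'\|_{\mathcal U_p}\lesssim\|f'\|_{\mathcal U_p}\|g'\|_{\mathcal V_p}$, is stronger than what is needed and than what is established in the literature: the conclusion of (iv) only places $(f\circ g)'$ in $\dot B_p^{1/p,\infty}$, and since the $\mathcal U_p$ norm dominates the $\dot B_p^{1/p,\infty}$ seminorm (step (5) of the proof of Theorem \ref{3.17}), membership of the product in $\mathcal U_p$ is an extra claim requiring separate justification. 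The monotonicity-splitting argument behind Theorem \ref{2.9} manipulates pointwise values of the outer function at partition points and does not transfer ``almost verbatim'' to the averaged moduli $\sup_{|h|\le t}|f'(\cdot+h)-f'(\cdot)|$; a change of variables along $g$ is needed, and making that rigorous is exactly where the bounded $p$-variation of $g'$ enters.

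Two further soft spots. First, your claim that the range $0<s\le 1$ of (ii) follows from the pointwise Lipschitz bound fails at $s=1$: there $B_p^{1,q}$ is characterised by second differences (Definition \ref{3.5}, Theorem \ref{1.15}), and $|\Delta_h^2(f\circ g)|$ is not dominated by $\|f'\|_\infty\,|\Delta_h^2 g|$; you must send $s=1$ through the derivative argument too, which you do set up later, so this is repairable but should not be stated as you have it. Second, for $1<s<1+1/p$ and general $q$ you reject interpolation on the grounds that $T_f$ is nonlinear, yet ``dominating $\omega_p^{M+1}(t,f\circ g)$ term by term'' from the two endpoint bounds is a K-functional argument in disguise and is not spelled out; the standard and cleaner route is nonlinear real interpolation, exploiting that $T_f$ is Lipschitz on the low-regularity endpoint (because $f'$ is bounded) and bounded at $s=1+\frac1p$. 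Without the core $\mathcal U_p$-estimate, and with these two steps only asserted, the proposal is a plausible roadmap rather than a proof.
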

\begin{proof}
For the proof, see ~\cite{Bo-Cr-Si-1}. Theorem \ref{3.12} can be generalized
to spaces $B_{p}^{s,q}\left(\mathbb{R}^{n}\right)$, but we do not know a
generalization to the n-dimensional case for spaces $BV_{p}^{1}$.
\end{proof}

\begin{theorem}[~\cite{Bo-Me}]
\label{3.13}  
Let 
\begin{equation*}
1\leq p<\infty ,\ q\in \left[ 1,+\infty \right] ,\ 0<s<1+\frac{1}{p} \ \ 
\text{and} \ \ f\in U_{p}^{1}\left(\mathbb{R}\right).
\end{equation*}

\begin{itemize}
\item[(i)] If $g\in \mathcal{\dot{B}}_{p}^{1+\frac{1}{p},1}\left(\mathbb{R }%
^{n}\right)\,,$ then we have 
\begin{eqnarray*}
\ \ \left[ f{\large \circ }g\right] &\in & \dot{B}_{p}^{1+\frac{1}{p}
,\infty}\left(\mathbb{R}^{n}\right) ,\ \partial _{j}\left( f{\large \circ }
g\right)\in \widetilde{C}_{0}\left(\mathbb{R}^{n}\right) ,\ \left(
j=1,...,n\right) , \\
&&\ \ \ \ \  \\
\text{and } \ \ \ \left\Vert f\ {\large \circ }g\right\Vert _{ \dot{B}%
_{p}^{1+\frac{1}{ p},\infty }\left(\mathbb{R}^{n}\right) } &\leq
&c\left\Vert f\right\Vert _{U_{p}^{1}}\left\Vert g\right\Vert _{ \dot{B}%
_{p}^{1+\frac{1}{p},1}\left( \mathbb{R}^{n}\right) },\ \ \ \left( c>0\right).
\end{eqnarray*}

\item[(ii)] If $g\in \mathcal{\dot{B}}_{p}^{s,q}\left(\mathbb{R} ^{n}\right) 
$ , then we have 
\begin{equation*}
f\circ g\in \mathcal{\dot{B}}_{p}^{s,q}\left(\mathbb{R}^{n}\right) , \ \text{%
and } \ \left\Vert f\ {\large \circ }g\right\Vert_{ \dot{B}_{p}^{s,q}\left(%
\mathbb{R}^{n}\right) }\leq c\left\Vert f\right\Vert _{U_{p}^{1}}\left\Vert
g\right\Vert _{ \dot{B}_{p}^{s,q}\left(\mathbb{R}^{n}\right) },\ \ \ \ \ \ \
\ \left( c>0\right).
\end{equation*}
\end{itemize}
\end{theorem}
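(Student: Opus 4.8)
Throughout we may assume $f(0)=0$: replacing $f$ by $f-f(0)$ changes $f\circ g$ only by the additive constant $f(0)$, which leaves the homogeneous Besov semi-norms and the $U_{p}^{1}$-semi-norm unchanged and merely shifts the $\mathcal{\dot{B}}_{p}^{s,q}$-representative, legitimate since by Proposition \ref{3.10} that representative is unique up to a constant. Since $f\in U_{p}^{1}(\mathbb{R})$, its derivative $f'=h$ is a bounded Borel function of class $\mathcal{U}_{p}(\mathbb{R})$; in particular $f$ is Lipschitz with $\|f'\|_{\infty}\le\|f\|_{U_{p}^{1}}$. Taking the canonical representative of $g$ provided by Proposition \ref{3.10} (with $\partial_{j}g\in\widetilde{C}_{0}$), one has the chain rule $\partial_{j}(f\circ g)=(f'\circ g)\,\partial_{j}g$ a.e.; the justification is the one used in Theorem \ref{2.13}(i) (the factors are Lipschitz along lines, hence differentiable a.e., and the antiderivative is identified via Proposition \ref{2.12}), supplemented in dimension $n\ge2$ by an approximation argument since the factors need not be classically differentiable. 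It then suffices to prove (a) the norm bound $\|f\circ g\|_{\dot{B}_{p}^{\ast,\ast}}\le c\,\|f\|_{U_{p}^{1}}\|g\|_{\dot{B}_{p}^{\ast,\ast}}$ and (b) that $\partial_{j}(f\circ g)\in\widetilde{C}_{0}(\mathbb{R}^{n})$, after which Proposition \ref{3.10} places $f\circ g$ in the asserted realization space.

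For (a) I would argue by cases on $s$. If $0<s\le1$, use the finite-difference characterization of Definition \ref{3.5}: from $|f(g(x+h))-f(g(x))|\le\|f'\|_{\infty}\,|g(x+h)-g(x)|$ one gets $\omega_{p}(h,f\circ g)\le\|f'\|_{\infty}\,\omega_{p}(h,g)$ for the first difference, and a Taylor expansion of $f$ combined with the $\mathcal{U}_{p}$-control of $f'$ handles the second (Zygmund) difference at $s=1$ — this is the integrated $\mathcal{U}_{p}$-analogue of the basic inequality of Theorem \ref{2.9} for $\mathcal{V}_{p}$, which is why the constant is $\|f\|_{U_{p}^{1}}$ and not merely $\|f'\|_{\infty}$. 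If $1<s<1+\tfrac1p$ (this includes the case $s=1+\tfrac1p$ of part (i)), use $\|f\circ g\|_{\dot{B}_{p}^{s,q}}\sim\sum_{j=1}^{n}\|(f'\circ g)\,\partial_{j}g\|_{\dot{B}_{p}^{s-1,q}}$ with $s-1\in\,]0,\tfrac1p]$, and estimate each product by a paraproduct decomposition of $\Delta_{k}\big((f'\circ g)\cdot\partial_{j}g\big)$ of the type recalled in Chapter 1 (the splitting $\prod_{k,1}+\prod_{k,2}+\prod_{k,3}$): the factor $\partial_{j}g$ already lies in $\dot{B}_{p}^{s-1,q}(\mathbb{R}^{n})$ because $g\in\dot{B}_{p}^{s,q}$, while the factor $f'\circ g$ is controlled in $L^{\infty}\cap\dot{B}_{p}^{1/p,\infty}(\mathbb{R}^{n})$ by $c\,\|f\|_{U_{p}^{1}}$ using the one-dimensional regularity $f'\in\mathcal{U}_{p}(\mathbb{R})$ — this is the derivative-level content already underlying Theorem \ref{3.12}(iii)--(iv). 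Summing the three paraproduct pieces yields the multiplier estimate, hence (a); at the endpoint $s=1+\tfrac1p$, $q=1$ one loses a summability index and lands in $\dot{B}_{p}^{1+1/p,\infty}$ rather than $\dot{B}_{p}^{1+1/p,1}$, exactly as in Theorem \ref{3.12}(iii).

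For (b), note $\partial_{j}(f\circ g)=(f'\circ g)\,\partial_{j}g$ with $f'\circ g$ bounded and $\partial_{j}g\in\widetilde{C}_{0}$; arguing as for the examples listed after Definition \ref{3.9} (a product of a bounded function with an $L^{p}$ function, or a derivative of a function vanishing at infinity), one gets $\partial_{j}(f\circ g)\in\widetilde{C}_{0}(\mathbb{R}^{n})$. Together with $[f\circ g]\in\dot{B}_{p}^{s,q}$ from (a), Proposition \ref{3.10} then shows that $f\circ g$ (resp.\ its class) belongs to $\mathcal{\dot{B}}_{p}^{s,q}$, the representative being unique up to a constant, which the normalization $f(0)=0$ together with $\lim_{\lambda\to0}g(\cdot/\lambda)=0$ pins down.

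I expect the main obstacle to be the product estimate in step (a) for $1<s<1+\tfrac1p$, in particular the chain bound $\|f'\circ g\|_{L^{\infty}\cap\dot{B}_{p}^{1/p,\infty}(\mathbb{R}^{n})}\lesssim\|f\|_{U_{p}^{1}}$: here only the one-dimensional, integrated regularity $f'\in\mathcal{U}_{p}(\mathbb{R})$ is available, and there is no ``pass to a monotone subsequence'' device as in the proof of Theorem \ref{2.9}, so this bound must be built from the Littlewood--Paley description of $\mathcal{U}_{p}$ and then fed into the paraproduct calculus, with the constant kept linear in $\|f\|_{U_{p}^{1}}$ uniformly down to the endpoint exponents. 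This is also exactly where the $\widetilde{C}_{0}$-realization $\mathcal{\dot{B}}_{p}^{s,q}$ is indispensable rather than the bare quotient $\dot{B}_{p}^{s,q}/\mathcal{P}$: it lets one treat $f\circ g$ as a genuine function (so that $f'\circ g$ has pointwise meaning and the polynomial part can be truncated), which is what makes the composition and the chain rule legitimate in the first place.
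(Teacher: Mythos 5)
The paper does not actually prove Theorem \ref{3.13}: its ``proof'' is the single line ``see \cite{Bo-Me}, page 13''. So there is no in-paper argument to measure you against, and your proposal has to stand on its own. As a proof it does not yet stand: it is a plausible architecture (chain rule, control of $f'\circ g$, product estimate, then Proposition \ref{3.10} to land in the realization $\mathcal{\dot{B}}_{p}^{s,q}$), consistent in spirit with the cited reference, but the two steps that carry all the weight are only named, not established.

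The first and most serious gap is the bound $\Vert f'\circ g\Vert_{L^{\infty}\cap\dot{B}_{p}^{1/p,\infty}(\mathbb{R}^{n})}\lesssim\Vert f\Vert_{U_{p}^{1}}$, which you yourself flag as ``the main obstacle'' and then use as an input. At the derivative level this estimate \emph{is} part (i) of the theorem (and of Theorem \ref{3.12}(iii)--(iv)); resting part (ii) on it, while also deducing part (i) from it, makes the argument circular unless you prove it independently from the definition of $\mathcal{U}_{p}(\mathbb{R})$ and the Littlewood--Paley blocks of $g$. Nothing in your sketch does that. The second gap is the multiplier estimate $\dot{B}_{p}^{s-1,q}\times\bigl(L^{\infty}\cap\dot{B}_{p}^{1/p,\infty}\bigr)\to\dot{B}_{p}^{s-1,q}$ for $0<s-1\le 1/p$: invoking the splitting $\prod_{k,1}+\prod_{k,2}+\prod_{k,3}$ is not a proof, and at the endpoint $s-1=1/p$ the piece $\prod_{k,3}$ is exactly where the loss from $q=1$ to $q=\infty$ in part (i) occurs, so it needs a genuine computation rather than a reference to ``the paraproduct calculus''. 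A smaller but real issue: your justification of the chain rule (``the factors are Lipschitz along lines'') is false in general, since for $1<s<1+\tfrac1p$ and $n\ge 2$ a function $g\in\mathcal{\dot{B}}_{p}^{s,q}(\mathbb{R}^{n})$ need not be Lipschitz, nor even continuous; the correct tool is the Marcus--Mizel/Stampacchia chain rule for a Lipschitz outer function composed with a $W^{1,1}_{\mathrm{loc}}$ inner function, together with the convention on the null set where $f'$ is undefined. Your treatment of the case $0<s\le 1$ and of the $\widetilde{C}_{0}$ membership in (b) is essentially fine, but as it stands the proposal is a roadmap with its two bridges missing, not a proof.
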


\begin{proof}

For the proof, see ~\cite{Bo-Me}, page 13.
\end{proof}

\section{\textbf{Peetre's Theorem}}

\begin{lemma}[~\cite{GIR}]
\label{3.14} 
Let $\phi :\left[ a,b\right] \mathbf{\longrightarrow }\mathbb{R}$ be an
absolutely continuous function.\\ Then: 
\begin{eqnarray*}
\nu _{1}\left( \phi ,\left[ a,b\right] \right)&=&\int_{a}^{b}\left \vert
\phi ^{\prime }\left( x\right) \right\vert \,\mathrm{d}{x}\ ,\ \text{ } 
\text{where } \ \ \text{ } \nu _{ 1}\ (f,I) =\sup_{\left\{ t_{i}\right\}
\subset I} \ \sum_{k=1}^{N}\left\vert f\left( t_{k}\right) - f\left( t_{k-
1}\right) \right\vert .
\end{eqnarray*}
\end{lemma}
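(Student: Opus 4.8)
The plan is to establish the two inequalities $\nu_1(\phi,[a,b])\le \int_a^b|\phi'(x)|\,\mathrm{d}x$ and $\int_a^b|\phi'(x)|\,\mathrm{d}x\le \nu_1(\phi,[a,b])$ separately; their conjunction is exactly the stated equality (and in particular shows $\phi$ has bounded variation, so $\nu_1(\phi,[a,b])<\infty$). For the first inequality I would use Proposition \ref{2.12}: the absolute continuity of $\phi$ gives $\phi'\in L^1[a,b]$ together with the fundamental theorem of calculus $\phi(d)-\phi(c)=\int_c^d\phi'(x)\,\mathrm{d}x$ for every subinterval $[c,d]\subseteq[a,b]$. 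Hence, for any finite increasing sequence $a\le t_0<t_1<\cdots<t_N\le b$ in $[a,b]$,
\[
\sum_{k=1}^N|\phi(t_k)-\phi(t_{k-1})|=\sum_{k=1}^N\left|\int_{t_{k-1}}^{t_k}\phi'(x)\,\mathrm{d}x\right|\le\sum_{k=1}^N\int_{t_{k-1}}^{t_k}|\phi'(x)|\,\mathrm{d}x\le\int_a^b|\phi'(x)|\,\mathrm{d}x,
\]
and taking the supremum over all such sequences yields $\nu_1(\phi,[a,b])\le\int_a^b|\phi'(x)|\,\mathrm{d}x$.

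For the reverse inequality the key remark is that, whenever $g=\sum_{k=1}^N\varepsilon_k\mathbf{1}_{[t_{k-1},t_k)}$ is a step function subordinate to a partition $a=t_0<\cdots<t_N=b$ with coefficients $\varepsilon_k\in\{-1,+1\}$, the fundamental theorem of calculus again gives the telescoping identity
\[
\int_a^b\phi'(x)g(x)\,\mathrm{d}x=\sum_{k=1}^N\varepsilon_k\bigl(\phi(t_k)-\phi(t_{k-1})\bigr)\le\sum_{k=1}^N|\phi(t_k)-\phi(t_{k-1})|\le\nu_1(\phi,[a,b]).
\]
So it suffices to approximate $\int_a^b|\phi'(x)|\,\mathrm{d}x$ from below by integrals of this form. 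Writing $h=\operatorname{sgn}(\phi')$ (with $h=1$ on $\{\phi'=0\}$), so that $\int_a^b|\phi'|=\int_a^b\phi' h$, I would fix $\varepsilon>0$, use the absolute continuity of the Lebesgue integral of $\phi'\in L^1$ to obtain $\delta>0$ with $\int_A|\phi'(x)|\,\mathrm{d}x<\varepsilon$ whenever $m(A)<\delta$, and then choose a finite union $E$ of subintervals of $[a,b]$ with $m\bigl(E\,\triangle\,\{h=1\}\bigr)<\delta$. Setting $g=\mathbf{1}_E-\mathbf{1}_{[a,b]\setminus E}$, which is a step function valued in $\{-1,+1\}$, one has $|h-g|=2$ precisely on $E\,\triangle\,\{h=1\}$, hence
\[
\int_a^b|\phi'(x)|\,\mathrm{d}x-\int_a^b\phi'(x)g(x)\,\mathrm{d}x=\int_a^b\phi'(x)\bigl(h(x)-g(x)\bigr)\,\mathrm{d}x\le 2\int_{E\,\triangle\,\{h=1\}}|\phi'(x)|\,\mathrm{d}x<2\varepsilon .
\]
Combining with the previous display gives $\int_a^b|\phi'(x)|\,\mathrm{d}x<\nu_1(\phi,[a,b])+2\varepsilon$, and letting $\varepsilon\to 0$ completes the proof.

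I expect the only genuinely delicate point to be this lower bound, and within it the measure-theoretic step: one must justify that the measurable set $\{h=1\}$ can be approximated in measure by a finite union of intervals (standard outer regularity of Lebesgue measure, available since $[a,b]$ has finite measure) and, crucially, that the resulting error in the integral is controlled via the absolute continuity of $A\mapsto\int_A|\phi'(x)|\,\mathrm{d}x$ rather than merely by $m\bigl(E\,\triangle\,\{h=1\}\bigr)$. Everything else is routine once Proposition \ref{2.12} is invoked: the triangle-inequality estimate in the first part and the telescoping identity $\sum_k\varepsilon_k(\phi(t_k)-\phi(t_{k-1}))=\int_a^b\phi' g$ in the second are immediate from the fundamental theorem of calculus for absolutely continuous functions.
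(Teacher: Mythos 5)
Your proof is correct. Note first that the paper itself gives no proof of Lemma \ref{3.14}: it is simply quoted from \cite{GIR} (and restated again without proof as part of Proposition \ref{4.11}), so there is nothing internal to compare against; your argument fills a gap the thesis leaves to the reference. The easy direction via Proposition \ref{2.12} and the triangle inequality is exactly right, and your reverse direction is the standard duality argument: the telescoping identity $\int_a^b\phi' g=\sum_k\varepsilon_k\bigl(\phi(t_k)-\phi(t_{k-1})\bigr)\le\nu_1(\phi,[a,b])$ for $\{\pm1\}$-valued step functions $g$, followed by approximating $\operatorname{sgn}(\phi')$ by such a $g$. The one delicate point is the one you identify yourself, and you handle it correctly: approximating $\{h=1\}$ in measure by a finite union of intervals is legitimate by regularity of Lebesgue measure on $[a,b]$, and the error must indeed be controlled through the absolute continuity of $A\mapsto\int_A|\phi'|$ rather than through $m(E\,\triangle\,\{h=1\})$ alone, since $\phi'$ need not be bounded. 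A common alternative (e.g.\ the route usually taken in texts such as \cite{GIR}) avoids this approximation step by instead proving that the variation function $x\mapsto\nu_1(\phi,[a,x])$ is itself absolutely continuous with derivative $|\phi'|$ a.e.; your version is more self-contained and equally rigorous.
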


\begin{theorem}[~\cite{Pee}]
\label{3.15}  
Let $1\leq p_{0}\,,\,p_{1}\leq \infty\quad,\quad \ 0<q_{0}\,,\,q_{1}\leq
\infty\,,\quad 0<\theta <1$.

\begin{itemize}
\item[(i)] If $\displaystyle s=\left( 1-\theta \right) s_{0}+\theta s_{1}\,,
\ \frac{ 1}{p}=\frac{1-\theta }{p_{0}}+\frac{\theta }{p_{1}}\,, \ \frac{1}{q}%
=\frac{1-\theta }{q_{0}}+\frac{\theta }{q_{1}}\,, $ then 
\begin{equation*}
B_{p}^{s,\min \left( q,p\right) }\left(\mathbb{R}^{n}\right)
\hookrightarrow\left( B_{p_{0}}^{s_{0},q_{0}}\left( \mathbb{R}
^{n}\right)\,,\,B_{p_{1}}^{s_{1},q_{1}}\left(\mathbb{R}^{n}\right) \right)
_{\theta,p}\hookrightarrow B_{p}^{s,\max \left(q,p\right) }\left(\mathbb{R}
^{n}\right).
\end{equation*}

\item[(ii)] If $s=\left( 1-\theta \right) s_{0}+\theta s_{1}\,,\ 0<\theta <1
\,,\ r\in\mathbb{R}\,,$ then 
\begin{equation*}
\left( B_{p}^{s_{0},q_{0}}\left(\mathbb{R}^{n}\right)\,,
\,B_{p}^{s_{1},q_{1}}\left(\mathbb{R}^{n}\right) \right)
_{\theta,r}=B_{p}^{s,r}\ \left(\mathbb{R}^{n}\right).
\end{equation*}

\item[(iii)] If $\displaystyle\frac{1}{q}=\frac{1-\theta }{q_{0}}+ \frac{%
\theta }{q_{1}}\,,$ then 
\begin{equation*}
\ \ \left( B_{p}^{s_{0},q_{0}}\left(\mathbb{R}^{n}\right)\,,
\,B_{p}^{s_{1},q_{1}}\left(\mathbb{R}^{n}\right) \right)
_{\theta,q}=B_{p}^{s,q}\ \left(\mathbb{R}^{n}\right).
\end{equation*}
\end{itemize}
\end{theorem}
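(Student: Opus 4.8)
The plan is to realise every Besov space appearing here as a retract of a weighted, vector-valued sequence space, and then to read off the three assertions from the classical interpolation statements for those sequence spaces. First I would set up the reduction. Fix a Littlewood--Paley resolution $\{\Psi,\Phi_j\}_{j\geq 1}$ as in Section~1 and, for a triple $(s,p,q)$, let $\ell^q_s(L^p)$ denote the space of sequences $a=(a_j)_{j\geq 0}$ of functions on $\mathbb{R}^n$ with $\|a\|_{\ell^q_s(L^p)}=\big(\sum_{j\geq 0}(2^{sj}\|a_j\|_{L^p})^q\big)^{1/q}<\infty$ (with the usual modification for $q=\infty$). Put $Sf=(\Psi\ast f,\Phi_1\ast f,\Phi_2\ast f,\dots)$ and $R\,a=\widetilde\Psi\ast a_0+\sum_{j\geq 1}\widetilde\Phi_j\ast a_j$, where $\widetilde\Psi,\widetilde\Phi_j\in\mathcal{S}(\mathbb{R}^n)$ have Fourier transforms identically $1$ on $\operatorname{supp}\widehat\Psi$, resp. $\operatorname{supp}\widehat\Phi_j$; the rapid decay of the $\widetilde\Phi_j$ (they are dilates of two fixed Schwartz functions) makes $R$ bounded $\ell^q_s(L^p)\to B^{s,q}_p(\mathbb{R}^n)$, uniformly over admissible triples, while $R\circ S=\mathrm{id}$ on $B^{s,q}_p(\mathbb{R}^n)$ by Definition~\ref{1.10}. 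Since the \emph{same} pair $(S,R)$ works for all triples, the standard fact that real interpolation commutes with retractions --- obtained by applying Theorem~\ref{3.3} to $R$ and to $S$ in turn --- gives, for every $r$,
\begin{equation*}
\big(B^{s_0,q_0}_{p_0},B^{s_1,q_1}_{p_1}\big)_{\theta,r}=R\Big(\big(\ell^{q_0}_{s_0}(L^{p_0}),\ell^{q_1}_{s_1}(L^{p_1})\big)_{\theta,r}\Big)
\end{equation*}
with equivalent (quasi-)norms; so it is enough to prove (i)--(iii) with $B^{s,q}_p(\mathbb{R}^n)$ replaced by $\ell^q_s(L^p)$.

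For (ii) and (iii) one has $p_0=p_1=p$, so $A:=L^p$ is a fixed inner space and the couple is $\big(\ell^{q_0}_{s_0}(A),\ell^{q_1}_{s_1}(A)\big)$. In case (iii) one takes $s_0=s_1=s$: factoring out the common weight $2^{sj}$ reduces matters to the classical identity $\big(\ell^{q_0}(A),\ell^{q_1}(A)\big)_{\theta,q}=\ell^q(A)$ for $1/q=(1-\theta)/q_0+\theta/q_1$, proved by estimating $K(t,a)$ componentwise and summing, and $R$ then yields $\big(B^{s_0,q_0}_p,B^{s_1,q_1}_p\big)_{\theta,q}=B^{s,q}_p(\mathbb{R}^n)$. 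In case (ii) one uses the implicit hypothesis $s_0\neq s_1$ (it is needed for the conclusion to hold for an arbitrary $r$): the weight ratio $2^{(s_0-s_1)j}$ is then unbounded both as $j\to+\infty$ and as $j\to-\infty$, and under exactly this separation one computes $K(t,a)$ for the couple explicitly, so that a Hardy-type inequality identifies $\big(\int_0^\infty(t^{-\theta}K(t,a))^r\,\frac{\mathrm{d}t}{t}\big)^{1/r}$ with $\|a\|_{\ell^r_s(A)}$, $s=(1-\theta)s_0+\theta s_1$, for every $r\in(0,\infty]$; applying $R$ gives (ii).

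For (i) the exponents $p_0,p_1$ may differ and the interpolation is taken with $1/p=(1-\theta)/p_0+\theta/p_1$. I would first invoke the classical interpolation of Lebesgue spaces, $(L^{p_0},L^{p_1})_{\theta,p}=L^p$ with equivalent norms, which is a consequence of Theorem~\ref{3.3} (Riesz--Thorin / Marcinkiewicz). Running the componentwise argument of the previous paragraph on $K\big(t,a;\ell^{q_0}_{s_0}(L^{p_0}),\ell^{q_1}_{s_1}(L^{p_1})\big)$, one is left with an expression in which the $\ell^{q_i}$-summation must be interchanged with the $\mathrm{d}t/t$-integral defining the $L^p$-interpolation norm; Minkowski's integral inequality carries this out in one direction and loses a summation exponent in the other, so that the outcome is only squeezed between $\ell^{\min(q,p)}_s(L^p)$ and $\ell^{\max(q,p)}_s(L^p)$, $s=(1-\theta)s_0+\theta s_1$. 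Undoing the retraction then gives
\begin{equation*}
B^{s,\min(q,p)}_p(\mathbb{R}^n)\hookrightarrow\big(B^{s_0,q_0}_{p_0},B^{s_1,q_1}_{p_1}\big)_{\theta,p}\hookrightarrow B^{s,\max(q,p)}_p(\mathbb{R}^n),
\end{equation*}
which is (i).

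The routine parts are the retraction machinery (the uniform $L^p$-boundedness of the fattened projections, via the Schwartz decay of the $\Phi_j$, together with the abstract fact that a retract of an interpolation space is again one) and the $L^p$-interpolation used in the last step. The hard part will be the two families of \emph{sharp} $K$-functional estimates: in case (ii), the explicit computation of $K(t,a)$ for couples of weighted sequence spaces with separated weights and the attendant Hardy inequalities; in case (i), the componentwise upper and lower bounds for the mixed $\ell^q$--$L^p$ functional, which are precisely what produce the exponents $\min(q,p)$ and $\max(q,p)$ and explain why equality cannot be expected in (i) unless $q=p$. Since the $q$'s may be $<1$, one must also track the quasi-norm constants through all of these estimates, which is the delicate bookkeeping point.
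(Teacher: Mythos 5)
The paper does not actually prove Theorem \ref{3.15}: its ``proof'' consists of a citation to Peetre's monograph (Chapter 5). Your retract argument is, in substance, the standard proof found in that source (and in Bergh--L\"ofstr\"om and Triebel): realize $B^{s,q}_p$ as a retract of the weighted sequence space $\ell^q_s(L^p)$ via the Littlewood--Paley pair $(S,R)$, use the fact that retracts pass through the real method, and then reduce (ii) and (iii) to the interpolation of weighted $\ell^q$-spaces (the separated-weights $K$-functional computation plus Hardy's inequality for (ii), the equal-weights identity $(\ell^{q_0},\ell^{q_1})_{\theta,q}=\ell^q$ for (iii)), while (i) comes from the one-sided Minkowski interchange of the $\ell^q$-sum with the $L^p$-interpolation integral, which is exactly what produces $\min(q,p)$ and $\max(q,p)$. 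Your remark that (ii) implicitly requires $s_0\neq s_1$ and (iii) implicitly requires $s_0=s_1$ correctly repairs the imprecise statement in the text. The outline is correct; I would only flag one misattribution: the identity $\left(L^{p_0},L^{p_1}\right)_{\theta,p}=L^p$ is not a consequence of the abstract interpolation Theorem \ref{3.3} (which only transports bounded operators and cannot identify the interpolation space); it requires the explicit computation of $K\left(t,f;L^{p_0},L^{p_1}\right)$ via decreasing rearrangements (Holmstedt's formula), so that step should be cited or carried out separately. With that repair, and the quasi-norm bookkeeping you already note for $q_i<1$, the proposal is a faithful reconstruction of the proof the paper delegates to \cite{Pee}.
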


\begin{proof}

For the proof, see Peetre's monograph, ~\cite{Pee} Chap 5, P107. Theorem \ref%
{3.15} remains true for homogeneous Besov spaces.
\end{proof}

\begin{proposition}[~\cite{Bo-Cr-Si-1}]\label{3.16}\text{ \ } \vspace{-20pt}
\begin{itemize}
\item[(i)] Let $p\in ]1,+\infty \lbrack$. If $g$ is an element of $\dot{B}%
_{p}^{\frac{1}{p},1}\left(\mathbb{R}\right)$,\\ then $g$ is a continuous
function and the limit $L_{g}=\lim_{x \rightarrow \infty }g\left(x\right)$
exists in $\mathbb{R}$. \newline
In particular, each element of $\dot{B}_{p}^{\frac{1}{p} ,1}\left(\mathbb{R}%
\right)$ is congruent modulo $\mathcal{P}_{0}$ to exactly one element of $%
C_{0}(\mathbb{R})$.

\item[(ii)] Let $p\in \lbrack 1,+\infty \lbrack$. Then the space $\dot{B}%
_{p}^{\frac{1}{p},1}\left(\mathbb{R}\right) \cap C_{0}(\mathbb{R})$, endowed
with the restriction of the semi-norm $\left\Vert \cdot\right\Vert _{\dot{B}%
_{p}^{\frac{1}{p},1}\left(\mathbb{R} \right) }$, is a Banach space isometric
to $\dot{\tilde{B}}_{p}^{1/p,1}\left(\mathbb{R}\right)$.
\end{itemize}
\end{proposition}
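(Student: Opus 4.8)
The plan is to deduce (i) from the one–dimensional Bernstein inequality together with the Littlewood--Paley description of the homogeneous seminorm, and then to obtain (ii) from (i) by a soft bijective–isometry argument.

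For (i), I would start from the fact (Theorem \ref{1.15}, in the form used in Definition \ref{3.5}) that for $s=1/p\in(0,1)$ the modulus–of–continuity seminorm is equivalent to $\sum_{j\in\mathbb Z}2^{j/p}\|\Delta_j g\|_p$. Bernstein's inequality in $\mathbb R$ (see Theorem \ref{3.3}) gives $\|\Delta_j g\|_\infty\lesssim 2^{j/p}\|\Delta_j g\|_p$, hence $\sum_{j\in\mathbb Z}\|\Delta_j g\|_\infty\lesssim\|g\|_{\dot{B}_p^{1/p,1}(\mathbb R)}<\infty$, so the series $\sum_j\Delta_j g$ converges absolutely and uniformly to a bounded continuous function $G$. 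This convergence also holds in $\mathcal S'$, and since the partial sums represent $g$ modulo polynomials, $g$ agrees with $G+P$ for some polynomial $P$; finiteness of the seminorm forces $P$ to be a constant, because for $\deg P\ge 1$ the difference $g(\cdot+h)-g(\cdot)$ would have a nonzero polynomial part and could not lie in $L^p$ for $h\neq 0$. Thus $g$ has a bounded continuous representative, which is the continuity claim.

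For the limit at infinity and the $C_0$–representative, observe that each block $\Delta_j g$ lies in $L^p(\mathbb R)$ (since $2^{j/p}\|\Delta_j g\|_p\le\|g\|_{\dot{B}_p^{1/p,1}(\mathbb R)}<\infty$) and is Lipschitz, because it is band–limited and a band–limited $L^p$ function has bounded derivative by Bernstein; a uniformly continuous $L^p$ function vanishes at infinity, so $\Delta_j g\in C_0(\mathbb R)$ for every $j$. A uniform limit of $C_0$–functions is again in $C_0(\mathbb R)$, hence $G\in C_0(\mathbb R)$, and therefore $L_g:=\lim_{x\to\infty}g(x)$ exists — it equals the constant $P$ — and $g-L_g=G\in C_0(\mathbb R)$ is a representative of the class of $g$ modulo $\mathcal P_0$. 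Uniqueness is immediate: two $C_0$–functions differing by a polynomial differ by a bounded polynomial, i.e. by a constant, which must be $0$ since both tend to $0$ at infinity. For (ii), I would consider the linear map $\pi$ sending $g\in\dot{B}_p^{1/p,1}(\mathbb R)\cap C_0(\mathbb R)$ to its class modulo polynomials, identified through Proposition \ref{3.7} with the class modulo $\mathcal P_{[1/p]}$ making up $\dot{\tilde{B}}_p^{1/p,1}(\mathbb R)$. By Lemma \ref{3.6} the seminorm vanishes exactly on $\mathcal P_{[1/p]}$, so it is a genuine norm on $\dot{B}_p^{1/p,1}(\mathbb R)\cap C_0(\mathbb R)$ and is constant on the fibres of $\pi$, whence $\pi$ is isometric; it is injective because two $C_0$–functions congruent modulo $\mathcal P_{[1/p]}$ differ by a bounded polynomial vanishing at infinity, hence coincide, and surjective by (i) (the Littlewood--Paley/Bernstein argument goes through verbatim also when $p=1$, with $\mathcal P_1$ replacing $\mathcal P_0$). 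So $\pi$ is a surjective linear isometry, and since $\dot{\tilde{B}}_p^{1/p,1}(\mathbb R)$ is a Banach space (Definition \ref{3.4}, Proposition \ref{3.7}), completeness transfers to $\dot{B}_p^{1/p,1}(\mathbb R)\cap C_0(\mathbb R)$.

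The delicate point lies entirely inside (i): showing that every single Littlewood--Paley block, including the low–frequency ones, decays at infinity, and pinning the discrepancy between $g$ and its reconstruction $\sum_j\Delta_j g$ down to an additive constant, since the homogeneous reconstruction formula is valid only modulo polynomials. Once this embedding into $C_0(\mathbb R)$ is secured, the remainder, and all of (ii), is purely formal.
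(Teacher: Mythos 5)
The paper states Proposition \ref{3.16} without proof, simply citing \cite{Bo-Cr-Si-1}, so there is nothing internal to compare against; your argument actually supplies the missing proof, and it is essentially the standard one from the cited reference. The chain Nikolskii--Bernstein inequality $\Vert \Delta_j g\Vert_\infty\lesssim 2^{j/p}\Vert \Delta_j g\Vert_p$, absolute convergence of $\sum_j\Vert\Delta_j g\Vert_\infty$, each block lying in $L^p\cap\mathrm{Lip}$ hence in $C_0(\mathbb{R})$, and the uniform limit therefore in $C_0(\mathbb{R})$, is correct and is exactly what makes the borderline case $s=n/p$, $q=1$ work; your elimination of higher-degree polynomial discrepancies via the finiteness of the difference seminorm is also sound. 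Two small points deserve tightening. First, Bernstein's inequality is not Theorem \ref{3.3} (that is the interpolation theorem); the paper only invokes Bernstein informally in the proof of Theorem \ref{3.17}, so you should either prove the Nikolskii inequality for band-limited functions directly or cite it properly. Second, the step ``$g$ agrees with $G+P$ for some polynomial $P$'' is glossed: convergence of the partial sums to $g$ only in $\mathcal{S}'/\mathcal{P}$ does not by itself identify $g-G$ as a polynomial, since a limit of polynomials in $\mathcal{S}'$ need not a priori be one. The clean fix is to note that $\Delta_j(g-G)=0$ for every $j$, so $\widehat{g-G}$ is supported at the origin and $g-G$ is a polynomial; with that observation your degree argument and all of part (ii), including the $p=1$ case modulo $\mathcal{P}_1$ and the transfer of completeness through the bijective isometry, go through.
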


\begin{theorem}[~\cite{Bo-Cr-Si-1}]
\label{3.17} 
Let $p\in ]1,+\infty \lbrack$. Then the following continuous inclusions
hold: 
\begin{eqnarray*}
\ \dot{\tilde{B}}_{p}^{1/p{\small ,}1}\left(\mathbb{R}\right)
&\hookrightarrow &\left( L^{\infty }\left(\mathbb{R}\right) ,BV_{1}\left( 
\mathbb{R}\right) \right) _{1/p,p}\ =\left( BV_{\infty }\left(\mathbb{R}
\right) ,BV_{1}\left(\mathbb{R}\right) \right) _{1/p,p} \\
&& \\
&\hookrightarrow &BV_{p}\left(\mathbb{R}\right) \hookrightarrow U_{p}\left( 
\mathbb{R}\right) \hookrightarrow \dot{\tilde{B}}_{p}^{1/p,\ \infty}\left( 
\mathbb{R}\right) .
\end{eqnarray*}
\end{theorem}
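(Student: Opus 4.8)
The plan is to read the displayed chain as five elementary steps: four genuine continuous inclusions together with the identity $BV_{\infty}(\mathbb{R}) = L^{\infty}(\mathbb{R})$, which is tautological since $\|f\|_{BV_{\infty}(\mathbb{R})} = \|f\|_{\infty}$ by definition, so the middle equality is immediate. It then remains to prove, in order: (a) $\dot{\tilde{B}}_{p}^{1/p,1}(\mathbb{R}) \hookrightarrow (L^{\infty}(\mathbb{R}), BV_{1}(\mathbb{R}))_{1/p,p}$; (b) $(L^{\infty}(\mathbb{R}), BV_{1}(\mathbb{R}))_{1/p,p} \hookrightarrow BV_{p}(\mathbb{R})$; (c) $BV_{p}(\mathbb{R}) \hookrightarrow U_{p}(\mathbb{R})$; (d) $U_{p}(\mathbb{R}) \hookrightarrow \dot{\tilde{B}}_{p}^{1/p,\infty}(\mathbb{R})$. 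Throughout I would use Propositions \ref{3.7} and \ref{3.16} to pass freely between the homogeneous spaces modulo $\mathcal{P}_{[s]}$ and their realizations (in $C_{0}$ or in $L_{\ell oc}^{p}$), so that the semi-norms being compared are genuinely equivalent.

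Steps (c) and (d) are direct. For (d): Theorem \ref{1.15}(ii), with $s=1/p$ and $M=0$ (legitimate since $1/p > \sigma_{p}=0$ for $p>1$), gives $\|g\|_{\dot{\tilde{B}}_{p}^{1/p,\infty}(\mathbb{R})} \sim \sup_{h\neq 0}|h|^{-1/p}\|\tau_{-h}g-g\|_{p}$; and for $g\in\mathcal{U}_{p}(\mathbb{R})$ the very definition of $\|\cdot\|_{\mathcal{U}_{p}}$ yields $\|\tau_{-h}g-g\|_{p}^{p} = \int_{\mathbb{R}}|g(x+h)-g(x)|^{p}\,\mathrm{d}x \leq |h|\,\|g\|_{\mathcal{U}_{p}(\mathbb{R})}^{p}$, hence $|h|^{-1/p}\|\tau_{-h}g-g\|_{p}\leq\|g\|_{\mathcal{U}_{p}(\mathbb{R})}$; a supremum over $h$ and an infimum over representatives $g$ of $f$ give (d). For (c): take the normalized representative $\tilde{f}$ of $f\in BV_{p}(\mathbb{R})$, fix $t>0$, cut $\mathbb{R}$ into the consecutive intervals $I_{j}=[jt,(j+1)t)$, and note that for $x\in I_{j}$ one has $\sup_{|h|\leq t}|\tilde{f}(x+h)-\tilde{f}(x)| \leq \omega_{j} := \mathrm{osc}(\tilde{f}\,;\overline{I_{j-1}}\cup\overline{I_{j}}\cup\overline{I_{j+1}})$; integrating gives $\int_{\mathbb{R}}\sup_{|h|\leq t}|\tilde{f}(x+h)-\tilde{f}(x)|^{p}\,\mathrm{d}x \leq t\sum_{j}\omega_{j}^{p}$, and splitting $j$ into residue classes modulo $3$ makes the triples $\overline{I_{j-1}}\cup\overline{I_{j}}\cup\overline{I_{j+1}}$ pairwise disjoint inside each class, so by the disjoint-intervals form of Definition \ref{1.16} each class contributes at most $\nu_{p}(\tilde{f},\mathbb{R})^{p}$; thus $\sum_{j}\omega_{j}^{p}\leq 3\,\nu_{p}(\tilde{f},\mathbb{R})^{p}$ and $\|f\|_{U_{p}(\mathbb{R})}^{p}\leq 3\,\|f\|_{BV_{p}(\mathbb{R})}^{p}$.

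For (a), rather than a bare-hands estimate of $K(t,f;L^{\infty},BV_{1})$ — whose $BV_{1}$ endpoint would demand an $L^{1}$ control not available from the $L^{p}$ Littlewood--Paley data — I would reiterate. Applying Theorem \ref{3.15}(i) (valid in the homogeneous case by the remark following it) to the couple $(\dot{B}_{\infty}^{0,1}(\mathbb{R}),\dot{B}_{1}^{1,1}(\mathbb{R}))$ with $\theta=1/p$ produces $\dot{\tilde{B}}_{p}^{1/p,1}(\mathbb{R}) \hookrightarrow (\dot{B}_{\infty}^{0,1}(\mathbb{R}),\dot{B}_{1}^{1,1}(\mathbb{R}))_{1/p,p}$. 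Now $\dot{B}_{\infty}^{0,1}(\mathbb{R}) \hookrightarrow L^{\infty}(\mathbb{R})$, since the realization $\sum_{j}\Delta_{j}f$ converges in $L^{\infty}$ with $\|\sum_{j}\Delta_{j}f\|_{\infty}\leq\sum_{j}\|\Delta_{j}f\|_{\infty}$, while $\dot{B}_{1}^{1,1}(\mathbb{R}) \hookrightarrow \dot{W}^{1,1}(\mathbb{R}) \hookrightarrow BV_{1}(\mathbb{R})$, the first embedding by Bernstein's inequality in $L^{1}$ (a consequence of Theorem \ref{3.3}) and the second by Lemma \ref{3.14} (with $\nu_{1}(f,\mathbb{R})=\|f'\|_{1}$). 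Monotonicity of the real interpolation functor in each of its arguments then chains these into (a).

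Step (b) is where the real difficulty lies, and I expect it to be the main obstacle. The boundedness part is soft: $\|f\|_{\infty}\leq K(1,f) \lesssim \|f\|_{(L^{\infty},BV_{1})_{1/p,p}}$ since $K(\cdot,f)$ is increasing. The substance is the $p$-variation estimate $\nu_{p}(f,\mathbb{R})^{p} \lesssim \int_{0}^{\infty}(t^{-1/p}K(t,f;L^{\infty},BV_{1}))^{p}\,\mathrm{d}t/t$, where I would use the fortunate identity $p\cdot(1/p)=1$, so that the right-hand side is just $\int_{0}^{\infty}t^{-1}K(t,f)^{p}\,\mathrm{d}t/t$. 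Given an arbitrary chain $t_{0}<\dots<t_{N}$ with gaps $\delta_{k}=t_{k}-t_{k-1}$, one picks for each dyadic scale $2^{m}$ a near-optimal splitting $f=g_{m}+h_{m}$ realizing $K(2^{m},f)$, bounds $|f(t_{k})-f(t_{k-1})| \leq 2\|g_{m}\|_{\infty}+V(h_{m};[t_{k-1},t_{k}])$ for the $m$ with $\delta_{k}\in(2^{m},2^{m+1}]$, and then reassembles via Minkowski in $\ell^{p}$ and a discrete Hardy inequality. The delicate point — the one deserving the most care — is to keep the ``sup-norm halves'' $\|g_{m}\|_{\infty}$ under control: summing one copy of such a term per gap of a given size naively introduces the total length $t_{N}-t_{0}$, and the cure is to interleave the scales so that each gap sees the variation halves only at the finer scales and the sup halves only at the coarser ones, mirroring the proof that $(\ell^{\infty},\ell^{1})_{1/p,p}=\ell^{p}$ transported through the variation functional.
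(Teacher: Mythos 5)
Your steps (a), (c) and (d) are sound and essentially coincide with the paper's proof: (a) is exactly the paper's reiteration of Theorem \ref{3.15}(i) on the couple $(\dot{\tilde{B}}_{\infty}^{0,1},\dot{\tilde{B}}_{1}^{1,1})$ followed by the endpoint embeddings $\dot{\tilde{B}}_{\infty}^{0,1}\hookrightarrow L^{\infty}$ and $\dot{\tilde{B}}_{1}^{1,1}\hookrightarrow\dot{W}^{1,1}\hookrightarrow BV_{1}$ and Theorem \ref{3.3}; your (c) uses the same partition of $\mathbb{R}$ into length-$t$ intervals plus disjointness (your residue-classes-mod-$3$ bookkeeping gives the constant $3^{1/p}$ where the paper gets $2^{1/p}$, which is harmless); and (d) is the paper's computation verbatim. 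One small caution: the middle equality is not ``tautological''. As a set $BV_{\infty}(\mathbb{R})$ is strictly smaller than $L^{\infty}(\mathbb{R})$ (its elements must admit a normalized, i.e.\ regulated, representative), so only the inclusion $(BV_{\infty},BV_{1})_{1/p,p}\hookrightarrow(L^{\infty},BV_{1})_{1/p,p}$ is free; the converse is what the paper's step (2) proves, by replacing $L^{\infty}$ with the closure $E$ of $BV_{1}$ in $L^{\infty}$ and noting $E\hookrightarrow BV_{\infty}$.

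The genuine gap is your step (b), and the sketch you give cannot be repaired along the lines you indicate. You tie the interpolation parameter to the spatial gap, choosing the near-optimal splitting $f=g_{m}+h_{m}$ for $K(2^{m},f)$ according to $\delta_{k}=t_{k}-t_{k-1}\in(2^{m},2^{m+1}]$. But both endpoint norms are dilation-invariant ($\Vert f(\lambda\cdot)\Vert_{\infty}=\Vert f\Vert_{\infty}$ and $\nu_{1}(f(\lambda\cdot))=\nu_{1}(f)$), so $K(t,f(\lambda\cdot))=K(t,f)$ for every $\lambda>0$: the $K$-functional of $(L^{\infty},BV_{1})$ carries no spatial-scale information whatsoever, and the splitting optimal at parameter $2^{m}$ bears no relation to increments over gaps of length $\sim 2^{m}$. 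Concretely, a chain may contain arbitrarily many gaps of comparable length, so the terms $2\Vert g_{m}\Vert_{\infty}$, one per such gap, cannot be summed in $\ell^{p}$, and ``interleaving the scales'' does not help because the correct parameter is the rank of $|f(t_{k})-f(t_{k-1})|$ in its decreasing rearrangement, not the gap length. The paper avoids all of this with a retract argument: for a fixed chain $t_{0}<\cdots<t_{N}$ the \emph{linear} operator $U(f)=(\tilde{f}(t_{k})-\tilde{f}(t_{k-1}))_{1\leq k\leq N}$ satisfies $\Vert U(f)\Vert_{\ell_{N}^{\infty}}\leq 2\Vert f\Vert_{BV_{\infty}}$ and $\Vert U(f)\Vert_{\ell_{N}^{1}}\leq\Vert f\Vert_{BV_{1}}$, with constants independent of the chain; Theorem \ref{3.3} together with $(\ell_{N}^{\infty},\ell_{N}^{1})_{1/p,p}=\ell_{N}^{p}$ (uniformly in $N$) then gives $\Vert U(f)\Vert_{\ell_{N}^{p}}\leq c_{p}\Vert f\Vert_{(BV_{\infty},BV_{1})_{1/p,p}}$, and taking the supremum over all finite chains yields $\nu_{p}(\tilde{f})\leq c_{p}\Vert f\Vert_{(BV_{\infty},BV_{1})_{1/p,p}}$ directly, with no explicit handling of $K(t,f)$ at all. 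You should replace your step (b) by this argument.
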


\begin{proof}

\begin{itemize}
\item[(1)] Applying Theorem \ref{3.15}, (i) with 
\begin{equation*}
\theta =s=1/p,\ p_{0}=\infty ,\ s_{0}=0,\ s_{1}=q_{0}=q_{1}=p_{1}=1\,,\qquad 
\text{then}
\end{equation*}
\begin{equation*}
\dot{\tilde{B}}_{p}^{1/p{\small ,}1}\left(\mathbb{R}\right) \hookrightarrow
( \dot{\tilde{B}}_{\infty }^{0{\small ,}1}\left(\mathbb{R}\right) ,\dot{ 
\tilde{ B}}_{1}^{1{\small ,}1}\left(\mathbb{R}\right)
)_{1/p,p}\hookrightarrow \dot{ \tilde{B}}_{p}^{1/p{\small ,}p}\left(\mathbb{R%
}\right) ,
\end{equation*}
and according to Proposition \ref{3.16}, (ii) we have 
\begin{eqnarray*}
\dot{B}_{p}^{1/p{\small ,}1}\left(\mathbb{R}\right) \cap C_{0}\left(\mathbb{%
\ R }\right) &\approx &\dot{\tilde{B}}_{p}^{1/p{\small ,}1}\left(\mathbb{R}
\right) \ \ \text{and} \ \ \dot{B}_{\infty }^{0,1}\left(\mathbb{R} \right)
\cap C_{b}\left(\mathbb{R}\right) \approx \dot{\tilde{B}}_{\infty }^{0%
{\small ,} 1}\left( \mathbb{R}\right) , \\
&& \\
\text{hence } \ \ \ \ \ \ \ \ \ \ \ \ \ \ \ \ \ \ \ \ \ \dot{B}_{p}^{1/p%
{\small ,}1}\left(\mathbb{R}\right) \cap C_{0}\left(\mathbb{\ R }\right)
&\hookrightarrow &\left( \dot{B}_{\infty }^{0,1}\left(\mathbb{R} \right)
\cap C_{b}\left(\mathbb{R}\right) ,\dot{B}_{1}^{1,1}\left(\mathbb{R} \right)
\cap C_{0}\left(\mathbb{R} \right) \right) _{1/p,p}.
\end{eqnarray*}
-- Let us prove the injections 
\begin{equation*}
\dot{\tilde{B}}_{1}^{1{\small ,} 1}\left(\mathbb{R}\right) \hookrightarrow 
\dot{W}^{1,1}\left(\mathbb{R}\right) \hookrightarrow BV_{1}\left(\mathbb{R}
\right).
\end{equation*}
-- If $f\in \dot{\tilde{B}}_{1}^{1{\small ,}1}\left(\mathbb{R}\right) $,
then using Bernstein's inequality we obtain 
\begin{eqnarray*}
\left\Vert f^{\prime }\right\Vert _{1} &\leq &cR\left\Vert f\right\Vert
_{1}=cR\left\Vert \sum_{j\in\mathbb{Z} }Q_{j}f\right\Vert _{1} \leq cR\sum_{j\in\mathbb{Z}}2^{j}\left\Vert Q_{j}f\right\Vert
_{1}=C\left\Vert f\right\Vert _{\dot{\tilde{B}}_{1}^{1{\small ,}1}}, \\
\\
\text{where } \ \ \ \ \ \ \ \ \ \ \ \ supp(\hat{f}) &\subset &\left\{ \xi
\in \mathbb{R}^{n}:\left\vert \xi \right\vert \leq R\right\},
\end{eqnarray*}
hence $\ \ \left\Vert f\right\Vert _{\dot{W}^{1,1}\left(\mathbb{R} \right)
}\leq C\left\Vert f\right\Vert _{\dot{\tilde{B}}_{1}^{1{\small ,} 1}\left(%
\mathbb{R}\right) }$, and so $\dot{\tilde{B}}_{1}^{1{\small ,}1}\left(%
\mathbb{R}\right) \hookrightarrow \dot{W}^{1,1}\left( \mathbb{R}\right)$. --
If $f\in \dot{W}^{1,1}$ $\left(\mathbb{R} \right) $, then $f$ is absolutely
continuous, and by Lemma \ref{3.14} we have 
\begin{equation*}
\ \ \ \ \ \left\Vert f\right\Vert _{BV_{1}\left(\mathbb{R}\right) }=\nu
_{1}\left( \tilde{f},\left(\mathbb{R} \right) \right) +\sup_{x\in \mathbb{R}%
}\left\vert \tilde{f}\left( x\right) \right\vert \leq 2\nu _{1}\left( \tilde{%
f},\mathbb{R}\right) \leq 2\int_{\mathbb{R} }\left\vert \tilde{f}^{\prime
}\left( x\right) \right\vert \,\mathrm{d}{x}\ \leq 2\left\Vert f\right\Vert
_{\dot{W}^{1,1} \left(\mathbb{R}\right) }, \ 
\end{equation*}
hence $\dot{W}^{1,1}\left(\mathbb{R}\right) \hookrightarrow BV_{1}\left(%
\mathbb{R}\right)$, and therefore $\dot{\tilde{B}}_{1}^{1{\small ,}1}\left(%
\mathbb{R}\right) \hookrightarrow BV_{1}\left(\mathbb{R} \right)$. -- We
also prove that $\dot{\tilde{B}}_{\infty }^{0,1}\left(\mathbb{R} \right)
\hookrightarrow L^{\infty }$ $\left(\mathbb{R}\right)$, because we have 
\begin{equation*}
\ \left\Vert f\right\Vert _{\infty }=\left\Vert \sum_{j\in\mathbb{Z}
}Q_{j}f\right\Vert _{\infty }\leq \ \sum_{j\in\mathbb{Z}}\left\Vert
Q_{j}f\right\Vert _{\infty }=\left\Vert f\right\Vert _{\overset{ \cdot }{ 
\tilde{B}}_{\infty }^{0,1}\left(\mathbb{R}\right) }\ ,
\end{equation*}
-- applying the interpolation Theorem \ref{3.3} we obtain 
\begin{eqnarray*}
\text{ \ \ \ }(\dot{\tilde{B}}_{\infty }^{0{\small ,}1}\left(\mathbb{R}
\right) ,\dot{\tilde{B}}_{1}^{1{\small ,}1}\left(\mathbb{R}\right) )_{1/p,p}
&\hookrightarrow &\left( L^{\infty }\left(\mathbb{R} \right) ,BV_{1}\left( 
\mathbb{R}\right) \right) _{1/p,p}, \\
&& \\
\text{hence } \ \ \ \ \ \ \ \ \ \ \ \ \ \ \ \ \ \ \ \ \ \ \ \ \ \ \text{ } 
\dot{\tilde{B}} _{p}^{1/p{\small ,}1}\left(\mathbb{R}\right)
&\hookrightarrow &\left( L^{\infty }\left(\mathbb{R}\right) ,BV_{1}\left( 
\mathbb{R}\right) \right) _{1/p,p}.
\end{eqnarray*}

\item[(2)] If $E$ is the closure of $BV_{1}\left(\mathbb{R}\right)$ with
respect to $\ L^{\infty }\left(\mathbb{R}\right)$, then 
\begin{equation*}
\left( L^{\infty }\left(\mathbb{R}\right) ,BV_{1}\left(\mathbb{R} \right)
\right) _{1/p,p}=\left( E\ ,BV_{1}\left(\mathbb{R}\right) \right) _{1/p,p},
\end{equation*}
and since $E$ $\hookrightarrow BV_{\infty }\left(\mathbb{R}\right)$, then 
\begin{eqnarray*}
\left( E,BV_{1}\left(\mathbb{R}\right) \right) _{1/p,p} &\hookrightarrow
&\left( BV_{\infty }\left(\mathbb{R} \right) ,BV_{1}\left(\mathbb{R}\right)
\right) _{1/p,p}, \\
&& \\
\text{hence } \ \ \ \ \ \ \ \ \ \text{\ \ \ \ \ \ \ \ \ }\left( L^{\infty
}\left(\mathbb{R}\right) ,BV_{1}\left(\mathbb{R}\right) \right) _{1/p,p}
&\hookrightarrow &\left( BV_{\infty }\left(\mathbb{R}\right) ,BV_{1}\left(%
\mathbb{R}\right) \right) _{1/p,p}.
\end{eqnarray*}

\item[(3)] We define the norms of the space $l_{N}^{\ p}$ of finite
sequences $\left\{ t_{i}\right\} _{1\leq i\leq N}\subset\mathbb{R}$ by 
\begin{equation*}
\left\Vert t_{i}\right\Vert _{l_{N}^{\ p}}=(\sum_{i=1}^{i=N}\left\vert
t_{i}\right\vert ^{p})^{\frac{1}{p} }\ \ \text{and} \ \ \left\Vert
t_{i}\right\Vert _{l_{N}^{\infty }}=\sup_{1\leq i\leq N}\left\vert
t_{i}\right\vert .
\end{equation*}
Consider $t_{0}<t_{1}<...<t_{N}$, and let the continuous function $U$
defined by 
\begin{eqnarray*}
U\,:\, \ \ BV_{\infty }\left(\mathbb{R}\right) \ \ &\mathbf{\longrightarrow }%
&\ l\ _{N}^{\infty }\left(\mathbb{R} \right) \\
&& \\
f\ \ &\mathbf{\longmapsto }&\ U\left( f\right) =\left( \ \tilde{f}\ \left(
t_{k}\right) -\tilde{f}\ (t_{k\ -1})\right) _{1\ \leq \ k\ \leq \ N}\ ,
\end{eqnarray*}
we then have 
\begin{eqnarray*}
\left\Vert {\small U}\left( f\right) \right\Vert _{l_{ N}^{\ {\large \infty }
}} &=&\sup_{1\leq i\leq N}\left\vert \ \tilde{f}\ \left( t_{i}\right) -%
\tilde{f}\ (t_{i\ -1})\right\vert \leq \sup_{1\leq i\leq N}\ \left(
\left\vert \tilde{f}\ \left( t_{i}\right) \right\vert +\left\vert \tilde{f}\
(t_{i\ -1})\right\vert \right) \\
&& \\
&\leq &2\sup \left\vert \tilde{f}\right\vert \ =2\left\Vert f\right\Vert
_{BV_{\infty }\left(\mathbb{R}\right) },\  \\
&& \\
\ \ \ \ \ \ \ \ \ \ \ \ \text{and } \ \ \ \ \ \ \ \ \ \ \ \ \ \ \ \ \ \
\left\Vert {\small U}\left( f\right) \right\Vert _{l_{ N}^{\ {\large 1}}}\
&=&\sum_{k=1}^{k=N}\left\vert \ \tilde{f}\ \left( t_{k}\right) - \tilde{f}\
(t_{k\ -1})\right\vert \leq \nu _{1}\left( \tilde{f} \right) \leq \left\Vert
f\right\Vert _{BV_{{\large 1}}\left(\mathbb{R} \right) }.
\end{eqnarray*}
We have $(L^{\infty }(X),L^{1}(X))_{\frac{1}{p}\,,\,p}$ $=$ $L^{p}(X)$, and
taking as a particular case the spaces $l_{ N}^{\ p}$, we obtain by the
interpolation Theorem \ref{3.3} 
\begin{equation*}
\left\Vert {\small U}\left( f\right) \right\Vert _{l_{N}^{\ p}}\ {\small %
\leq } c_{p}\left\Vert f\right\Vert _{\left( BV_{\infty }\left(\mathbb{R}
\right) ,BV_{{\large 1}}\left( \mathbb{R}\right) \right) _{\frac{1}{p}
\,,\,p}}\ ,
\end{equation*}
and since $\displaystyle\left\Vert f\right\Vert _{BV_{p}\left(\mathbb{R}
\right) }$ $=\inf_{a}\left\{ a>0:\left\Vert {\small U(}\tilde{f}
)\right\Vert _{l_{N}^{\ p}}\leq a\right\} \,, $ then 
\begin{equation*}
\left\Vert f\right\Vert _{BV_{p}\left(\mathbb{R}\right) }\leq
c_{p}\left\Vert f\right\Vert _{\left( BV_{\infty }\left( \mathbb{R}\right)
,BV_{{\large 1}}\left(\mathbb{R}\right) \right) _{\frac{1}{p}\,,\,p}},
\end{equation*}
which gives $\left( BV_{\infty }\left(\mathbb{R}\right) ,BV_{{\large 1 }%
}\left(\mathbb{R} \right) \right) _{\frac{1}{p}\,,\,p}\hookrightarrow
BV_{p}\left(\mathbb{R}\right)$.

\item[(4)] 
\begin{eqnarray*}
\text{We have } \int_{\mathbb{R} }\ \sup_{\left\vert h\right\vert \leq
t}\left\vert f\left( x+h\right) -f\left( x\right) \right\vert ^{p}\,\mathrm{d%
}{x} &=&\sum_{m\in\mathbb{Z} }\int_{mt}^{\left( m+1\right)
t}\sup_{\left\vert h\right\vert \leq t} \left\vert f\left( x+h\right)
-f\left( x\right) \right\vert ^{p}\,\mathrm{d}{x} \\
&& \\
&=&\int_{0}^{t}\ \sum_{m \in\mathbb{Z} }\ \sup_{ \left\vert h\right\vert
\leq t}\left\vert f\left( y+mt+h\right) -f\left( y+mt\right) \right\vert
^{p}\,\mathrm{d}{y}\,.
\end{eqnarray*}
For each $m\in\mathbb{Z}$, $y\in\mathbb{R}$, there exists $h_{m}(y)\in
\lbrack -t,t]$, such that 
\begin{equation*}
\left\vert f\left( y+mt+h_{m}\left( y\right) -f\left( y+mt\right) \right)
\right\vert ^{p}\geq \ \sup_{\left\vert h\right\vert \leq t} \left\vert
f\left( y+mt+h\right) -f\left( y+mt\right) \right\vert ^{p}-\frac{ {\LARGE %
\varepsilon }}{4}2^{-\left\vert m\right\vert }\ .
\end{equation*}
Since the family of intervals $\left\{ \left] y+kt,y+kt+h\left( y\right) %
\right[ \right\}$ is disjoint, then 
\begin{equation*}
\hspace{-1in}\int_{0}^{t}\sum_{m \in\mathbb{Z}}\sup_{ \left\vert
h\right\vert {\small \leq }t}\left\vert f\left(y+mt+h\right) -f\left(
y+mt\right) \right\vert ^{p}\,\mathrm{d}{y} \leq \int_{0}^{t}\sum_{m \in%
\mathbb{Z}}\left\{ \left\vert f\left( y+mt+h_{m}\left( y\right) \right)
-f\left(y+mt\right) \right\vert ^{p}+\frac{{\LARGE \varepsilon }}{4}%
2^{-\left\vert m\right\vert }\right\} \, \mathrm{d}{y}
\end{equation*}
\begin{equation*}
\leq \int_{0}^{t}\left( 2\nu _{p}^{p}\left( f\right) +\varepsilon \right) \,%
\mathrm{d}{y}=t\left( \left( 2\nu _{p}^{p}\left( f\right) +\varepsilon
\right) \right)\,.
\end{equation*}
Since $\varepsilon >0$ is arbitrary, we obtain the inequality $\displaystyle%
\left\Vert f\right\Vert _{\mathcal{U}_{p}\left(\mathbb{R}\right) }\leq
2^{1/p}\left\Vert f\right\Vert _{\mathcal{V }_{p}\left(\mathbb{R}\right) \ }$%
, which gives $$\mathcal{V}_{p}\ \left(\mathbb{R}\right) \hookrightarrow 
\mathcal{U}_{p}\left( \mathbb{R} \right),$$ and we deduce that $$%
BV_{p}\left(\mathbb{R}\right) \hookrightarrow U_{p}\ \left(\mathbb{R}
\right). $$

\item[(5)] We have for all $g=f$, (a.e.) 
\begin{eqnarray*}
\left\Vert f\right\Vert _{\dot{\tilde{B}}_{p}^{1/p,\infty }\left( \mathbb{R}
\right) } &=&\sup_{\left\vert h\right\vert \ \geq 1}\ \left[ h^{- \frac{1}{p}%
}\left\Vert \tau _{-h}f-f\right\Vert _{L^{p}}\frac{1}{\left\vert
h\right\vert }\right] , \\
&& \\
&\leq &\left[ \sup_{t\ >0}\ t^{-1}\int_{\mathbb{R}}\ \left( \sup_{\left\vert
h\right\vert \leq t}\ \left\vert g\left( x+h\right) -g\left( x\right)
\right\vert ^{p}\right) \,\mathrm{d}{x}\right] ^{\frac{1}{ p }}=\left\Vert
g\right\Vert _{\mathcal{U}_{p}\left( \mathbb{R}\right) }, \ \text{ } \\
&& \\
\text{hence } \ \ \ \ \ \ \ \ \ \ \ \ \ \ \ \ \ \ \ \ \left\Vert
f\right\Vert _{\dot{\tilde{B}}_{p}^{1/p,\infty }\left( \mathbb{R}\right) }
&\leq &\inf \ \left\{ \ \left\Vert g\right\Vert _{\mathcal{U}_{p}\left( 
\mathbb{R}\right) }\,;\quad g\in \mathcal{U}_{p}\left( \mathbb{R}%
\right)\,,\, \ g=f\ \ \ \left( p.p\ \right) \right\} =\left\Vert
f\right\Vert _{U_{p}\left( \mathbb{R }\right) },\text{ } \\
&& \\
\text{therefore } \ \ \ \ \ \ \ \ \ \ \ \ \ \ \ \ \ \ \ \ \left\Vert
f\right\Vert _{\dot{\tilde{B}}_{p}^{1/p,\infty }\left( \mathbb{R}\right) }
&\leq &\left\Vert f\right\Vert _{U_{p}\left( \mathbb{R}\right) }, \\
&& \\
\text{which gives } \ \ \ \ \ \ \ \ \ \ \ \ \ \ \ \ U_{p}\left( \mathbb{R}
\right) &\hookrightarrow &\dot{\tilde{B}}_{p}^{1/p,\infty }\left( \mathbb{R}
\right) .
\end{eqnarray*}
\end{itemize}
\end{proof}

\setstretch{1.5}

\chapter{\textbf{Composition of Operators in $BV_{p}^{%
			\alpha }\left( I\right) $}}
 In this chapter, we give
the definitions of the spaces $\mathcal{V} _{p}^{\alpha }\left( I\right) ,$ $%
BV_{p}^{\alpha }\left( I\right) ,$ of functions defined on intervals $I$ of $%
\mathbb{R}$ with bounded $p$-variation of order $\alpha$, and the space of
their equivalence classes with respect to almost everywhere equality. Then
we study the particular cases $\displaystyle \alpha =1-\frac{1}{p},\ \left(
p\geq 1\right) $ which give the spaces $W^{1,p}\ (I)$. We also give a
generalization of Peetre's Theorem \ref{3.17} to the spaces $BV_{p}^{\alpha
}\left( I\right)$,\,$\ 0\leq\alpha <1\,,\ $ $1<p<\infty \,$. Finally, we
present some examples of composition operators defined in certain functional
spaces.

\section{\textbf{Basic Notions}}

In this section, we present some basic notions on the spaces $\mathcal{V}_{
p}^{\alpha }\left( I\right)$ of functions with bounded $p$-variation of
order $\alpha$ where $p\geq 1,\ \alpha \geq 0$, as well as the space of
their primitives $BV_{ p}^{\alpha }\left( I\right)$, in order to generalize
some results from previous chapters.

\subsection{\textbf{The Spaces }$\mathcal{V}_{ p}^{\alpha %
}\left(I\right) $}

\begin{definition}
\label{4.1} 
Let $p\in \left[ 1,+\infty \right]$. We define the norm of the space $%
l_{N}^{\ p}$ of finite real sequences $\left\{ t_{i}\right\} _{1\leq i\leq
N} $ by 
\begin{equation*}
\left\{ 
\begin{array}{c}
\left\Vert t_{i}\right\Vert _{l_{N}^{\ p}}=\left( \sum_{i=1}^{i=N}\left\vert
t_{i}\right\vert ^{p}\right) ^{1/p},\ \ \ \ \ \ \ \ \text{if } \ p\neq \infty
\\ 
\\ 
\ \left\Vert t_{i}\right\Vert _{l_{N}^{\ \infty }}=\sup_{1\leq i\leq
N}\left\vert t_{i}\right\vert \ \ \ \ ,\ \ \ \ \ \ \ \ \ \ \text{if } \
p=\infty%
\end{array}
\right.
\end{equation*}
\end{definition}

\begin{definition}
\label{4.2} 
Let $p\in \left[ 1,\ +\ \infty \ \right[ ,\ \alpha \geq 0$. Then a function $%
f\ :I\ \mathbf{\longrightarrow }\mathbb{R}$ is said to be of \textbf{bounded 
$p$-variation of order $\alpha$}, or briefly of $p$-$v$. $b$ of order $%
\alpha $, if for all finite, strictly increasing real sequences $t_{ 0}< t_{
1} <......<t_{ N}$ in $I$, there exists $c\ >\ 0$ such that 
\begin{equation}
\ \ \ \ \ \ \ \ \ \ \ \ \ \ \ \ \ \ \ \ \ \ \ \ \ \ \left\Vert \frac{
f\left( t_{k}\right) - f\left( t_{k- 1}\right) }{\left( t_{k}- t_{k-
1}\right) ^{\alpha }}\right\Vert _{l_{N}^{\ p}}\leq c\ <\infty \ \ \ \ \ \ \
\ \ \ \ \ \ \ \ \ \ \ \ \ \ \ \ \ \ \ \ \ \ \ \ \ \ \ \ \ \hspace*{\fill}
\label{[4.1]}
\end{equation}
or equivalently, if the number $\displaystyle\sup_{\left\{
t_{k}\right\}_{1\leq k\leq n,t_{k-1}<t_{k}}\subset I}\ \ \left[
\sum_{k=1}^{N}\left\vert \frac{f\left(t_{k}\right) - f\left( t_{k- 1}\right) 
}{\left( t_{k}- t_{k- 1}\right) ^{\alpha }}\right\vert ^{p}\right] \qquad 
\text{is finite}\,.$ \newline
\newline
The set of these functions is denoted by $\mathcal{V}_{ p}^{\alpha }\left(
I\right) ,\ \left( \mathcal{V}_{ p}^{\alpha }\ \text{if } I=\mathbb{R}
\right)\,,$ and the minimum of such constants $c$ is denoted by $\nu_{
p}^{\alpha }\ (f,\ I)$,$\ \left( \ \nu _{ p}^{\alpha }\ \text{if } I=\mathbb{%
R }\right)$: 
\begin{equation*}
\ \nu _{ p}^{\alpha }\ (f,\ I)=\inf_{c}\left\{ c\ >\
0:\sum_{k=1}^{N}\left\vert \frac{f\left( t_{k}\right) - f\left(
t_{k-1}\right) }{\left( t_{k}- t_{k- 1}\right) ^{\alpha }}\right\vert ^{
p}\leq c^{ p},\ \text{for all } \left\{ t_{k}\right\} _{1\leq k\leq
n,t_{k-1}<t_{k}}\subset I\right\}.
\end{equation*}
\end{definition}

\begin{remark}\text{ \ }
\label{4.3}
\vspace{-20pt}
\begin{itemize}
\item[--] We agree to take $\mathcal{V}_{ p}^{0}\left( I\right) = \mathcal{V}%
_{ p}\left( I\right)$.

\item[--] Condition (\ref{4.1}) is equivalent to the fact that for every
family of disjoint intervals $\left[ a_{k},b_{k}\right]$, $\ a_{k}\neq b_{k}$
of $I$, we have $\displaystyle\sum_{k=1}^{N}\left\vert \frac{f\left(
a_{k}\right) -f\left( b_{k}\right) }{\left( a_{k}-b_{k}\right) ^{\alpha }}%
\right\vert ^{p}\leq c<\infty ,\ \left( c\ >\ 0\right)$.

\item[--] $\mathcal{V}_{1}^{\alpha }\left( I\right)$ or simply $\mathcal{V}%
^{\alpha }\left( I\right)$ is called the space of functions of bounded
variation of order $\alpha$ on $I$, and $\mathcal{V}_{\infty }^{\alpha
}\left( I\right)$ is a Banach space for the norm 
\begin{eqnarray*}
\ \ \left\Vert f\right\Vert _{\mathcal{V}_{\infty }^{\alpha }\left( I\right)
} &=&\nu _{\infty }^{\alpha }\ (f,\ I)=\ \sup_{x\in I\ /\left\{ 0\right\}
}\left\vert \frac{f\left( x\right) }{x^{\alpha }}\right\vert , \\
\\
\text{and we have } \ \ \ \ \ \ \ \ \ \ \ \ \ \ \ \ \ \ \mathcal{V}_{\infty
}^{\alpha }\left( I\right) &=&Lip_{\alpha }\left( I\right) \text{,\ for }
0\leq \alpha <1.
\end{eqnarray*}
\end{itemize}
\end{remark}

\begin{proposition}
\label{4.4} 
Let $x\,,\,\ y\ \in I\,,\,\ p\ \in \ [\ 1\,,\,\ +\ \infty \ [\,,\,\ \text{%
and } \ 0\leq \alpha <1$. Then each element of $\mathcal{V}_{p}^{\alpha
}\left( I\right)$ is a Lipschitz function of order $\alpha$, and $\mathcal{V}%
_{p}^{\alpha }\left( I\right)$ becomes a Banach space when endowed with the
following norm: 
\begin{equation}
\displaystyle\left\Vert f\right\Vert _{\mathcal{V}_{p}^{\alpha }\left(
I\right) }=\sup_{x\in I}\left\vert f\left( x\right) \right\vert +\nu
_{p}^{\alpha }\ (f,\ I)\,.  \label{[4.2]}
\end{equation}
\end{proposition}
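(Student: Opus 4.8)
The plan is to mirror, step by step, the proof of Proposition \ref{1.18} (the case $\alpha=0$), adapting each ingredient to the order-$\alpha$ seminorm $\nu_p^\alpha(\cdot,I)$.

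First I would establish the $\text{Lip}_\alpha$ property by testing Definition \ref{4.2} on two-point sequences. Fix $x,y\in I$ with $y<x$ and apply \eqref{4.1} to the partition $t_0=y<t_1=x$: since $N=1$, the $l_N^p$-norm collapses to a single term, so $\left|\frac{f(x)-f(y)}{(x-y)^\alpha}\right|\leq \nu_p^\alpha(f,I)<\infty$, i.e. $|f(x)-f(y)|\leq \nu_p^\alpha(f,I)\,|x-y|^\alpha$ for all $x,y\in I$. Hence $f\in\text{Lip}_\alpha(I)$ with $\|f\|_{\text{Lip}_\alpha(I)}\leq\nu_p^\alpha(f,I)$; fixing a basepoint $y_0\in I$ this also shows $|f(x)|\leq|f(y_0)|+\nu_p^\alpha(f,I)\,|x-y_0|^\alpha$, so $\sup_I|f|$ is the finite quantity appearing in \eqref{4.2} (on a bounded interval this is immediate; in the unbounded case one restricts to the situation where this supremum is finite, exactly as the norm \eqref{4.2} presupposes).

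Next I would check that \eqref{4.2} is a norm. Absolute homogeneity and non-negativity are clear. For subadditivity, note that for each fixed partition $t_0<\dots<t_N$ the assignment $f\mapsto\bigl(\tfrac{f(t_k)-f(t_{k-1})}{(t_k-t_{k-1})^\alpha}\bigr)_{1\leq k\leq N}\in l_N^p$ is linear and $\|\cdot\|_{l_N^p}$ is a norm, whence the partition-sum for $f+g$ has $l_N^p$-norm at most $\nu_p^\alpha(f,I)+\nu_p^\alpha(g,I)$; taking the supremum over partitions gives $\nu_p^\alpha(f+g,I)\leq\nu_p^\alpha(f,I)+\nu_p^\alpha(g,I)$, and adding $\sup_I|\cdot|$ keeps subadditivity. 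Positive-definiteness holds because $\|f\|_{\mathcal{V}_p^\alpha(I)}=0$ forces $\sup_I|f|=0$, i.e. $f\equiv 0$. This is the verbatim analogue of the reasoning behind Proposition \ref{1.18}.

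The substantive step is completeness. Let $(f_n)$ be Cauchy in $\mathcal{V}_p^\alpha(I)$; since $\sup_I|g|\leq\|g\|_{\mathcal{V}_p^\alpha(I)}$, it is uniformly Cauchy, so $f_n\to f$ uniformly on $I$ for some function $f$ (bounded when $I$ is). To see $f\in\mathcal{V}_p^\alpha(I)$: for a fixed partition the \emph{finite} sum $\sum_{k=1}^N\left|\frac{f(t_k)-f(t_{k-1})}{(t_k-t_{k-1})^\alpha}\right|^p$ equals $\lim_n\sum_{k=1}^N\left|\frac{f_n(t_k)-f_n(t_{k-1})}{(t_k-t_{k-1})^\alpha}\right|^p\leq\sup_n\bigl(\nu_p^\alpha(f_n,I)\bigr)^p<\infty$, the bound being finite because Cauchy sequences are bounded; taking the supremum over all partitions yields $\nu_p^\alpha(f,I)<\infty$. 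For norm convergence, given $\varepsilon>0$ pick $M$ with $\|f_n-f_m\|_{\mathcal{V}_p^\alpha(I)}<\varepsilon$ for $n,m\geq M$; for a fixed partition $\sum_k\left|\frac{(f_n-f_m)(t_k)-(f_n-f_m)(t_{k-1})}{(t_k-t_{k-1})^\alpha}\right|^p<\varepsilon^p$, and letting $m\to\infty$ (pointwise convergence, finite sum) gives the same bound with $f$ in place of $f_m$; supping over partitions gives $\nu_p^\alpha(f_n-f,I)\leq\varepsilon$, which together with $\sup_I|f_n-f|\leq\varepsilon$ yields $\|f_n-f\|_{\mathcal{V}_p^\alpha(I)}\leq 2\varepsilon$ for $n\geq M$. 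The main obstacle — and it is a mild one — is just ensuring the limit interchanges in this last step are legitimate; they are, since each partition contributes only a finite sum over which uniform (indeed pointwise) convergence passes termwise, and one should flag that $\sup_I|f|$ is genuinely finite, which is built into the setting via the $\text{Lip}_\alpha$ bound established in the first step. Everything else is a routine transcription of the $\alpha=0$ case.
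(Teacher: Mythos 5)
Your proof is correct and follows essentially the same route as the paper: the $\mathrm{Lip}_\alpha$ property comes from testing the defining condition on a two-point partition, giving $\bigl|\tfrac{f(x)-f(y)}{(x-y)^{\alpha}}\bigr|\leq\nu_p^{\alpha}(f,I)$, after which one checks the norm axioms and completeness. The paper simply asserts that the norm ``satisfies all conditions making $\mathcal{V}_p^{\alpha}(I)$ a Banach space,'' so your explicit verification of subadditivity and of completeness (and your caveat about the finiteness of $\sup_I|f|$ on unbounded intervals) supplies detail the paper omits rather than diverging from it.
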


\begin{proof}

If $f\in \mathcal{V}_{p}^{\alpha }\left( I\right)$, then $f$ is $\nu
_{p}^{\alpha }$-Lipschitz of order $\alpha$ because 
\begin{equation*}
\ \left|\frac{f(x)-f(y)}{\left( x-y\right) ^{\alpha }}\right|\leq \nu
_{p}^{\alpha }(f,I)\ \ \ \ \ \ \ \ \text{for all } x,y\in I.\ \ \ \ \ \ 
\end{equation*}
and the norm (\ref{4.2}) satisfies all conditions making $\mathcal{V}%
_{p}^{\alpha }\left( I\right)$ a Banach space.
\end{proof}

\begin{proposition}
\label{4.5} 
The space $\mathcal{V}_{p}^{\alpha }\left( I\right)$ is a Banach algebra for
pointwise multiplication of functions, 
\begin{equation*}
\text{such that } \ \ \ \left\Vert f.g\right\Vert _{ \mathcal{V}_{p}^{\alpha
}\left( I\right)}\leq \left\Vert f\right\Vert _{ \mathcal{V}_{p}^{\alpha
}\left( I\right)}\left\Vert g\right\Vert _{ \mathcal{V} _{p}^{\alpha }\left(
I\right) }\ \ \ \ \ \ \ \ \ \ \ \ \text{for all } f, g\in \mathcal{V}
_{p}^{\alpha }\left(I\right) .
\end{equation*}
\end{proposition}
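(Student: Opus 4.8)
The plan is to mimic the proof of Proposition \ref{1.24}, replacing ordinary increments by the $\alpha$-divided differences that define $\nu_p^\alpha$. Fix a finite, strictly increasing sequence $t_0 < t_1 < \cdots < t_N$ in $I$ and write, for each $k$,
\[
\frac{f g(t_k) - f g(t_{k-1})}{(t_k - t_{k-1})^\alpha} = f(t_k)\,\frac{g(t_k) - g(t_{k-1})}{(t_k - t_{k-1})^\alpha} + g(t_{k-1})\,\frac{f(t_k) - f(t_{k-1})}{(t_k - t_{k-1})^\alpha}.
\]
Taking the $l_N^p$-norm of the vector indexed by $k = 1,\dots,N$ and applying Minkowski's inequality splits the right-hand side into two pieces; in the first I bound $|f(t_k)| \le \sup_I |f|$ uniformly and pull it out, in the second I bound $|g(t_{k-1})| \le \sup_I |g|$. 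By Proposition \ref{4.4} these suprema are finite whenever $f$ and $g$ have finite $\mathcal{V}_p^\alpha$-norm. Passing to the supremum over all admissible sequences $\{t_k\}$ then gives
\[
\nu_p^\alpha(f g, I) \le \sup_I |f|\,\nu_p^\alpha(g, I) + \sup_I |g|\,\nu_p^\alpha(f, I).
\]

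Next I combine this with the elementary bound $\sup_I |f g| \le \sup_I |f|\,\sup_I |g|$ to obtain
\[
\|f g\|_{\mathcal{V}_p^\alpha(I)} = \sup_I |f g| + \nu_p^\alpha(f g, I) \le \sup_I |f|\,\sup_I |g| + \sup_I |f|\,\nu_p^\alpha(g, I) + \sup_I |g|\,\nu_p^\alpha(f, I),
\]
and the right-hand side is at most $\bigl(\sup_I |f| + \nu_p^\alpha(f, I)\bigr)\bigl(\sup_I |g| + \nu_p^\alpha(g, I)\bigr) = \|f\|_{\mathcal{V}_p^\alpha(I)}\,\|g\|_{\mathcal{V}_p^\alpha(I)}$, which is exactly the submultiplicativity claimed. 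In particular $fg \in \mathcal{V}_p^\alpha(I)$ whenever $f,g \in \mathcal{V}_p^\alpha(I)$.

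Finally, since Proposition \ref{4.4} already provides that $\bigl(\mathcal{V}_p^\alpha(I), \|\cdot\|_{\mathcal{V}_p^\alpha(I)}\bigr)$ is a Banach space, the submultiplicative estimate just proved upgrades it to a Banach algebra, with no rescaling of the norm needed since the best constant is $1$. I do not expect any real obstacle here: the proof is a direct transcription of the $\alpha = 0$ case (Proposition \ref{1.24}, where $\mathcal{V}_p^0 = \mathcal{V}_p$). The only point requiring a moment's care is justifying that every supremum appearing is finite, which is precisely where I invoke Proposition \ref{4.4}, namely that a finite $\mathcal{V}_p^\alpha$-norm forces boundedness of the function.
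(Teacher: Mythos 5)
Your proof is correct and follows essentially the same route as the paper: the same product-difference decomposition of the $\alpha$-divided differences, Minkowski's inequality to split into two sums, the uniform bounds $|f(t_k)|\leq\sup_I|f|$ and $|g(t_{k-1})|\leq\sup_I|g|$, and the final factorization into the product of the two norms. The only cosmetic difference is that the paper pads the right-hand side with extra nonnegative terms before factoring, whereas you observe directly that the three-term bound is dominated by the expanded product; both are equally valid.
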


\begin{proof}

Set $x_{0}<x_{1}<...<x_{n}\ $\ a finite sequence in $I$ \ and $f,g\in 
\mathcal{V}_{p}\left( I\right).$ Since $p\geq 1$\ and by using Minkowski's inequality we obtain:
\begin{equation*}
\hspace*{-0.7in}\left( \sum\limits_{j=1}^{N}\left\vert \frac{
fg(x_{j})-fg(x_{j-1})}{\left( x_{j}-x_{j-1}\right) ^{\alpha }}\right\vert
^{p}\right) ^{\frac{1}{p}}\leq \left( \sum\limits_{j=1}^{N}\left\vert
f(x_{j})\left( \frac{g\left( x_{j}\right) -g(x_{j-1})}{\left(
x_{j}-x_{j-1}\right) ^{\alpha }}\right) \right\vert ^{p}\right)
^{1/p}+\left( \sum\limits_{j=1}^{N}\left\vert g(x_{j-1})\left( \frac{
f(x_{j})-f(x_{j-1})}{\left( x_{j}-x_{j-1}\right) ^{\alpha }}\right)
\right\vert ^{p}\right) ^{1/p}
\end{equation*}

\begin{eqnarray*}
	&\leq &\sup\limits_{I}\left\vert f\right\vert \,.\,\left(
	\sum\limits_{j=1}^{N}\left\vert \frac{g\left( x_{j}\right) -g(x_{j-1})}{
		\left( x_{j}-x_{j-1}\right) ^{\alpha }}\right\vert ^{p}\right) ^{\frac{1}{p}
	}+\sup\limits_{I}\left\vert g\right\vert .\,\left(
	\sum\limits_{j=1}^{N}\left\vert \frac{f\left( x_{j}\right) -f(x_{j-1})}{
		\left( x_{j}-x_{j-1}\right) ^{\alpha }}\right\vert ^{p}\right) ^{\frac{1}{p}
	} \text{\ \ \ \ \ \ \ \ \ \ \ \ \ \ \ \ \ \ \ \ \ \ \ \ \ \ \ \ \ \ \ \ \ \
		\ \ \ \ \ \ \ \ \ \ \ \ \ \ \ \ \ \ \ \ \ \ \ \ \ \ \ \ \ \ \ \ \ \ \ \ \ \
		\ \ \ \ \ \ \ \ \ \ \ \ \ \ \ \ \ \ \ \ \ \ \ \ \ \ \ \ \ \ \ \ \ \ \ \ \ \ }
	\\
	&& \\
	\ &\leq &\ \sup\limits_{I}\left\vert f\right\vert .\,\nu _{p}^{\alpha
	}(g,I)+\sup\limits_{I}\left\vert g\right\vert .\,\nu _{p}^{\alpha }(f,I), \\ \text{ \  }
	\end{eqnarray*}
hence

\begin{eqnarray*}
\nu _{p}^{\alpha }(f\text{ }.g,I &=&\sup\limits_{I}\left(
\sum\limits_{j=1}^{N}\left\vert \frac{fg(x_{j})-fg(x_{j-1})}{\left(
x_{j}-x_{j-1}\right) ^{\alpha }}\right\vert ^{p}\right) ^{\frac{1}{p}}\leq \
\sup\limits_{I}\left\vert f\right\vert .\,\nu _{p}^{\alpha
}(g,I)+\sup\limits_{I}\left\vert g\right\vert .\,\nu _{p}^{\alpha
}(f,I). \\
\end{eqnarray*}
We deduce that:
\begin{eqnarray*}
\left\Vert f\text{ }.\,g\right\Vert _{\mathcal{V}_{p}^{\alpha }\left(
I\right) } &=&\nu _{p}^{\alpha }(f.\,g,I)+\sup\limits_{I}\left\vert
f\right\vert \text{ }.\,\sup\limits_{I}\left\vert g\right\vert \leq \
\sup\limits_{I}\left\vert f\right\vert \text{ }.\,\nu _{p}^{\alpha
}(g,I)+\sup\limits_{I}\left\vert g\right\vert \text{ }.\,\nu _{p}^{\alpha
}(f,I)+\sup\limits_{I}\left\vert f\right\vert \text{ }.\,\sup\limits_{I}
\left\vert g\right\vert \\
&& \\
&\leq &\sup\limits_{I}\left\vert f\right\vert .\,\nu _{p}^{\alpha
}(g,I)+\sup\limits_{I}\left\vert g\right\vert .\,\nu _{p}^{\alpha
}(f,I)+\sup\limits_{I}\left\vert f\right\vert +\sup\limits_{I}\left\vert
g\right\vert +\underset{\geq 0}{\underbrace{\left[ \nu _{p}^{\alpha
}(g,I).\,\nu _{p}^{\alpha }(f,I)\right] }} \\
&& \\
&=&\left( \sup\limits_{I}\left\vert f\right\vert +\nu _{p}^{\alpha
}(f,I)\right) .\,\left( \sup\limits_{I}\left\vert g\right\vert +\nu
_{p}^{\alpha }(g,I)\right) =\left\Vert f\right\Vert _{\mathcal{V}
_{p}^{\alpha }\left( I\right) }\cdot \left\Vert g\right\Vert _{\mathcal{V}
_{p}^{\alpha }\left( I\right) }\,,
\end{eqnarray*}
\end{proof}

\subsection{\textbf{The Spaces }$BV_{p}^{\alpha }$$\left(I\right) $}

\begin{quote}
In this subsection, we
generalize the space of classes $BV_{p}$$\left( I\right)$ to the space of
classes $BV_{p}^{\alpha }$ $\left(I\right)$ such that $0\leq \alpha <1$.
Then we define the space of distributions $BV_{p}^{\alpha }$ for $\alpha
\geq 1$ as their primitives.
\end{quote}

\begin{definition}
\label{4.6} 
Let $p\ \in \lbrack 1,\ +\
\infty \ ],\ 0\leq \alpha <1$.\\ Then we denote by $\mathcal{BV}_{p}^{\alpha
}\left( I\right)$ the set of functions $f: \mathbb{R}\mathbf{\longrightarrow 
}\mathbb{R}$ such that there exists a function $g\in \mathcal{V}_{p}^{\alpha
}\left( I\right)$ which coincides with $f$ almost everywhere. 
\begin{equation*}
\ \mathcal{BV}_{p}^{\alpha }\left( I\right) =\left\{ f:I\mathbf{\
\longrightarrow }\mathbb{R} \quad ;\qquad \ \exists g\in \mathcal{V}
_{p}^{\alpha }\,,\,\text{such that}\quad f=g\ \left( p.p\right) \right\} ,
\end{equation*}
and we set 
\begin{equation*}
\varepsilon _{ p}^{\alpha }\left( f\right) =\inf \left\{ {\large \nu }_{
p}^{\alpha }\ \left( g\right) \quad ;\qquad \ g \in \mathcal{V}_{p}^{\alpha
}\,,\, \text{such that} \quad \ \ g\ = f{\large \ \ }\left( p.p\right)
\right\} .
\end{equation*}
We denote by $BV_{ p}^{\alpha }\left( I\right)$ the quotient set with
respect to the equivalence relation "equality in $\mathcal{BV} _{p}^{\alpha
}\left( I\right)$ almost everywhere", such that 
\begin{eqnarray*}
\dot{f} &=&\left\{ g\in \mathcal{BV}_{p}^{\alpha }\left( I\right)\quad ;
\qquad \ g=f\ \ \left( p.p\right) \right\} , \\
&& \\
\text{and } \ \text{\ \ \ \ \ \ \ \ \ \ \ \ \ \ \ \ \ \ \ \ \ \ \ \ \ \ \ }
BV_{ p}^{\alpha }\left( I\right) &=&\left\{ \ \dot{f}\quad ;\qquad \ f\in 
\mathcal{BV}_{p}^{\alpha }\left( I\right) \right\} =\mathcal{BV}_{p}^{\alpha
}\left( I\right) /e.p.p.
\end{eqnarray*}
If $h\in BV_{ p}^{\alpha}\left( I\right) ,$ we denote by $\varepsilon
_{p}^{\alpha }\left( h\right) \,$ the number $\varepsilon _{ p}^{\alpha
}\left( f\right) \,$ for any representative $f$ of $h$.
\end{definition}

\begin{quote}
We agree to take $\ BV_{p}^{0}\left( I\right) =BV_{p}\left( I\right)$.
\end{quote}

\begin{proposition}
\label{4.7} 
Let $p\in \ [1,+\infty ], 0\leq
\alpha <1$. If $f\in BV_{p}^{\alpha }( \mathbb{R})$, then $f$ has a unique
normal representative $\tilde{f}\in \ \mathcal{V}_{p}^{\alpha }$, and we
have 
\begin{equation*}
\varepsilon _{p}^{\alpha }\left( f\right) =\nu _{p}^{\alpha }(\tilde{f})\ .
\end{equation*}
\end{proposition}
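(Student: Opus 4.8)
The plan is to split the argument according to the value of $\alpha$. When $\alpha=0$ one has $\mathcal{V}_p^0=\mathcal{V}_p$ and $BV_p^0(\mathbb{R})=BV_p(\mathbb{R})$, so the statement is exactly Proposition~\ref{1.22} and nothing new is needed there. The whole content therefore lies in the range $0<\alpha<1$, and there I would exploit that bounded $p$-variation of order $\alpha>0$ forces continuity, which renders the normalization essentially automatic.

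Concretely, for $0<\alpha<1$ I would first recall that, by Proposition~\ref{4.4}, every $g\in\mathcal{V}_p^\alpha(\mathbb{R})$ is $\nu_p^\alpha(g)$-Lipschitz of order $\alpha$, hence --- since $\alpha>0$ --- uniformly continuous on $\mathbb{R}$. A continuous function on $\mathbb{R}$ is its own normalization in the sense of Definition~\ref{1.20}, because its one-sided limits coincide with its value; so every element of $\mathcal{V}_p^\alpha(\mathbb{R})$ is already normalized. Given $f\in BV_p^\alpha(\mathbb{R})$, Definition~\ref{4.6} supplies a representative $g\in\mathcal{V}_p^\alpha(\mathbb{R})$, which is then a normalized representative of $f$; set $\tilde f:=g$. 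For uniqueness I would note that if $g_1,g_2\in\mathcal{V}_p^\alpha(\mathbb{R})$ both represent $f$, they are continuous and agree almost everywhere, so the open set $\{g_1\ne g_2\}$ is Lebesgue-null, hence empty and $g_1=g_2$. The identity $\varepsilon_p^\alpha(f)=\nu_p^\alpha(\tilde f)$ then follows at once: by Definition~\ref{4.6} the infimum defining $\varepsilon_p^\alpha(f)$ runs over all $g\in\mathcal{V}_p^\alpha(\mathbb{R})$ with $g=f$ a.e., and we have just seen this competitor set is the singleton $\{\tilde f\}$.

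The only place where genuine work is required is hidden in the case $\alpha=0$ --- the existence of one-sided limits for functions of bounded $p$-variation and the fact that passing to the normalization does not increase $\nu_p$ --- but this is precisely what Propositions~\ref{1.21} and~\ref{1.22} record, so I may simply cite them. If one prefers an argument uniform in $\alpha\in[0,1)$ rather than a case split, the plan would be to show instead that for an arbitrary representative $g\in\mathcal{V}_p^\alpha(\mathbb{R})$ its normalization $\tilde g$ (Definition~\ref{1.20}) again lies in $\mathcal{V}_p^\alpha(\mathbb{R})$ with $\nu_p^\alpha(\tilde g)\le\nu_p^\alpha(g)$ --- here the proof of Proposition~\ref{1.21} carries over verbatim, the weights $(t_k-t_{k-1})^{-\alpha}$ depending continuously on the nodes $t_k$, so the approximating inequalities survive the passage to the limit --- and that $\tilde g$ depends only on the almost-everywhere class of $g$; this yields uniqueness together with $\nu_p^\alpha(\tilde f)\le\varepsilon_p^\alpha(f)\le\nu_p^\alpha(\tilde f)$, the last inequality because $\tilde f$ is itself an admissible competitor.
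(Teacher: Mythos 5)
Your argument is correct. Note that the paper itself states Proposition~\ref{4.7} without any proof (it is presented as the analogue of Proposition~\ref{1.22}, which is likewise only cited from the literature), so there is no in-text argument to compare yours against; what you have written fills that gap. Your case split is the right observation: for $0<\alpha<1$ the two-point estimate $|f(x)-f(y)|\leq\nu_p^{\alpha}(f)\,|x-y|^{\alpha}$ from Proposition~\ref{4.4} makes every element of $\mathcal{V}_p^{\alpha}(\mathbb{R})$ H\"older continuous, so the set $\{g\in\mathcal{V}_p^{\alpha}:g=f\ \text{a.e.}\}$ is a singleton (two continuous functions differing on a nonempty open set cannot agree almost everywhere), the normalization condition of Definition~\ref{1.20} is vacuous, and the identity $\varepsilon_p^{\alpha}(f)=\nu_p^{\alpha}(\tilde f)$ is immediate because the infimum is taken over one competitor; the genuinely nontrivial case $\alpha=0$ is exactly Proposition~\ref{1.22} and may be cited as the paper does. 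Your alternative uniform argument (normalization does not increase $\nu_p^{\alpha}$, by rerunning the proof of Proposition~\ref{1.21} with the weights $(t_k-t_{k-1})^{-\alpha}$) is also sound and is closer in spirit to how the $\alpha=0$ case is actually proved in the cited reference, but for $\alpha>0$ it is more machinery than needed.
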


\begin{quote}
We will therefore consider $BV_{p}^{\alpha }(\mathbb{R})$ as a Banach space
of distributions endowed with the following norm 
\begin{eqnarray*}
\left\Vert f\right\Vert _{BV_{p}^{\alpha }\left(\mathbb{R}\right) }
&=&\varepsilon _{p}^{\alpha }\left( f\right) +\left\Vert \frac{f\left(
x\right) }{x^{\alpha }}\right\Vert _{\infty } \\
\\
&=&\nu _{p}^{\alpha }( \tilde{f} )+\sup_{x\in\mathbb{R}/\left\{ 0\right\}
}\left\vert \frac{\tilde{f} \left( x\right) }{x^{\alpha }} \right\vert \ \ .
\end{eqnarray*}
Note that for $p=\infty$, we obtain 
\begin{equation*}
\left\Vert f\right\Vert _{BV_{\infty }^{\alpha }\left( I\right) }\sim
\sup_{x\in I\ /\left\{ 0\right\} }\left\vert \frac{\tilde{f}\left( x\right) 
}{x^{\alpha }}\right\vert .
\end{equation*}
\end{quote}

\begin{definition}
\label{4.8}
Let $p\in \lbrack \ 1,\ +\
\infty \ ],\ \alpha \geq 1$. Then we say that a function $\ f\ :\ I \ 
\mathbf{\longrightarrow }\mathbb{R}$ belongs to $BV_{ p}^{\alpha
}\left(I\right)$ if there exist $c,x_{0}\ \in\mathbb{R}$ and $g\in
BV_{p}^{\alpha -1}\left( I\right)$ such that 
\begin{equation}
\ \text{\ \ \ \ \ \ \ \ \ \ \ \ \ \ \ \ \ \ \ \ \ \ \ \ \ \ \ \ \ \ \ \ }
f(x)=c+ \int_{x_{0}}^{x}g\left( t\right) \,\mathrm{d}{t}\ \ \ \ \ \ \ \ \ 
\text{for all } x\in I\hfill \, .  \label{[4.3]}
\end{equation}
\end{definition}

\begin{quote}
If (\ref{4.3}) holds, then $f$ is a Lipschitz continuous function, and we
endow $BV_{p}^{\alpha }\left( I\right)$ with the norm 
\begin{equation*}
\left\Vert f\right\Vert _{BV_{p}^{\alpha }\left( I\right)
}=|f(x_{0})|+\left\Vert f^{\prime }\right\Vert _{BV_{p}^{\alpha -1}\left(
I\right) \ },
\end{equation*}
for which $BV_{p}^{\alpha }\left( I\right)$ becomes a Banach space. For each
point $x_{0}$ in $I$, we obtain an equivalent norm.
\end{quote}

\section{\textbf{The Spaces}$\ W^{1,p}\left( \Omega \right) $, $\Omega $ 
\textbf{\ an open subset of }$\mathbb{R}^{n}$}

 In this section, we study
the space $W^{1,p}\left( \Omega \right)$, where $\Omega$ is an open subset
of $\mathbb{R}^{n}$, which proves to be very interesting due to its
regularity with respect to composition operators. See the works of H. Brezis
~\cite{Br-Mir}.

\subsection{\textbf{Properties of the Spaces }$W^{1,p}\left( \Omega\right) $}

\begin{definition}[~\cite{Po}]\text{ \ }
\label{4.9} 
\vspace{-20pt}
\begin{itemize}
\item[$\ast$] Let $\Omega$ be
an open subset of $\mathbb{R}^{n}$, and $1\leq p\leq \infty$. Then we define
the space $W^{1,p}\left( \Omega \right)$ by 
\begin{equation*}
W^{1,p}\left( \Omega \right) =\left\{ u\in L^{p}\left( \Omega \right) ,\frac{
\partial u}{\partial x_{i}}\left( \text{in the sense of distributions}
\right) \in L^{p}\left( \Omega \right) ,\text{ }i=1,..n\right\}.
\end{equation*}

\item[$\ast $] We set $H^{1}\left( \Omega \right) =W^{1,2}\left( \Omega
\right)$, which is endowed with the scalar product 
\begin{equation*}
\ \left\langle u,v\right\rangle _{H^{1}\left( \Omega \right) }=\left\langle
u,v\right\rangle _{L^{2}\left( \Omega \right) }+ \sum_{i=1}^{n}\left\langle 
\frac{\partial u}{\partial x_{i}},\frac{ \partial v}{\partial x_{i}}%
\right\rangle _{L^{2}\left( \Omega \right) },
\end{equation*}
and the associated norm is 
\begin{equation*}
\left\Vert u\right\Vert _{H^{1}\left( \Omega \right) }=\left( \left\Vert
u\right\Vert _{L^{2}\left( \Omega \right) }^{2}+ \sum_{i=1}^{n}\left\Vert 
\frac{ \partial u}{ \partial x_{i}}\right\Vert _{L^{2}\left( \Omega \right)
}^{2}\right) ^{\frac{ 1}{2}}.
\end{equation*}

\item[$\ast$] The space $\displaystyle W^{1,p} \left( \Omega \right)$ is
endowed with the norm 
\begin{equation*}
\left\Vert u\right\Vert _{W^{1,p}\left( \Omega \right) }=\left\Vert
u\right\Vert _{L^{p}\left( \Omega \right) }+ \sum_{i=1}^{n}\left\Vert \frac{%
\partial u}{\partial x_{i} }\right\Vert _{L^{p}\left( \Omega \right) }.
\end{equation*}

\item[$\ast$] If $\displaystyle 1<p<+\infty \,$, then 
\begin{equation*}
\left\Vert u\right\Vert _{W^{1,p}\left( \Omega \right) }\sim \left[
\left\Vert u\right\Vert _{L^{p}\left( \Omega \right)
}^{p}+\sum_{i=1}^{n}\left\Vert \frac{\partial u}{\partial x_{i}} \right\Vert
_{L^{p}\left( \Omega \right) }^{p}\right] ^{1/p}\ .
\end{equation*}

\item[$\ast$] If we denote $\displaystyle g_{i}= \frac{\partial u}{\partial
x_{i}}\,$, then $u\in W^{1,p}\left( \Omega \right)$ if and only if 
\begin{equation*}
\forall \varphi \in \mathcal{D}\left( \Omega \right) \,,\,\forall
i=1,...n\quad ;\qquad \text{ \ \ \ }\int_{\Omega }u\frac{\partial \varphi }{%
\partial x_{i}}=-\int_{\Omega }g_{i}\varphi \ \ ,\ \ \ .
\end{equation*}
\end{itemize}
\end{definition}


\begin{definition}[~\cite{Po}]\text{  }
\label{4.10} 
\vspace{-20pt}
\begin{itemize}
\item[$\ast$]  Let $\Omega$
be an open subset of $\mathbb{R}^{n}$, $1 \leq p \leq \infty$, $m \geq 2$,
then we define the Sobolev space $W^{m,p}\left(\mathbb{R}^{n}\right)$, by 
\begin{equation*}
W^{m,p}\left( \Omega \right) = \left\{ u \in W^{m-1,p}\left( \Omega \right);
\qquad \frac{\partial u}{\partial x_{i}} \in W^{m-1,p}\left( \Omega \right), 
\text{ for all } i=1,\dots,N \right\}.
\end{equation*}

\item[$\ast$] We set $H^{m}\left( \Omega \right) = W^{m,2}\left( \Omega
\right)$ which becomes a Hilbert space, if equipped with the inner product 
\begin{equation*}
\left\langle u, v \right\rangle_{H^{m}\left( \Omega \right)} =
\sum\limits_{0 \leq \left\vert \alpha \right\vert \leq m} \left\langle
D^{\alpha}u, D^{\alpha}v \right\rangle_{L^{2}\left( \Omega \right)}.
\end{equation*}

\item[$\ast$] The space $W^{m,p}\left( \Omega \right)$ is equipped with the
norm 
\begin{equation*}
\left\Vert u \right\Vert_{W^{m,p}} = \sum\limits_{0 \leq \left\vert \alpha
\right\vert \leq m} \left\Vert D^{\alpha}u \right\Vert_{L^{p}}.
\end{equation*}

\item[$\ast$] If we denote $\displaystyle g_{\alpha} = D^{\alpha}u = \frac{%
D^{\left\vert \alpha \right\vert}}{\partial x_{1}^{\alpha_{1}} \cdots
\partial x_{n}^{\alpha_{n}}}$, then for all $\displaystyle u \in
L^{p}\left(\Omega \right)$ 
\begin{equation*}
u \in W^{m,p}\left( \Omega \right), \quad \text{if and only if} \qquad
\forall \alpha \in \mathbb{N}^{n}, \text{ } \left\vert \alpha \right\vert
\leq m, \quad \exists g_{\alpha} \in L^{p}\left( \Omega \right), \text{ such
that}
\end{equation*}
\begin{equation*}
\int_{\Omega} u D^{\alpha} \varphi = \left( -1 \right)^{\left\vert \alpha
\right\vert} \int_{\Omega} g_{\alpha} \varphi, \quad \text{for all } \varphi
\in \mathcal{D}\left( \Omega \right).
\end{equation*}

\item[$\ast$] If $W_{0}^{1,p}\left( \Omega \right)$ denotes the closure of $%
\mathcal{D}\left(\mathbb{R}^{n}\right)$ in $W^{1,p}\left( \Omega \right)$,
then we have the following inequality called Poincare 's inequality 
\begin{eqnarray*}
\forall u & \in & W_{0}^{1,p}\left( \Omega \right); \qquad \int_{\Omega}
\left\vert u \right\vert^{p} \leq c\left( \Omega \right)^{p} \int_{\Omega}
\left\Vert \nabla u \right\Vert^{p}, \quad c\left( \Omega \right) > 0, \quad
p \geq 1. \\
&& \\
\text{Where } \nabla u &=& \left( \frac{\partial u}{\partial x_{1}}, \dots, 
\frac{\partial u}{\partial x_{k}}, \dots, \frac{\partial u}{\partial x_{n}}
\right), \qquad \frac{\partial u}{\partial x_{k}} \in L^{p}\left( \Omega
\right), \quad k = 1, \dots, n.
\end{eqnarray*}
\end{itemize}
\end{definition}

\begin{proposition}[ ~\cite{GIR} ]
\label{4.11}
 Let $I = \left[ a, b \right]$
such that $a, b \in \mathbb{R}$, and consider a function $f: I
\longrightarrow \mathbb{R}$, and a partition $\mathcal{P} \subset I$ such
that 
\begin{equation*}
\mathcal{P} = \left\{ x_{0}, x_{1}, x_{2}, \dots, x_{n} \right\}, \text{
where } a = x_{0} < x_{1} < x_{2} < \dots < x_{n} = b,
\end{equation*}
if we define the sums $S\left( f, \mathcal{P} \right), s\left( f, \mathcal{P}
\right), V\left( f, \mathcal{P} \right)$ by 
\begin{eqnarray*}
S\left( f, \mathcal{P} \right) &=& \sum\limits_{k=1}^{n} \sup \left\{
f\left( x \right); x_{k-1} \leq x \leq x_{k} \right\} \cdot \left( x_{k} -
x_{k-1} \right) \\
\\
s\left( f, \mathcal{P} \right) &=& \sum\limits_{k=1}^{n} \inf \left\{
f\left( x \right); x_{k-1} \leq x \leq x_{k} \right\} \cdot \left( x_{k} -
x_{k-1} \right) \\
\\
V\left( f, \mathcal{P} \right) &=& \sum\limits_{k=1}^{n} \left\vert f\left(
x_{k} \right) - f\left( x_{k-1} \right) \right\vert \text{ and } \nu \left(
f, \left[ a, b \right] \right) = \sup \left\{ V\left( f, \mathcal{P}
\right); \mathcal{P} \subset \left[ a, b \right] \right\},
\end{eqnarray*}
\begin{equation*}
\text{then } \int_{a}^{b} f\left( x \right) \, \mathrm{d}x = \inf \left\{
S\left( f, \mathcal{P} \right); \mathcal{P} \subset \left[ a, b \right]
\right\} = \sup \left\{ s\left( f, \mathcal{P} \right); \mathcal{P} \subset %
\left[ a, b \right] \right\},
\end{equation*}
$\ \ \ $\ \ \ \ \ \ \ \ \ \ \ \ \ \ \ \ \ \ \ \ \ \ \ \ \ \ \ \ \ \ \ \ 
\begin{equation*}
\text{and if $f$ is absolutely continuous then } \nu \left( f, \left[ a, b %
\right] \right) = \int_{a}^{b} \left\vert f^{\prime }\left( x \right)
\right\vert \, \mathrm{d}x \text{ \ \ \ \ \ \ \ \ \ \ \ \ \ \ \ \ \ \ \ \ \
\ \ \ \ \ \ \ \ \ \ \ \ \ \ \ }
\end{equation*}
\end{proposition}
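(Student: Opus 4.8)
The first assertion is the classical theorem of Darboux, and I would dispatch it quickly: refining a partition does not increase $S(f,\mathcal{P})$ and does not decrease $s(f,\mathcal{P})$, so $\inf_{\mathcal{P}} S(f,\mathcal{P}) \geq \sup_{\mathcal{P}} s(f,\mathcal{P})$, while Riemann integrability forces equality with the common value $\int_a^b f(x)\,\mathrm{d}x$; this is entirely standard (cf. ~\cite{GIR}) and I would not belabour it.

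The substance is the identity $\nu(f,[a,b]) = \int_a^b |f'(x)|\,\mathrm{d}x$ for $f$ absolutely continuous (which is also the content of Lemma \ref{3.14}). Write $g = f'$; by Proposition \ref{2.12} we have $g \in L^1[a,b]$ and $f(y) - f(x) = \int_x^y g(t)\,\mathrm{d}t$ for all $a \leq x \leq y \leq b$. The plan is to prove the two inequalities separately. For the upper bound, given any partition $\mathcal{P} = \{a = x_0 < \cdots < x_n = b\}$, the fundamental theorem of calculus and the triangle inequality for integrals give
\[ V(f,\mathcal{P}) = \sum_{k=1}^n \left| \int_{x_{k-1}}^{x_k} g(t)\,\mathrm{d}t \right| \leq \sum_{k=1}^n \int_{x_{k-1}}^{x_k} |g(t)|\,\mathrm{d}t = \int_a^b |f'(t)|\,\mathrm{d}t , \]
and taking the supremum over $\mathcal{P}$ yields $\nu(f,[a,b]) \leq \int_a^b |f'(t)|\,\mathrm{d}t$.

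For the lower bound I would fix $\varepsilon > 0$ and exploit the density of step functions in $L^1[a,b]$: choose a partition $\mathcal{P} = \{a = x_0 < \cdots < x_n = b\}$ and a step function $s$, constant equal to $c_k$ on each $]x_{k-1}, x_k[$, with $\int_a^b |g - s| < \varepsilon$. On each subinterval one has $\left| \int_{x_{k-1}}^{x_k} g \right| \geq |c_k|\,(x_k - x_{k-1}) - \int_{x_{k-1}}^{x_k} |g - s|$; summing over $k$ and using $\sum_k |c_k|(x_k - x_{k-1}) = \int_a^b |s| \geq \int_a^b |g| - \int_a^b |g - s|$ gives $V(f,\mathcal{P}) \geq \int_a^b |f'| - 2\varepsilon$. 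Since $\varepsilon$ is arbitrary, $\nu(f,[a,b]) \geq \int_a^b |f'(t)|\,\mathrm{d}t$, and combined with the upper bound this establishes the equality.

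The main obstacle is this lower bound: the upper bound is essentially immediate once one has the fundamental theorem of calculus for absolutely continuous functions, whereas the reverse inequality genuinely needs the $L^1$ structure of $f'$ (density of step functions, equivalently absolute continuity of the Lebesgue integral) in order to manufacture partitions whose variation sums approach $\int_a^b |f'|$. The one technical point to watch is that the partition used to define the step function $s$ must be the same partition $\mathcal{P}$ used to estimate $V(f,\mathcal{P})$; since passing to a common refinement of two partitions only decreases $\int_a^b |g - s|$, this matching can always be arranged and causes no difficulty.
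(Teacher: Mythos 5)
Your argument is correct, but note that the paper itself offers no proof of Proposition \ref{4.11}: it is stated and attributed to ~\cite{GIR} (as is the closely related Lemma \ref{3.14}), and the text moves straight on to Proposition \ref{4.12}. So there is no internal argument to compare against; what you have written is a complete, self-contained proof of a result the thesis only cites. Your treatment of the Darboux part is the standard one (the only caveat being that the statement as printed omits the hypothesis that $f$ be Riemann integrable, which you correctly identify as what ``forces equality''). For the identity $\nu(f,[a,b])=\int_a^b|f'|$, your two-sided argument is exactly the textbook route: the upper bound follows from $f(x_k)-f(x_{k-1})=\int_{x_{k-1}}^{x_k}f'$ (Proposition \ref{2.12}) and the triangle inequality, and the lower bound from $L^1$-density of step functions, with the reverse triangle inequality $\|s\|_1\geq\|g\|_1-\|g-s\|_1$ closing the estimate $V(f,\mathcal{P})\geq\int_a^b|f'|-2\varepsilon$. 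Your remark that the partition subordinate to the step function can be taken as the partition in $V(f,\mathcal{P})$ — since $\nu$ is a supremum over all partitions, exhibiting one good partition suffices — disposes of the only technical point. I have no corrections.
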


\begin{proposition}[~\cite{Br-Mir}]
\label{4.12}  
Let $1 < p < \infty$, $s \in \mathbb{N}$, $0 < \sigma \leq s$, $\Omega
\subset \mathbb{R}^{n}$, then we can find equivalent norms for the space $%
W^{s,p}\left(\Omega \right)$ defined by 
\begin{equation*}
\left\Vert f \right\Vert_{W^{s,p}\left( \Omega \right)} = \left\Vert f
\right\Vert_{L^{p}\left( \Omega \right)} + \sup\limits_{h \in \mathbb{R}%
^{n}, h \neq 0} \frac{\left\Vert \Delta_{h}^{s} f \right\Vert_{L^{p}\left(
\Omega \right)}}{\left\vert h \right\vert^{\sigma}}.
\end{equation*}
\end{proposition}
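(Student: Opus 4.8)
The plan is to establish the norm equivalence
\[
\|f\|_{L^{p}(\Omega)}+\sup_{h\neq 0}\frac{\|\Delta_{h}^{s}f\|_{L^{p}(\Omega)}}{|h|^{\sigma}}\;\sim\;\|f\|_{W^{s,p}(\Omega)}
\]
by proving the two inequalities separately, after reducing to the model case $\Omega=\mathbb{R}^{n}$. For a domain with Lipschitz boundary one precomposes with a Stein extension operator $E\colon W^{s,p}(\Omega)\to W^{s,p}(\mathbb{R}^{n})$, which is bounded and satisfies $(Ef)|_{\Omega}=f$, so that both members for $f$ on $\Omega$ are comparable to the corresponding members for $Ef$ on $\mathbb{R}^{n}$ up to the constant $\|E\|$; for a general open set the statement is local and may be tested on cubes compactly contained in $\Omega$. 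Henceforth $\Omega=\mathbb{R}^{n}$ and $\Delta_{h}^{s}$ is as in Definition \ref{1.4}.

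\emph{First inequality (``$\lesssim\|f\|_{W^{s,p}}$'').} For $|h|\le 1$ I would iterate the elementary estimate behind Lemma \ref{1.12}: expressing $\Delta_{h}^{s}f$ as an $s$-fold average of the $s$-th directional derivative $(h\cdot\nabla)^{s}f$ over translates gives $\|\Delta_{h}^{s}f\|_{p}\le|h|^{s}\sum_{|\alpha|=s}c_{\alpha}\|D^{\alpha}f\|_{p}\lesssim|h|^{s}\|f\|_{W^{s,p}}$, hence $|h|^{-\sigma}\|\Delta_{h}^{s}f\|_{p}\le|h|^{s-\sigma}\|f\|_{W^{s,p}}\le\|f\|_{W^{s,p}}$ since $s-\sigma\ge 0$ and $|h|\le 1$. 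For $|h|\ge 1$, the binomial identity of Lemma \ref{1.5} gives at once $\|\Delta_{h}^{s}f\|_{p}\le 2^{s}\|f\|_{p}$, so $|h|^{-\sigma}\|\Delta_{h}^{s}f\|_{p}\le 2^{s}\|f\|_{p}$ because $\sigma>0$. Taking the supremum over $h$ and adding $\|f\|_{p}$ yields this direction; note that $\sigma>0$ governs the regime $|h|\to\infty$ while $\sigma\le s$ governs the regime $|h|\to 0$.

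\emph{Second inequality (the converse).} This is the heart of the matter, and here the assumption $1<p<\infty$ is indispensable, through the reflexivity of $L^{p}$. Put $M:=\sup_{h\neq 0}|h|^{-\sigma}\|\Delta_{h}^{s}f\|_{p}$ and suppose $f\in L^{p}$ with $M<\infty$. Take first $\sigma=s$. For every unit vector $e$ the family $g_{e,t}:=t^{-s}\Delta_{te}^{s}f$ is bounded in $L^{p}$ by $M$ uniformly in $t>0$, and $g_{e,t}\to(e\cdot\nabla)^{s}f$ in $\mathcal{D}'(\mathbb{R}^{n})$ as $t\downarrow 0$; by reflexivity a weak-$L^{p}$ limit along a subsequence exists and must coincide with the distributional one, so $(e\cdot\nabla)^{s}f\in L^{p}$ with norm $\le M$. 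Since the polynomials $\xi\mapsto\langle e,\xi\rangle^{s}$, $e\in S^{n-1}$, span all degree-$s$ monomials in $\xi$, each pure partial $D^{\alpha}f$ with $|\alpha|=s$ is a finite linear combination of such $(e\cdot\nabla)^{s}f$ and hence lies in $L^{p}$; the lower-order $D^{\beta}f$ with $|\beta|<s$ then lie in $L^{p}$ by the Gagliardo--Nirenberg interpolation inequality $\|D^{\beta}f\|_{p}\lesssim\|f\|_{p}^{1-|\beta|/s}\|D^{s}f\|_{p}^{|\beta|/s}$. Collecting, $\|f\|_{W^{s,p}}\lesssim\|f\|_{p}+M$.

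\emph{Main obstacle.} The converse inequality is the delicate point: besides its essential reliance on $1<p<\infty$ (reflexivity in the extraction of weak limits), for $0<\sigma<s$ the scaling $\|g_{e,t}\|_{p}\le M\,t^{\sigma-s}$ degenerates as $t\downarrow 0$, so one cannot pass directly to $(e\cdot\nabla)^{s}f$; there one must first descend from $s$-th differences to first-order differences by a Marchaud-type inequality before running the same weak-compactness scheme, which is exactly where the argument is technically heaviest. The remaining steps — the reduction to $\mathbb{R}^{n}$ by extension, the iterated-integral bound for $\Delta_{h}^{s}$, and the large-$|h|$ binomial estimate — are routine once this skeleton is in place.
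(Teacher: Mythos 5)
The paper offers no proof of Proposition \ref{4.12}: it is stated with a bare citation to \cite{Br-Mir}, so there is nothing internal to compare against; your proposal must stand on its own. Your forward inequality is correct and complete for every $0<\sigma\leq s$ (the split at $|h|=1$, the $s$-fold integral representation of $\Delta_h^s f$ for small $h$, and the crude binomial bound $\|\Delta_h^s f\|_p\leq 2^s\|f\|_p$ for large $h$ are exactly right), and this is in fact the only half of the proposition the thesis ever uses (in the proof of Lemma \ref{4.13}, with $s=1$ and $\sigma=1-1/p$). Your converse for $\sigma=s$ is also sound: uniform boundedness of $t^{-s}\Delta_{te}^s f$ in $L^p$, weak compactness from $1<p<\infty$, identification of the weak limit with the distributional limit $(e\cdot\nabla)^s f$, polarization to recover all $D^\alpha f$ with $|\alpha|=s$, and Gagliardo--Nirenberg for the intermediate derivatives. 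Your remark that reflexivity is where $1<p<\infty$ enters is the right diagnosis.

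The genuine gap is the case $0<\sigma<s$, which you defer to a ``Marchaud-type inequality.'' That route cannot be completed, because the asserted equivalence is false in that range: the quantity $\|f\|_{L^p}+\sup_{h\neq 0}|h|^{-\sigma}\|\Delta_h^s f\|_p$ is the norm of the Nikolskii--Besov space $B^{\sigma,\infty}_p(\mathbb{R}^n)$ (compare Theorem \ref{1.15} with $q=\infty$), which strictly contains $W^{s,p}(\mathbb{R}^n)$ when $\sigma<s$. A scaling argument makes the failure quantitative: for $f_\lambda(x)=f(\lambda x)$ one has $\sum_{|\alpha|=s}\|D^\alpha f_\lambda\|_p=\lambda^{s-n/p}\sum_{|\alpha|=s}\|D^\alpha f\|_p$ while $\sup_h|h|^{-\sigma}\|\Delta_h^s f_\lambda\|_p=\lambda^{\sigma-n/p}\sup_h|h|^{-\sigma}\|\Delta_h^s f\|_p$, so as $\lambda\to\infty$ no constant can realize $\|f\|_{W^{s,p}}\lesssim\|f\|_{L^p}+\sup_h|h|^{-\sigma}\|\Delta_h^s f\|_p$. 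Marchaud's inequality only controls lower-order differences by higher-order ones (with a logarithmic or integral tail) and cannot manufacture the missing $s-\sigma$ derivatives. The honest conclusion is that the proposition as printed is only correct for $\sigma=s$ (where your argument works), and that for $\sigma<s$ only the one-sided estimate $\sup_{h\neq 0}|h|^{-\sigma}\|\Delta_h^s f\|_p\lesssim\|f\|_{W^{s,p}}$ survives --- which, fortunately, is all the thesis actually needs. You should state this restriction explicitly rather than present the $\sigma<s$ converse as a surmountable technicality.
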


\begin{lemma}[~\cite{GIR}]
 \label{4.13} 
Let $1 < p < \infty$, then for every interval $I$ of $\mathbb{R}$, we have 
\begin{equation*}
W^{1,p}\left( I \right) = \mathcal{V}_{p}^{1 - \frac{1}{p}}\left( I \right).
\end{equation*}
\end{lemma}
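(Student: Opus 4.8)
The plan is to prove the two inclusions separately and, along the way, to establish the sharper identity $\nu_{p}^{1-1/p}(f,I)=\|f'\|_{L^{p}(I)}$, from which the equivalence of the two norms will follow once one compares $\|f\|_{L^{p}(I)}$ with $\sup_{I}|f|$ (which is harmless on a bounded interval). Throughout I put $\alpha=1-\tfrac1p$, so that $\alpha p=p-1$ and $\tfrac1{p'}=\alpha$, where $p'$ is the conjugate exponent of $p$.

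For the inclusion $W^{1,p}(I)\subseteq\mathcal{V}_{p}^{1-1/p}(I)$ I would use that in dimension one every $f\in W^{1,p}(I)$ has an absolutely continuous representative with $f'\in L^{p}(I)$ (cf.\ Propositions \ref{2.12}, \ref{4.11}), so that $f(t_{k})-f(t_{k-1})=\int_{t_{k-1}}^{t_{k}}f'$ for any strictly increasing finite sequence $t_{0}<\dots<t_{N}$ in $I$. Hölder's inequality then gives $|f(t_{k})-f(t_{k-1})|\le(t_{k}-t_{k-1})^{\alpha}\big(\int_{t_{k-1}}^{t_{k}}|f'|^{p}\big)^{1/p}$, hence $\big|\tfrac{f(t_{k})-f(t_{k-1})}{(t_{k}-t_{k-1})^{\alpha}}\big|^{p}\le\int_{t_{k-1}}^{t_{k}}|f'|^{p}$; summing over the non-overlapping intervals $[t_{k-1},t_{k}]$ produces $\sum_{k}\big|\tfrac{f(t_{k})-f(t_{k-1})}{(t_{k}-t_{k-1})^{\alpha}}\big|^{p}\le\int_{I}|f'|^{p}$, so $f\in\mathcal{V}_{p}^{\alpha}(I)$ with $\nu_{p}^{\alpha}(f,I)\le\|f'\|_{L^{p}(I)}$.

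For the converse, given $f\in\mathcal{V}_{p}^{\alpha}(I)$ I would first convert the defining inequality into a Lipschitz-in-$L^{p}$ bound on translates. Fixing $h>0$, for $y\in[0,h)$ the integers $k$ with $y+kh\in I$ and $y+(k+1)h\in I$ are consecutive (since $I$ is an interval) and the corresponding points form a strictly increasing sequence in $I$, so Definition \ref{4.2} gives $\sum_{k}|f(y+(k+1)h)-f(y+kh)|^{p}\le\nu_{p}^{\alpha}(f,I)^{p}h^{\alpha p}=\nu_{p}^{\alpha}(f,I)^{p}h^{p-1}$. Writing each $x$ with $x,x+h\in I$ uniquely as $x=y+kh$ with $y\in[0,h)$ and integrating in $y$ over $[0,h)$, this yields $\int|f(x+h)-f(x)|^{p}\,dx\le\nu_{p}^{\alpha}(f,I)^{p}h^{p}$, i.e.\ $\|\tau_{-h}f-f\|_{L^{p}}\le\nu_{p}^{\alpha}(f,I)|h|$ for every $h$. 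Because $1<p<\infty$, $L^{p}(I)$ is reflexive, so along some $h_{n}\to0$ the difference quotients $\tfrac{f(\cdot+h_{n})-f}{h_{n}}$ converge weakly in $L^{p}(I)$ to some $g$ with $\|g\|_{L^{p}(I)}\le\nu_{p}^{\alpha}(f,I)$. It remains to recognize $g$ as $f'$: for $\varphi\in\mathcal{D}(I)$ and $n$ large, $\int_{I}\tfrac{f(x+h_{n})-f(x)}{h_{n}}\varphi\,dx=\int_{I}f\,\tfrac{\varphi(\cdot-h_{n})-\varphi}{h_{n}}\,dx\to-\int_{I}f\varphi'$, using that $f$ is continuous and hence bounded on $\mathrm{supp}\,\varphi$ (Proposition \ref{4.4}) and that $\tfrac{\varphi(\cdot-h_{n})-\varphi}{h_{n}}\to-\varphi'$ uniformly on a fixed compact set; comparing with the weak limit gives $\int_{I}g\varphi=-\int_{I}f\varphi'$ for all test functions, so $f'=g\in L^{p}(I)$ distributionally and $f\in W^{1,p}(I)$ with $\|f'\|_{L^{p}(I)}\le\nu_{p}^{\alpha}(f,I)$. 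Combining the two halves, $\nu_{p}^{\alpha}(f,I)=\|f'\|_{L^{p}(I)}$.

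The hard part will be upgrading this semi-norm identity to equality of the full Banach spaces. On a bounded $I$ it is easy: a $\mathcal{V}_{p}^{\alpha}(I)$-function is continuous, hence in $L^{\infty}(I)\subseteq L^{p}(I)$, so the underlying sets coincide, and $\|f\|_{L^{p}(I)}+\|f'\|_{L^{p}(I)}$ is equivalent to $\sup_{I}|f|+\nu_{p}^{\alpha}(f,I)$ because $\|f\|_{L^{p}}\le|I|^{1/p}\sup_{I}|f|$ while averaging $f(x)=f(y)+\int_{y}^{x}f'$ over $y\in I$ bounds $\sup_{I}|f|$ by $C_{I}(\|f\|_{L^{p}}+\|f'\|_{L^{p}})$. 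On an unbounded interval, however, the two spaces genuinely differ by constants, so there the identification should be read modulo $\mathcal{P}_{0}$, or one simply restricts, as elsewhere in the thesis, to the model cases $I=[0,1]$, $I=[0,+\infty[$, $I=\mathbb{R}$ with the appropriate conventions. The weak-compactness argument itself is routine; the only subtlety is that the distributional derivative is unique, so $g$ does not depend on the chosen subsequence $(h_{n})$.
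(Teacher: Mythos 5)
Your proof is correct, but it follows a genuinely different route from the one in the thesis. The thesis proves both inclusions by passing through the difference-quotient characterization of $W^{1,p}$ (Proposition \ref{4.12}): for $\mathcal{V}_{p}^{1-1/p}\subset W^{1,p}$ it compares the variation sum with upper Riemann sums of $g_{h}(x)=|f(x+h)-f(x)|^{p}$ via Proposition \ref{4.11} to bound $\|\Delta_{h}^{1}f\|_{L^{p}}/|h|$, and for $W^{1,p}\subset\mathcal{V}_{p}^{1-1/p}$ it reverses the comparison by selecting subintervals of equal length $|h|$ inside the given disjoint family and bounding the resulting sum by the same integral. You instead work directly with the derivative: the inclusion $W^{1,p}\subseteq\mathcal{V}_{p}^{1-1/p}$ by the fundamental theorem of calculus plus H\"older on non-overlapping intervals, and the converse by first deriving $\|\tau_{-h}f-f\|_{L^{p}}\le\nu_{p}^{\alpha}(f,I)\,|h|$ through the slicing argument (the same device the thesis uses in step (4) of the proof of Theorem \ref{3.17}) and then invoking weak compactness in the reflexive space $L^{p}$ to identify the distributional derivative. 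What your approach buys is the sharp semi-norm identity $\nu_{p}^{1-1/p}(f,I)=\|f'\|_{L^{p}(I)}$ rather than a mere two-sided equivalence, and independence from Proposition \ref{4.12}, whose stated form (with $\sigma=1-\tfrac1p<1$) only characterizes a Besov-type class in one direction and is the weakest link of the thesis's argument. One remark on your closing caveat: on an unbounded $I$ the obstruction is not only constants --- a bounded function with $L^{p}$ derivative, such as $\arctan$ on $\mathbb{R}$, lies in $\mathcal{V}_{p}^{1-1/p}(\mathbb{R})$ but in no translate of $L^{p}(\mathbb{R})$ by a constant, so the clean statement is $\mathcal{V}_{p}^{1-1/p}(I)=\dot{W}^{1,p}(I)\cap L^{\infty}(I)$; but the thesis's own proof is carried out only on $[a,b]$ and ignores the $L^{p}$ membership entirely, so you are, if anything, more careful on this point than the source.
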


\begin{proof}

\begin{description}
\item[--] If $f \in \mathcal{V}_{p}^{1 - \frac{1}{p}} \left( \left[ a, b %
\right] \right)$, then for any partition $\mathcal{P} = \left\{ a \leq
x_{k-1} < x_{k} \leq \dots b \right\}$, 
\begin{equation*}
\text{we have } \left( \sum\limits_{k \geq 0} \left\vert x_{k} - x_{k-1}
\right\vert \cdot \left( \frac{\left\vert f\left( x_{k} \right) - f\left(
x_{k-1} \right) \right\vert}{\left\vert x_{k} - x_{k-1} \right\vert}
\right)^{p} \right)^{1/p} \leq c < \infty, \text{ \ \ \ \ \ \ \ \ \ \ \ \ \
\ \ \ \ \ \ \ \ \ \ \ \ \ \ \ \ \ \ \ \ \ \ \ \ \ \ \ \ \ \ \ \ \ \ \ \ \ \
\ \ \ \ \ \ \ \ \ \ \ \ }
\end{equation*}
fix the difference $\left\vert x_{k} - x_{k-1} \right\vert = h$, and
consider the function $g_{h}$, defined by $\displaystyle g_{h}\left( x
\right) = \left( \left\vert f\left( x+h \right) - f\left( x \right)
\right\vert \right)^{p}$, then according to Proposition \ref{4.11} we have 
\begin{equation*}
\int_{a}^{b} g_{h}\left( x \right) \, \mathrm{d}x = \inf\limits_{\mathcal{P}
\subset \left[ a, b \right]} \left\{ \sum\limits_{k=1}^{n} \sup\limits_{x}
\left\{ g_{h}\left( x \right); x_{k-1} \leq x \leq x_{k} \right\} \cdot
\left( x_{k} - x_{k-1} \right) \right\}, \text{ hence }
\end{equation*}

$\displaystyle \int_{a}^{b} \left( \left\vert f\left( x+h \right) - f\left(
x \right) \right\vert \right)^{p} \leq \sum\limits_{k \geq 0} \left\vert
x_{k} - x_{k-1} \right\vert \left( \left\vert f\left( x_{k} \right) -
f\left( x_{k-1} \right) \right\vert \right)^{p},$ 

dividing both sides by $\left\vert x_{k} - x_{k-1} \right\vert^{p}$, we
obtain 
\begin{equation*}
\left[ \frac{\displaystyle \int_{a}^{b} \left( \left\vert f\left( x+h
\right) - f\left( x \right) \right\vert \right)^{p} \, \mathrm{d}x}{%
\left\vert h \right\vert^{p}} \right]^{\frac{1}{p}} \leq \left[
\sum\limits_{k \geq 0} \left\vert x_{k} - x_{k-1} \right\vert \left( \frac{%
\left\vert f\left( x_{k} \right) - f\left( x_{k-1} \right) \right\vert}{%
\left\vert x_{k} - x_{k-1} \right\vert} \right)^{p} \ \right]^{\frac{1}{p}}
\leq \left\Vert f \right\Vert_{\mathcal{V}_{p}^{1 - \frac{1}{p}}},
\end{equation*}
and thus $\mathcal{V}_{p}^{1 - \frac{1}{p}}\left( I \right) \subset
W^{1,p}\left( I \right)$.

\item[--] Consider a sequence of disjoint intervals $\left[ a_{k}, b_{k} %
\right]$ of $I$, we can always choose a subsequence of intervals such that $%
\displaystyle \left[ a_{k}^{\prime }, b_{k}^{\prime }\right] \subset \left[
a_{k}, b_{k} \right]$, $\left\vert b_{k}^{\prime }- a_{k}^{\prime
}\right\vert = \left\vert h \right\vert \neq 0$.

If $f \in W^{1,p}\left( I \right)$, then according to Proposition \ref{4.12} 
\begin{eqnarray*}
\frac{\left[ \displaystyle \sum\limits_{k \geq 0} \left( \left\vert f\left(
b_{k} \right) - f\left( a_{k} \right) \right\vert \right)^{p} \right]^{1/p}}{%
\left\vert b_{k} - a_{k} \right\vert^{1 - \frac{1}{p}}} &=& \frac{ \left( %
\displaystyle \sum\limits_{k \geq 0} \left( \left\vert f\left( a_{k} + h
\right) - f\left( a_{k} \right) \right\vert \right)^{p} \right)^{1/p}}{%
\left\vert h \right\vert^{1 - \frac{1}{p}}} \\
&\leq& \displaystyle \frac{ \left( \displaystyle \int_{a}^{b} \left(
\left\vert f\left( x + h \right) - f\left( x \right) \right\vert \right)^{p}
\, \mathrm{d}x \right)^{1/p}}{\left\vert h \right\vert^{1 - \frac{1}{p}}} = 
\frac{\left\Vert \Delta_{h}^{1} f \right\Vert_{L^{p}}}{\left\vert h
\right\vert^{\sigma}} \leq c < \infty,
\end{eqnarray*}
with $\displaystyle 0 < \sigma = 1 - \frac{1}{p} < 1$ because $p > 1$, hence 
$\displaystyle W^{1,p}\left( I \right) \subset \mathcal{V}_{p}^{1 - \frac{1}{%
p}}\left( I \right)$.
\end{description}
\end{proof}

\subsection{\textbf{Composition  }$W^{1,p}\left( \Omega \right)$%
}

\begin{corollary}[~\cite{Po}]
\label{4.14} 
 Let $G \in C^{1}\left( 
\mathbb{R} \right)$, $G\left( 0 \right) = 0$, $1 \leq p < \infty$, $u \in
W^{1,p}\left( I \right)$, then 
\begin{equation*}
G \circ u \in W^{1,p}\left( I \right) \quad \text{and} \quad \left( G \circ
u \right)^{\prime }= \left( G^{\prime }\circ u \right) u^{\prime }.
\end{equation*}
\end{corollary}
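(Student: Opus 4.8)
The plan is to reduce the statement to the integral identity
\[
G(u(x)) = G(u(x_0)) + \int_{x_0}^{x} G'(u(t))\,u'(t)\,\mathrm{d}t, \qquad x \in I,
\]
for a fixed $x_0 \in I$, after which Proposition \ref{2.12} yields at once that $v := (G'\circ u)\,u'$ is the distributional derivative of $G\circ u$ and that $G\circ u \in W^{1,1}_{\mathrm{loc}}(I)$; the $L^p$ bounds established below then upgrade this to $W^{1,p}(I)$. First I would record the preliminaries on $u$: since $u \in W^{1,p}(I)$ with $p<\infty$, we have $u' \in L^p(I)\subset L^1_{\mathrm{loc}}(I)$, so on every compact subinterval $[a,b]\subset I$ the representative $u(x)=u(a)+\int_a^x u'(t)\,\mathrm{d}t$ is absolutely continuous; by the one–dimensional Sobolev embedding $u$ is continuous and bounded, say $\sup_I |u| = R < \infty$ (when $I$ is unbounded one uses in addition that $u(x)\to 0$ at infinity). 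By Proposition \ref{2.12} (differentiation of an indefinite integral) $u$ is differentiable almost everywhere, with pointwise derivative equal a.e.\ to $u'$.

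Next I would apply the elementary chain rule pointwise: at each point $x$ where $u$ is differentiable, $G$ is differentiable at $u(x)$ (since $G\in C^1(\mathbb{R})$), so $G\circ u$ is differentiable at $x$ with $(G\circ u)'(x)=G'(u(x))\,u'(x)$; hence $G\circ u$ is differentiable a.e.\ with a.e.\ derivative $v$. Because $G(0)=0$ and $G'$ is continuous, on $[-R,R]$ one has $|G(t)|\le L|t|$ and $|G'(t)|\le L$ with $L:=\sup_{|t|\le R}|G'(t)|<\infty$ (the first bound by the mean value theorem). Therefore $|G(u(x))|\le L|u(x)|$ and $|v(x)|\le L|u'(x)|$, which give $\|G\circ u\|_{L^p(I)}\le L\|u\|_{L^p(I)}<\infty$ and $\|v\|_{L^p(I)}\le L\|u'\|_{L^p(I)}<\infty$.

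It remains to prove the displayed integral identity, and this is the only genuinely delicate step: almost–everywhere differentiability alone does not imply the fundamental theorem of calculus, so one must verify that $G\circ u$ is absolutely continuous on compact subintervals. Here I would use that $G$, being $C^1$, is Lipschitz with constant $L$ on the compact set $u([a,b])\subseteq[-R,R]$, so for every finite family of pairwise disjoint intervals $(a_i,b_i)\subset[a,b]$,
\[
\sum_i |G(u(b_i)) - G(u(a_i))| \le L\sum_i |u(b_i) - u(a_i)|,
\]
and the right-hand side is small whenever $\sum_i (b_i - a_i)$ is small, by the absolute continuity of $u$. Thus $G\circ u$ is absolutely continuous on $[a,b]$, and Proposition \ref{2.12} gives $G(u(b))-G(u(a))=\int_a^b (G\circ u)'(t)\,\mathrm{d}t=\int_a^b v(t)\,\mathrm{d}t$; letting $a=x_0$ and $b=x$ range over $I$ produces the identity. (Alternatively, mirroring the proof of Theorem \ref{2.13}(i), one observes that $G\circ u$ and $x\mapsto\int_{x_0}^x v(t)\,\mathrm{d}t$ are continuous with a.e.\ derivative $v$, applies De la Vallée Poussin's Theorem \ref{2.10} to conclude their difference is constant, and evaluates at $x_0$.) Combining the integral representation with the two $L^p$ estimates shows $G\circ u\in W^{1,p}(I)$ with $(G\circ u)'=(G'\circ u)\,u'$. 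I expect the absolute-continuity step to be the main obstacle, since everything else is either a direct estimate or a citation of results already recorded; note also that Lemma \ref{4.13} could be invoked to see that $u$ is Hölder continuous when $p>1$, but absolute continuity is what the argument actually needs and it is available for $p=1$ as well.
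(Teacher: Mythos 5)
Your proof is correct, but it follows a genuinely different route from the one in the paper. The paper's proof is the classical approximation argument: after the same preliminary bounds ($|G(s)|\leq \|G'\|_{\infty}|s|$ on the range of $u$, hence $G\circ u\in L^{p}$ and $(G'\circ u)u'\in L^{p}$), it invokes the density of $\mathcal{D}(\mathbb{R})$ in $W^{1,p}(I)$ for $p<\infty$, takes smooth $u_{n}\to u$ in $W^{1,p}$ and $L^{\infty}$, writes the identity $\int_{I}(G\circ u_{n})\varphi'=-\int_{I}(G'\circ u_{n})u_{n}'\varphi$ for test functions $\varphi$, and passes to the limit. You instead stay entirely within the one-dimensional absolutely continuous calculus: you show $G\circ u$ is absolutely continuous on compact subintervals (Lipschitz composed with absolutely continuous), apply the pointwise chain rule a.e., and use Proposition \ref{2.12} (or, in your alternative, Theorem \ref{2.10}) to recover the integral representation — exactly the pattern of the paper's own proof of Theorem \ref{2.13}(i). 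What each approach buys: yours avoids the density theorem and any limit-passing, and is arguably more elementary and more in the spirit of this chapter's $BV_{p}^{1}$ machinery; the paper's density argument is the one that survives the passage to $\Omega\subset\mathbb{R}^{n}$, which is why the paper can say that "the same steps" give Lemma \ref{4.15}, whereas your absolute-continuity argument is intrinsically one-dimensional. Two minor points in your favour: you correctly localize the bound on $G'$ to $[-R,R]$ (the paper's unqualified $\|G'\|_{\infty}\leq c$ for $G\in C^{1}(\mathbb{R})$ is only justified after restricting to the range of $u$), and your closing step is cleaner than the paper's puzzling final remark about choosing $\varphi(t)=e^{-t}$, which is not even compactly supported.
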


\begin{proof}

By the mean value theorem we have 
\begin{equation*}
\frac{\left\vert G\left( s \right) - G\left( 0 \right) \right\vert}{%
\left\vert s - 0 \right\vert} \leq 1_{\text{supp } G}\left( s \right) \cdot
\left\Vert G^{\prime }\right\Vert_{\infty}.
\end{equation*}
If $G\left( s \right) = 0$, the result is immediate; otherwise $1_{\text{%
supp } G}\left( s \right) = 1$,\\ and since $G \in C^{1}\left( \mathbb{R}
\right)$ then $\left\Vert G^{\prime }\right\Vert_{\infty} \leq c < \infty$,
and taking $M = \left\Vert u \right\Vert_{L^{\infty}}$, so for every $M > 0$%
, we have 
\begin{equation*}
\forall s \in \left[ -M, +M \right], \quad \left\vert G\left( s \right)
\right\vert \leq c \left\vert s \right\vert \leq c \left\Vert u
\right\Vert_{L^{\infty}},
\end{equation*}
but $\left\Vert u \right\Vert_{L^{\infty}} \leq c^{\prime }\left\Vert u
\right\Vert_{W^{1,p}}$, $c^{\prime }> 0$, hence 
\begin{equation*}
\left\vert G \circ u \right\vert \leq c \left\Vert u
\right\Vert_{L^{\infty}} \leq c^{\prime \prime }\left\Vert u
\right\Vert_{W^{1,p}} < \infty,
\end{equation*}
and since $u \in L^{p}\left( I \right)$, $u^{\prime p}\left( I \right)$,
then 
\begin{equation*}
G \circ u \in L^{p}\left( I \right), \quad \left( G \circ u^{\prime }\right)
u^{\prime p}\left( I \right),
\end{equation*}
and since $1 \leq p < \infty$, then there exists a sequence $u_{n}$ in $%
\mathcal{D}\left( \mathbb{R} \right)$ such that $u_{n} \longrightarrow u$ in 
$W^{1,p}(I)$ and in $L^{\infty}\left( I \right)$, and thus 
\begin{eqnarray*}
G \circ u_{n} &\longrightarrow& G \circ u \in L^{\infty}\left( I \right) \\
&& \\
\text{and } (G^{\prime }\circ u_{n}) u_{n}^{\prime }&\longrightarrow&
(G^{\prime }\circ u) u^{\prime p}\left( I \right),
\end{eqnarray*}
\begin{equation*}
\text{but we have } \qquad \forall \varphi \in \mathcal{D}\left( I \right),
\quad \int_{I} (G \circ u_{n}) \varphi^{\prime }= -\int_{I} (G^{\prime
}\circ u_{n}) u_{n}^{\prime }\varphi,
\end{equation*}
hence the result, because it suffices to take $\displaystyle \varphi\left( t
\right) = e^{-t} \in \mathcal{D}\left( I \right)$.
\end{proof}

\begin{lemma}[~\cite{Po}]
\label{4.15} 
 Let $\Omega$ be an open
subset of $\mathbb{R}^{n}$, $1 \leq p < \infty$, $G \in C^{1}\left( \mathbb{R%
} \right)$, such that 
\begin{equation*}
G\left( 0 \right) = 0, \quad \text{and} \quad \left\vert G^{\prime }\left( s
\right) \right\vert \leq M, \quad \left( M > 0 \right) \text{ for all } s
\in \mathbb{R},
\end{equation*}
\begin{equation*}
\text{we have if } u \in W^{1,p}\left( \Omega \right), \text{ then } G \circ
u \in W^{1,p}\left( \Omega \right) \quad \text{and} \quad \frac{\partial}{%
\partial x_{i}} \left( G \circ u \right) = \left( G^{\prime }\circ u \right) 
\frac{\partial u}{\partial x_{i}}.
\end{equation*}
\end{lemma}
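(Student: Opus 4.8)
The plan is to reduce the statement to the one-dimensional situation handled in Corollary \ref{4.14} by an approximation argument, while being careful that for a general open set $\Omega$ one may not invoke density of $\mathcal{D}(\mathbb{R}^n)$ in $W^{1,p}(\Omega)$. Since the conclusion only involves test functions $\varphi\in\mathcal{D}(\Omega)$, it is a local statement, so it suffices to argue on relatively compact open subsets of $\Omega$ and mollify there.

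First I would extract the pointwise estimates forced by the hypotheses. Since $G(0)=0$ and $|G'|\le M$, the mean value theorem gives $|G(s)-G(t)|\le M|s-t|$ for all $s,t\in\mathbb{R}$, and in particular $|G(s)|\le M|s|$. Taking $s=u(x)$ yields $|G\circ u|\le M|u|$ a.e., so $G\circ u\in L^p(\Omega)$ with $\|G\circ u\|_{L^p(\Omega)}\le M\|u\|_{L^p(\Omega)}$; likewise $|G'\circ u|\le M$ a.e., whence $(G'\circ u)\,\partial u/\partial x_i\in L^p(\Omega)$ for every $i$. This last function is the candidate weak derivative.

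Then I fix $\varphi\in\mathcal{D}(\Omega)$, put $K=\mathrm{supp}\,\varphi$, and choose an open set $\Omega'$ with $K\subset\Omega'$ and $\overline{\Omega'}$ a compact subset of $\Omega$. For a standard mollifier $\rho_\varepsilon$, the functions $u_\varepsilon=u*\rho_\varepsilon$ are of class $C^\infty$ on $\Omega'$ for small $\varepsilon$ and $u_\varepsilon\to u$ in $W^{1,p}(\Omega')$; passing to a sequence $\varepsilon_k\to 0$ I may also assume $u_{\varepsilon_k}\to u$ and $\partial_{x_i}u_{\varepsilon_k}\to\partial_{x_i}u$ almost everywhere on $\Omega'$. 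For these smooth functions the classical $C^1$ chain rule (equivalently, Corollary \ref{4.14} applied along coordinate lines) gives $\partial_{x_i}(G\circ u_{\varepsilon_k})=(G'\circ u_{\varepsilon_k})\,\partial_{x_i}u_{\varepsilon_k}$, hence
\[ \int_{\Omega}(G\circ u_{\varepsilon_k})\,\frac{\partial\varphi}{\partial x_i}\,\mathrm{d}x=-\int_{\Omega}(G'\circ u_{\varepsilon_k})\,\frac{\partial u_{\varepsilon_k}}{\partial x_i}\,\varphi\,\mathrm{d}x. \]
Letting $k\to\infty$: on the left $\|G\circ u_{\varepsilon_k}-G\circ u\|_{L^p(\Omega')}\le M\|u_{\varepsilon_k}-u\|_{L^p(\Omega')}\to 0$; on the right I split
\[ (G'\circ u_{\varepsilon_k})\partial_{x_i}u_{\varepsilon_k}-(G'\circ u)\partial_{x_i}u=(G'\circ u_{\varepsilon_k})(\partial_{x_i}u_{\varepsilon_k}-\partial_{x_i}u)+(G'\circ u_{\varepsilon_k}-G'\circ u)\,\partial_{x_i}u, \]
where the first term tends to $0$ in $L^p(\Omega')$ because $|G'\circ u_{\varepsilon_k}|\le M$, and the second tends to $0$ in $L^1(\Omega')$ by dominated convergence, using $G'\circ u_{\varepsilon_k}\to G'\circ u$ a.e. (continuity of $G'$) and the domination $|(G'\circ u_{\varepsilon_k}-G'\circ u)\,\partial_{x_i}u|\le 2M|\partial_{x_i}u|\in L^1(\Omega')$. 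Thus $\int_\Omega(G\circ u)\,\partial_{x_i}\varphi\,\mathrm{d}x=-\int_\Omega(G'\circ u)(\partial_{x_i}u)\,\varphi\,\mathrm{d}x$ for every $\varphi\in\mathcal{D}(\Omega)$, which identifies $\partial_{x_i}(G\circ u)=(G'\circ u)\,\partial_{x_i}u\in L^p(\Omega)$; together with $G\circ u\in L^p(\Omega)$ this yields $G\circ u\in W^{1,p}(\Omega)$.

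I expect the only genuine obstacle to be the interchange of composition with the weak-derivative limit in the last step: it is what forces the extraction of an almost everywhere convergent subsequence together with the splitting above, so that the uniform bound $|G'|\le M$ can be exploited on one summand and dominated convergence on the other. The remaining ingredients — the estimate $|G(s)|\le M|s|$, the smooth chain rule, and the localisation that dispenses with any global density statement on $\Omega$ — are routine. An alternative, if one wishes to use Corollary \ref{4.14} essentially verbatim, is to invoke the absolute-continuity-on-lines characterisation of $W^{1,p}(\Omega)$, apply the one-dimensional result on almost every coordinate line, and then recover the $n$-dimensional weak derivative by Fubini.
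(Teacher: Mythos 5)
Your proof is correct, and it follows the same basic strategy the paper intends: approximate $u$ by smooth functions, apply the classical chain rule, and pass to the limit in the weak formulation $\int_\Omega (G\circ u)\,\partial_{x_i}\varphi = -\int_\Omega (G'\circ u)(\partial_{x_i}u)\,\varphi$. The paper itself gives no independent argument here — it states only that ``the same steps as in the proof of Corollary \ref{4.14}'' apply, and that earlier proof relies on choosing $u_n\in\mathcal{D}$ with $u_n\to u$ both in $W^{1,p}$ and in $L^\infty$, a convergence that is not available for general $u\in W^{1,p}(\Omega)$ when $\Omega\subset\mathbb{R}^n$ and $p<\infty$. Your version repairs exactly the two weak points of that template: you localise to relatively compact subsets and mollify there, so you never need density of $\mathcal{D}(\Omega)$ in $W^{1,p}(\Omega)$; and you replace the unjustified uniform convergence by an a.e.-convergent subsequence combined with the splitting $(G'\circ u_{\varepsilon_k})(\partial_{x_i}u_{\varepsilon_k}-\partial_{x_i}u)+(G'\circ u_{\varepsilon_k}-G'\circ u)\partial_{x_i}u$, handling the first term with the uniform bound $|G'|\le M$ and the second by dominated convergence. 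So the route is the paper's, but your execution is the one that actually closes the argument for a general open $\Omega$.
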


\begin{proof}

The same steps as in the proof of Corollary \ref{4.14} give us Lemma \ref%
{4.15}.
\end{proof}
\bigskip \bigskip
\begin{proposition}[~\cite{Br-Mir}]
\label{4.16} 
Let $\Omega$ be an open subset of $\mathbb{R}^{n}$, $s > 1$, $1 < p < \infty$%
, $sp = n$, $k = \left[ s \right] + 1$, such that 
\begin{equation*}
G \in C^{k}\left( \mathbb{R} \right), \quad G(0) = 0 \quad \text{and} \quad
D^{j}G \in L^{\infty}\left( \mathbb{R} \right), \quad \text{for all } j \leq
k,
\end{equation*}
\begin{equation*}
\text{we have, } \text{ if } u \in W^{s,p}\left( \Omega \right) \quad \text{%
then} \quad G \circ u \in W^{s,p}\left( \Omega \right). \text{ \ \ \ \ \ \ \
\ \ \ \ \ \ \ \ \ \ \ \ \ \ \ \ \ \ \ \ \ \ \ \ \ \ \ \ \ \ \ \ \ \ \ \ \ \
\ \ \ \ \ \ \ \ \ \ \ \ \ \ \ \ }
\end{equation*}
\end{proposition}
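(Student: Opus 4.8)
The plan is to verify the two pieces of the standard equivalent norm $\|f\|_{W^{s,p}}\sim\|f\|_{L^{p}}+\sum_{|\alpha|=m}[D^{\alpha}f]_{W^{\sigma,p}}$, where $m=[s]$, $\sigma=s-m\in[0,1[$, and for $\sigma=0$ the seminorm $[\,\cdot\,]_{W^{0,p}}$ is read as $\|\,\cdot\,\|_{L^{p}}$ (cf. Proposition \ref{4.12} and Definition \ref{4.10}); it suffices to argue on $\mathbb{R}^{n}$, a cube being identical and a general open set reducing to these by localization. First, for the $L^{p}$ bound, the mean value theorem together with $G(0)=0$ gives the pointwise estimate $|G\circ u(x)|\le\|G'\|_{\infty}|u(x)|$, hence $\|G\circ u\|_{p}\le\|G'\|_{\infty}\|u\|_{p}$.

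For the top-order part I would expand $D^{\alpha}(G\circ u)$, $|\alpha|=m$, by the higher-order chain rule (Fa\`a di Bruno formula):
\begin{equation*}
D^{\alpha}(G\circ u)=\sum_{j=1}^{m}(G^{(j)}\circ u)\,\Pi_{\alpha,j}(u),
\end{equation*}
where each $\Pi_{\alpha,j}(u)$ is a finite linear combination of products $D^{\beta_{1}}u\cdots D^{\beta_{j}}u$ with $|\beta_{\ell}|\ge1$ and $|\beta_{1}|+\cdots+|\beta_{j}|=m$. The structural input from the critical relation $sp=n$ is the family of zero-loss Sobolev embeddings $W^{t,p}(\mathbb{R}^{n})\hookrightarrow W^{t',r}(\mathbb{R}^{n})$ valid whenever $t'\le t$ and $t-\tfrac{n}{p}=t'-\tfrac{n}{r}$ (in particular $W^{s,p}\hookrightarrow L^{q}$ for every finite $q\ge p$). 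Applied to $D^{\beta_{\ell}}u\in W^{s-|\beta_{\ell}|,p}$, whose Sobolev number is $-|\beta_{\ell}|$, they give $D^{\beta_{\ell}}u\in L^{n/|\beta_{\ell}|}$ and, when $|\beta_{\ell}|<m$, also $D^{\beta_{\ell}}u\in W^{\sigma,\,n/(\sigma+|\beta_{\ell}|)}$. One then estimates $[\Pi_{\alpha,j}(u)]_{W^{\sigma,p}}$ by the fractional Leibniz rule for $W^{\sigma,p}$, letting a single factor carry the $\sigma$ derivatives while the remaining ones sit in the corresponding $L^{n/|\beta_{\ell}|}$; the exponents close exactly, since $\tfrac{\sigma+|\beta_{i}|}{n}+\sum_{\ell\ne i}\tfrac{|\beta_{\ell}|}{n}=\tfrac{\sigma+m}{n}=\tfrac{s}{n}=\tfrac1p$, which yields $\|\Pi_{\alpha,j}(u)\|_{W^{\sigma,p}}\lesssim\|u\|_{W^{s,p}}^{j}$ (for $j=1$ this is trivial, as then $\Pi_{\alpha,1}(u)=D^{\alpha}u\in W^{\sigma,p}$; and for $\sigma=0$ the whole step collapses to the elementary $\|(G^{(j)}\circ u)\,\Pi_{\alpha,j}(u)\|_{p}\le\|D^{j}G\|_{\infty}\|u\|_{W^{s,p}}^{j}$, since then $n/m=n/s=p$).

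It remains to absorb the factor $G^{(j)}\circ u$, which is where the hypotheses on $G$ enter. Because $j\le m\le k$ one has $\|G^{(j)}\circ u\|_{\infty}\le\|D^{j}G\|_{\infty}<\infty$, and because $j+1\le m+1=k$ — the one place in the argument with no slack, which forces the choice $k=[s]+1$ — the mean value theorem gives $|G^{(j)}\circ u(x)-G^{(j)}\circ u(y)|\le\|D^{j+1}G\|_{\infty}|u(x)-u(y)|$, so that $[G^{(j)}\circ u]_{W^{\sigma,r}}\le\|D^{j+1}G\|_{\infty}[u]_{W^{\sigma,r}}\lesssim\|u\|_{W^{s,p}}$ for the relevant exponent $r=n/\sigma$. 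Feeding these two facts into the product estimate $[fg]_{W^{\sigma,p}}\lesssim\|f\|_{\infty}[g]_{W^{\sigma,p}}+[f]_{W^{\sigma,r}}\|g\|_{L^{r'}}$ (with $\tfrac1r+\tfrac1{r'}=\tfrac1p$, so $r'=n/m$, and $g=\Pi_{\alpha,j}(u)\in L^{n/m}=L^{r'}$) bounds each $D^{\alpha}(G\circ u)$ in $W^{\sigma,p}$; summing over $\alpha$ and $j$ and adding the $L^{p}$ bound gives $\|G\circ u\|_{W^{s,p}}\le C(G)\bigl(\|u\|_{W^{s,p}}+\|u\|_{W^{s,p}}^{m}\bigr)<\infty$.

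The main obstacle is the critical nature of the assumption $sp=n$: here $W^{s,p}$ does not embed into $L^{\infty}$, so the naive Gagliardo--Nirenberg scheme of interpolating between $W^{s,p}$ and $L^{\infty}$ is unavailable, and every exponent occurring in the chain and product rules must be tracked at the borderline, using precisely the zero-loss embeddings above together with $W^{s,p}\hookrightarrow L^{q}$ for all finite $q$. Establishing the required fractional chain and product rules in $W^{\sigma,p}$ with these sharp exponents is the technical heart of the matter and is carried out in detail in \cite{Br-Mir}.
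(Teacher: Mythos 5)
The paper records no proof of this proposition: it is stated with the attribution to \cite{Br-Mir} and followed only by the remark that it generalizes Lemma \ref{4.15} and fails for $p=1$. Your sketch therefore goes beyond what the thesis contains, and it correctly reproduces the strategy of the cited reference: split the norm into the $L^{p}$ piece plus top-order Gagliardo seminorms, expand $D^{\alpha}(G\circ u)$ by Fa\`a di Bruno, and close the estimates with the zero-loss embeddings along the critical line (Sobolev number $-|\beta_{\ell}|$ preserved), which is exactly how Brezis and Mironescu exploit $sp=n$ --- for instance through $W^{s,p}\hookrightarrow W^{1,sp}$ --- in the absence of an $L^{\infty}$ bound. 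The exponent bookkeeping $\tfrac{\sigma+|\beta_{i}|}{n}+\sum_{\ell\neq i}\tfrac{|\beta_{\ell}|}{n}=\tfrac1p$ is right, and you correctly identify $j+1\le k$ as the binding constraint that forces $k=[s]+1$.

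Two caveats. First, the reduction of a general open $\Omega$ to $\mathbb{R}^{n}$ by \emph{localization} is not automatic: an arbitrary open set need not admit a bounded extension operator for $W^{s,p}$, so strictly speaking your argument (like the reference) establishes the result on $\mathbb{R}^{n}$ and on extension domains. Second, the fractional product and chain rules at the borderline exponents, and the justification of Fa\`a di Bruno for a non-smooth $u\in W^{s,p}$ via density, are precisely the technical content of \cite{Br-Mir}; since you acknowledge this explicitly, the proposal should be read as a correct reduction to those lemmas rather than as a self-contained proof --- which is still strictly more than the paper provides.
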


\begin{quote}
This Proposition \ref{4.16} is a generalization of Lemma \ref{4.15},\\ but it
is not verified for $p = 1$, see ~\cite{Br-Mir}.
\end{quote}

\begin{theorem}[~\cite{Bo-Mo-Si}]
\label{4.17}
Let $f: \mathbb{R} \longrightarrow \mathbb{R}$ be a Borel measurable
function such that $f\left( 0 \right) = 0$.

\begin{itemize}
\item[(i)] If $1 \leq p \leq \infty$, then $T_{f}$ operates on $%
W^{1,p}\left( \mathbb{R} \right)$ if and only if $f$ is a continuous and
locally Lipschitz function.

\item[(ii)] If $1 \leq p \leq \infty$ and $m \geq 2$, then 
\begin{equation*}
T_{f} \text{ operates on } W^{m,p}\left( \mathbb{R} \right) \text{ if and
only if } f \in W_{\ell oc}^{m,p}\left( \mathbb{R} \right).
\end{equation*}
\end{itemize}
\end{theorem}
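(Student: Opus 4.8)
The plan is to treat (i) and (ii) separately, and in each to dispatch the easy implication by composing with a function that coincides with $\mathrm{id}_{\mathbb R}$ on a large compact set, and the hard implication by an explicit construction: if $f$ fails the asserted regularity, build a $u$ in the space with $f\circ u$ outside it. Throughout I use the one-dimensional Sobolev embeddings $W^{1,p}(\mathbb R)\hookrightarrow C(\mathbb R)$ (and $W^{k,p}(\mathbb R)\hookrightarrow L^{\infty}(\mathbb R)$ for $k\ge 1$), the Rademacher/Lipschitz chain rule, and local absolute continuity of one-dimensional Sobolev functions; for the first-order composition I use Corollary \ref{4.14} and Lemma \ref{4.15}.

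\textbf{Sufficiency in (i).} Assume $f$ is continuous and locally Lipschitz with $f(0)=0$. If $1\le p<\infty$, a function $u\in W^{1,p}(\mathbb R)$ is bounded and continuous, so its range lies in a compact interval $[-M,M]$ with $M=\|u\|_{\infty}$, on which $f$ is Lipschitz with some constant $L_M$. Then $|f(u(x))|=|f(u(x))-f(0)|\le L_M|u(x)|$ gives $f\circ u\in L^{p}$, and the Lipschitz chain rule (obtained, e.g., by mollifying $f$ near $[-M,M]$ to $C^{1}$ functions with uniformly bounded derivative, applying Corollary \ref{4.14}, and passing to the limit) gives $(f\circ u)'=(f'\circ u)\,u'$ a.e., hence $|(f\circ u)'|\le L_M|u'|\in L^{p}$; so $f\circ u\in W^{1,p}(\mathbb R)$. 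For $p=\infty$ it is even more direct: $u\in W^{1,\infty}(\mathbb R)$ is bounded and Lipschitz, so $f\circ u$ is bounded and Lipschitz, i.e. in $W^{1,\infty}(\mathbb R)$.

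\textbf{Necessity in (i).} For $p=\infty$ the truncation $u_R(x)=\max(-R,\min(R,x))\in W^{1,\infty}(\mathbb R)$ equals the identity on $[-R,R]$, so $f=f\circ u_R\in W^{1,\infty}([-R,R])$ for every $R$, i.e. $f$ is locally Lipschitz (hence continuous). For $1\le p<\infty$, $u_R(x)=x\,\varphi(x)$ with $\varphi\in C_c^{\infty}(\mathbb R)$, $\varphi\equiv 1$ on $[-R,R]$, lies in $W^{1,p}(\mathbb R)$ and equals $\mathrm{id}$ on $[-R,R]$, so $f\in W^{1,p}_{\mathrm{loc}}(\mathbb R)\subset C(\mathbb R)$; the work is to upgrade this to local Lipschitz. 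Suppose it fails: there are $M>0$ and $s_k,t_k\in[-M,M]$ with $h_k:=|t_k-s_k|>0$ and $\lambda_k:=|f(t_k)-f(s_k)|/h_k\to\infty$; since $f$ is bounded on $[-M,M]$ we have $\lambda_kh_k\le C$, so $h_k\to 0$, and after passing to a subsequence $s_k,t_k\to s_*$, with $h_k\le 2^{-k}$, $\lambda_k\ge 2^{2k/p}$ and $|s_k-s_*|+|t_k-s_*|\le 2^{-k}$. In a fixed interval $[-1,1]$ choose disjoint subintervals $I_k$ and set $u=s_*\varphi+v$ where $\varphi\in C_c^{\infty}$, $\varphi\equiv 1$ on $[-1,1]$, and $v$ is supported in $\bigcup_k I_k$ and on $I_k$ drives $u$ from $s_*$ up to $s_k$, then oscillates between $s_k$ and $t_k$ exactly $m_k:=\lceil 2^{-k}/h_k\rceil$ times along legs of slope $\pm1$ (each of length $h_k$), then returns to $s_*$, all legs slope $\pm1$. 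Then $|I_k|\approx 2m_kh_k\le C2^{-k}$, so the $I_k$ fit disjointly, $u$ is compactly supported and bounded, and $\|u'\|_{L^{p}(\mathbb R)}^{p}\lesssim 1+\sum_k\bigl(m_kh_k+|s_k-s_*|+|t_k-s_*|\bigr)<\infty$, i.e. $u\in W^{1,p}(\mathbb R)$. On each leg $J$ of $I_k$, if $f\circ u\in W^{1,p}$ then it is absolutely continuous there, so $\|(f\circ u)'\|_{L^{p}(J)}^{p}\ge |f(t_k)-f(s_k)|^{p}h_k^{-(p-1)}=\lambda_k^{p}h_k$, whence
\[
\|(f\circ u)'\|_{L^{p}(\mathbb R)}^{p}\ \ge\ \sum_k 2m_k\lambda_k^{p}h_k\ \ge\ \sum_k 2^{1-k}\lambda_k^{p}\ =\ \infty,
\]
contradicting $f\circ u\in W^{1,p}(\mathbb R)$. (For $p=1$ one has $h_k^{-(p-1)}=1$ and the same computation works; the $m_k$-fold oscillation is exactly what defeats the fact that $|f(t_k)-f(s_k)|\to 0$.) Hence $f$ is locally Lipschitz. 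The main obstacle in (i) is precisely this step: since $W^{1,p}_{\mathrm{loc}}$ is strictly weaker than ``locally Lipschitz'' for $p<\infty$, one is forced to test $T_f$ against genuinely non-Lipschitz functions.

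\textbf{Part (ii).} Necessity is again the identity trick, with a smooth profile: $u_R(x)=x\,\varphi(x)$ with $\varphi\in C_c^{\infty}(\mathbb R)$, $\varphi\equiv 1$ on $[-R,R]$, lies in $C_c^{\infty}(\mathbb R)\subset W^{m,p}(\mathbb R)$ and equals $\mathrm{id}$ on $[-R,R]$, so $f\in W^{m,p}([-R,R])$ for every $R$, i.e. $f\in W^{m,p}_{\mathrm{loc}}(\mathbb R)$. For sufficiency let $f\in W^{m,p}_{\mathrm{loc}}(\mathbb R)$ with $f(0)=0$; by the one-dimensional embedding $f\in C^{m-1}(\mathbb R)$ with $f^{(m-1)}$ locally absolutely continuous and $f^{(m)}\in L^{p}_{\mathrm{loc}}$, so $f,f',\dots,f^{(m-1)}$ are locally bounded. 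Let $u\in W^{m,p}(\mathbb R)$; since $m\ge 2$, $u^{(j)}\in W^{m-j,p}(\mathbb R)\subset L^{\infty}(\mathbb R)$ for $0\le j\le m-1$, so $u$ is bounded (range in a compact $K$), $u'\in L^{\infty}$, and $u,u',\dots,u^{(m)}\in L^{p}$. Then $f\circ u\in L^{p}$ because $|f\circ u|\le\|f'\|_{L^{\infty}(K)}|u|$. For the derivatives, the Fa\`a di Bruno formula writes $D^{j}(f\circ u)$, $1\le j\le m$, as a linear combination of terms $(f^{(\ell)}\circ u)\prod_i u^{(a_i)}$ with $\ell$ factors, $a_i\ge 1$, $\sum_i a_i=j$. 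If $\ell\le m-1$ then $f^{(\ell)}\circ u$ is bounded and $\prod_i u^{(a_i)}$ has all factors in $L^{p}\cap L^{\infty}$ (if there are two or more) or equals $u^{(j)}\in L^{p}$ (if there is one), hence the term is in $L^{p}$. The only delicate term is $j=\ell=m$, namely $(f^{(m)}\circ u)\,(u')^{m}$: writing $|u'|^{mp}\le\|u'\|_{\infty}^{mp-1}|u'|$ (using $mp\ge 2$) one must bound $\int_{\mathbb R}|f^{(m)}(u(x))|^{p}|u'(x)|\,dx$, which is done by a change of variables / Banach-indicatrix estimate combined with Gagliardo--Nirenberg multiplicative inequalities, reducing it to $\|f^{(m)}\|_{L^{p}(K)}$ times a polynomial in $\|u\|_{W^{m,p}}$ — this is the technical core, the place where $m\ge 2$ (hence $u'\in L^{\infty}$) is essential, and it is carried out in ~\cite{Bo-Mo-Si} (the case $p=\infty$ is trivial since then $f^{(m)}\in L^{\infty}_{\mathrm{loc}}$). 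Collecting the terms gives $f\circ u\in W^{m,p}(\mathbb R)$, so $T_f$ operates. Thus in (ii) the hard point is, conversely to (i), the \emph{sufficiency}: controlling the top-order Fa\`a di Bruno term when $f^{(m)}$ is merely $L^{p}_{\mathrm{loc}}$.
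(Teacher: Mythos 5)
The paper does not actually prove Theorem \ref{4.17}: it only attributes the result to Marcus and Mizel via \cite{Bo-Mo-Si}. Your proposal therefore supplies something the paper omits, and most of it is correct. Part (i) is a complete reconstruction of the Marcus--Mizel argument: sufficiency via the Lipschitz chain rule on the compact range of $u$, necessity via the cut-off identity trick to get $f\in W^{1,p}_{\mathrm{loc}}\subset C(\mathbb{R})$, followed by the sawtooth construction; your bookkeeping is right ($\|u'\|_{L^p}^p\lesssim 1+\sum_k m_kh_k<\infty$ while the $2m_k$ disjoint monotone legs force $\|(f\circ u)'\|_{L^p}^p\ge\sum_k 2^{1-k}\lambda_k^p=\infty$ once $\lambda_k^p\ge 2^{2k}$), and the continuity of $f$ established beforehand is exactly what lets you identify $f\circ u$ with its absolutely continuous representative to get the lower bound. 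Part (ii) necessity is likewise complete.

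The one step that is not closed is the sufficiency in (ii), and the mechanism you name for it does not work as stated. Reducing $|u'|^{mp}$ to $\|u'\|_{\infty}^{mp-1}|u'|$ and changing variables gives $\int|f^{(m)}(u)|^p|u'|\,dx=\int|f^{(m)}(y)|^pN_u(y)\,dy$ with $N_u$ the Banach indicatrix, but $N_u$ need not be bounded for $u\in W^{m,p}(\mathbb{R})$ (a $W^{2,\infty}$ function behaving like $x^3\sin(1/x)$ near a point has $N_u(y)\sim|y|^{-1/3}$), and the product of two merely integrable densities need not be integrable. The genuine proof has to exploit that $|u'|$ is small precisely where the multiplicity accumulates; that compensation is the content of Bourdaud's lemma and cannot be replaced by a bare change of variables. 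Two further routine points deserve a sentence: $f^{(m)}\circ u$ is only defined a.e.\ on $\{u'\neq 0\}$ (harmless, since $u'$ vanishes a.e.\ on the preimage of any null set), and the Fa\`a di Bruno identity for $f$ merely in $W^{m,p}_{\mathrm{loc}}$ requires an approximation argument. Since you explicitly defer this core estimate to \cite{Bo-Mo-Si}, your proof of (ii) ultimately rests on the same citation as the paper's; everything else you wrote is sound.
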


\begin{quote}
It follows that if the composition operator $T_{f}$ operates on a Sobolev
space then it must be bounded. This Theorem \ref{4.17} is due to Marcus and
Mizel, ~\cite{Bo-Mo-Si}.
\end{quote}

\section{\textbf{Peetre's theorem  } $BV_{p}^{\alpha}
\left( I \right)$}

\begin{theorem}
\label{4.18}
Let $p \in ]1, +\infty[$, $0 \leq \alpha < 1$, then we have the following
continuous injections which are verified 
\begin{eqnarray*}
\dot{\tilde{B}}_{p}^{1/p, 1}\left( \mathbb{R} \right) &\hookrightarrow&
\left( L^{\infty}\left( \mathbb{R} \right), BV_{1}^{\alpha}\left( \mathbb{R}
\right) \right)_{1/p, p} = \left( BV_{\infty}^{\alpha}\left( \mathbb{R}
\right), BV_{1}^{\alpha}\left( \mathbb{R} \right) \right)_{1/p, p} \\
&& \\
&\hookrightarrow& BV_{p}^{\alpha}\left( \mathbb{R} \right) \hookrightarrow
U_{p}\left( \mathbb{R} \right) \hookrightarrow \dot{\tilde{B}}_{p}^{1/p,
\infty}\left( \mathbb{R} \right).
\end{eqnarray*}
\end{theorem}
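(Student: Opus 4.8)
The plan is to follow, stage for stage, the five-part argument by which Theorem~\ref{3.17} is proved, inserting the variation of order $\alpha$ throughout: wherever $BV_{1}$, $BV_{\infty}$, $BV_{p}$ occur they are replaced by $BV_{1}^{\alpha}$, $BV_{\infty}^{\alpha}$, $BV_{p}^{\alpha}$, and the increment operator $U$ used in the third stage is replaced by its weighted analogue
\[
U^{\alpha}(f)=\left(\frac{\tilde{f}(t_{k})-\tilde{f}(t_{k-1})}{(t_{k}-t_{k-1})^{\alpha}}\right)_{1\le k\le N},
\]
acting on finite strictly increasing sequences $t_{0}<t_{1}<\dots<t_{N}$ of $\mathbb{R}$. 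Stages (2), (3), (4) are the three inner inclusions and carry the weight; stages (1) and (5) are the two extreme inclusions, and only (1) requires essentially new work.

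I would settle the inner inclusions first. For $(BV_{\infty}^{\alpha}(\mathbb{R}),BV_{1}^{\alpha}(\mathbb{R}))_{1/p,p}\hookrightarrow BV_{p}^{\alpha}(\mathbb{R})$, fix a partition $t_{0}<\dots<t_{N}$ and bound $U^{\alpha}$ at the two endpoints of the couple: by $\mathcal{V}_{\infty}^{\alpha}=\text{Lip}_{\alpha}$ (Remark~\ref{4.3}) one has $\|U^{\alpha}(f)\|_{l_{N}^{\infty}}\le c\,\|f\|_{BV_{\infty}^{\alpha}}$, and straight from Definition~\ref{4.2}, $\|U^{\alpha}(f)\|_{l_{N}^{1}}\le\nu_{1}^{\alpha}(\tilde{f})\le\|f\|_{BV_{1}^{\alpha}}$. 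Since $(l_{N}^{\infty},l_{N}^{1})_{1/p,p}=l_{N}^{p}$ with interpolation constant independent of $N$, Theorem~\ref{3.3} yields $\|U^{\alpha}(f)\|_{l_{N}^{p}}\le c_{p}\,\|f\|_{(BV_{\infty}^{\alpha},BV_{1}^{\alpha})_{1/p,p}}$ uniformly in the partition; taking the supremum over partitions and recalling that $\|f\|_{BV_{p}^{\alpha}}\sim\nu_{p}^{\alpha}(\tilde{f})+\sup_{x\neq 0}|\tilde{f}(x)/x^{\alpha}|$ (Proposition~\ref{4.7}), the lower-order term being controlled in the same way, gives the embedding. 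The identity $(L^{\infty}(\mathbb{R}),BV_{1}^{\alpha}(\mathbb{R}))_{1/p,p}=(BV_{\infty}^{\alpha}(\mathbb{R}),BV_{1}^{\alpha}(\mathbb{R}))_{1/p,p}$ is then obtained just as in stage (2) of Theorem~\ref{3.17}: letting $E$ be the closure of $BV_{1}^{\alpha}(\mathbb{R})$ in $L^{\infty}$, the two couples have the same $K$-functional, and $E\hookrightarrow BV_{\infty}^{\alpha}(\mathbb{R})$ squeezes the interpolation functor between them.

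For stage (4), $BV_{p}^{\alpha}(\mathbb{R})\hookrightarrow U_{p}(\mathbb{R})$, it is enough, exactly as in Theorem~\ref{3.17}, to prove the pointwise-in-$t$ inequality $\|f\|_{\mathcal{U}_{p}}\le c\,\|f\|_{\mathcal{V}_{p}^{\alpha}}$, by the same block-covering computation: expand $\int_{\mathbb{R}}\sup_{|h|\le t}|f(x+h)-f(x)|^{p}\,\mathrm{d}x=\int_{0}^{t}\sum_{m\in\mathbb{Z}}\sup_{|h|\le t}|f(y+mt+h)-f(y+mt)|^{p}\,\mathrm{d}y$, pick near-optimal $h_{m}(y)\in[-t,t]$, and use that the intervals $\,]y+mt,\,y+mt+h_{m}(y)[\,$ are pairwise disjoint, so that the disjoint-interval characterisation of $\mathcal{V}_{p}^{\alpha}$ in Remark~\ref{4.3} applies. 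Here the weight $(t_{k}-t_{k-1})^{-\alpha}$ must be tracked carefully, and one has to verify that it is compatible with the $t^{-1}$ normalisation built into the $\mathcal{U}_{p}$-norm. Stage (5), $U_{p}(\mathbb{R})\hookrightarrow\dot{\tilde{B}}_{p}^{1/p,\infty}(\mathbb{R})$, is identical to stage (5) of Theorem~\ref{3.17}, as neither space depends on $\alpha$.

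The principal obstacle is the outermost inclusion $\dot{\tilde{B}}_{p}^{1/p,1}(\mathbb{R})\hookrightarrow(L^{\infty}(\mathbb{R}),BV_{1}^{\alpha}(\mathbb{R}))_{1/p,p}$. In Theorem~\ref{3.17} this followed from $\dot{\tilde{B}}_{p}^{1/p,1}\hookrightarrow(\dot{\tilde{B}}_{\infty}^{0,1},\dot{\tilde{B}}_{1}^{1,1})_{1/p,p}$ (Theorem~\ref{3.15}(i), valid for homogeneous Besov spaces) together with $\dot{\tilde{B}}_{\infty}^{0,1}\hookrightarrow L^{\infty}$ and $\dot{\tilde{B}}_{1}^{1,1}\hookrightarrow\dot{W}^{1,1}\hookrightarrow BV_{1}$; the first endpoint carries over, but the embedding $\dot{\tilde{B}}_{1}^{1,1}\hookrightarrow BV_{1}$ must now be made to land in $BV_{1}^{\alpha}$, and the exponent relations force the Besov index appearing there, so it cannot simply be traded for a more convenient homogeneous space. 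The approach I would take is to bypass the endpoint interpolation and estimate the Peetre functional $K(t,f;L^{\infty},BV_{1}^{\alpha})$ directly from the Littlewood--Paley decomposition $f=\sum_{j}\Delta_{j}f$: split $f$ at a frequency $2^{N}$ chosen in terms of $t$, push the high-frequency tail into $L^{\infty}$ using the Nikol'skii-type inequality $\|\Delta_{j}f\|_{\infty}\lesssim 2^{j/p}\|\Delta_{j}f\|_{p}$ and the summability of $\sum_{j}2^{j/p}\|\Delta_{j}f\|_{p}$, and control the low-frequency part in $BV_{1}^{\alpha}$ by Bernstein's inequality and a scaling bound for the order-$\alpha$ variation of a band-limited function; integrating the resulting estimate for $K(t,f)$ against $t^{-1/p}\,\mathrm{d}t/t$ and using $\|f\|_{\dot{\tilde{B}}_{p}^{1/p,1}}\sim\sum_{j}2^{j/p}\|\Delta_{j}f\|_{p}$ then gives the claim. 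Establishing that $BV_{1}^{\alpha}$-estimate for the low-frequency block — quantifying how the order-$\alpha$ variation of a band-limited $L^{p}$-function is controlled by its frequency and $L^{p}$-size — is the step I expect to be genuinely delicate, and it is exactly where the interplay between the weight $(t_{k}-t_{k-1})^{-\alpha}$ and the Bernstein scaling has to be handled with care.
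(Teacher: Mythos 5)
Your stages (2), (3) and (5) coincide with the paper's proof: the same closure argument for $E_{\alpha}$, the same weighted increment map $U_{\alpha}(f)=\bigl(\,(\tilde f(t_k)-\tilde f(t_{k-1}))/|t_k-t_{k-1}|^{\alpha}\bigr)_k$ interpolated between $l_N^{\infty}$ and $l_N^{1}$ via Theorem \ref{3.3}, and the same reuse of $U_p\hookrightarrow\dot{\tilde B}_p^{1/p,\infty}$ from Theorem \ref{3.17}. In stage (4) you take a harder road than necessary: you propose to re-run the whole block-covering computation with the weight $(t_k-t_{k-1})^{-\alpha}$ inserted, and you leave open whether that weight is compatible with the $t^{-1}$ normalisation of $\mathcal U_p$. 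The paper avoids this entirely by the one-line chain $\|f\|_{\mathcal U_p}\le 2^{1/p}\|f\|_{\mathcal V_p}\le 2^{1/p}\|f\|_{\mathcal V_p^{\alpha}}$, i.e.\ $\mathcal V_p^{\alpha}\hookrightarrow\mathcal V_p\hookrightarrow\mathcal U_p$, so the unweighted computation of Theorem \ref{3.17} is reused verbatim and no weight ever enters the block argument.

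The genuine gap is in stage (1), which is also the only place where you depart structurally from the paper. You assert that the endpoint interpolation cannot be salvaged because $\dot{\tilde B}_1^{1,1}\hookrightarrow BV_1$ ``cannot simply be traded'' for a version landing in $BV_1^{\alpha}$, and you replace it by a direct estimate of $K(t,f;L^{\infty},BV_1^{\alpha})$ via a Littlewood--Paley splitting. But the decisive ingredient of that plan --- a bound on the order-$\alpha$ variation $\nu_1^{\alpha}$ of a band-limited function in terms of its frequency and $L^p$ size --- is exactly the step you flag as ``genuinely delicate'' and never establish, so the outermost (and hardest) inclusion $\dot{\tilde B}_p^{1/p,1}(\mathbb R)\hookrightarrow(L^{\infty}(\mathbb R),BV_1^{\alpha}(\mathbb R))_{1/p,p}$ is not proved in your proposal. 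The paper, by contrast, keeps the interpolation between the endpoints $(\dot{\tilde B}_{\infty}^{0,1},\dot{\tilde B}_1^{1,1})_{1/p,p}$ unchanged and supplies the missing link by a direct two-line estimate: $\|f\|_{BV_1^{\alpha}}\le 2\,\nu_1^{\alpha}(\tilde f)\le 2\int_{\mathbb R\setminus\{0\}}\bigl|\tilde f'(x)/x^{\alpha}\bigr|\,\mathrm dx\sim 2\|f\|_{\dot W^{1,1}}$, giving $\dot W^{1,1}\hookrightarrow BV_1^{\alpha}$ and hence $\dot{\tilde B}_1^{1,1}\hookrightarrow BV_1^{\alpha}$, after which Theorem \ref{3.3} applies exactly as in Theorem \ref{3.17}. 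So the obstacle you built your alternative route around is dispatched in the paper by a weighted pointwise estimate on $\nu_1^{\alpha}$, not by a frequency decomposition; without either that estimate or your band-limited lemma, the chain of inclusions does not close.
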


\begin{proof}
{\ \ \ } 
\vspace{-20pt}
\begin{itemize}
\item[1)] As we saw in the proof of Theorem \ref{3.17} we have 
\begin{equation*}
\dot{B}_{p}^{1/p, 1}\left( \mathbb{R} \right) \cap C_{0}\left( \mathbb{R}
\right) \hookrightarrow \left( \dot{B}_{\infty}^{0, 1}\left( \mathbb{R}
\right) \cap C_{b}\left( \mathbb{R} \right), \dot{B}_{1}^{1, 1}\left( 
\mathbb{R} \right) \cap C_{0}\left( \mathbb{R} \right) \right)_{1/p, p}.
\end{equation*}

\begin{itemize}
\item Let us prove the injections $\dot{\tilde{B}}_{1}^{1, 1}
\hookrightarrow \dot{W}^{1,1} \hookrightarrow BV_{1}^{\alpha}$.

\begin{itemize}
\item The injection $\dot{\tilde{B}}_{1}^{1, 1} \hookrightarrow \dot{W}%
^{1,1} $ is verified (Theorem \ref{3.17}).

\item Since $\displaystyle \nu_{1}^{\alpha} (f, \mathbb{R}) =
\sup\limits_{\left\{ t_{0} < t_{1} < \dots < t_{k} \dots < t_{n} \right\}
\subset \mathbb{R}, n \in \mathbb{N}} \left[ \sum\limits_{k=1}^{n} \frac{%
\left\vert f\left( t_{k} \right) - f\left( t_{k-1} \right) \right\vert}{%
\left\vert t_{k} - t_{k-1} \right\vert^{\alpha}} \right]$, then 
\begin{eqnarray*}
\left\Vert f \right\Vert_{BV_{1}^{\alpha}\left( \mathbb{R} \right)} &=&
\nu_{1}^{\alpha}\left( \tilde{f} \right) + \sup\limits_{x \in \mathbb{R}
\setminus \left\{ 0 \right\}} \left\vert \frac{\tilde{f}^{\prime }\left( x
\right)}{x^{\alpha}} \right\vert \leq 2 \nu_{1}^{\alpha}\left( \tilde{f}
\right), \text{ because } 0 \leq \alpha < 1 \\
&& \\
&\leq& 2 \int_{\mathbb{R} \setminus \left\{ 0 \right\}} \left\vert \frac{%
\tilde{f}^{\prime }\left( x \right)}{x^{\alpha}} \right\vert \, \mathrm{d}x
\sim 2 \left\Vert f \right\Vert_{\dot{W}^{1,1}\left( \mathbb{R} \right)}, \\
&& \\
\text{hence } \dot{W}^{1,1} &\hookrightarrow& BV_{1}^{\alpha}, \text{ and
thus } \dot{\tilde{B}}_{1}^{1, 1} \hookrightarrow BV_{1}^{\alpha}.
\end{eqnarray*}

\item We also prove that $\dot{\tilde{B}}_{\infty}^{0,1} \hookrightarrow
L^{\infty}$ (Theorem \ref{3.17}).
\end{itemize}

\item Then applying the interpolation Theorem \ref{3.3} we obtain 
\begin{equation*}
\left( \dot{\tilde{B}}_{\infty}^{0,1}, \dot{\tilde{B}}_{1}^{1,1}
\right)_{1/p, p} = \dot{\tilde{B}}_{p}^{1/p,1} \hookrightarrow \left(
L^{\infty}, BV_{1}^{\alpha} \right)_{1/p, p}.
\end{equation*}
\end{itemize}

\item[(2)] If $E_{\alpha}$ is the closure of $BV_{1}^{\alpha}\left( \mathbb{R%
} \right)$ with respect to $L^{\infty}\left( \mathbb{R} \right)$, then 
\begin{equation*}
\left( L^{\infty}\left( \mathbb{R} \right), BV_{1}^{\alpha}\left( \mathbb{R}
\right) \right)_{1/p, p} = \left( E_{\alpha}, BV_{1}^{\alpha}\left( \mathbb{R%
} \right) \right)_{1/p, p}
\end{equation*}
and since $E_{\alpha} \hookrightarrow BV_{\infty}^{\alpha}\left( \mathbb{R}
\right)$, then 
\begin{eqnarray*}
\left( E_{\alpha}, BV_{1}^{\alpha}\left( \mathbb{R} \right) \right)_{1/p, p}
&\hookrightarrow& \left( BV_{\infty}^{\alpha}\left( \mathbb{R} \right),
BV_{1}^{\alpha}\left( \mathbb{R} \right) \right)_{1/p, p}, \\
&& \\
\text{hence } \left( L^{\infty}\left( \mathbb{R} \right),
BV_{1}^{\alpha}\left( \mathbb{R} \right) \right)_{1/p, p} &\hookrightarrow&
\left( BV_{\infty}^{\alpha}\left( \mathbb{R} \right), BV_{1}^{\alpha}\left( 
\mathbb{R} \right) \right)_{1/p, p}.
\end{eqnarray*}

\item[(3)] We define the norms of the space $l_{N}^{p}$ of sequences $%
\left\{ t_{i} \right\}_{1 \leq i \leq N}$ by 
\begin{equation*}
\left\Vert t_{i} \right\Vert_{l_{N}^{p}} = \left( \sum\limits_{i=1}^{N}
\left\vert t_{i} \right\vert^{p} \right)^{1/p} \text{ and } \left\Vert t_{i}
\right\Vert_{l_{N}^{\infty}} = \sup\limits_{1 \leq i \leq N} \left\vert
t_{i} \right\vert.
\end{equation*}
Fix a finite strictly increasing real sequence $\left\{ t_{i} \right\}_{0
\leq i \leq N}$ such that $t_{0} < t_{1} < \dots < t_{N}$, and associate for
every $0 \leq \alpha < 1$, a function $U_{\alpha}$, defined by 
\begin{eqnarray*}
U_{\alpha}: BV_{\infty}^{\alpha}\left( \mathbb{R} \right) &\longrightarrow&
l_{N}^{\infty} \\
&& \\
f &\longmapsto& \left( \frac{\tilde{f}\left( t_{i} \right) - \tilde{f}\left(
t_{i-1} \right)}{\left\vert t_{i} - t_{i-1} \right\vert^{\alpha}} \right)_{1
\leq i \leq N},
\end{eqnarray*}
\begin{eqnarray*}
\text{thus } \left\Vert U_{\alpha}\left( f \right)
\right\Vert_{l_{N}^{\infty}} &=& \sup\limits_{t_{k} \in \left\{ t_{i}
\right\}} \left\vert \frac{\tilde{f}\left( t_{k} \right) - \tilde{f}\left(
t_{k-1} \right)}{\left\vert t_{k} - t_{k-1} \right\vert^{\alpha}}
\right\vert \leq \sup\limits_{t_{k} \in \left\{ t_{i} \right\}} \left(
\left\vert \frac{\tilde{f}\left( t_{k} \right)}{\left\vert t_{k} - t_{k-1}
\right\vert^{\alpha}} \right\vert + \left\vert \frac{\tilde{f}\left( t_{k-1}
\right)}{\left\vert t_{k} - t_{k-1} \right\vert^{\alpha}} \right\vert \right)
\\
\\
&\leq& 2 \sup\limits_{t \in \mathbb{R} \setminus \left\{ 0 \right\}}
\left\vert \frac{\tilde{f}\left( t \right)}{t^{\alpha}} \right\vert = 2
\left\Vert f \right\Vert_{BV_{\infty}^{\alpha}\left( \mathbb{R} \right)}, \\
\\
\text{hence } \left\Vert U_{\alpha}\left( f \right)
\right\Vert_{l_{N}^{\infty}} &\leq& 2 \left\Vert f
\right\Vert_{BV_{\infty}^{\alpha}\left( \mathbb{R} \right)}, \\
\\
\text{and thus } BV_{\infty}^{\alpha}\left( \mathbb{R} \right)
&\hookrightarrow& l_{N}^{\infty}.
\end{eqnarray*}
On the other hand we have 
\begin{eqnarray*}
\left\Vert U_{\alpha}\left( f \right) \right\Vert_{l_{N}^{1}} &=&
\sum\limits_{k=1}^{N} \left\vert \frac{ \tilde{f}\left( t_{k} \right) - 
\tilde{f}\left( t_{k-1} \right)}{\left\vert t_{k} - t_{k-1}
\right\vert^{\alpha}} \right\vert \leq \nu_{1}^{\alpha}(\tilde{f}) + \sup
\left\vert \tilde{f} \right\vert = \left\Vert f
\right\Vert_{BV_{1}^{\alpha}\left( \mathbb{R} \right)}, \\
\\
\text{hence } \left\Vert U_{\alpha}\left( f \right) \right\Vert_{l_{N}^{1}}
&\leq& \left\Vert f \right\Vert_{BV_{1}^{\alpha}\left( \mathbb{R} \right)},
\\
\\
\text{and thus } BV_{1}^{\alpha}\left( \mathbb{R} \right) &\hookrightarrow&
l_{N}^{1}.
\end{eqnarray*}
We have $(L^{\infty}(X), L^{1}(X))_{\frac{1}{p}, p} = L^{p}(X)$, and by the
interpolation Theorem \ref{3.3} we obtain 
\begin{equation*}
\left\Vert U_{\alpha}\left( f \right) \right\Vert_{l_{N}^{p}} \leq c_{p,
\alpha} \left\Vert f \right\Vert_{\left( BV_{\infty}^{\alpha}\left( \mathbb{R%
} \right), BV_{1}^{\alpha}\left( \mathbb{R} \right) \right)_{\frac{1}{p},
p}}, \quad c_{p, \alpha} > 0.
\end{equation*}
Since $\left\Vert f \right\Vert_{BV_{p}^{\alpha}\left( \mathbb{R} \right)} =
\inf\limits_{a} \left\{ a > 0: \left\Vert U_{\alpha}\left( f \right)
\right\Vert_{l_{N}^{p}} \leq a \right\}$, then 
\begin{eqnarray*}
\left\Vert f \right\Vert_{BV_{p}^{\alpha}\left( \mathbb{R} \right)} &\leq&
c_{p, \alpha} \left\Vert f \right\Vert_{\left( BV_{\infty}^{\alpha}\left( 
\mathbb{R} \right), BV_{1}^{\alpha}\left( \mathbb{R} \right) \right)_{\frac{1%
}{p}, p}}, \quad c_{p, \alpha} > 0, \\
&& \\
\text{which gives } \left( BV_{\infty}^{\alpha}\left( \mathbb{R} \right),
BV_{1}^{\alpha}\left( \mathbb{R} \right) \right)_{\frac{1}{p}, p}
&\hookrightarrow& BV_{p}^{\alpha}\left( \mathbb{R} \right).
\end{eqnarray*}

\item[(4)] According to the proof of Theorem \ref{3.17}, and since $0 \leq
\alpha < 1$, then we have 
\begin{equation*}
\left\Vert f \right\Vert_{\mathcal{U}_{p}\left( \mathbb{R} \right)} \leq
2^{1/p} \left\Vert f \right\Vert_{\mathcal{V}_{p}\left( \mathbb{R} \right)}
\leq 2^{1/p} \left\Vert f \right\Vert_{\mathcal{V}_{p}^{\alpha}\left( 
\mathbb{R} \right)},
\end{equation*}
\begin{eqnarray*}
\text{hence } \mathcal{V}_{p}^{\alpha}\left( \mathbb{R} \right)
&\hookrightarrow& \mathcal{V}_{p}\left( \mathbb{R} \right) \hookrightarrow 
\mathcal{U}_{p}\left( \mathbb{R} \right), \\
&& \\
\text{and we deduce that } BV_{p}^{\alpha}\left( \mathbb{R} \right)
&\hookrightarrow& BV_{p}\left( \mathbb{R} \right) \hookrightarrow
U_{p}\left( \mathbb{R} \right), \\
&& \\
\text{ which gives } BV_{p}^{\alpha} &\hookrightarrow& U_{p}.
\end{eqnarray*}

\item[(5)] The continuous inclusion $U_{p} \hookrightarrow \dot{\tilde{B}}%
_{p}^{1/p, \infty}$ was proved in the proof of Theorem \ref{3.17}.
\end{itemize}
\end{proof}

\section{\textbf{Examples}}

\begin{quote}
The following examples are taken from the works of: ~\cite{Bo-Cr-Si-1,Bo-Cr-Si-2,KAT,SIC,Bo-Mo-Si,DAH,IGA,Bo-Cr,Bo-Me}.\\
We denote by $\mathcal{E}_{p}$ the closure of $BV_{p}\left( \mathbb{R}
\right) \cap C^{\infty}\left( \mathbb{R} \right)$ in $BV_{p}\left( \mathbb{R}
\right)$ and by $\mathcal{E}_{p}^{1}$ the closure of $BV_{p}^{1}\left( 
\mathbb{R} \right) \cap C^{\infty}\left( \mathbb{R} \right)$ in $%
BV_{p}^{1}\left( \mathbb{R} \right)$.
\end{quote}

\makeatletter\let\orig@addcontentsline 
\addcontentsline
\renewcommand{\addcontentsline}[3]{}   \makeatother
\subsection{\textbf{Example 1 }~\cite{KAT,Bo-Mo-Si,SIC}}

\begin{itemize}
\item ~\cite{KAT} Let $1 < p < \infty$, $\displaystyle 1 < s < 1 + 1/p$, $1
\leq q \leq \infty$, and let $F$ be a real variable function, Lipschitz, $%
F\left( 0 \right) = 0$, $F \in \dot{B}_{p}^{1 + \left( 1/p \right),
\infty}\left( \mathbb{R} \right)$, then we have

$$F\left( B_{p}^{s, q}\left( \mathbb{R}^{n} \right) \right) \subset
B_{p}^{s, q}\left( \mathbb{R}^{n} \right)$$
and
$$\exists C \left( n, s, p, q \right) > 0: \left\Vert F\left( f \right)
\right\Vert_{B_{p}^{s, q}\left( \mathbb{R}^{n} \right)} \leq C \max \left(
\left\Vert F^{\prime }\right\Vert_{\infty}, \left\Vert F \right\Vert_{\dot{B}%
_{p}^{1 + \left( 1/p \right), \infty}} \right) \cdot \left\Vert f
\right\Vert_{B_{p}^{s, q}\left( \mathbb{R}^{n} \right)}$$

\item ~\cite{KAT} Let the nonlinear operator $F_{\mu}: f \longmapsto
\left\vert f \right\vert^{\mu}, \mu > 0$,

\begin{itemize}
\item If $\displaystyle \mu > 1$, $1 \leq p \leq \infty$, $1 \leq q \leq
\infty$, $0 < s < \mu + \frac{1}{p}$, then there exists a constant $$C\left(
s, p, q, n, \mu \right) > 0,$$ such that:

\begin{itemize}
\item $\displaystyle\forall f \in B_{p}^{s, q}\left( \mathbb{R}%
^{n} \right) \cap L^{\infty}\left( \mathbb{R}^{n} \right),\quad \left\Vert F_{\mu}\left( f \right)
\right\Vert_{B_{p}^{s, q}\left( \mathbb{R}^{n} \right)} \leq C \left\Vert f
\right\Vert_{B_{p}^{s, q}\left( \mathbb{R}^{n} \right)} \left\Vert f
\right\Vert_{\infty}^{\mu - 1}$

\item $\displaystyle \forall g \in B_{p}^{\mu + \left(
	1/p \right), \infty}\left( \mathbb{R} \right) ,\quad \left\Vert \left\vert g \right\vert^{\mu}
\right\Vert_{B_{p}^{\mu + \left( 1/p \right), \infty}\left( \mathbb{R}
\right)} \leq C \left\Vert g \right\Vert_{B_{p}^{\mu + \left( 1/p \right),
\infty}\left( \mathbb{R} \right)}^{\mu}$
\end{itemize}

\item If $\mu > 1$, $1 < p < \infty$, $m < n/p$, then we have an equivalence
between

\begin{itemize}
\item $\displaystyle \left\{ \left\vert f \right\vert^{\mu}: f \in W^{m, t}
\right\} \subset W^{m, p}$, and

\item $\displaystyle m < \mu + \frac{1}{p}, \quad \frac{n \mu}{m \left( \mu
- 1 \right) + \frac{n}{p}} \leq t \leq p \mu$,
\end{itemize}
\end{itemize}

the result remains true if we replace $F_{\mu}: f \longmapsto \left\vert f
\right\vert^{\mu}$, by 
\begin{equation*}
\tilde{F}_{\mu}: f \longmapsto f \left\vert f \right\vert^{\mu - 1}, \quad 
\text{or by } \bar{F}_{\mu}: f \longmapsto \left( \max \left( f, 0 \right)
\right)^{\mu}.
\end{equation*}

\item If $m \in \mathbb{N}$, $1 < p < \infty$, $m < n/p$, $k \in \mathbb{N}$%
, $k \geq 2$, then we have an equivalence between

 $\displaystyle \left\{ f^{k}: f \in W^{m, t} \right\} \subset W^{m, p}$\quad and\quad $\displaystyle \frac{n k}{m \left( k - 1 \right) + \frac{n}{p}} \leq t
\leq p k$.

\end{itemize}

\begin{quote}
The operator $f \longmapsto f \left\vert f \right\vert^{\mu}$ plays an
essential role in the study of the Cauchy initial value problem for
nonlinear Schrudinger partial differential equations.
\end{quote}

\subsection{\textbf{Example 2 }~\cite{Bo-Cr-Si-1,Bo-Cr-Si-2}}

\begin{itemize}
\item Let $p \geq 1$, and consider the absolute value operator $T_{F_{1}} =
T_{\left\vert \cdot \right\vert}$ such that 
\begin{equation*}
T_{F_{1}}\left( g \right) = F_{1}\left( g \right) = \left\vert g \right\vert,
\end{equation*}
then $\left\vert \cdot \right\vert$ belongs to $BV_{p}^{1}\left( \mathbb{R}
\right) \setminus \mathcal{E}_{p, \ell oc}^{1}$, and it verifies

\begin{itemize}
\item $T_{\left\vert \cdot \right\vert}$ operates on $BV_{p}^{1}\left( I
\right)$, such that 
\begin{equation*}
\left\Vert \left\vert g \right\vert \right\Vert_{BV_{p}^{1}\left( I \right)}
\leq c \left\Vert g \right\Vert_{BV_{p}^{1}\left( I \right)}, \quad (c > 0)
\end{equation*}

\item $\displaystyle T_{\left\vert \cdot \right\vert}$ sends $\displaystyle %
B_{p}^{1 + 1/p, 1}\left( \mathbb{R}^{n} \right)$ to $\displaystyle B_{p}^{1
+ 1/p, \infty}\left( \mathbb{R}^{n} \right)$ such that 
\begin{equation*}
\left\Vert \left\vert g \right\vert \right\Vert_{B_{p}^{1 + 1/p,
\infty}\left( \mathbb{R}^{n} \right)} \leq c \left\Vert g
\right\Vert_{B_{p}^{1 + 1/p, 1}\left( \mathbb{R}^{n} \right)}, \quad (c > 0)
\end{equation*}

\item The operator $T_{\left\vert \cdot \right\vert}$ is not continuous from 
$B_{p}^{1 + 1/p, \infty}\left( \mathbb{R}^{n} \right)$ to $B_{p}^{1 + 1/p,
\infty}\left( \mathbb{R}^{n} \right)$.

\item If $0 < s < 1 + (1/p)$, $1 \leq q \leq \infty$, it operates on $%
B_{p}^{s, q}\left( \mathbb{R}^{n} \right)$, such that 
\begin{equation*}
\left\Vert \left\vert g \right\vert \right\Vert_{B_{p}^{s, q}\left( \mathbb{R%
}^{n} \right)} \leq c \left\Vert g \right\Vert_{B_{p}^{s, q}\left( \mathbb{R}%
^{n} \right)}, \quad (c > 0)
\end{equation*}

\item If $0 < s < \infty$, $1 \leq q < \infty$, it operates continuously on $%
B_{p}^{s, q}\left( \mathbb{R}^{n} \right)$.
\end{itemize}

\item Consider the family of functions 
\begin{equation*}
u_{\alpha}(x) = |x + \alpha| - |\alpha|, \quad x, \alpha \in \mathbb{R},
\end{equation*}
then 
\begin{equation*}
\left\Vert u_{\alpha}\left( g \right) \right\Vert_{BV_{p}^{1}\left( \mathbb{R%
} \right)} = \left\Vert |g + \alpha| - |\alpha|
\right\Vert_{BV_{p}^{1}\left( \mathbb{R} \right)} \leq c_{p} \left\Vert g
\right\Vert_{BV_{p}^{1}\left( \mathbb{R} \right)}.
\end{equation*}
\end{itemize}

\subsection{\textbf{Example 3 }~\cite{Bo-Cr-Si-2}}

\begin{itemize}
\item Let $p \geq 1$, and the operator $T_{\psi}: g \longmapsto \psi \circ g$%
, such that

\begin{itemize}
\item $\displaystyle \rho \in \mathcal{D}\left( \mathbb{R} \right)$, $\text{%
supp } \rho \subseteq [-1/2, 1/2]$, $\rho\left( t \right) = 1$ in $[-1/e,
1/e]$,

\item $\displaystyle \psi: \mathbb{R} \longrightarrow \mathbb{R}$, and $%
\psi\left( t \right) = \left\vert t \right\vert \frac{\rho\left( t \right)}{%
\log \left\vert t \right\vert}$, if $t \neq 0$, $\psi\left( 0 \right) = 0$,
\end{itemize}

then the following assertions are verified

\begin{description}
\item[a)] The function $\psi$ belongs to $\mathcal{E}_{p}^{1}$.

\item[b)] $T_{\psi}$ operates on $BV_{p}^{1}\left( I \right)$ such that $%
\left\Vert \psi \circ g \right\Vert_{BV_{p}^{1}\left( I \right)} \leq c
\left\Vert g \right\Vert_{BV_{p}^{1}\left( I \right)}, \quad (c > 0)$.

\item[c)] $T_{\psi}$ sends $B_{p}^{1 + 1/p, 1}\left( \mathbb{R}^{n} \right)$
to $B_{p}^{1 + 1/p, \infty}\left( \mathbb{R}^{n} \right)$ such that 
\begin{equation*}
\left\Vert \psi \circ g \right\Vert_{B_{p}^{1 + 1/p, \infty}\left( \mathbb{R}%
^{n} \right)} \leq c \left\Vert g \right\Vert_{B_{p}^{1 + 1/p, 1}\left( 
\mathbb{R}^{n} \right)}, \quad (c > 0).
\end{equation*}

\item[d)] $T_{\psi}$ is continuous from $B_{p}^{1 + 1/p, 1}\left( \mathbb{R}%
^{n} \right)$ to $\displaystyle B_{p}^{1 + 1/p, \infty}\left( \mathbb{R}^{n}
\right)$.

\item[e)] If $\displaystyle 0 < s < 1 + (1/p)$, $\displaystyle 1 \leq q \leq
\infty$, then $T_{\psi}$ operates on $B_{p}^{s, q}\left( \mathbb{R}^{n}
\right)$, such that 
\begin{equation*}
\left\Vert \psi \circ g \right\Vert_{B_{p}^{s, q}\left( \mathbb{R}^{n}
\right)} \leq c \left\Vert g \right\Vert_{B_{p}^{s, q}\left( \mathbb{R}^{n}
\right)}, \quad (c > 0).
\end{equation*}

\item[f)] If $q \in [1, \infty[$, then the operator in (e) is continuous.
The above properties do not change if $\log \left\vert t \right\vert$ is
replaced by iterated logarithms like $\displaystyle \log \left\vert \log
\left\vert t \right\vert \right\vert$ or $\displaystyle \log \left\vert \log
\left\vert \log \left\vert t \right\vert \right\vert \right\vert$.
\end{description}
\end{itemize}

\subsection{\textbf{Example 4 }~\cite{Bo-Cr-Si-2}}

\begin{itemize}
\item Since the spaces $BV_{p}^{1}\left( \mathbb{R} \right)$ decrease with
respect to $p$, we look for functions that belong to $BV_{p_{0}}^{1}\left( 
\mathbb{R} \right)$ such that $p > p_{0}$, and considering thus the family
of functions $\psi_{\alpha, \beta}: \mathbb{R} \longrightarrow \mathbb{R}$,
defined by 
\begin{equation*}
\psi_{\alpha, \beta}(t) = |t|^{\alpha + 1} \rho\left( t \right) \sin \left(
\left\vert t \right\vert^{-\beta} \right) \text{ if } t \neq 0, \text{ and }
\psi_{\alpha, \beta}\left( 0 \right) = 0, \quad 0 < \beta < \alpha,
\end{equation*}
Then we obtain an equivalence between the following assertions (a), (b), (c)
such that
\end{itemize}

\begin{description}
\item[a)] $\displaystyle \frac{1}{p} < \frac{\alpha}{\beta} - 1$

\item[b)] $\displaystyle \text{The function } \psi_{\alpha, \beta} \text{
belongs to } BV_{p}^{1}\left( \mathbb{R} \right)$

\item[c)] $\displaystyle \text{The operator } T_{\psi_{\alpha, \beta}} \text{
sends } BV_{p}^{1}\left( \mathbb{R} \right) \text{ to } BV_{p}^{1}\left( 
\mathbb{R} \right)$,
\end{description}

\begin{quote}
and if $\displaystyle \frac{1}{p} < \frac{\alpha}{\beta} - 1$, then the
operator $\displaystyle T_{\psi_{\alpha, \beta}}$ satisfies properties
(c)-(f) of the third example if we replace $\psi$ by $\psi_{\alpha, \beta}$.
\end{quote}

\subsection{\textbf{Example 5 }~\cite{DAH,IGA}}

\begin{itemize}
\item For the problem of composition of operators we have
\end{itemize}

\begin{description}
\item[i)] A classical theorem due to B.E.J. Dahlberg (cf ~\cite{DAH}) for
the Sobolev space $W^{m, p}\left( \mathbb{R}^{n} \right)$ stating that 
\begin{equation*}
\text{If } 1 + \frac{1}{p} < m < \frac{n}{p}, \quad p \neq 0, \text{ or } 1
< p < \infty, \quad 2 \leq m < \frac{n}{p}, \text{ then }
\end{equation*}
\begin{eqnarray*}
T_{G}\left( W^{m, p}\left( \mathbb{R}^{n} \right) \right) &=& \left\{
G\left( f \right); f \in W^{m, p}\left( \mathbb{R}^{n} \right) \right\}
\subset W^{m, p}\left( \mathbb{R}^{n} \right), \\
&& \\
\text{implies } \qquad G\left( t \right) &=& c \cdot t, \quad c \in \mathbb{R%
}.
\end{eqnarray*}

\item[ii)] A classical result due to S. Igari (~\cite{IGA}) for the Besov
space $B_{p}^{s, q}\left( \mathbb{R}^{n} \right)$, which states that 
\begin{equation*}
\text{If } 1 \leq p < \infty, \quad 1 \leq q \leq \infty, \quad 0 < s < 1/p
\quad \text{ then }
\end{equation*}
\begin{equation*}
T_{G}\left( B_{p}^{s, q} \right) \subset B_{p}^{s, q}, \quad \text{if and
only if } G \text{ is Lipschitz and } G\left( 0 \right) = 0.
\end{equation*}
\end{description}

\subsection{\textbf{Example 6 }~\cite{Bo-Cr-Si-1,Bo-Cr}}

\begin{itemize}
\item Operators related to continuous Lipschitz functions satisfy the norm
inequality property on $W^{1, p}\left( \mathbb{R}^{n} \right)$ but not on $%
W^{2, p}\left( \mathbb{R}^{n} \right)$, this restriction is due to the
following result

\begin{proposition}[~\cite{Br-Mir}]
\label{4.19} 
Let $\displaystyle 1 < p \leq +\infty$, $\displaystyle s > 1 + \left( 1/p
\right)$ and $N$ a norm on $\mathcal{D}\left( \mathbb{R}^{n} \right)$. 
\newline
If $E$ is a normed space such that $\displaystyle \mathcal{D}\left( \mathbb{R%
}^{n} \right) \subset E \subset W_{\ell oc}^{1, 1}\left( \mathbb{R}^{n}
\right)$ 
\begin{equation*}
\sup_{h \neq 0} \left\vert h \right\vert^{1 - s} \left( \int_{\mathbb{R}%
^{n}} \left\vert \frac{\partial g}{\partial x_{i}}\left( x + h \right) - 
\frac{\partial g}{\partial x_{i}}\left( x \right) \right\vert^{p} \, \mathrm{%
d}x \right)^{1/p} \leq A \left\Vert g \right\Vert_{E}, \quad (A > 0)
\end{equation*}
for all $g \in E$, and $i = 1, \dots, n$, and if there exists a continuously
differentiable function $f: \mathbb{R} \longrightarrow \mathbb{R}$, and a
constant $B > 0$, such that $T_{f}$ injects $\mathcal{D}\left( \mathbb{R}%
^{n} \right)$ into $E$, and such that the following inequality is verified,
then $f$ must be an affine function. 
\begin{equation*}
\left\Vert f \circ g \right\Vert_{E} \leq B \left( N\left( g \right) + 1
\right) \quad \text{for all } g \in \mathcal{D}\left( \mathbb{R}^{n} \right).
\end{equation*}
\end{proposition}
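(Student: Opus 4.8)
The plan is to argue by contradiction. Suppose $f\in C^{1}(\mathbb{R})$ is not affine; then $f'$ is not constant, so there are $a\neq b$ with $\delta:=|f'(a)-f'(b)|>0$. Replacing $f$ by $t\mapsto f(t+a)-f(a)$ — which is affine exactly when $f$ is, and whose composition operator differs from $T_{f}$ only by a translation of the argument and the subtraction of the constant $f(a)$ — I may also assume $f(0)=0$, so that $T_{f}$ really carries $\mathcal{D}(\mathbb{R}^{n})$ into a space of distributions vanishing at infinity. The two structural hypotheses enter as follows: since $g\in\mathcal{D}(\mathbb{R}^{n})$ is smooth and $f\in C^{1}$, the classical chain rule gives $\partial_{i}(f\circ g)=(f'\circ g)\,\partial_{i}g$; and writing $\sigma:=s-1\in(\tfrac1p,1)$ (for $s\geq2$ the first–difference form of the displayed inequality degenerates, so this range is the substantial one), that inequality says precisely $\|\partial_{i}g\|_{\dot{B}_{p}^{\sigma,\infty}}\leq A'\,\|g\|_{E}$ by the first–difference characterisation of $\dot{B}_{p}^{\sigma,\infty}$. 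Hence, for every $g\in\mathcal{D}(\mathbb{R}^{n})$,
\begin{equation*}
\|f\circ g\|_{E}\ \geq\ (A')^{-1}\,\bigl\|(f'\circ g)\,\partial_{i}g\bigr\|_{\dot{B}_{p}^{\sigma,\infty}}.
\end{equation*}

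For the test functions I would fix $\phi\in\mathcal{D}(\mathbb{R})$ which on $[0,1]$ increases monotonically from $a$ to $b$ (and is supported near $[0,1]$), and a cutoff $\eta\in\mathcal{D}(\mathbb{R}^{n-1})$ with $\eta\equiv1$ near the origin. For $0<\lambda<1$ and $M\in\mathbb{N}$ set $g_{\lambda,M}(x)=\eta(x_{2},\dots,x_{n})\,\Phi_{\lambda,M}(x_{1})$, where $\Phi_{\lambda,M}$ is a smoothed $M$–tooth sawtooth built from the rescaled translates $\phi\bigl((x_{1}-j\lambda)/\lambda\bigr)$, $j=0,\dots,M-1$, placed in a fixed bounded region. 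On $\{\eta\equiv1\}$ the chain rule yields, on the $j$–th tooth, $\partial_{1}(f\circ g_{\lambda,M})(x)=\lambda^{-1}u\bigl((x_{1}-j\lambda)/\lambda\bigr)$ with $u:=(f\circ\phi)'=(f'\circ\phi)\,\phi'$ a single fixed function; $u$ is not identically zero exactly because $f$ is not affine on $[a,b]$ — this is where $f'(a)\neq f'(b)$ is used — and the monotonicity of $\phi$ on the ramp ensures $u$ registers the full variation of $f'$ between $f'(a)$ and $f'(b)$, so no cancellation can occur and $\|u\|_{\dot{B}_{p}^{\sigma,\infty}}>0$.

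Combining the displayed lower bound with the homogeneity of the $\dot{B}_{p}^{\sigma,\infty}$–seminorm (Proposition \ref{1.9}, Lemma \ref{3.6}) on each tooth — a $\lambda$–dilation in $x_{1}$ multiplies it by $\lambda^{1/p-\sigma}$, the extra factor $\lambda^{-1}$ coming from the amplitude of the rescaled derivative, and $M$ well–separated copies multiply an $L^{p}$–based seminorm by $M^{1/p}$, the cutoff $\eta$ contributing only a fixed factor via Fubini — I get
\begin{equation*}
\|f\circ g_{\lambda,M}\|_{E}\ \geq\ c\,M^{1/p}\,\lambda^{\,1/p-s}\,,\qquad c=c(f,\phi,\eta,p)>0 .
\end{equation*}
On the other hand $N(g_{\lambda,M})+1$ only measures the features of $g_{\lambda,M}$ that the norm $N$ actually sees, which in every situation where the proposition is applied (e.g. $N$ of differential order $\leq1$, or the norm of a space strictly below the $\dot{B}_{p}^{\sigma,\infty}$ level) grows strictly more slowly than $M^{1/p}\lambda^{1/p-s}$ as $\lambda\downarrow0$ or $M\to\infty$; the surplus is precisely the gap $s>1+\tfrac1p$. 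Letting $\lambda\downarrow0$ (or $M\to\infty$) then makes $\|f\circ g_{\lambda,M}\|_{E}/(N(g_{\lambda,M})+1)$ unbounded, contradicting $\|f\circ g\|_{E}\leq B(N(g)+1)$; therefore $f'$ is constant and $f$ is affine.

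Three points carry the real weight. First is the non–degeneracy estimate: one must verify quantitatively that the oscillation of $f'\circ g_{\lambda,M}$ is not killed by that of $\partial_{1}g_{\lambda,M}$, which is the reason for taking $\phi$ monotone on the ramp and the reason the hypothesis $f'(a)\neq f'(b)$ is used through $\|u\|_{\dot{B}_{p}^{\sigma,\infty}}>0$. Second, and hardest, is calibrating the construction so that $N(g_{\lambda,M})+1$ is provably outrun: a single dilation of one fixed profile only produces the same power $\lambda^{1/p-s}$ on both sides, so one needs the genuine extra gain supplied either by the multi–scale superposition above (teeth at well–separated scales $\lambda_{j}$ summed in $\ell^{p}$, a structure the $\dot{B}_{p}^{\sigma,\infty}$ side detects but $N$ cannot match since $\sigma>\tfrac1p$) or by a logarithmic refinement, and this tuning must be matched to the exact $s,p,n$. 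Third are the harmless but fiddly reductions — the normalisation $f(0)=0$ together with the fact that the constant $f(0)$ is not itself an element of $E$, the passage from $n=1$ to $n\geq2$ through the cutoff $\eta$ and Fubini, and the obvious modifications when $p=+\infty$.
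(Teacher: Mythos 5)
The paper itself gives no proof of Proposition \ref{4.19}: it is quoted from \cite{Br-Mir} inside Example~6, so there is nothing to compare your argument against except the statement. Judged on its own terms, your first half is sound: the reduction to $\sigma=s-1\in(1/p,1)$, the lower bound $\|f\circ g\|_{E}\gtrsim\|(f'\circ g)\,\partial_{1}g\|_{\dot{B}_{p}^{\sigma,\infty}}$ extracted from the displayed hypothesis, and the scaling count $M^{1/p}\lambda^{1/p-s}$ for the sawtooth are all correct; so is the implicit observation that one may assume $0<\|u\|_{\dot{B}_{p}^{\sigma,\infty}}<\infty$, since otherwise a single tooth already gives $f\circ g\notin E$. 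The genuine gap is exactly the step you yourself flag as ``hardest'': the comparison with $N(g_{\lambda,M})+1$. The proposition allows $N$ to be an \emph{arbitrary} norm on $\mathcal{D}(\mathbb{R}^{n})$, while your argument only handles norms ``of differential order $\leq 1$'' or ``below the $\dot{B}_{p}^{\sigma,\infty}$ level''. Take $N(g)=\|g\|_{C^{k}}$ with $k$ large: then $N(g_{\lambda,M})\sim\lambda^{-k}$ swamps $M^{1/p}\lambda^{1/p-s}$ and no contradiction results. As written you have proved a strictly weaker statement than the one asserted.

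The repairs you sketch do not close this gap. Packing teeth at infinitely many scales $\lambda_{j}\downarrow 0$ into a \emph{single} element of $\mathcal{D}(\mathbb{R}^{n})$ (so that $N$ of it is just one finite number) forces the amplitudes to decay faster than every power of $\lambda_{j}$, otherwise the sum fails to be $C^{\infty}$ at the accumulation point of scales; that decay destroys the lower bound. The feature of $N$ that a correct proof must exploit is precisely that it is a \emph{norm}: positively homogeneous and subadditive, hence growing at most linearly along rays $t\mapsto tg_{0}$ and bounded on bounded subsets of every finite-dimensional subspace of $\mathcal{D}(\mathbb{R}^{n})$. The test family must therefore be confined to such a set --- fixed profiles, with only amplitudes and base levels varying --- while the left-hand side is forced to grow superlinearly there; the model case $f(t)=t^{2}$ shows the mechanism, since $\|f\circ(tg)\|_{E}\gtrsim t^{2}\|\partial_{1}(g^{2})\|_{\dot{B}_{p}^{\sigma,\infty}}$ while $N(tg)=tN(g)$ grows only linearly. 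Your family $g_{\lambda,M}$ spans new directions as $\lambda\downarrow 0$, so nothing whatsoever constrains $N$ on it. Until the construction is recast in that form (or the arbitrariness of $N$ is otherwise defeated), the argument does not prove the stated proposition.
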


\item Superposition operators related to affine functions trivially satisfy
the norm inequality properties with respect to function composition for
every normed functional space.

\item For the Sobolev space $W^{m, p}\left( \mathbb{R}^{n} \right)$, $m \in 
\mathbb{N}$, $1 \leq p \leq \infty$, nontrivial superposition operators that
satisfy the norm inequality property exist if and only if 
\begin{equation*}
(m = 0) \quad \text{or} \quad (m = 1) \quad \text{or} \quad (m = 2 \text{
and } p = 1).
\end{equation*}
$\bullet $ For every $\mu > 1$, $m < \mu + \frac{1}{p}$, $f \in W^{m,
p}\left( \mathbb{R}^{n} \right) \cap L^{\infty}\left( \mathbb{R}^{n} \right)$
we have 
\begin{equation*}
\left\Vert F_{\mu}\left( f \right) \right\Vert_{W^{m, p}\left( \mathbb{R}%
^{n} \right)} \leq c \left\Vert f \right\Vert_{W^{m, p}\left( \mathbb{R}^{n}
\right)} \left\Vert f \right\Vert_{\infty}^{\mu - 1}, \quad (c > 0).
\end{equation*}
The norm inequality property for the operator $\displaystyle F_{\mu} =
\left\vert \cdot \right\vert^{\mu}$ is not entirely verified on $W^{m,
p}\left( \mathbb{R}^{n} \right)$ but partially on $W^{m, p}\left( \mathbb{R}%
^{n} \right) \cap L^{\infty}\left( \mathbb{R}^{n} \right)$.

\item ~\cite{Bo-Cr} - Let $\displaystyle p \geq 1$, $m > \max \left( n/p, 1
\right)$, $m \in \mathbb{N}$, then 
\begin{equation*}
\text{the composition operator } T_{f} \text{ operates on } W^{m, p}\left( 
\mathbb{R}^{n} \right), \text{ if and only if } f \in W_{\ell oc}^{m,
p}\left( \mathbb{R}^{n} \right).
\end{equation*}
\end{itemize}

\subsection{\textbf{Example 7 }~\cite{Bo-Me}}

\begin{itemize}
\item We obtain a solution for the \textbf{P.S.O.}, with an additional norm
inequality property in the Sobolev spaces $W^{s, p}\left( \mathbb{R}^{n}
\right)$ by Theorem \ref{4.20}, by introducing the space of functions of
bounded variation $BV\left( \mathbb{R}^{n} \right)$ equipped with the
following seminorm:

$\displaystyle \left\Vert f \right\Vert_{BV\left( \mathbb{R}^{n} \right)}
\sim \nu \left( f \right) = \sum\limits_{j=1}^{n} \left\Vert \partial_{j} f
\right\Vert_{M} \sim \sup_{h \in \mathbb{R}^{n} \setminus \left\{ 0
\right\}} \frac{1}{\left\vert h \right\vert} \int_{\mathbb{R}^{n}}
\left\vert f\left( x + h \right) - f\left( x \right) \right\vert \, \mathrm{d%
}x < +\infty$,

where $\left\Vert g \right\Vert_{M}$ denotes the total variation of the
measure $g$, and the space $BH\left( \mathbb{R} \right)$ of distributions
whose derivative belongs to $BV\left( \mathbb{R} \right)$, equipped with the
seminorm 
\begin{equation*}
\left\Vert f \right\Vert_{BH} = \nu \left( f^{\prime }\right) + \left\Vert
f^{\prime }\right\Vert_{\infty}.
\end{equation*}

\begin{theorem}[~\cite{Bo-Me}]
\label{4.20} 
Let $\displaystyle 1 \leq p \leq \infty$, $0 < s < 1 + \left( 1/p \right)$,
then every function $f \in BH\left( \mathbb{R} \right)$, $f\left( 0 \right)
= 0$, operates on $W^{s, p}\left( \mathbb{R}^{n} \right)$, moreover there
exists $c\left( s, p, n \right) > 0$, such that:
\begin{equation*}
 \text{For all } g
\in W^{s, p}\left( \mathbb{R}^{n} \right),\quad \left\Vert f \circ g \right\Vert_{W^{s, p}\left( \mathbb{R}^{n} \right)}
\leq c\left( s, p, n \right) \left\Vert f \right\Vert_{BH} \left\Vert g
\right\Vert_{W^{s, p}\left( \mathbb{R}^{n} \right)}.
\end{equation*}
\end{theorem}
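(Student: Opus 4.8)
The plan is to deduce Theorem~\ref{4.20} from the functional calculus already established in Besov spaces, resting on two observations: the hypothesis $f\in BH(\mathbb{R})$ with $f(0)=0$ is strong enough to put $f$ into \emph{every} space $U_p^1(\mathbb{R})$ with $p\in\,]1,+\infty[$; and at non-integer smoothness one has $W^{s,p}(\mathbb{R}^n)=B_p^{s,p}(\mathbb{R}^n)$. For the first point, if $f\in BH(\mathbb{R})$ then $f'\in L^\infty(\mathbb{R})$ and $f'\in BV_1(\mathbb{R})$; the elementary bound $\nu_p\leq\nu_1$ gives $\mathcal{V}_1(\mathbb{R})\hookrightarrow\mathcal{V}_p(\mathbb{R})$, hence $BV_1(\mathbb{R})\hookrightarrow BV_p(\mathbb{R})$, and since $BV_p(\mathbb{R})\hookrightarrow U_p(\mathbb{R})$ by Peetre's Theorem~\ref{3.17} (step~(4) of its proof yields $\|f'\|_{U_p}\leq 2^{1/p}\|f'\|_{\mathcal{V}_p(\mathbb{R})}\leq 2^{1/p}\|f'\|_{\mathcal{V}_1(\mathbb{R})}=2^{1/p}\|f\|_{BH}$), we obtain $f'\in U_p(\mathbb{R})$. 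Combined with $\|f'\|_\infty\leq\|f\|_{BH}$, $f(0)=0$, and the equivalence $\|f\|_{U_p^1}\sim|f(0)|+\|f'\|_\infty+\|f'\|_{U_p}$, this gives $f\in U_p^1(\mathbb{R})$ with $\|f\|_{U_p^1}\lesssim\|f\|_{BH}$. Consequently, for $1<p<\infty$ and non-integer $s\in\,]0,1+1/p[$, Theorem~\ref{3.12}(ii) applied with $q=p$ gives at once $f\circ g\in B_p^{s,p}(\mathbb{R}^n)=W^{s,p}(\mathbb{R}^n)$ together with $\|f\circ g\|_{W^{s,p}}\leq c\,\|f\|_{U_p^1}\,\|g\|_{W^{s,p}}\lesssim c\,\|f\|_{BH}\,\|g\|_{W^{s,p}}$.

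It then remains to treat $s=1$ (the only integer that the band $]0,1+1/p[$ can contain, excluded above because $W^{1,p}\ne B_p^{1,p}$ unless $p=2$) and the endpoint exponents $p\in\{1,\infty\}$. For $s=1$ and any $p\in[1,\infty]$: since $f'\in L^\infty$, $f$ is globally Lipschitz of constant $\|f'\|_\infty$ with $f(0)=0$, so the chain rule for a Lipschitz function composed with a Sobolev function (Lemma~\ref{4.15}, Theorem~\ref{4.17}(i)) gives $\partial_i(f\circ g)=(f'\circ g)\,\partial_i g$ a.e.\ and $|f\circ g|\leq\|f'\|_\infty|g|$, whence $\|f\circ g\|_{W^{1,p}}\leq\|f'\|_\infty\|g\|_{W^{1,p}}\leq\|f\|_{BH}\|g\|_{W^{1,p}}$. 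The same Lipschitz bound, via the difference characterisations (Proposition~\ref{1.13} and the pointwise inequality $|\Delta_h^1(f\circ g)|\leq\|f'\|_\infty|\Delta_h^1 g|$), settles $p=\infty$ with $0<s<1$ (where $W^{s,\infty}=\mathcal{C}^s$) and $p=1$ with $0<s<1$; note that in all these subcases the full strength of $f\in BH$ is not needed, only that $f$ is Lipschitz with $f(0)=0$.

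The genuinely delicate case, which I expect to be the main obstacle, is $p=1$ with $1<s<2$ (and, for a treatment bypassing Theorem~\ref{3.12} entirely, the whole band $1<s<1+1/p$). Here one uses the second-difference norm for $B_p^{s,p}(\mathbb{R}^n)=W^{s,p}(\mathbb{R}^n)$ from Theorems~\ref{1.14}--\ref{1.15} (with $M+1=2$, noting $\sigma_p=0$ for $p\geq1$), and the fact that $f''=\mu$ is a finite signed measure with $\|\mu\|=\nu(f')\leq\|f\|_{BH}$. Picking $v_0=g(x+h)$ and the Taylor-with-remainder identity $f(v)-f(v_0)-f'(v_0)(v-v_0)=\int_{\mathbb{R}}G(v,v_0,t)\,\mathrm{d}\mu(t)$, where $G(v,v_0,t)=(v-t)_+-(v_0-t)_+-(v-v_0)\,1_{\{t<v_0\}}$ is supported, for fixed $v,v_0$, in the interval between $v_0$ and $v$ and satisfies $|G|\leq|v-v_0|$, one gets
\begin{align*}
\Delta_h^2(f\circ g)(x)&=\big(f'\circ g(x+h)\big)\,\Delta_h^2 g(x)\\
&\quad+\int_{\mathbb{R}}\Big[G\big(g(x+2h),g(x+h),t\big)+G\big(g(x),g(x+h),t\big)\Big]\,\mathrm{d}\mu(t).
\end{align*}
The first term is harmless: it contributes $\|f'\|_\infty\,\|\Delta_h^2 g\|_p$, hence, after the $(h,x)$-integration, $\|f'\|_\infty$ times the relevant seminorm of $g$. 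The difficulty is the integral term: estimating it crudely by $\|\mu\|\,|\Delta_h^1 g|$ is too lossy --- it would require $\int|h|^{-sp}\|\Delta_h^1 g\|_p^p\,|h|^{-n}\,\mathrm{d}h$, which diverges for $s>1$ --- so one must retain the localisation of $\mu$ along the level sets of $g$ and carry out the $t$-integration by a Fubini/coarea argument. It is exactly at this point that the hypothesis $s<1+1/p$ is consumed, the threshold being the one at which the model operators $g\mapsto g_+$ or $g\mapsto|g|$ (the case $f''=\delta_0$, the bound familiar from the absolute-value operator discussed in the examples) are bounded on $B_p^{s,p}$. Making this last estimate precise --- controlling the inner $t$-integral uniformly and executing the double integration in $h$ and $x$ --- is the technical heart of the argument, and is the content of Bourdaud and Meyer~\cite{Bo-Me}.
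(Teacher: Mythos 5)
First, a point of reference: the thesis contains no proof of Theorem \ref{4.20} at all --- it is stated in the Examples section purely as a citation of Bourdaud--Meyer \cite{Bo-Me} --- so whatever you write is necessarily your own argument rather than a variant of the paper's. Your reduction is sound over most of the parameter range. The chain $f'\in L^{\infty}$, $f'\in BV_{1}\hookrightarrow BV_{p}\hookrightarrow U_{p}$ (the first embedding from $\nu_{p}\leq\nu_{1}$, the second from step (4) of the proof of Theorem \ref{3.17}), together with $f(0)=0$, does give $f\in U_{p}^{1}(\mathbb{R})$ with $\left\Vert f\right\Vert _{U_{p}^{1}}\lesssim\left\Vert f\right\Vert _{BH}$; Theorem \ref{3.12}(ii) with $q=p$ then covers $1<p<\infty$ and non-integer $s\in\,]0,1+1/p[$ under the standard identification of $W^{s,p}$ with $B_{p}^{s,p}$ (worth stating explicitly, since the thesis never fixes a convention for fractional $W^{s,p}$); and the cases $s=1$, and $p\in\{1,\infty\}$ with $0<s<1$, follow from the global Lipschitz bound $\left\vert \Delta_{h}^{1}(f\circ g)\right\vert \leq\left\Vert f'\right\Vert _{\infty}\left\vert \Delta_{h}^{1}g\right\vert$ as you say.

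There is, however, a genuine gap, and you have named it yourself without closing it: the case $p=1$, $1<s<2$. Theorem \ref{3.12} is stated only for $p\in\,]1,+\infty[$, so it is unavailable there, and your second-difference identity $\Delta_{h}^{2}(f\circ g)(x)=\bigl(f'\circ g(x+h)\bigr)\Delta_{h}^{2}g(x)+\int_{\mathbb{R}}\bigl[G(g(x+2h),g(x+h),t)+G(g(x),g(x+h),t)\bigr]\,\mathrm{d}\mu(t)$ stops exactly where the work begins: the estimate of the measure term after the integrations in $x$, $h$ and $t$, which is the only place the hypothesis $s<1+1/p$ is actually consumed. The crude bound $\left\vert G\right\vert \leq\left\vert v-v_{0}\right\vert$ gives, as you note, only a first-difference control that diverges for $s>1$, so the localisation of $G(\cdot,\cdot,t)$ in $t$ between the values of $g$ must be exploited quantitatively, and no such estimate is carried out. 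Writing that this step ``is the content of Bourdaud and Meyer'' is circular, since \cite{Bo-Me} is precisely the result being proved. As it stands, the proposal is a correct and useful reduction of Theorem \ref{4.20} to (a) the cited Theorem \ref{3.12} and (b) its supercritical $p=1$ case --- equivalently, to the boundedness of the model operators $g\mapsto g_{+}$ and $g\mapsto\left\vert g\right\vert$ on $B_{1}^{s,1}$ for $1<s<2$ --- but it is not a proof of the theorem.
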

\end{itemize}

\subsection{\textbf{Example 8 }~\cite{Bo-Cr}}

\begin{itemize}
\item Every composition operator such that $T_{f}\left( g\right) =f\circ g$
and $f:\mathbb{R}\longrightarrow \mathbb{R}$, defined on $$B_{\infty
}^{s,\infty }\left( \mathbb{R}^{n}\right) =\mathcal{C}^{s}\left( \mathbb{R}%
^{n}\right),\, s>0$$ the Holder-Zygmund space, verifies 
\begin{equation*}
T_{f}\left( \mathcal{C}^{s}\left( \mathbb{R}^{n}\right) \right) \subset 
\mathcal{C}^{s}\left( \mathbb{R}^{n}\right)
\end{equation*}%
if and only if we have the following conditions
\end{itemize}

\begin{description}
\item[i)] $f$ is continuous and locally Lipschitz, for $0 < s < 1$.

\item[ii)] $f$ belongs locally to $\mathcal{C}^{s}\left( \mathbb{R}^{n}
\right)$, for $s > 1$.

\item[iii)] $f$ is continuous and locally Lipschitz and satisfies the
condition 
\begin{equation*}
f\left( x + t \right) + f\left( x - t \right) - 2f\left( x \right) = O\left( 
\frac{t}{\left\vert \log t \right\vert} \right), \quad \text{with } t
\longrightarrow 0^{+},
\end{equation*}
uniformly on each compact subset of $\mathbb{R}$, for $s = 1$.
\end{description}

 \makeatletter
\let\addcontentsline\orig@addcontentsline \makeatother

\begin{thebibliography}{99}
\bibitem{Bo-Cr} G.~Bourdaud and M.~Lanza de~Cristoforis, \emph{Functional
calculus in {H\"{o}lder}-{Zygmund} spaces}, Trans. Amer. Math. Soc. \textbf{%
354} (2002), 4109--4129.

\bibitem{Bo-Cr-Si-1} G.~Bourdaud, M.~Lanza de~Cristoforis, and W.~Sickel, 
\emph{Superposition operators and functions of bounded p-variation}, Rev.
Mat. Iberoamer (Feb., 2004), (to appear). Preprint 362. Institut de Math\'{e}%
matiques de Jussieu. Unit\'{e} Mixte de Recherche 7586. Universit\'{e} Paris
VI et Paris VII / CNRS. 

\bibitem{Bo-Cr-Si-2} G.~Bourdaud, Massimo~Lanza de~Cristoforis, and
W.~Sickel, \emph{Superposition operators and functions of bounded
p-variation {II}}, Nonlinear Analysis Series A \textbf{Volume. 62} (2005),
483--518.

\bibitem{Bo-Me} G.~Bourdaud and Y.~Meyer, \emph{Le calcul fonctionnel
sous-lin\'{e}aire dans les espaces de {Besov} homog\`{e}nes}, (1st November
2004).

\bibitem{Bo-Mo-Si} G.~Bourdaud, M.~Moussai, and W.~Sickel, \emph{{Towards} {%
Sharp} {Superposition} {Theorems} in {Besov} and {Lizorkin}-{Triebel} {Spaces%
}}, (March 29, 2006).

\bibitem{Br-Mir} H.~Brezis and P.~Mironescu, \emph{Composition in fractional 
{Sobolev} spaces}, Discrete and Continuous Dynamical Systems \textbf{Number
2. Volume 7} (April 2001), 241--246, Website: http://math.smsu.edu/journal.

\bibitem{DAH} B.~E.~J. Dahlberg, \emph{A note on {Sobolev} spaces}, Proc.
Symp. Pure Math. \textbf{N. 35 Vol. 1} (1979), 183--185, MR 81h:46030.

\bibitem{GIR} A.~Giroux, \emph{Notes de cours. {Mesure} et integration}, D%
\'{e}partement de Math\'{e}matiques et Statistique. Universit\'{e} de Montr%
\'{e}al, Mai 2004.

\bibitem{Hein} J.~Heinonen, \emph{{Lectures} on {Lipchitz} {Analysis}},
Lectures at the 14th Jyv\"{a}skyl\"{a} Summer School in August 2004.
Supported by NSF grant DMS 0353549 and DMS 0244421, April 28, 2005.

\bibitem{IGA} S.~Igari, \emph{Sur les fonctions qui op\`{e}rent sur l'espace 
$\widehat{A^{2}}$}, Ann. Inst. Fourier. Grenoble \textbf{15-21} (1965),
525--533.

\bibitem{KAT} D.~Kateb, \emph{On the boundedness of the mapping $f
\longmapsto |f|^{\mu}, \mu > 1,$ on {Besov} {Spaces}}, Math.Nachr \textbf{%
248-249} (2003), 110--128.

\bibitem{KUF} A.~Kufner, \emph{Some difference inequalities with weights and
interpolation}, L. E. Persson \textbf{Volume 1. Number 3} (1998), 437--444,
Mathematical inequalities Applications.

\bibitem{Mau} B.~Maurey and J.~P. Tacchi, \emph{{Ludwing} {Scheffer} et les
extensions du th\'{e}or\`{e}me des accroissements finis}, Version of
27/10/01.

\bibitem{Pee} J.~Peetre, \emph{{New} {Thoughts} on {Besov} {Spaces}},
Mathematics Series. No.1, Mathematics Department. Duke University. Durham.
N.C, 1976. MR \MRhref{57:1108}{57:1108}.

\bibitem{Po} C.~Portenier, \emph{Cours d'analyse fonctionnelle}, Version of
November 29, 2004.

\bibitem{SIC} W.~Sickel, \emph{Necessary conditions on composition operators
acting on {Sobolev} spaces of fractional order}, Forum Math.\,9 (1997),
267--302.

\bibitem{Trie} H.~Triebel, \emph{{Theory} of {Function} {Spaces} {III}},
Birkh\"{a}user Verlag, P. O. Box : 133. CH-4010 Basel. Switzerland. e-ISBN :
3-7643-7643-7582-5, 2006.

\bibitem{ULLR} D.~C. Ullrich, \emph{{Besov} spaces. {A} primer}, Department
of Mathematics Oklahoma State University. Stillwater OK 74078.
\end{thebibliography}
\end{document}